\def\tr{\mbox{tr}}
\newcommand{\vol}{\textnormal{vol}}
\def\f12{\frac 1 2}
\def\a{\alpha}
\def\b{\beta}
\def\ga{\gamma}
\def\Ga{\Gamma}
\def\De{\Delta}
\def\ep{\epsilon}
\def\la{\lambda}
\def\La{\Lambda}
\def\si{\sigma}
\def\Si{\Sigma}
\def\om{\omega}
\def\th{\theta}
\def\Th{\Theta}
\def\pa{\partial}
\def\les{\lesssim}
\def\f12{\frac 1 2}
\def\e{\text{e}}
\newtheorem{thm}{Theorem}
\newtheorem{prop}{Proposition}
\newtheorem{lem}{Lemma}
\newtheorem{cor}{Corollary}
\newtheorem{remark}{Remark}
\newtheorem{Def}{Definition}
\begin{document}

\title{On the geodesic hypothesis in general relativity}
\date{}
\author{Shiwu Yang \footnote{Department of Mathematics, Princeton University, NJ 08544 USA, Email: shiwuy@math.princeton.edu}}
\maketitle

\begin{abstract}
In this paper, we give a rigorous derivation of Einstein's geodesic
hypothesis in general relativity. We use small material bodies $\phi^\ep$ governed by the nonlinear Klein-Gordon equations to approximate the test particle. Given a
vacuum spacetime $([0, T]\times\mathbb{R}^3, h)$, we consider the initial value problem for the
Einstein-scalar field system. For all sufficiently small $\ep$ and
$\delta\leq \ep^q$, $q>1$, where $\delta$, $\ep$ are the amplitude
and size of the particle, we show the existence of solution $([0,
T]\times\mathbb{R}^3, g, \phi^\ep)$ to the Einstein-scalar field system with the property that the energy of the particle
$\phi^\ep$ is concentrated along a timelike geodesic. Moreover, the
gravitational field produced by $\phi^\ep$ is negligibly small in
$C^1$, that is, the spacetime metric $g$ is $C^1$ close to the given vacuum metric $h$.
These results generalize those obtained by D. Stuart  in
\cite{psedoStuart}, \cite{einsteinStuart}.
\end{abstract}

\section{Introduction}
In general relativity, Einstein's geodesic hypothesis,
which corresponds to Newton's first law of motion in classical
mechanics, states that a free massive test
particle will follow a timelike geodesic in the spacetime, where by
free we mean in the absence of all external forces except
gravitation, which is ascribed to the spacetime curvature instead of
a force. For the concept of test particle, one has to ignore its
internal structure as well as the gravitational field produced by
it. This paper is devoted to a rigorous mathematical derivation of
geodesic hypothesis for the toy model that the particles are stable solitons for a
class of nonlinear Klein-Gordon equations. Since the particles
will interact with the background spacetime, we consider the
following Einstein-scalar field system
\begin{equation}
\label{EQUATION}
\begin{cases}
R_{\mu\nu}-\f12 R g_{\mu\nu}=T_{\mu\nu}(g, \phi^\ep;\mathcal{V}_{\ep, \delta}),\\
\Box_{g}\phi^\ep-\mathcal{V}_{\ep, \delta}'(\phi^\ep)=0.
\end{cases}
\end{equation}
Here $\Box_g$ is the covariant wave operator for the unknown
spacetime metric $g_{\mu\nu}$. $\phi^\ep$ is the complex scalar
field representing the particles. $R_{\mu\nu}$, $R$ are the
Ricci, scalar curvatures of the metric $g$ respectively.
$\mathcal{V}'_{\ep, \delta}$ is the first variation of the potential
$\mathcal{V}_{\ep, \delta}$. $T_{\mu\nu}$ is the energy momentum
tensor for the scalar field $\phi^\ep$ and is given as follows
\begin{equation}
\label{defofT} T_{\mu\nu}(g,\phi^\ep;\mathcal{V}_{\ep,
\delta})=<\pa_{\mu}\phi^\ep, \pa_{\nu}\phi^\ep>-\f12
g_{\mu\nu}\left(<\pa^\ga\phi^\ep, \pa_\ga\phi^\ep>+2\mathcal{V}_{\ep,
\delta}(\phi^\ep)\right),
\end{equation}
where $<, >$ denotes the inner product of two complex numbers, namely, $<a, b>=\f12(a\bar b+\bar a b)$.

We study the motion of the particles dynamically. The test particle is approximated by a sequence of actual material bodies with small size $\ep$ and small amplitude $\delta$. In terms of local coordinate $(t, x)$, the
normalized scalar field $\phi$ takes the following form
\[
 \phi(t, x)=\delta^{-1} \phi^\ep (\ep t, \ep x).
\]
Then the corresponding potential $\mathcal{V}$ for the normalized scalar field $\phi$ is given by $\mathcal{V}_{\ep,
\delta}$ via
\begin{equation}
\label{potenscal}\mathcal{V}(\phi)= \delta^{-2}\ep^{2}\mathcal{V}_{\ep,
\delta}(\phi^\ep).
\end{equation}
In particular, one has $\mathcal{V}_{\ep, \delta}'(\phi^\ep)=\delta\ep^{-2}\mathcal{V}'(\delta^{-1}\phi^\ep)$.
For a solution $(\mathcal{M}, g, \phi^\ep)$ of the above
Einstein-scalar field system, we find that the normalized scalar
field $\phi$ must satisfy the nonlinear wave equation
\begin{equation}
\label{fixbackKGeq}
 \Box_{g^\ep}\phi-\mathcal{V}'(\phi)=0,\quad g^\ep(t, x)=g(\ep t, \ep x).
\end{equation}
As the particle will not change its shape during the motion, we use solitons of the
above wave equation to approximate the normalized scalar field $\phi$. By a
soliton centered at point $P$ in the spacetime, we mean a solitary
solution of the form $\e^{i\om t}f(x)$ to the equation
$\Box_{g^\ep(P)}\phi-\mathcal{V}'(\phi)=0$ on the flat space
$\mathbb{R}^{3+1}$ with metric $g^\ep(P)$. For concreteness, we will
assume that the potential $\mathcal{V}$ takes the following form
\begin{equation}
\label{defofmV} \mathcal{V}(\phi)=\frac{m^2}{2}|\phi|^2-
N(\phi)=\frac{m^2}{2}|\phi|^2-\frac{1}{p+1}|\phi|^{p+1},\quad 1<p<5.
\end{equation}
This type potentials guarantees the existence of solitons, which decays exponentially \cite{exsolitonLions}. If $\phi$ is close to such solitons, then the
particles $\phi^\ep$ is localized to a region of size $\ep$.
Hence solitons centered at the position of the particles can be used
to approximate the test particle.

We consider the initial value problem for the Einstein-scalar field system on a given vacuum
spacetime which is diffeomorphic to $([0, T]\times\mathbb{R}^3, h)$
with coordinate system $(t, x)$.  We assume the vacuum Einstein metric $h$ is $C^4$
and satisfies the estimates
\begin{equation}
\label{condfix}
\begin{split}
 &K_0^{-1} I_{3\times 3}\leq (h^{kl})(t, x)\leq K_0 I_{3\times 3},\quad \forall (t, x)\in[0, T]\times \mathbb{R}^3,\\
& K_0^{-1}\leq -h^{00}\leq K_0,\quad |h^{\mu\nu}|\leq K_0, \quad
\forall \mu, \nu=0, 1, 2, 3,\\
&\|h\|_{C^4([0, T]\times\mathbb{R}^3)}+\||x|^{\f12}\pa h\|_{L^\infty}+\||x|\pa^{2+s}h\|_{L^\infty}\leq K_0,\quad |s|\leq 2
\end{split}
\end{equation}
for some positive constant $K_0$, where $h=h_{\mu\nu}(t, x)dx^\mu
dx^\nu$, $x^0=t$, $h^{\mu\nu}(t, x)= (h^{-1})_{\mu\nu}(t, x)$, $\pa$
is short for $(\pa_t, \pa_{x_1}, \pa_{x_2}, \pa_{x_3})$. The initial data $(\mathbb{R}^3, \bar g, \bar K,
\phi_0^\ep, \phi_1^\ep)$, where $\bar g$ is a Riemannian metric on
$\mathbb{R}^3$, $\bar K$ is a symmetric two tensor, $(\phi_0^\ep,
\phi_1^\ep)$ are the initial data for the particle, have to satisfy the constraint
equations
\begin{equation}
 \label{constrainteq}
\begin{cases}
 \bar R(\bar g)-|\bar K|^2+(tr \bar K)^2=|\phi_1^\ep|^2+|\bar\nabla \phi_0^\ep|^2+2\mathcal{V}
 _{\ep, \delta}(\phi_0^\ep),\\
\bar{\nabla}^j \bar K_{ij}-\bar{\nabla}_i tr\bar K=<\phi_1^\ep,
\bar\nabla_i\phi_0^\ep>,
\end{cases}
\end{equation}
where $\bar \nabla$ is the covariant derivative with respect to the
Riemannian metric $\bar g$ on the initial hypersurface. We assume the initial data satisfy
\begin{equation}
\label{initialcond}
\begin{split}
 &\|\delta^{-1}\phi_0^\ep(x)-\phi^\ep_S(x;\la_0)\|_{H^3_\ep}+\ep\|\delta^{-1}\phi_1^\ep(x)-
 n\phi_S^\ep(x;\la_0)\|_{H^2_\ep}\leq C_0\ep,\\
&\|\nabla(\bar g-\bar h)\|_{H_\ep^2(\Si_0)}+\|\bar K-\bar
k\|_{H_\ep^2(\Si_0)}\leq C_0\delta^2 \ep^{-1}
\end{split}
\end{equation}
for some constant $C_0$, where $(\bar h, \bar k)$ is the initial data for the given vacuum spacetime $([0, T]\times \mathbb{R}^3, h)$.
The weighted Sobolev norm is defined as follows
\begin{equation}
\label{defofscasob}
 \|f\|_{H_\ep^s}=\|f(\ep\cdot)\|_{H^s}=\sum\limits_{|\a|=0}^{s}\ep^{|\a|-\frac{3}{2}}\|\nabla^\a
 f\|_{L^2}.
\end{equation}
The first inequality of \eqref{initialcond} means that the particle is close to a soliton initially. For a more precise definition of the solitons $\phi_S^\ep(x;\la_0)$,
$n\phi_S^\ep(x;\la_0)$, see details in Section 2.

The initial center of mass of the particle $\xi_0$ together with the initial velocity of the soliton $u_h(0)$ (as the particle is initially
close to a soliton), which depends on the initial data, determines a timelike
geodesic $(t, \ga_0(t))$ on the given spacetime $([0, T]\times \mathbb{R}^3, h)$ such that
$\ga_0(0)=\xi_0$, $\ga_0'(0)=u_h(0)$.

We have the following main result
\begin{thm}
 \label{thmbaby}
Let $([0, T]\times \mathbb{R}^3, h)$ be a vacuum spacetime satisfying
\eqref{condfix}. Assume $2\leq p<\frac{7}{3}$. Assume $\delta\leq
\ep^q$, $q>1$ or $\delta=\ep_0\ep$. Suppose the initial data $(\mathbb{R}^3,
\bar g, \bar K, \phi_0^\ep, \phi_1^\ep)$ satisfy conditions
\eqref{constrainteq}, \eqref{initialcond}. Then there exists
$\ep^{*}>0$ such that for all $\ep,\ep_0\in[0, \ep^{*})$ the Cauchy
problem for system \eqref{EQUATION} admits a unique (up to
diffeomorphism) solution $([0, T]\times \mathbb{R}^3, g, \phi^\ep)$ with the
following properties: there exists a foliation $[0,
T^*]\times\Si_\tau$ of $[0, T]\times \mathbb{R}^3$ with coordinates $(s, y)$,
where $\Si_\tau\subseteq \mathbb{R}^3$ and $T^*$ depends only on
$T$, $h$ and $\la_0$, as well as a $C^1$ curve $\la(s)=(\om(s),
\th(s), \xi(s), u_h(s))\in\La_{\textnormal{stab}}(s)$ such that
\begin{itemize}
 \item[(1)] The spacetime $([0, T]\times \mathbb{R}^3, g, \phi^\ep)$ is close to the given vacuum spacetime $([0, T]\times \mathbb{R}^3, h)$
\begin{equation}
\label{metricclose} \|\pa(g-h)\|_{H^2_\ep(\Si_s)}\leq C \ep,\quad
\forall s\in[0, T^*];
\end{equation}
\item[(2)] $\phi^\ep(t, x)$ is approximated by some translated soliton centered at $\xi(s)$
\begin{equation}
\label{solitonclose}
 \|\delta^{-1}\phi^\ep(s, y)-\phi^\ep_S(y;\la(s))\|_{H^3_\ep(\Si_s)}+\ep
 \|\delta^{-1}\phi_t^\ep(s, y)-\psi_S^\ep(y;\la(s))
\|_{H^2_\ep(\Si_s)}\leq C\ep,\quad \forall s\in[0, T^*];
\end{equation}
\item[(3)] The center of the particle $\xi(s)$ is close to the given timelike
geodesic $(s, \ga_0(s))$
\begin{equation}
\label{geoclose}
|\xi(s)-\ga_0(s)|+|\om(s)-\om_0|+|u_h(s)-\ga_0'(s)|\leq C\ep,\quad
\forall s\in[0, T^*],
\end{equation}
\end{itemize}
where the constant $C$ is independent of $\ep$ and the definition for $\La_{\textnormal{stab}}(s)$ can be found in Section 2.
\end{thm}
\begin{remark}
Existence of initial data satisfying conditions in the theorem will be shown in the last section, see
Theorem \ref{exdata}.
\end{remark}
\begin{remark}
 The lower bound of $p$ ($\geq 2$) is required for regularity purpose: the spacetime metric has to be in $C^1$.
 The upper bound $p<\frac{7}{3}$ is needed to guarantee the existence of stable solitons.
\end{remark}
\begin{remark}
We have the same conclusion for much more general potentials $\mathcal{V}$, for example, potentials $\mathcal{V}$ satisfying
conditions given in \cite{einsteinStuart}.
\end{remark}
The theorem shows that the motion of the small bodies $\phi^\ep$ can be described by a timelike geodesic in the spacetime in the
limit when the size and the energy of the small bodies go to zero. Moreover, the gravitational field produced by the particle is negligibly small. In particular, this gives a rigorous
derivation of Einstein's geodesic hypothesis for the model when the test particle is approximated by scalar fields governed by the nonlinear Klein-Gordon equations. Nevertheless one
can also expect the same result for general macroscopic matters such as fluids or elastic matters, see \cite{thomas}, \cite{taub}. For a more general theorem, we refer to the work
\cite{gerochmotion} of Ehlers and Geroch. However, our theorem is not a direct consequence of the result of Ehlers and Geroch as the main difficulty of our theorem is to sovle the Einstein-scalar
system dynamically.

On the other hand, our theorem generalizes the result in \cite{einsteinStuart} obtained
by D. Stuart in two ways. First, our result describes the long time
behavior of the test particles. In
\cite{einsteinStuart}, it was shown that the solution $(g,
\phi^\ep)$ of the Einstein-scalar field system exists only in a
small portion $[0, t^*)\times\mathbb{R}^3$ of the given spacetime
$[0, T]\times\mathbb{R}^3$ for some small positive constant $t^*$.
Here we can extend the solution to the whole spacetime $[0, T]\times\mathbb{R}^3$
and at the same time the energy of the particle is concentrated along a
timelike geodesic for arbitrarily large given time $T$. Second, our result holds
under the assumption $\delta\leq \ep^q$, $q>1$ which is weaker than
$\delta\leq \ep^q$, $q\geq \frac{7}{4}$ imposed in
\cite{einsteinStuart}. Recall that $\delta$ denotes the amplitude or
the energy of the particle. If one wants to show that the
gravitational field produced by the particle is negligibly small in
$C^1$, one has to show that the energy momentum tensor $T_{\mu\nu}$ is small in
$H^{s-1}$, $s>\frac{5}{2}$. This requires $\delta$ to be
sufficiently small in terms of $\ep$. On the other hand, notice that
$\delta^{-1}\phi^\ep$ is close to some soliton which has size $1$. In view of the
equation \eqref{potenscal}, the potential $\mathcal{V}_{\ep,
\delta}$, in particular the energy momentum tensor $T_{\mu\nu}$, has
size $\delta^2\ep^{-2}$. Based on the heuristics that in the
limiting case when $\ep$ goes to zero, the potential
 $\mathcal{V}_{\ep, \delta}$ should be bounded, we see that the condition $\delta\leq \ep^q$, $q\geq 1$ is needed
 to allow a proof of geodesic hypothesis in this setting. Our theorem thus answers
the question of D. Stuart on the optimal value of $q$ when $q>1$.
Furthermore, the
condition can be even weaker by merely assuming that
$\delta=\ep_0\ep$ for some small constant $\ep_0$ independent of
$\ep$. This is robust and interesting as then the potential
$\mathcal{V}_{\ep, \delta}$ always has size $\ep_0^2$ even when
$\ep$ goes to zero.

The key ingredient of our proof for Theorem \ref{thmbaby} is the introduction of a Fermi coordinate system adapted to the center of mass of the
particles, see more detailed description in Section 3. The improvement on the amplitude $\delta\leq \ep^q$, $q>1$ is based on a more careful
estimate for the energy momentum tensor.

The plan of this paper is as follows: in Section 2, we will address the basic setup, define solitons in
Minkowski space and summarize the known results related to stability of solitons. In Section 3, we describe the main ideas for the proof of the main
theorem. In Section 4, we construct the Fermi coordinate system such
that the Christoffel symbols are vanishing along a given timelike
geodesic. In Section 5, we reduce the Einstein-scalar field system to a
hyperbolic system by choosing the relatively harmonic gauge
condition. In Section 6, we prove the orbital stability of stable
solitons along a timelike geodesic up to time $T/\ep$ on a slowly
varying background and show the higher Sobolev estimates for the
solution which are used to control the energy momentum tensor. In Section 7, we conclude the main theorem. In the last
section, we discuss the existence of initial data.

\section{Preliminaries and Stability Results in Minkowski Space}

In Minkowski space, for the nonlinear Klein-Gordon equation
\begin{equation}
 \label{EquationMink}
\Box \phi-m^2 \phi+|\phi|^{p-1}\phi=0, \quad \Box=-\pa_{t}^2+\Delta=-\pa_t^2+\pa_{x_1}^2+\pa_{x_2}^2+\pa_{x_3}^2, \quad m>0, \quad p>1
\end{equation}
of complex functions $\phi(t, x)$, looking for the solitons or stationary waves, that is,
$\phi$ is of the form $\e^{i\om t}f_\om(x)$, $\om\in \mathbb{R}$
, we are led to consider the elliptic equation
\begin{equation}
\label{ellipeq}
 \Delta f_\om-(m^2-\om^2)f_\om+|f_\om|^{p-1}f_\om=0
\end{equation}
on $\mathbb{R}^3$. Such elliptic equation has been studied extensively and has infinite many
solutions \cite{manysolLions}, of which we are particularly interested in the
ground state, that is, solution of \eqref{ellipeq} with lowest
energy
\[
E_\om(v)=\f12\int_{\mathbb{R}^3}|\nabla
v|^2+(m^2-\om^2)|v|^2-\frac{2}{p+1}|v|^{p+1}dx.
\]
It can be shown that the ground state has to be positive, radial
symmetric. The existence of ground state as well as its properties
are summarized in the following theorem, see \cite{exsolitonLions}, \cite{odelions}, \cite{uniqueSolMcLeod}, \cite{decaySerrin}
, \cite{exsoliStrauss}, \cite{moduStuart} and reference therein.
\begin{thm}
 \label{propoff}
For $1<p<5$, $\om\in(-m, m)$, there exists a unique, positive, radial symmetric solution
 $f_\om(x)\in H^4(\mathbb{R}^3)\cap C^4(\mathbb{R}^3)$ of the equation \eqref{ellipeq}.
It is decreasing in $|x|$ with
the following properties:
\begin{itemize}
\item[1]. Exponential decay up to fourth order derivatives
\begin{equation}
\label{decayoffom}
|\nabla^\a f_{\om}(x)|\leq C(\om)e^{-c(\om)|x|},\quad \forall x\in \mathbb{R}^3, \quad |\a|\leq 4
\end{equation}
for some positive constant $c(\om)$;
\item[2]. Asymptotical behavior
\begin{equation}
\label{decayrateoff}
\lim\limits_{|x|\rightarrow\infty}\frac{f_{\om}'(|x|)}{f_{\om}(|x|)}=-\sqrt{m^2-\om^2};
\end{equation}
\item[3]. Scaling of the solutions
\begin{equation}
\label{scalling}
f_\om(x)=(m^2-\om^2)^{\frac{1}{p-1}}f(\sqrt{m^2-\om^2}x),
\end{equation}
where $f(x)$ is the solution for $m^2-\om^2=1$;
\item[4]. Identities of the energy
\begin{equation}
\label{lempropf}
\frac{3(p-1)(m^2-\om^2)}{2\|\nabla f_\om\|_{L^2}^2}=\frac{5-p}{2\|f_\om\|_{L^2}^2}
=\frac{(p+1)(m^2-\om^2)}{\|f_\om\|_{L^{p+1}}^{p+1}}.
\end{equation}
\end{itemize}
\end{thm}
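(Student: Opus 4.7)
The plan is to first normalize the equation by the scaling transformation in item~3: if $f$ denotes the solution corresponding to $m^2-\om^2=1$, then a direct calculation shows that $f_\om(x):=(m^2-\om^2)^{1/(p-1)}f(\sqrt{m^2-\om^2}\,x)$ solves \eqref{ellipeq} for general $\om\in(-m,m)$, so all other statements reduce to the case $\Delta f-f+|f|^{p-1}f=0$ on $\mathbb{R}^3$. For existence of a ground state, I would use the Berestycki--Lions/Strauss variational approach: minimize $\int(|\nabla v|^2+v^2)\,dx$ among radial $v\in H^1(\mathbb{R}^3)$ normalized by $\|v\|_{L^{p+1}}=1$. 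Strauss's compactness lemma (the radial subspace $H^1_{\mathrm{rad}}$ embeds compactly into $L^{p+1}$ since $2<p+1<6$) delivers a minimizer, and a Lagrange multiplier rescaling produces a nonnegative solution. Replacing $f$ by $|f|$ preserves the equation and the energy, and the strong maximum principle then upgrades nonnegativity to strict positivity.

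Radial symmetry of any positive decaying solution follows from the Gidas--Ni--Nirenberg moving plane method once the basic decay at infinity is known. Uniqueness of the positive radial solution is the deepest ingredient: I would invoke the Kwong/McLeod--Serrin ODE shooting argument, which studies $u''+\frac{2}{r}u'-u+u^p=0$ with $u'(0)=0$, $u(\infty)=0$, and rules out coexistence of two such solutions by a careful oscillation count for the linearization. For the regularity $f\in H^4\cap C^4$, I would bootstrap: $f\in H^1\subset L^6$ gives $f^p\in L^{6/p}$, so $f\in W^{2,6/p}$; Moser iteration then promotes $f$ to $L^\infty$, Schauder estimates yield $C^{4,\alpha}$, and differentiating the equation combined with the decay estimate below controls the $H^4$ norm.

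For the exponential decay in item~1, the radial $H^1$ bound forces $f(r)\to 0$ so that for $r$ large the equation is essentially $\Delta f=(1+O(f^{p-1}))f$; comparing with the supersolution $Ce^{-\alpha r}/r$ for any $\alpha<1$ via the maximum principle yields $|f(x)|\lesssim e^{-\alpha|x|}$, and the same bound propagates to $\nabla^\a f$ for $|\a|\leq 4$ by differentiating the equation and iterating the barrier argument. The asymptotic formula in item~2 then follows from ODE analysis at infinity: $u^p$ is exponentially smaller than $u$, so the radial ODE is a higher order perturbation of $u''+\frac{2}{r}u'-u=0$, whose unique $L^2$-decaying solution behaves like $Cr^{-1}e^{-r}$, giving $u'/u\to -1$ and hence $f_\om'/f_\om\to -\sqrt{m^2-\om^2}$ after undoing the scaling.

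Finally, for the energy identities in item~4, I would derive two Pohozaev-type integral identities. Multiplying the equation by $f_\om$ and integrating gives $\|\nabla f_\om\|_{L^2}^2+(m^2-\om^2)\|f_\om\|_{L^2}^2=\|f_\om\|_{L^{p+1}}^{p+1}$. Multiplying by $x\cdot\nabla f_\om$ and integrating, using the exponential decay to discard boundary terms, yields
\begin{equation*}
\tfrac{1}{2}\|\nabla f_\om\|_{L^2}^2+\tfrac{3(m^2-\om^2)}{2}\|f_\om\|_{L^2}^2=\tfrac{3}{p+1}\|f_\om\|_{L^{p+1}}^{p+1}.
\end{equation*}
Solving these two linear relations for the three quantities produces the threefold equality \eqref{lempropf}. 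The main obstacle in the above program is the uniqueness claim, which is not variational and rests on the nontrivial Kwong/McLeod--Serrin ODE shooting analysis; the remaining items follow from standard variational, elliptic regularity, maximum principle, and Pohozaev techniques.
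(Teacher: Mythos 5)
Your proposal tracks the paper's own treatment of this theorem almost exactly. The paper does not prove Theorem \ref{propoff} from scratch: it cites Berestycki--Lions and Strauss for the variational existence of the ground state, McLeod for uniqueness, Serrin for the asymptotic ratio \eqref{decayrateoff}, Stuart for the upgrade to $C^4$ regularity, and states that the identities \eqref{lempropf} follow by multiplying \eqref{ellipeq} by $f_\om$ and by $x\cdot\nabla f_\om$. Your sketch invokes precisely the same ingredients (Strauss compactness plus maximum principle for existence and positivity, Gidas--Ni--Nirenberg for radial symmetry, Kwong/McLeod--Serrin shooting for uniqueness, barrier comparison for exponential decay, ODE asymptotics at infinity, and the two Pohozaev-type integral identities), and your computation of the Pohozaev pair does recover \eqref{lempropf} correctly. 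This is the same approach as the paper; there is nothing to contrast.
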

Existence of ground state has been obtained in \cite{exsolitonLions}, \cite{odelions}, \cite{exsoliStrauss}.
K. McLeod \cite{uniqueSolMcLeod} proved the uniqueness of the ground state.
In \cite{exsolitonLions}, it was shown that $f_\om(x)$ is $C^{2}$ and decays
 exponentially up to second order derivatives, which was generalized to be $C^4$
in \cite{moduStuart}. The asymptotic behavior of the
solution \eqref{decayrateoff} was first proven in
\cite{decaySerrin}. Using integration by parts, identities
\eqref{lempropf} follow by multiplying the equation \eqref{ellipeq}
with $f_\om$, $x\cdot \nabla f_\om$ respectively.

\bigskip

Having the basic solitons $\e^{i\om t}f_\om(x)$ of
\eqref{EquationMink} corresponding to the ground state $f_\om(x)$, one can study their stability.
To start with, we must understand the symmetries of the equation \eqref{EquationMink}, connecting to the symmetries of
Minkowski space together
 with the structure of the equation \eqref{ellipeq}, namely the scaling property of the ground state
 \eqref{scalling}. These symmetries give an 8-parameter family of solitons. More precisely, denote
$\la=(\om, \theta,\xi, u)\in \La$ with
\begin{equation*}
 \La\equiv\{(\om, \theta, \xi, u)\in \mathbb{R}^8: |u|<1,\quad |\om|<m\}.
\end{equation*}
A subset of $\La$ is of particular importance
\begin{equation}
\label{defofLastab}
\La_{\textnormal{stab}}\equiv\{(\om, \theta, \xi, u)\in\La,\quad \frac{p-1}{6-2p}<\frac{\om^2}{m^2}<1\},\quad 1<p<\frac{7}{3},
\end{equation}
corresponding to the stable solitons. Define
\begin{align}
\label{z}
 & z(x;\la)=A_{u}(x-\xi)=\rho P_u(x-\xi)+(I-P_u)(x-\xi),\quad \rho=(1-|u|^2)^{-\f12},\\
\notag
& \Th(x;\la)=\th-\om u\cdot z(x;\la),
\end{align}
where $P_u:\mathbb{R}^3\rightarrow \mathbb{R}^3$ is the projection operator in the direction $u\in \mathbb{R}^3$, $I$ is
 the identity map. Let
\begin{equation*}
  \begin{split}
&\phi_S(x;\la)=\e^{i\Th(x;\la)}f_\om(z(x;\la)),\\
& \psi_S(x;\la)=\e^{i\Th(x;\la)}\left(i\rho\om
f_{\om}(z(x;\la))-\rho u\cdot \nabla_z f_\om(z(x;\la))\right).
\end{split}
\end{equation*}
Direct calculations show that $\phi_S(x;\la)$ solves \eqref{EquationMink} if the curve $\la(t)=(\om(t), \th(t), \xi(t), u(t))$
obeys the evolution equations
\begin{equation*}
\dot{\om}=0,\quad \dot{\th}=\frac{\om}{\rho}, \quad \dot{\xi}=u, \quad\dot{u}=0,.
\end{equation*}
Here we use the dot to denote the derivative with respect to $t$.
We remark here that the centers $(t, ut)$ of the solitons forms a straight line(geodesic)
in Minkowski space.

For $\la\in \La$, let
\begin{equation}
\label{defofVla}
V(\la)=(0, \frac{\om}{\rho}, u, 0).
\end{equation}
We find that $\psi_S(\la;x)=D_\la \phi_S(\la;x)\cdot V(\la)$(inner
product of vectors in $\mathbb{R}^8$) and the important identity
\begin{equation}
\label{idenofphiS}
 \Delta_x \phi_S-m^2\phi_S+|\phi_S|^{p-1}\phi_S-D_\la\psi_S \cdot V(\la)=0.
\end{equation}
Here the Laplacian operator $\Delta_x$ is for the variable $x$, used to distinguish the Laplacian $\Delta_z$ for the variable $z$
defined in \eqref{z}. For a $C^1$ curve
$\la(t)=(\om(t), \theta(t), \xi(t), u(t))$, let $\ga(t)=(\om(t), \pi(t), \eta(t), u(t))$ such that
\begin{equation}
\label{lagaV}
 \dot{\la}=\dot{\ga}+V(\la).
\end{equation}
We note that
\begin{align*}
 &\eta(t)=\xi(t)-\int_{0}^{t}u(s)ds,\quad \pi(t)=\th(t)-\int_{0}^{t}\frac{\om(s)}{\rho(s)}ds.
\end{align*}

Consider the Cauchy problem for the equation \eqref{EquationMink} in
Minkowski space with initial data $\phi_0(x)\in H^1(\mathbb{R}^3)$,
$\phi_1(x)\in L^2(\mathbb{R}^3)$. The stability result for solitons
is known. The following theorem is proven in \cite{moduStuart}.
\begin{thm}
\label{thmstbMin}
 Let $1<p<\frac{7}{3}$. For $\la_{0}\in\La_{\textnormal{stab}}$, there exists a small constant $\ep(\la_0)$ such that
if
\[
 \ep=\|\phi(0, x)-\phi_S(x;\la_0)\|_{H^1}+\|\pa_t\phi(0,
 x)-\psi_S(x;\la_0)\|_{L^2}<\ep(\la_0),
\]
then there exist a $C^1$ curve $\la(t)\in \La_{\textnormal{stab}}$
and a solution $\phi(t, x)\in H^1$ of equation
\eqref{EquationMink} satisfying
\[
 \|\phi(t, x)-\phi_S(x;\la(t))\|_{H^1}+\|\pa_t\phi(t, x)-\psi_S(x;\la(t))\|_{L^2}<C\ep
\]
and
\[
 |\pa_t \la(t)-V(\la(t))|<C\ep
\]
for some constant $C$ independent of $\ep$.
\end{thm}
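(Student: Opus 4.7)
The plan is to follow a modulation-theoretic approach in the spirit of Weinstein and Stuart. First, write $\phi(t, x) = \phi_S(x; \la(t)) + v(t, x)$ with a $C^1$ curve $\la(t) = (\om(t), \th(t), \xi(t), u(t)) \in \La_{\textnormal{stab}}$, where $\la(t)$ is selected so that the remainder $(v, v_t - \psi_S(\la(t)))$ is orthogonal, in a suitable symplectic $L^2$ pairing, to the eight-dimensional generalized null space of the linearized operator at $\phi_S$, which is spanned by $D_\la \phi_S$. For initial data within $\ep$ of $(\phi_S(\cdot; \la_0), \psi_S(\cdot; \la_0))$ in $H^1 \times L^2$, the implicit function theorem produces a unique $\la(0)$ with $|\la(0) - \la_0| \les \ep$ and $\|v(0)\|_{H^1} + \|v_t(0) - \psi_S(\la(0))\|_{L^2} \les \ep$, and this decomposition extends as long as the remainder stays small and $\la(t)$ remains in $\La_{\textnormal{stab}}$.

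Differentiating the orthogonality conditions in $t$, eliminating $\phi_{tt}$ via \eqref{EquationMink}, and using the critical point identity \eqref{idenofphiS}, I obtain a first-order ODE system for the shifted parameter $\ga(t)$ from \eqref{lagaV} of the schematic form
\begin{equation*}
M(\la)\,\dot{\ga} = \mathcal{R}(\la; v, v_t),
\end{equation*}
where $M(\la)$ is the $8\times 8$ antisymmetric matrix with entries built from $\langle D_\la \psi_S, D_\la \phi_S\rangle$ inner products and $\mathcal{R}$ is at least bilinear in $(v, v_t - \psi_S)$. A block computation of $\det M(\la)$ reduces its nonvanishing to the single scalar condition $D_\om(\om \|f_\om\|_{L^2}^2) \neq 0$, which through the scaling \eqref{scalling} and the identities \eqref{lempropf} is exactly the characterization of $\La_{\textnormal{stab}}$ in \eqref{defofLastab}. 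Consequently $M(\la)^{-1}$ exists uniformly on $\La_{\textnormal{stab}}$, yielding the pointwise bound $|\dot{\la} - V(\la)| \les \|v\|_{H^1} + \|v_t - \psi_S\|_{L^2}$, which is the second conclusion of the theorem provided the smallness of the remainder persists.

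To obtain that persistence, I use the Lyapunov functional
\begin{equation*}
\mathcal{L}(\phi, \phi_t; \la) = E(\phi, \phi_t) + \om\, Q(\phi, \phi_t) + u \cdot P(\phi, \phi_t),
\end{equation*}
where $E$, $Q$, $P$ are the conserved energy, charge and momentum of \eqref{EquationMink}. The identity \eqref{idenofphiS} says $(\phi_S(\la), \psi_S(\la))$ is a critical point of $\mathcal{L}(\cdot; \la)$ at frozen $\la$, so expansion around the soliton gives
\begin{equation*}
\mathcal{L}(\phi, \phi_t; \la) - \mathcal{L}(\phi_S, \psi_S; \la) = \langle H_\la (v, v_t - \psi_S),\, (v, v_t - \psi_S) \rangle + O(\|v\|_{H^1}^3).
\end{equation*}
The Grillakis-Shatah-Strauss criterion, together with the orthogonality conditions knocking out the null directions $D_\la \phi_S$, identifies $\La_{\textnormal{stab}}$ as precisely the regime in which $H_\la$ is coercive on the constrained subspace. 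Combining this coercivity with conservation of $E, Q, P$, the bound on $\dot{\la} - V(\la)$ from the previous step, and the smooth $\la$-dependence of $\mathcal{L}(\phi_S, \psi_S; \la)$, I run a Gronwall/continuity argument closing $\|v(t)\|_{H^1} + \|v_t(t) - \psi_S(\la(t))\|_{L^2} \les \ep$ uniformly on the maximal interval of existence, which is therefore all of $\mathbb{R}$.

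The main obstacle is the coercivity of $H_\la$ on the symplectic orthogonal complement of the symmetry directions. The Hessian is indefinite on the full energy space because $D_\om \phi_S$ is a genuinely negative direction, and merely removing the eight null modes by the orthogonality condition does not suffice — one must additionally exploit the constraints imposed by conservation of $Q$ and $P$ to absorb the negative eigenvalue of $H_\la$ and produce strict positivity on the remaining subspace. The condition $\frac{p-1}{6-2p} < \om^2/m^2 < 1$ in \eqref{defofLastab} is the sharp algebraic criterion ensuring the convexity $D_\om(\om \|f_\om\|_{L^2}^2) < 0$ that powers this absorption, and transporting the rest-frame coercivity to general Lorentz-boosted solitons requires keeping careful track of the map $x \mapsto z = A_u(x-\xi)$ from \eqref{z} and the attendant factor $\rho$; this is the essential analytic input of the argument.
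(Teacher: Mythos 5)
Your proposal follows the same modulation-theoretic route that the paper sketches after the theorem statement and that it carries out in full in Section 6 for the curved-background analogue (the Minkowski theorem itself is imported from Stuart's work \cite{moduStuart} rather than reproven). Three points, however, need to be corrected before this becomes a proof.

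First, the decomposition must extract the phase: the paper writes $\phi=\phi_S+e^{i\Th}v$ and, separately, $\pa_t\phi=\psi_S+e^{i\Th}w$. Writing $\phi=\phi_S+v$ and using $v_t-\psi_S$ as the second component conflates two distinct quantities and, more seriously, without stripping $e^{i\Th(x;\la(t))}$ the linearization is not a small perturbation of the $z$-autonomous operators $L_\pm$ of \eqref{L+}, so Proposition \ref{propL} and the coercivity machinery cannot be invoked. Second, the Lagrange multipliers in your Lyapunov functional $E+\om Q+u\cdot P$ are off: for a boosted soliton the correct combination (cf.\ Section 6) is $\mathcal{H}-u(0)\cdot\Pi-\frac{\om}{\rho(0)}Q$, with the $1/\rho$ Lorentz factor and with multipliers \emph{frozen at $t=0$}; with time-varying multipliers the functional is not almost-conserved and the leading soliton contributions fail to cancel, producing secular terms. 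Third, and most substantively, your claim that ``merely removing the eight null modes by the orthogonality condition does not suffice — one must additionally exploit the constraints imposed by conservation of $Q$ and $P$'' contradicts Proposition \ref{positen}. The orthogonality condition \eqref{orthcond} is an $\mathbb{R}^8$-valued constraint coupling $v$ and $w$, and as the paper notes its $\th$-component $\langle i\phi_S,w\rangle=\langle i\psi_S,v\rangle$ already plays the role of the charge constraint; under \eqref{orthcond} and $\la\in\La_{\textnormal{stab}}$ the energy $E_0$ of \eqref{defofE0} is coercive with no further input. Conservation of the energy, charge, and momentum is used to show the Lyapunov functional is time-invariant, not to manufacture coercivity.
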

Stability of solitons with $\la_{0}\in\La_{\textnormal{stab}}$ has
first been shown by J. Shatah in \cite{stableShatah} for radial
symmetric initial data and was later put into a very general
framework in \cite{stableShatah1}, \cite{stableShatah2}. Their
approach relies on the fact that the energy $E_\om(f_{\om})$ is
strictly convex in $\om$ if initially
 $\la_0\in\La_{\textnormal{stab}}$, see \cite{stableShatah}. This condition on $\la_0$ is sharp in the sense that the solitons
are unstable if the energy $E_\om(f_{\om})$ is concave in $\om$, see \cite{unstableshatah}, \cite{unstable2shatah}.
 Alternatively, the modulation approach, pioneered by M. Weinstein
\cite{ModulWeinstein} for showing the stability of solitons to
nonlinear Schr$\ddot{o}$dinger equations, leads to Theorem
\ref{thmstbMin} which additionally gives the behavior of the curve
$\la(t)$, see the work of D. Stuart \cite{moduStuart}.

\bigskip

We now briefly describe the modulation approach. Notice that
equation \eqref{EquationMink} locally has a unique solution
$(\phi(t, x), \pa_t\phi(t, x))\in C^1([0, t^*); H^1\times L^2)$.
Decompose the solution $\phi$ as follows
\begin{equation*}
 \begin{split}
&\phi(t, x)=\phi_S(x;\la(t))+\e^{i\Th(x;\la(t))}v(t, x),\\
&\pa_t\phi(t, x)=\psi_S(x;\la(t))+\e^{i\Th(x;\la(t))}w(t, x)
\end{split}
\end{equation*}
for a $C^1$ curve $\la(t)\in \La_{\textnormal{stab}}$ such that the following orthogonality condition hold
\begin{equation}
\label{orthcond}
 <\e^{-i\Th} D_\la \phi_S, w>_{dx}=<\e^{-i\Th}D_\la \psi_S, v>_{dx}, \quad \forall
 t\in\mathbb{R}.
\end{equation}
Here in this paper for complex valued functions $a(x)$, $b(x)$, $<a(x), b(x)>_{dx}$ is short for
\[
\f12\int_{\mathbb{R}^3}a\bar b+\bar a b \quad dx
\]
on $\mathbb{R}^3$ with measure $dx$.
Differentiating \eqref{orthcond} in $t$ and using the equation
\eqref{EquationMink}, we can obtain a coupled system of ODE's for $\la(t)$. To estimate
the curve $\la(t)$, we must control the radiation term
$(v, w)$. We first define two operators, $L_+$ and
$L_-$, appeared in the linearization in the real and
imaginary part of the solution to ~\eqref{EquationMink}. These
two operators act on functions of $z$ in $H^1(\mathbb{R}^3)$,
defined as follows
\begin{equation}
 \label{L+}
\begin{split}
& L_+=-\De_z+(m^2-\om^2)-pf_{\om}^{p-1}(z),\\
& L_{-}=-\De_z+(m^2-\om^2)-f_{\om}^{p-1}(z),
\end{split}
\end{equation}
which satisfy the following properties proven in \cite{ModulWeinstein}.
\begin{prop}
\label{propL}
We have
 \begin{itemize}
  \item [(a)]$L_-$ is a nonnegative self-adjoint operator in $L^2$ with null space $\ker L_-=span\{f_\om\}$;
  \item [(b)] $L_+$ is a self-adjoint operator in $L^2$ with null space $\ker L_+=span\{\nabla_{z^i}f_\om\}_{i=1}^{3}$. The strictly
negative eigenspace of $L_+$ is one dimensional.
 \end{itemize}
\end{prop}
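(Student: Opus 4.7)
Since $f_\omega \in C^4(\mathbb{R}^3)$ decays exponentially by Theorem \ref{propoff}, the potentials $V_\pm(z) := (m^2-\omega^2) - c_\pm f_\omega(z)^{p-1}$ (with $c_- = 1$, $c_+ = p$) are smooth and bounded, and tend to $m^2-\omega^2>0$ as $|z|\to\infty$. Thus $L_\pm$ are bounded perturbations of $-\Delta_z + (m^2-\omega^2)$, hence essentially self-adjoint on $C_0^\infty(\mathbb{R}^3)$ with domain $H^2(\mathbb{R}^3)$ by Kato--Rellich. Since $V_\pm - (m^2-\omega^2)$ decays exponentially, Weyl's theorem gives $\sigma_{\text{ess}}(L_\pm) = [m^2-\omega^2,\infty)$; any spectrum strictly below $m^2-\omega^2$ thus consists of eigenvalues of finite multiplicity.

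For part (a), rewriting \eqref{ellipeq} as $-\Delta f_\omega + (m^2-\omega^2) f_\omega - f_\omega^{p-1} f_\omega = 0$ gives $L_- f_\omega = 0$ directly. Since $f_\omega > 0$ everywhere, the Perron--Frobenius principle for Schr\"odinger operators implies that $f_\omega$ is the ground state of $L_-$, its eigenvalue $0$ is simple, and $L_- \geq 0$. Hence $\ker L_- = \text{span}\{f_\omega\}$.

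For part (b), differentiating \eqref{ellipeq} in $z^i$ yields $L_+(\partial_{z^i} f_\omega) = 0$, so $\text{span}\{\nabla_z f_\omega\} \subset \ker L_+$. The plan to pin down the null space and count negative eigenvalues is to exploit the rotational symmetry of $L_+$: since $f_\omega$ is radial, $L_+$ commutes with $\mathrm{SO}(3)$ and decomposes as a direct sum of radial operators acting on each spherical harmonic sector $\ell = 0, 1, 2, \ldots$. In the $\ell = 0$ sector, multiplying \eqref{ellipeq} by $1$ identifies $L_+ f_\omega = -(p-1) f_\omega^p$, so $\langle L_+ f_\omega, f_\omega\rangle < 0$, giving at least one strictly negative radial eigenvalue with a simple, strictly positive ground state. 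In the $\ell = 1$ sector, each component $\partial_{z^i} f_\omega = (z^i/|z|) f_\omega'(|z|)$ has a single radial sign change (since $f_\omega$ is strictly decreasing in $|z|$ by Theorem \ref{propoff}), so by Sturm--Liouville theory on the half-line it is the radial ground state of $L_+$ in that sector with eigenvalue $0$, and no strictly negative eigenvalue exists there. For $\ell \geq 2$, the centrifugal term $\ell(\ell+1)/|z|^2$ raises the infimum of the spectrum above that of the $\ell = 1$ problem, so no zero or negative eigenvalue appears. Summing the sectors gives $\ker L_+ = \text{span}\{\partial_{z^i} f_\omega\}_{i=1}^3$ and exactly one strictly negative eigenvalue.

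The principal obstacle is the $\ell = 0$ uniqueness claim --- ruling out a \emph{second} negative radial eigenvalue of $L_+$. This is the technical heart of the analysis and is the content of Weinstein's argument in \cite{ModulWeinstein}: the key input is that $\partial_\omega f_\omega$ satisfies $L_+(\partial_\omega f_\omega) = -2\omega f_\omega$, which together with the convexity of $\omega\mapsto \|f_\omega\|_{L^2}^2$ in the stable regime \eqref{defofLastab} provides enough information to exclude a second negative eigenvalue via a min--max argument on the radial sector.
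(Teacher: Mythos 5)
The paper does not actually prove Proposition \ref{propL}: it simply states it and attributes the proof to Weinstein's paper \cite{ModulWeinstein} (``which satisfy the following properties proven in \cite{ModulWeinstein}''). So your proposal is going beyond the paper's own treatment. Your overall route --- bounded perturbation / Kato--Rellich for self-adjointness, Weyl's theorem for essential spectrum, Perron--Frobenius for part (a), spherical-harmonic decomposition plus Sturm--Liouville for part (b) --- is the standard one and is correct in outline. Part (a) is fine as written.

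For part (b), however, there are three problems, one of them substantive. First, you say $\partial_{z^i}f_\omega = (z^i/|z|)f_\omega'(|z|)$ has ``a single radial sign change.'' Since $f_\omega$ is strictly decreasing (Theorem \ref{propoff}), $f_\omega'(r) < 0$ for all $r>0$, so the radial factor has \emph{no} sign change --- and that absence of nodes is exactly what makes it the ground state of the $\ell=1$ radial operator with eigenvalue $0$. Second, the sign in your identity is wrong: differentiating \eqref{ellipeq} in $\omega$ gives $L_+(\partial_\omega f_\omega) = 2\omega f_\omega$, not $-2\omega f_\omega$. Third, and most importantly, you attribute the exclusion of a second negative radial eigenvalue to the convexity condition of the stable regime \eqref{defofLastab}. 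That is a mischaracterization. The fact that $L_+$ has exactly one negative eigenvalue is a structural ODE fact about the ground state which holds for \emph{all} $\omega\in(-m,m)$ --- and indeed the proposition as stated makes no assumption that $\omega$ is in the stable range. It is proved via Sturm--Liouville nodal/oscillation arguments for the radial operator (Coffman-type arguments, or the appendix of \cite{ModulWeinstein}), with no reference to the stability condition. The sign/convexity condition on $\omega$ enters the analysis only later: it is what is used, together with the one-negative-eigenvalue count and $L_+(\partial_\omega f_\omega) = 2\omega f_\omega$, to prove coercivity of the restricted quadratic form in Proposition \ref{positen}. Conflating the two gives a circular-looking argument and would make the proposition falsely appear to depend on $\La_{\textnormal{stab}}$.
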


\bigskip

 It can be shown from the nonlinear wave equation of $v(t,
x)$ that the corresponding energy is
\begin{equation}
 \label{defofE0}
E_0(t)=\|w+\rho u\cdot \nabla_z v-i\rho \om v\|_{L^2(dz)}^2+<v_1,
L_{+}v_1>_{dz}+<v_2, L_{-}v_2>_{dz},
\end{equation}
where $v, w$ are viewed as functions of $(t, z)$. Although the
operators $L_{+}$, $L_{-}$ are not positive definite, one still can show that $E_0(t)$
is equivalent to $\|v\|_{H^1}+\|w\|_{L^2}$ under the orthogonality condition
\eqref{orthcond}.
\begin{prop}
\label{positen} Assume $\la\in \La_{\textnormal{stab}}$. Assume $v,
w$ satisfy the orthogonality condition \eqref{orthcond}. Then there
is a positive constant $C(\om, u)$, depending continuously on $\om$,
$u$ such that
\[
C^{-1}(\om, u)(\|w\|_{L^2}^2+\|v\|_{H^1}^2)\leq E_0(t)\leq
C(\om,u)(\|w\|_{L^2}^2+\|v\|_{H^1}^2).
\]
\end{prop}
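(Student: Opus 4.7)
\medskip

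The plan is to prove the two inequalities separately. The upper bound is routine: expanding $\|w+\rho u\cdot\nabla_z v-i\rho\omega v\|_{L^2}^2$ by Cauchy--Schwarz gives a bound by $C(\omega,u)(\|w\|_{L^2}^2+\|v\|_{H^1}^2)$, and expanding $\langle v_i,L_\pm v_i\rangle_{dz}=\|\nabla v_i\|^2+(m^2-\omega^2)\|v_i\|^2-\langle v_i,\kappa f_\omega^{p-1}v_i\rangle$ (with $\kappa=p$ or $1$) shows that the potential part is bounded by $C\|v\|_{H^1}^2$, using the $L^\infty$ bound on $f_\omega$ from Theorem \ref{propoff}. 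Continuity of the constant in $(\omega,u)$ is inherited from the continuous dependence of $f_\omega$ on $\omega$.

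The lower bound is the main content. First I would transfer the orthogonality condition \eqref{orthcond}, stated in the $(t,x)$-variables, into the $(t,z)$-variables using the formulae for $e^{-i\Theta}D_\lambda\phi_S$ and $e^{-i\Theta}D_\lambda\psi_S$ in each of the eight parameter directions. The $\theta$-direction produces an orthogonality of $v_2$ to $f_\omega$, which eliminates $\ker L_-$; the $\xi$-direction produces orthogonality of $v_1$ to $\nabla_z f_\omega$, which eliminates $\ker L_+$; the $\omega$-direction produces orthogonality of $v_1$ to $D_\omega f_\omega$, which is the direction aligned with the unique negative eigenvalue of $L_+$; and the $u$-direction produces orthogonality conditions that control the boost-generated modes. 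By Proposition \ref{propL}(a) combined with the spectral gap above the zero eigenvalue, the $\theta$-orthogonality already gives $\langle v_2,L_-v_2\rangle_{dz}\geq c\|v_2\|_{H^1}^2$, and modulo the remaining kernel/negative modes $L_+$ is also coercive.

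The real obstacle is the negative eigendirection of $L_+$. Here I would apply Weinstein's constrained-positivity argument: write $v_1=a\chi_0+\sum_i b_i\nabla_i f_\omega+v_1^\perp$ with $\chi_0$ the negative eigenvector, and use the identity $L_+(D_\omega f_\omega)=2\omega f_\omega$ (differentiating \eqref{ellipeq} in $\omega$) to convert the $\omega$-orthogonality condition into a relation of the form $a\,\langle\chi_0,D_\omega f_\omega\rangle+\text{lower order}=\text{controlled by }v_1^\perp$. Proving that the leading coefficient has a definite sign reduces, after unwinding, to the strict inequality $\partial_\omega(\omega\|f_\omega\|_{L^2}^2)<0$, which by the identities \eqref{lempropf} holds precisely when $(p-1)/(6-2p)<\omega^2/m^2<1$, i.e., when $\lambda\in\Lambda_{\textnormal{stab}}$. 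This is where the stability hypothesis enters in an essential way. Once $|a|$ is controlled by $\|v_1^\perp\|_{H^1}$, the spectral gap yields $\langle v_1,L_+v_1\rangle\geq c\|v_1\|_{H^1}^2-C\|w\|_{L^2}^2$, where the $w$-contribution appears through the cross terms in the expansion of the kinetic square and is absorbed by redistributing that nonnegative kinetic term. Combining everything and using continuity of $f_\omega$ and of the spectral projectors in $(\omega,u)$ yields the equivalence with a constant $C(\omega,u)$ depending continuously on its arguments.
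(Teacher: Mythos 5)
The paper's own proof is not given in full: it is a sketch with a reference to \cite{moduStuart}. Its route is to first observe that the $\theta$-component of the orthogonality condition, which is equivalent to $<i\phi_S,w>_{dx}=<i\psi_S,v>_{dx}$, already forces $E_0(t)\geq 0$, and then to obtain the two-sided equivalence by a contradiction (compactness) argument. You instead propose the direct Weinstein-style constrained-positivity argument: decompose into spectral modes of $L_\pm$, use the orthogonality conditions to kill the kernel and negative eigendirections, and quantify coercivity. That is a genuinely different and legitimate alternative, and your identification of where the stability hypothesis enters — via $L_+D_\omega f_\omega=2\omega f_\omega$ and the sign of $\partial_\omega(\omega\|f_\omega\|_{L^2}^2)$ — is correct and is exactly the fact used in Lemma~\ref{nondegD}.

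There is, however, a genuine gap. You assert that each parameter direction in \eqref{orthcond} yields a clean orthogonality of a single component of $v$: that $\theta$ gives $v_2\perp f_\omega$, that $\xi$ gives $v_1\perp\nabla_z f_\omega$, and that $\omega$ gives $v_1\perp D_\omega f_\omega$. This is not what \eqref{orthcond} says. The condition is $<e^{-i\Theta}D_\lambda\phi_S,w>_{dx}=<e^{-i\Theta}D_\lambda\psi_S,v>_{dx}$, a constraint on the pair $(v,w)$, and every component couples $v_1,v_2,w_1,w_2$. Concretely, for the $\theta$-direction one has $e^{-i\Theta}D_\theta\phi_S=if_\omega$ and $e^{-i\Theta}D_\theta\psi_S=-\rho\omega f_\omega-i\rho u\cdot\nabla_z f_\omega$, so the constraint reads
\[
\int_{\mathbb{R}^3} f_\omega\, w_2\, dx \;=\; -\rho\omega\int_{\mathbb{R}^3} f_\omega\, v_1\, dx \;-\; \rho\int_{\mathbb{R}^3}(u\cdot\nabla_z f_\omega)\, v_2\, dx,
\]
which is not $\int f_\omega v_2=0$; passing to the variable $\tilde w=w+\rho u\cdot\nabla_z v-i\rho\omega v$ does not decouple it either. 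The same happens in the $\xi$ and $\omega$ directions, since $e^{-i\Theta}D_\xi\phi_S=i\rho\omega u f_\omega-A_u\nabla_z f_\omega$ has both real and imaginary parts. Disentangling these coupled constraints and showing they still restrict $v_1,v_2$ to subspaces on which $L_\pm$ are coercive, while simultaneously controlling the cross terms coming from the kinetic square $\|w+\rho u\cdot\nabla_z v-i\rho\omega v\|_{L^2}^2$, is precisely the content of the lemma and cannot be taken for granted. Your proposal acknowledges the cross terms in a sentence but the core step — deducing coercivity of $\langle v_1,L_+v_1\rangle$ and $\langle v_2,L_-v_2\rangle$ from the orthogonality as stated — assumes a clean separation that the conditions do not provide, so the argument as written does not close.
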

We will use this proposition for our later argument. The proof could
be found in \cite{moduStuart}, which is based on Proposition
\ref{propL}. The orthogonality condition $<\e^{-i\Th} D_\th \phi_S,
w>_{dx}=<\e^{-i\Th}D_\th \psi_S, v>_{dx}$ which is equivalent to
$<i\phi_S, w>_{dx}=<i\psi_S, v>_{dx}$ shows that the energy $E_0(t)$
is nonnegative. The proposition then follows by using a
contradiction argument, see the detailed proof in \cite{moduStuart}.
Once we have control of $\|v\|_{H^1}+\|w\|_{L^2}$, by analyzing the
ODEs for $\la(t)$, we can control the curve $\la(t)$ and obtain
estimates for the solution $\phi(t, x)$.

\bigskip

Having the definition for solitons on flat spacetimes, we generalize the definition for solitons on a Lorentzian manifold. In particular, we give the precise definitions for
$\phi_S^\ep(x;\la)$, $n\phi_S^\ep(x;\la)$, $ \La_{\textnormal{stab}}(t)$ which were used in the main theorem \ref{thmbaby}.
 We work under the coordinate system $x$ on the initial hypersurface $\Si_0=\mathbb{R}^3$. We
assume the particle enters the spacetime at point $\xi_0\in \Si_0$ and is approximated by a scaled
and translated stable soliton centered at $\xi_0$. We hence has to
define solitons on $(\mathcal{M}, h)$, $\mathcal{M}=[0, T]\times\mathbb{R}^3$. The idea is that at any point $P\in\mathcal{M}$, we simply define the solitons at $P$ to
be those on the flat space with metric $h(P)$. More precisely, for any foliations $[0,
T^{*}]\times\Si_\tau$ of $\mathcal{M}$ with coordinate system $(t,
x)$, where $\Si_\tau\subseteq\mathbb{R}^3$, let
\begin{align*}
 \La_{\textnormal{stab}}(t)=\left.\{(\om, \th, \xi,
u_h)\in
\mathbb{R}^8\right|\frac{p-1}{6-2p}<\frac{\om^2}{m^2}<1,\quad \pa_t
+u_h \pa_x \textnormal{ is timelike at }(t, \xi)\in[0, T^*]\times
\Si_\tau\}.
\end{align*}
We define
\begin{Def}
 \label{defofsolfix}
For a curve $\la(t)=(\om(t), \th(t), \xi(t),
u_h(t))\in\La_{\textnormal{stab}}(t)$, a soliton centered at
$\xi(t)$ in the direction $u_h(t)$ on the space $([0, T^*]\times
\Si_\tau, h)$  is defined as follows
\begin{equation*}
\begin{split}
\phi^\ep_S(x;\la(t))&=\e^{i\ep^{-1}\left(\th(t)-\rho\om(t) u\cdot
(x-\xi(t))Q\right)}f_{\om(t)}(\ep^{-1} A_uQ^T(x-\xi(t))^T),
\end{split}
\end{equation*}
where
\[
 u=a^{-1}(\a+u_h Q),  \quad \rho=(1-|u|^2)^{-\f12}, \quad a>0, \quad \a\in\mathbb{R}^3, \quad Q \textnormal{ is}
 \quad 3\times 3 \textnormal{ matrix}
\]
such that
\[
 h(t, \xi(t))=\left(
\begin{array}{cc}
 -a^2+\a\a^T& \a Q^T\\
Q \a^T & Q Q^{T}
\end{array}
\right)=\left(
\begin{array}{cc}
 a& \a\\
0 & Q
\end{array}
\right)m_0 \left(
\begin{array}{cc}
 a& 0\\
\a^T & Q^{T}
\end{array}
\right).
\]
We recall here that $A_u$ is $3\times 3$ matrix defined in \eqref{z}
and $m_0$ is the Minkowski metric. In particular, we can denote
\begin{align*}
&\psi^\ep_S(x;\la(t))=i\ep^{-1}\rho^{-1}\om a
\phi_S^\ep(x;\la(t))-u_h(t)\nabla_x
 \phi_S^\ep(x;\la(t))
\end{align*}
corresponding to $\pa_t \phi_S^\ep$ in the direction $u_h$ and
\begin{align*}
n\phi_S^\ep(x;\la(t))=i\ep^{-1}\om\rho^{-1}\phi_S^\ep-uQ^{-1}\cdot\nabla_x
\phi_S^\ep(x;\la(t))
\end{align*}
associated to $n\phi_S^\ep$, where $n$ is the unit normal to the
hypersurface $\Si_0$ embedded to $(\mathcal{M}, h)$ at time $t$.
\end{Def}

The above definition is well defined. We first show that $h(t,
\xi(t))$ has a decomposition as in the definition, that is, $a$,
$\a$, $Q$ exist. Notice that $h$ is Lorentzian metric. At point $(t,
\xi(t))$, the $3\times 3$ matrix $(h(t, \xi(t))_{kl})$ is symmetric
and positive definite. We hence can find a $3\times 3$ matrix $Q$
such that
\[
 QQ^T=\left((h(t, \xi(t)))_{kl}\right).
\]
Then $a, \a$ are uniquely determined as follows
\[
 \a=(h_{01}, h_{02}, h_{03})(t, \xi(t))(Q^T)^{-1},\quad a=\sqrt{-h(t,
 \xi(t))_{00}+\a\a^T}.
\]
Secondly, we prove that $|u|<1$. In fact, notice that the vector
$\pa_t+u_h\pa_x$ is timelike at $(t, \xi(t))$. We have
\[
 h(t, \xi(t))_{00}+2h(t, \xi(t))_{0k}u_h^k+u_h^k h(t,
 \xi(t))_{kl}u_h^l<0,
\]
which implies that
\[
 |u|^2=a^{-2}(\a+u_h Q)(\a+u_h Q)^T<1.
\]
Finally, we demonstrate that the definition is independent of the
choice of the $3\times 3$ matrix $Q$. Let $\tilde{u}$, $\tilde{a}$,
$\tilde{\a}$, $\tilde{Q}$ be another decomposition. Let
$P=Q^{-1}\tilde{Q}$. We conclude that $PP^T=I$ and
\[
 \tilde{\a}=\a P, \quad \tilde{a}=a,\quad \tilde{u}=uP.
\]
Hence we can show that
\begin{align*}
 &\tilde{u}\cdot (x-\xi(t))\tilde{Q}=uP \cdot (x-\xi(t)) QP=u\cdot
 (x-\xi(t))Q,\\
 &A_{\tilde u}\tilde{Q}^T=A_{uP}P^T Q^T=P^T A_u Q^T.
\end{align*}
Since $f_\om$ is spherical symmetric by Theorem \ref{propoff}, $PP^T=I$, we thus have shown that $\phi_S^\ep(x;\la(t))$,
$\psi_S^\ep(x;\la(t))$ are well defined.

\section{The main idea of the proof}
In this section, we briefly describe the main ideas for the proof of the main theorem \ref{thmbaby}. As having discussed in the introduction, our theorem generalizes those
results in \cite{einsteinStuart} in two ways. The first aspect that we can extend the solution to arbitrarily
large time $T$ is based on a result on the orbital stability of
solitons on small perturbations of Minkowski space. Note that the
dynamics of the particle $\phi^\ep$ are governed by the nonlinear
Klein-Gorden equation, see the Einstein-scalar field system
\eqref{EQUATION}. In local coordinate $(t, x)$, the corresponding
normalized scalar field $\phi$ solves the nonlinear wave equation
\eqref{fixbackKGeq} on the slowly varying background with metric
$g^\ep(t, x)=g(\ep t, \ep x)$. The assumption on the initial data
for the particle implies that the initial data for the normalized
scalar field $\phi$ are close to some stable solitons. The property
that the particle moves along a timelike geodesic can then be
reduced to the orbital stability of stable solitons
 along a timelike geodesic on a slowly varying background. In other words,
we need to show that the solution $\phi$ of the nonlinear wave \eqref{fixbackKGeq} is close to some soliton for all
$t$ and the center of the soliton propagates along a timelike geodesic.

The related problem of stability of solitons in Minkowski space has been discussed in the previous section.
Stable solitons have been proven to be orbitally stable for all time in \cite{stableShatah1}, \cite{stableShatah2}, \cite{stableShatah}. More precisely,
it was shown that if the initial data are close to
some stable soliton in $H^1$, then the solution to the nonlinear
Klein-Gordon equation exists for all time and is close to some
translated solitons (by the Lorentz transformation of Minkowski
space). However, these works do not characterize the dynamics of the
solitons. In particular, the centers of the solitons were not
explicitly constructed.

The first step along this direction was taken by M. Weinstein. In
\cite{ModulWeinstein}, he proved the orbital stability of solitons
to nonlinear Schr\"odinger equations and gave the additional
information on the position and speed of
 the solitons by using modulation theory. For the modulation theory,
 one decomposes the solution $\phi$ into the soliton
part and the remainder part $\phi=\phi_S+v$. The soliton part $\phi_S$
is unknown, depending on, e.g., its position and speed. Using the
decomposition, the equations for $\phi$ then lead to a linearized
equation for the remainder $v$. We remark here that the
linearized equation also depends on the unknown soliton $\phi_S$. We
choose the decomposition such that the remainder part $v$ is
orthogonal to the generalized null space of the linearized equation
at $\phi_S$. This orthogonality condition together with the
equations for $\phi$ leads to a coupled system of nonlinear
ODE's (modulation equations), which governs the position and speed of
the solitons.

Using this modulation approach, D. Stuart studied the stability of
solitons to nonlinear wave equations. In \cite{moduStuart}, he
proved the orbital stability of stable solitons and showed that
 the center of the soliton moves along a $C^1$ curve which is close to a straight line in Minkowski space.
Later in \cite{psedoStuart}, D. Stuart studied the stability of
stable solitons on small perturbations
of Minkowski space. More precisely, he considered the Cauchy problem for the nonlinear wave
\eqref{fixbackKGeq} with initial data which are close to stable solitons on a slowly varying background with metric $g^{\ep}(t, x)=g(\ep t,\ep x)$. He showed that stable
solitons are orbitally stable and move along timelike geodesics
 up to time $t^*/\ep$ for some small positive constant $t^*$.  Due to the scaling, when applying this result to the problem of geodesic hypothesis, one can only show that the particle travels along a timelike geodesic
in the short time interval $[0, t^*)$, see \cite{einsteinStuart}. Although $t^*$ is independent of $\ep$, it was required to be
sufficiently small.

A key improvement which allows us to obtain Theorem \ref{thmbaby} is
that we are able to extend Stuart's stability result up to time
$T/\ep$ for arbitrary large $T>0$. We show that if the initial data
are close to some stable solitons, then we can solve the nonlinear wave equation
\eqref{fixbackKGeq} up to time $T/\ep$ and demonstrate that the
solution is close to stable solitons centered along a timelike
geodesic. The time $T$ has to be fixed as we need
to require $\ep$ to be sufficiently small depending on $T$. However,
we no longer need the smallness of $T$ as it was in
\cite{psedoStuart}.

We will adapt the modulation approach to treat the
orbital stability of stable solitons on a fixed slowly varying
background. The new ingredient is that we can construct a coordinate system (Fermi coordinate system) such that the
Christoffel symbols vanish along the trajectory of the center of the soliton, which is a timelike geodesic uniquely determined by the
initial data.  The
motivation for choosing such a local coordinate system is to study
the equation for the remainder part $v$ by using the modulation
approach mentioned above. Recall that we decompose the solution
$\phi$ of the equation \eqref{fixbackKGeq} into soliton plus a
remainder $\phi=\phi_S+v$. The remainder $v$ is supposed to be
small. To control $v$, we need to estimate
$\Box_{g^\ep}\phi_S$, in particular the term
\[
\frac{1}{\sqrt{-\det g^\ep}}\pa_\mu\left((g^\ep)^{\mu\nu}\sqrt{-\det
g^\ep}\right)\pa_\nu\phi_S=-(g^\ep)^{\mu\ga}\Ga_{\mu\ga}^{\nu}\pa_\nu\phi_S,
\]
where $\Ga_{\mu\ga}^{\nu}$ is the Christoffel symbols for the metric
$g^\ep$. As the soliton $\phi_S$ is expected to be centered along a timelike
geodesic, to control the above term, a natural way is to choose a good local
coordinate system such that along that geodesic the Christoffel symbols vanish.
Furthermore, under such a coordinate system, the geodesic equations are linear. In
particular, the geodesic can be parameterized by $(t,u_0t)$ for some
constant vector $u_0\in\mathbb{R}^3$. This parametrization is
exactly the one for straight lines in Minkowski space. Now as the full derivative of
the metric components vanishes along the geodesic and the
metric is slowly varying, that is, $g^\ep(t, x)=g(\ep t, \ep x)$, we conclude that
near the geodesic, the metric $g^\ep$ is higher order (at least $\ep^2$)
perturbation of the flat metric $g^\ep(0, 0)$. Hence the errors contributed by
the soliton $\phi_S$ in the equation for the remainder $v$ will have size $\ep^2$.

Nevertheless, for the orbital stability in the energy space $H^1$ on a slowly varying background, we are not going to study the equation for the
remainder $v$ directly. Instead, as in \cite{moduStuart}, we decompose the almost conserved energies of the full solution $\phi$ around the
soliton $\phi_S$. By using Gronwall's inequality, we can prove the orbital stability up to time $T/\ep$. The key is that we have avoided
arguments based on bootstrap. And by doing so we can remove the smallness assumption on $t^*$ which was used to close the bootstrap assumption
as it was in \cite{psedoStuart}. For the higher order Sobolev estimates of the remainder $v$ which are needed to control the spacetime metric
$g$, we turn to rely on the equation of $v$.

We now briefly discuss the proof for the main theorem as well as the second aspect of our work that we can improve the amplitude $\delta$ of the
particle to be $\delta\leq \ep^q$, $q>1$ . We work under the Fermi coordinate system mentioned above. After scaling, it is equivalent to
consider the scaled coupled Einstein equations for $(g^\ep, \phi)$. Starting with the bootstrap assumption on the unknown spacetime metric
$g^\ep$
\begin{equation}
\label{boostrapass}
 \|\pa^s(g^\ep-h^\ep)\|_{L^2(\mathbb{R}^3)}(t)\leq 2\ep^2,\quad 1\leq s\leq 3,\quad t\leq T/\ep,
\end{equation}
we can show that the normalized scalar field $\phi$ is close to some solitons centered along the timelike geodesic $(t, u_0 t)$ in $H^3$. That is
\[
 \|\pa^s(\phi-\phi_S)\|_{L^2(\mathbb{R}^3)}(t)\leq C\ep,\quad \forall s\leq 3, \quad t\leq T/\ep
\]
for some constant $C$ independent of $\ep$, where $\phi_S$ are solitons centered along $(t, u_0 t)$. Here we must clarify that we have used bootstrap argument
in order to estimate the spacetime metric $g$. But as discussed above, we avoid using bootstrap argument when estimating the scalar field $\phi$. The $H^1$ estimates are
implied by the orbital stability of solitons. Then the higher Sobolev estimates follow by
analyzing the nonlinear wave equation of the remainder $\phi-\phi_S$.
These Sobolev estimates for the scalar field $\phi$ are used to control the
 the energy momentum tensor $\delta^2 T_{\mu\nu}(g^\ep, \phi;\mathcal{V}(\phi))$ (after scaling) so that we can estimate the spacetime
metric $g^\ep$ in order to close the above bootstrap assumption. Choosing the relatively
harmonic gauge condition \cite{GREE_bruhat}, \cite{larSca_hawking}, one can turn the Einstein equations into a hyperbolic system
for the components of the metric $g^\ep$. Detailed reduction is carried out in Section 5. Combining with the vacuum Einstein
 equations for the given metric $h^\ep$, we can roughly obtain estimates for $g^\ep-h^\ep$ as follows
\[
 \sup\limits_{t\leq T/\ep}\|\pa^s(g^\ep-h^\ep)\|_{L^2(\mathbb{R}^3)}(t)\leq C\delta^2\int_{0}^{T/\ep} \|T_{\mu\nu}\|_{H^s}(t)dt\leq
C\delta^2\ep^{-1}
\]
 by using energy estimates for hyperbolic equations. This requires $\delta\leq \ep^q$, $q>\frac{3}{2}$ in order to close the
bootstrap assumption. Such a lower bound on $q$ was suggested in \cite{einsteinStuart} and the
result there was proven under the even stronger condition $q\geq\frac{7}{4}$.

However, through another new observation, we are able to improve the
condition on $\delta$ to be $\delta\leq \ep^q$, $q>1$ or
$\delta=\ep_0 \ep$ for sufficiently small $\ep_0$ which is
independent of $\ep$. Notice that the tangent vector
$X=\pa_t+u_0\pa_x$ of the geodesic $(t, u_0 t)$ is timelike and can
be extended to a uniformly timelike vector field on the whole
spacetime. We have already shown that the scalar field $\phi$
decomposes into a soliton part $\phi_S$ and an error term. Hence the energy
momentum tensor $\delta^2 T_{\mu\nu}(g^\ep, \phi;\mathcal{V}(\phi))$
splits into the soliton part $\delta^2 T_{\mu\nu}^S$, which moves
along the timelike geodesic $(t, u_0 t)$ or quantitatively
 \[
  \delta^2|X\pa^s T_{\mu\nu}^S|\leq C\delta^2\ep,\quad \forall s\leq 3,
 \]
 and the
error part $\delta^2 T_{\mu\nu}^R$ which satisfies the estimates
\[
\delta^2\|\pa^s T_{\mu\nu}^R\|_{L^2(\mathbb{R}^3)}(t)\leq C\delta^2\ep,\quad s\leq 3, t\leq T/\ep.
 \]
To make use of the fact that the soliton part moves along the geodesic, when doing energy estimates, we multiply
the hyperbolic equations by $X(g^\ep-h^\ep)$. Integrating by parts, we can pass
 the derivative $X$ to the soliton part
 $T_{\mu\nu}^S$ of the energy momentum tensor $T_{\mu\nu}$. As the soliton decays exponentially, using Hardy's inequality,
we can show that
\[
\left|\int_{\mathbb{R}^3}T_{\mu\nu}^S \cdot X(g^\ep-h^\ep)dx\right|\leq
\|(1+|x|)XT_{\mu\nu}^S\|_{L^2}\|(1+|x|)^{-1}(g^\ep-h^\ep)\|_{L^2}\leq
C\ep\|\pa(g^\ep-h^\ep)\|_{L^2}.
\]
Since the error part is small, the above observation allows us to
improve the condition on $\delta$. In fact, since the vector fields
$X$, $\pa_t$ are uniformly timelike, the energy estimates for
hyperbolic equations show that
\[
 \|\pa(g^\ep-h^\ep)\|_{L^2}^2(t)\leq C\delta^4+C\ep \delta^2\int_{0}^{t}\|\pa(g^\ep-h^\ep)\|_{L^2}(s)ds.
\]
Here we have to require that initially $\|\pa(g^\ep-h^\ep)\|_{L^2}(0)\leq C\delta^2$. Applying Gronwall's inequality
or using bootstrap argument, we obtain
\[
 \|\pa(g^\ep-h^\ep)\|_{L^2}(t)\leq C\delta^2,\quad \forall t\leq T/\ep.
\]
Commuting the equations with $\pa^s$, we have the same estimates for
$\pa^s(g^\ep-h^\ep)$, $1\leq s\leq 3$. Thus to close the bootstrap
assumption \eqref{boostrapass} and hence to conclude our main
theorem, it suffices to require that $\delta\leq \ep^q$, $q>1$ or
$\delta=\ep_0 \ep$ for sufficiently small $\ep_0$ which is
independent of $\ep$. Our main theorem then follows from the
local existence result for Einstein equations in $H^3$, see
\cite{LocalEx_bruhat}, \cite{Uniqu_bruhat}.

Finally, we discuss the existence of the initial data $(\Si_0, \bar
g, \bar K, \phi_0^\ep, \phi_1^\ep)$. As we have mentioned previously, we require
the data to satisfy the constraint equations and the estimates
\[
\|\pa^s(g^\ep-h^\ep)\|_{L^2(\Si_0)}\leq C\delta^2, \quad \forall 1\leq s\leq 3.
\]
For given $(\phi_0^\ep, \phi_1^\ep)$, existence of initial data satisfying the constraint equations has been shown in
\cite{Imfunc_bruhat}, \cite{GREE_bruhat}. However, the above estimates do not follow
directly from previous works. We will revisit the existence of initial data by using the implicit function
 theorem combined with the approach
developed in \cite{Imfunc_bruhat}, \cite{Bruhat_christo},
\cite{GREE_bruhat}. To show that the data also satisfy the above estimates,
we rely on a Hardy type inequality for a first order linear operator with trivial
kernel in some weighted Sobolev space. We refer the reader to Lemma \ref{lemhardyineqV} in the last section for details.

\section{Construction of Fermi Coordinate System}
The geodesic hypothesis states that a test particle moves along a
timelike geodesic. This timelike geodesic $(t, \ga_0(t))$, using the
notations in the previous section, is determined by the initial
position $\xi_0$ and speed $u_h(0)$. As having discussed in the
introduction, our approach relies on the Fermi coordinate system
such that the Christoffel symbols are vanishing along the timelike
geodesic $(t, \ga_0(t))$. In this section, we give an explicit
construction of such local coordinate system.

\begin{lem}
 \label{propfermiCord}
Let $([0, T]\times\mathbb{R}^3, h)$ be a smooth Lorentzian spacetime
and $(t, \ga_0(t))$ be a timelike geodesic. Assume that the metric $h$
satisfies \eqref{condfix}. Then for all $\delta_1\in(0, \f12 T)$,
there exists a subspace $M$
\[
[0, T-2\delta_1]\times\mathbb{R}^3\subset M\subset [0,
T]\times\mathbb{R}^3
\]
with a new coordinate system  $(s, y)\in [0, C(\la'(0),
h)(T-\delta_1)]\times\mathbb{R}^3$ such that the given timelike
geodesic is represented as $(s, u_0s)$ for some constant vector
$u_0\in\mathbb{R}^3$, $|u_0|<1$. Moreover, along this geodesic, we
have
\[
 \Ga_{\mu\nu}^\a(s, u_0 s)=0,\quad h_{\mu\nu}(s, u_0s)=(m_0)_{\mu\nu},\quad \forall s\in [0, C(\la'(0),
 h)(T-\delta_0)],
\]
where $\Ga_{\mu\nu}^\a$ are the associated Christoffel symbols and
$m_0$ is the Minkowski metric, that is $(m_0)_{00}=-1$,
$(m_0)_{kk}=1$, $(m_0)_{\mu\nu}=0, \forall \mu\neq\nu$. Furthermore,
under the new coordinate system $(s, y )$, there exists a positive
constant $K(\delta_1, \ga_0'(0))$ depending on $\delta_1$, $\ga_0'(0)$,
$K_0$ such that
\[
h(X, X)\leq -K(\delta_1, \ga_0'(0)),\quad X=\pa_s+u_0^k\pa_{y_k}
\]
on the space $[0, C(\ga_0'(0), h)(T-\delta_1)]\times\mathbb{R}^3$. In
particular, the vector field $X=\pa_s+u_0^k\pa_{y_k}$ is timelike.
\end{lem}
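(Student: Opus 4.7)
The plan is to build a Fermi-type coordinate system along $\ga_0$ by parallel-transporting an orthonormal frame along the geodesic, using the $h$-exponential map in the three spatial frame directions to define coordinate patches, and then composing with a constant linear shear so that $\ga_0$ is represented by the line $y = u_0 s$ rather than the time axis. With this setup, the required metric and Christoffel identities on the axis will come out of the standard Fermi construction combined with the fact that a linear change of coordinates preserves vanishing of Christoffel symbols; the quantitative diffeomorphism and timelike bound will then rest on the $C^4$ and decay assumptions in~\eqref{condfix}.

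\textbf{Frame and coordinate construction.} At the initial point $\xi_0 = \ga_0(0)$ take $E_0(0)$ to be the future-directed $h$-unit normal to $\Si_0$ and complete this to an $h$-orthonormal basis $(E_\mu(0))$ with $E_i(0)\in T_{\xi_0}\Si_0$. Decompose the geodesic tangent as $\dot\ga_0(0) = a\bigl(E_0(0) + u_0^k E_k(0)\bigr)$ for some $a>0$ and $u_0\in\mathbb{R}^3$; timelikeness of $\dot\ga_0(0)$ forces $|u_0|<1$. Parallel-transport the frame along $\ga_0$ (with affine parameter $\tau$) to obtain $(E_\mu(\tau))$, and define
\[
 \Phi(s,y) = \exp^h_{\ga_0(s/a)}\!\bigl((y^i - u_0^i s)\, E_i(s/a)\bigr).
\]
By construction $\Phi(s, u_0 s) = \ga_0(s/a)$, so in the $(s,y)$ coordinates $\ga_0$ is represented as $(s, u_0 s)$.

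\textbf{Minkowski metric and vanishing Christoffel symbols on the axis.} A direct differentiation using $\dot\ga_0 = a(E_0 + u_0^k E_k)$ along $\ga_0$ and $\exp^h_p(0)=p$ with $d\exp^h_p|_0 = \mathrm{Id}$ yields $\Phi_*\pa_{y^i}|_{\text{axis}} = E_i(s/a)$ and $\Phi_*\pa_s|_{\text{axis}} = E_0(s/a)$, so $h_{\mu\nu}(s, u_0 s) = (m_0)_{\mu\nu}$. For the Christoffel symbols, pass to intermediate linear coordinates $\tilde s = s/a$, $\tilde y^i = y^i - u_0^i s$, in which $\Phi$ reduces to the standard Fermi map $(\tilde s, \tilde y) \mapsto \exp^h_{\ga_0(\tilde s)}(\tilde y^i E_i(\tilde s))$. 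The identities $\nab_{\dot\ga_0}\dot\ga_0 = 0$, $\nab_{\dot\ga_0}E_i = 0$, and $\nab_{E_i}E_j|_{\tilde y = 0} = 0$ (the last from the fact that the spatial coordinate curves are $h$-geodesics) give $\Ga^\a_{\mu\nu}|_{\tilde y = 0} = 0$. Because $(\tilde s, \tilde y) \leftrightarrow (s, y)$ is linear, the inhomogeneous term in the transformation law for Christoffel symbols vanishes, so $\Ga^\a_{\mu\nu}(s, u_0 s) = 0$ in the $(s, y)$ coordinates as well.

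\textbf{Diffeomorphism onto the required domain and uniform timelike bound on $X$.} This is the main technical step. The $C^4$ bound on $h$ in~\eqref{condfix} gives a pointwise bound on the Riemann curvature, which via Rauch comparison yields a uniform lower bound on the conjugate radius of spatial $h$-geodesics emanating from $\ga_0(\tau)$ for $\tau\in[0, T-\de_1]$. The weighted decay hypotheses $\||x|^{\f12}\pa h\|_{L^\infty} + \||x|\pa^{2+s}h\|_{L^\infty} \leq K_0$ ensure that such spatial geodesics stay close to straight lines at infinity and have no focal points, so that $\Phi$ extends to a global diffeomorphism from $[0, C(T-\de_1)]\times\mathbb{R}^3$ onto a subdomain $M$ containing $[0, T-2\de_1]\times\mathbb{R}^3$, provided $C = C(\ga_0'(0), h)$ is chosen appropriately. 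On the axis one has $h(X,X) = -1 + |u_0|^2 < 0$; the vanishing of the Christoffel symbols gives a quadratic estimate $|h_{\mu\nu} - (m_0)_{\mu\nu}|=O(|y-u_0 s|^2)$ near the axis, which can be combined with the decay of $h$ toward the flat metric at spatial infinity (transferred into the new coordinates via Gronwall-type estimates along the spatial $h$-geodesics) to yield the uniform bound $h(X,X) \leq -K(\de_1, \ga_0'(0))$. The hardest part will be this last step, since one must simultaneously calibrate $C(\ga_0'(0), h)$ to obtain the diffeomorphism and $K(\de_1, \ga_0'(0))$ to obtain the uniform timelike estimate, using the interplay between the local Fermi expansion and the global asymptotic-flatness hypotheses.
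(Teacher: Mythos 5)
Your construction of the Fermi coordinate system is genuinely different from the paper's: you use the classical exponential-map construction (parallel transport of a frame along $\ga_0$ plus $\exp^h$ in spatial frame directions, followed by a constant linear shear), whereas the paper does a ``surgical'' modification. Starting from $(t,x)\mapsto(t,x-\ga_0(t))$, it introduces
\[
\tilde{x}^\mu = x^\mu + a_k^\mu(t)\,x^k\chi(x) + \tfrac12\, b_{kl}^\mu(t)\,x^kx^l\chi(x),
\]
with $\chi$ a cutoff supported near the axis and the coefficients $a_k^\mu,\,b_{kl}^\mu$ solving ODEs that force the Christoffel symbols to vanish on the axis. The local checks in your proposal (metric equals $m_0$ and Christoffels vanish on the axis; linear changes preserve this) are correct and standard. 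The local/global structure of the two constructions, however, is fundamentally different, and that is where your argument has a real gap.

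The difficulty is the global diffeomorphism and slab-nesting claims. Your $\Phi$ is the exponential-map Fermi chart, which differs from the identity everywhere, not only near the geodesic. To conclude that $\Phi$ maps $[0, C(T-\delta_1)]\times\mathbb{R}^3$ diffeomorphically onto an $M$ with $[0, T-2\delta_1]\times\mathbb{R}^3\subset M\subset[0,T]\times\mathbb{R}^3$, you would need: (a) the spatial exponential maps $\exp^h_{\ga_0(\tau)}$ restricted to $\{E_0(\tau)\}^\perp$ to be \emph{global} diffeomorphisms onto all of $\mathbb{R}^3$ for every relevant $\tau$ (not just locally injective); (b) that these images are genuinely nested between the original $t=\text{const}$ slices, giving the two-sided inclusion for $M$; and (c) that $X=\pa_s+u_0^k\pa_{y_k}$ stays uniformly timelike even far from the axis in the new coordinates. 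You invoke Rauch comparison and the decay hypotheses in \eqref{condfix}, but Rauch from bounded curvature only yields a lower bound on conjugate radius — a tube — and the decay conditions do not, by themselves, give global injectivity, surjectivity onto $\mathbb{R}^3$, or the precise nesting of slabs, nor do they yield (c) without a separate argument relating the new metric components to the old ones far from the geodesic. The paper avoids all three obstructions by design: outside the support of $\chi$ (a tube of radius $2r_0$), the coordinate change agrees with the identity composed with a single constant linear map. Hence the diffeomorphism property, the two-sided inclusion of $M$, and the global uniform timelike bound on $X$ all reduce to elementary estimates on the original $h$, provided $r_0$ is taken small. This localization via the cutoff is the missing idea that turns the lemma into a finite-effort statement; without it, your exponential-map route would require a substantially stronger and separate global geometry argument that is not supplied (and is not obviously true under the stated hypotheses).
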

\begin{proof} Let $(t, x)$ be
the given coordinate system on $([0, T]\times\mathbb{R}^3, h)$. Our
first step is to choose a coordinate system such that the given
geodesic is represented by $(t, 0)$. Let $(t, \bar x)$ be a new
coordinate system defined as follows
\[
(t, \bar x)=(t, x-\ga_0(t)).
\]
With this translation, we can show that the geodesic is
parameterized by $(t, 0)$. However, along this geodesic we only have
$\Ga_{00}^{\mu}(t, 0)=0$. For simplicity, let's still use $(t, x)$ to denote $(t, \bar x)$.

Next, we change the coordinates in a small neighborhood of the geodesic such that all
the Christoffel symbols are vanishing along the geodesic. To achieve this, we reproduce the
construction here, which is essentially the same as that in
\cite{NormalCord}, \cite{Fermi}. Let $\chi(x)$ be a cutoff function
\begin{equation*}
\begin{cases}
\chi(x)=1, \quad |x|\leq r_0,\\
\chi(x)=0,\quad |x|\geq 2 r_0
\end{cases}
\end{equation*}
for some positive number $r_0$. Introduce new coordinates defined as
follows
\[
\tilde{x}^\mu=x^\mu+a_{k}^\mu(t) x^k\chi(x)+\f12 b_{kl}^\mu(t)
x^kx^l\chi(x),
\]
where $(x^0, x^1, x^2, x^3)=(t, x^1, x^2, x^3)$. Here we recall that
Greek letters $k, l$ run from 1 to 3 while the Latin letters $\mu,
\nu$ run from 0 to 3. The geodesic in this new coordinate system is
still parameterized by $(t, 0)$. The transformation law for
Christoffel symbols
\[
\tilde{\Ga}_{\si\nu}^\mu=\frac{\pa x^\b}{\pa \tilde{x}^\si}\frac{\pa
x^\ga}{\pa\tilde{x}^\nu}\left(\Ga_{\b\ga}^{\a}\frac{\pa
\tilde{x}^\mu}{\pa x^\a}-\frac{\pa^2 \tilde{x}^\mu}{\pa x^\b \pa
x^\ga}\right)
\]
together with fact that $\Ga_{00}^\mu(t, 0)=0$, implies that
$\tilde{\Ga}_{\si\nu}^{\mu}$ vanishes along the geodesic $(t, 0)$ is
equivalent to the following ODEs for $a_{k}^\mu(t)$, $b_{kl}^\mu(t)$
\begin{equation*}
\begin{cases}
\frac{d}{dt}a_{k}^\mu=\Ga_{0 k}^\mu(t,
0)+a_{j}^\mu\Ga_{0k}^j(t, 0),\\
b_{kl}^\mu=\Ga_{kl}^\mu(t, 0)+a_{j}^\mu\Ga_{kl}^j(t,0).
\end{cases}
\end{equation*}
Now take $a_{k}^\mu(0)=b_{kl}^\mu(0)=0$ initially. We conclude that the
coefficients $a_{k}^\mu$, $b_{kl}^\mu$ are uniquely determined up to
any time $T$. Moreover, we have the estimates
\[
|a_{k}^\mu|+|b_{kl}^\mu|\leq C(h, T,\ga_0),
\]
where the constant $C(h, T,\ga_0)$ depends only on the metric $h$, the time $T$ and the geodesic $(t, \ga_0(t))$.
Therefore for any $\delta_1>0$, there exist $r_0$ sufficiently small
and the associated cutoff function $\chi$ such that
\[
\tilde x: [0, T-\delta_1]\times \mathbb{R}^3\rightarrow [0,
T]\times\mathbb{R}^3,\quad \tilde{x}^{-1}:[0, T-2\delta_1]\times
\mathbb{R}^3\rightarrow [0, T-\delta_1]\times\mathbb{R}^3
\]
are inclusions. Hence let $M=\tilde{x}([0,
T-\delta_1]\times\mathbb{R}^3)$ with the induced metric $h$. We have
\[
[0, T-2\delta_1]\times \mathbb{R}^3\subset M\subset [0, T]\times
\mathbb{R}^3
\]
with a coordinate system $\tilde x$ such that the Christoffel
symbols $\tilde{\Ga}_{\mu\nu}^{\b}$ vanish along the timelike
geodesic $(t, 0)$.

\bigskip

We still need to change the coordinate such that $h=m_0$ along
the geodesic. We have shown that under the coordinate system
$\tilde{x}=(t, x')$, $\tilde{\Ga}_{\mu\nu}^{\b}(t, 0)=0$, which
imply that $\pa h(t, 0)=0$, $h(t, 0)=h(0, 0)$. Since the geodesic is
timelike, we have $h_{00}(0, 0)<0$ and $(h(0, 0))_{kl}$ is positive
definite. Assume
\[
 h(0, 0)=\left(
              \begin{array}{cc}
                -a & \a \\
                \a^T & P \\
              \end{array}
            \right),\quad a>0.
\]
Let $Q_{3\times3}$ be such that $QPQ^T=I_{3\times3}$. Consider the
following new coordinate system
\[
 (s, y)=(t, x')
 \left(
\begin{array}{cc}
1&\a P^{-1}\\
0&I\\
\end{array}
\right)\left(
\begin{array}{cc}
\sqrt{a+\a P^{-1}\a^T}&0\\
0&Q^{-1}\\
\end{array}
\right)=(t\sqrt{a+\a P^{-1}\a^T}, (t\a P^{-1}+x')Q^{-1}).
\]
We can show that under this new coordinate system, the given
timelike geodesic is parameterized by $(s, u_0 s)$. Moreover, $h(s,
u_0 s)=m_0$ for some constant vector $u_0\in \mathbb{R}^3$ given as
follows
\[
 u_0=\frac{\a P^{-1}Q^{-1}}{\sqrt{a+\a
 P^{-1}\a^T}}\in\mathbb{R}^3,\quad |u_0|<1.
\]
By the previous construction, we have $(s, y)\in[0, (T-\delta_1)C(\ga_0'(0), h)]\times
\mathbb{R}^3$, where we denote $C(\ga_0'(0), h)=\sqrt{a+\a
P^{-1}\a^T}$ depending only on $\ga_0'(0)$ and the metric $h$.

Finally, notice that under the coordinate system $\tilde{x}=(t,
x')$, the vector field $\pa_t$ is timelike, that is, by
\eqref{condfix} and the construction of $\tilde{x}$
\[
h(\pa_t,\pa_t)\leq -K(\delta_1, \ga_0'(0))
\]
for some constant depending on $h$, $\delta_1$, $\ga_0'(0)$. We remark
here that the positive constant $K(\delta_1, \ga_0'(0))$ also depends
on the cutoff function $\chi$. However, when $\delta_1$ is fixed,
the cutoff function is also fixed. Therefore the corresponding
vector field $X=\pa_s+u_0^k\pa_{y_k}$ on the space $(M, h)$ with
coordinate system $(s, y)\in[0, (T-\delta_1)\sqrt{a+\a
P^{-1}\a^T}]\times \mathbb{R}^3$ is also timelike and satisfies
\[
h(X, X)\leq -K(\delta_1, \ga_0'(0)).
\]
\end{proof}

\section{Reduced Einstein Equations}
In this section, we reduce the scaled Einstein equations to a
hyperbolic system for the components of the metric $g^\ep_{\mu\nu}$
by fixing a relatively harmonic gauge condition, see
\cite{GREE_bruhat}, \cite{larSca_hawking}.

 The Einstein equations are independent of the choice of
local coordinate system. To solve the Einstein equations, we work
under the Fermi coordinate system constructed in the previous
section. More precisely, starting with the given vacuum spacetime
$([0, T]\times \mathbb{R}^3, h)$ with local coordinate system $(t,
x)$, Lemma \ref{propfermiCord} shows that we can find a new
coordinate system $(s, y)\in[0, T^*]\times\mathbb{R}^3$ on $M$,
where $[0, T-2\delta_1]\times\mathbb{R}^3\subset M\subset [0,
T]\times\mathbb{R}^3$, such that the timelike geodesic is
represented as $(s, u_0 s)$ for some constant vector field
$u_0\in\mathbb{R}^3$, $|u_0|<1$ and
\[
 h(s, u_0 s)=m_0,\quad \Ga_{\mu\nu}^{\a}(s, u_0 s)=0.
\]
Moreover, the vector field
\[
 X=\pa_s+u_0^k\pa_{y_k}
\]
is uniformly timelike. To simplify the notations, we work on the space
$([0, T]\times\mathbb{R}^3, h)$ for arbitrary $T>0$ with Fermi coordinate
system $(s, y)$.

With this local coordinate system $(s, y)$, to understand the
particle $\phi^\ep$, which has small size $\ep$ and small energy
$\delta$, we instead consider the scaled Einstein equations. More
precisely, recall that the potential $\mathcal{V}_{\ep, \delta}$
satisfies the scaling $\mathcal{V}_{\ep,
\delta}(\phi)=\delta^2\ep^{-2}\mathcal{V}(\delta^{-1}\phi)$. We
infer that
 if $(g(s, y), \phi^\ep(s, y))$ solves system
\eqref{EQUATION} on the space $[0, T]\times\mathbb{R}^3$, then
\begin{equation*}
g^\ep(s, y)=g(\ep s, \ep y),\quad \phi(s, y)=\delta^{-1}\phi^\ep(\ep s,\ep y)
\end{equation*}
satisfy the rescaled Einstein equations
\begin{equation}
\label{EQUATIONSC}
\begin{cases}
R_{\mu\nu}(g^\ep)-\f12 R(g^\ep) (g^\ep)_{\mu\nu}=\delta^2 T_{\mu\nu}(g^\ep, \phi;\mathcal{V}(\phi)),\\
\Box_{g^\ep}\phi-\mathcal{V}'(\phi)=0
\end{cases}
\end{equation}
on the rescaled space $[0, T/\ep]\times\mathbb{R}^3$. Conversely, a
solution $(g^\ep, \phi)$ of \eqref{EQUATIONSC} on the space $[0,
T/\ep]\times\mathbb{R}^3$ gives a solution $(g, \phi^\ep)$ of
\eqref{EQUATION} on the space $[0, T]\times\mathbb{R}^3$. It hence
suffices to consider the scaled Einstein equations
\eqref{EQUATIONSC} on the space $[0, T/\ep]\times\mathbb{R}^3$ with
initial data determined by $(\bar g, \bar K, \phi^\ep_0,
\phi_1^\ep)$ as well as the coordinate change from $(t, x)$ to the
Fermi coordinate system constructed in the previous section.
\begin{remark}
 The scaling heavily relies on the Fermi local coordinate system. However, the system \eqref{EQUATIONSC}
  itself is independent of choice of local
coordinate system on the space $[0, T/\ep]\times\mathbb{R}^3$.
\end{remark}

We must determine the initial data for the corresponding
Cauchy problem for the rescaled Einstein equations
\eqref{EQUATIONSC}. By the construction of the Fermi coordinate system in
Lemma \ref{propfermiCord} and by Definition \ref{defofsolfix}, we
define
\[
\phi_0(y)=\delta^{-1}\phi_0^\ep(\ep y),\quad
\phi_1(y)=\delta^{-1}\ep \phi_1^\ep(\ep y), \quad \bar
{g}^\ep(y)=\bar g(\ep y),\quad \bar{K}^\ep(y)=\ep \bar {K}(\ep y)
\]
on the initial hypersurface $\mathbb{R}^3$ with coordinate system $\{y\}$. Denote
\[
 \la_0^n=(\om_0, \ep^{-1}\th_0, 0, u_0),\quad
 n\phi_S(y;\la_0^n)=i\rho \om \phi_S(y;\la_0^n)-u_0
 Q^{-1}\nabla_y\phi_S(y;\la_0^n),
\]
where $\la_0=(\om_0, \th_0, \xi_0, u_h(0))$ is given in Theorem
\ref{thmbaby} and $Q$ is the $3\times 3$ matrix in Definition
\ref{defofsolfix}. By \eqref{constrainteq}, we have
\begin{equation}
\label{constraineqsc}
\begin{cases}
 \bar R(\bar {g}^\ep)-|\bar K^\ep|^2+(tr \bar K^\ep)^2=\delta^2(|\phi_1|^2+|\bar\nabla \phi_0|^2+2\mathcal{V}(\phi_0)),\\
 \bar{\nabla}^j \bar K^\ep_{ij}-\bar{\nabla}_i tr\bar
K^\ep=\delta^2<\phi_1, \bar\nabla_i\phi_0>,
\end{cases}
\end{equation}
where the covariant derivative is with respect to the metric $\bar
g^\ep$. Using this scaling, the condition \eqref{initialcond} becomes
\begin{equation}
\label{inicondgphi}
\begin{split}
 &\|\phi_0(y)-\phi_S(y;\la_0^n)\|_{H^3}+\|\phi_1(y)-
 n\phi_S(y;\la_0^n)\|_{H^2}\leq C_0\ep,\\
&\|\nabla(\bar g^\ep-\bar h^\ep)\|_{H^2}+\|\bar K^\ep-\bar
k^\ep\|_{H^2}\leq C_0\delta^2,
\end{split}
\end{equation}
where $\bar {h}^\ep(y)=\bar {h}(\ep y)$, $\bar k^\ep(y)=\bar k(\ep
y)$, $\nabla $ is the covariant derivative for the metric $\bar h^\ep$. We hence obtain the initial data $(\Si_0,
\bar g^\ep, \bar K^\ep, \phi_0, \phi_1)$ for rescaled Einstein equations
\eqref{EQUATIONSC}. We also remark here that the
existence of the initial data $(\Si_0, \bar{g}, \bar K, \phi^\ep_0,
\phi^\ep_1)$ in our main theorem \ref{thmbaby} is equivalent to the existence
of $(\Si_0, \bar{g}^\ep, \bar K^\ep, \phi_0, \phi_1)$ satisfying the
conditions \eqref{constraineqsc}, \eqref{inicondgphi} under the
Fermi coordinate system. Existence of such initial data set under
certain conditions is discussed in details in the last section.

\bigskip

The Cauchy problem for system \eqref{EQUATIONSC} is
underdetermined as any diffeomorphism of the
spacetime $([0, T/\ep]\times\mathbb{R}^3, g^\ep, \phi)$ satisfying
\eqref{EQUATIONSC} leads to another solution. Such freedom can be
removed by choosing a gauge condition such that system
\eqref{EQUATIONSC} is equivalent to a hyperbolic system for the
components of the metric $g^\ep$. Under the Fermi coordinate system
on the space $([0, T/\ep]\times\mathbb{R}^3, h^\ep)$, we define
\begin{equation*}
 \mathcal{G}^\la(g^\ep,
h^\ep)=(g^\ep)^{\mu\nu}(\Ga_{\mu\nu}^\la-\hat{\Ga}_{\mu\nu}^\la),\quad
\la=0, 1, 2, 3,
\end{equation*}
where $\Ga_{\mu\nu}^{\la}$, $\hat{\Ga}_{\mu\nu}^\la$ are Christoffel
symbols for the unknown metric $g^\ep$ and the given vacuum metric
$h^\ep$ respectively. The gauge condition that we choose is
\begin{equation}
 \label{gaugecond}
 \mathcal{G}^\la(g^\ep,
h^\ep)=0,\quad \forall \la =0, 1, 2, 3,
\end{equation}
which will be called the relatively harmonic gauge condition, see \cite{GREE_bruhat},
\cite{larSca_hawking}.  Instead of considering the full Einstein equations \eqref{EQUATIONSC}, we consider the following
reduced Einstein equations
\begin{equation}
\label{EEQred}
R_{\mu\nu}(g^\ep)-\frac{1}{2}R(g^\ep)g^\ep_{\mu\nu}-\f12\left(g^\ep_{\mu\la}\nabla_\nu
\mathcal{G}^\la+g^\ep_{\nu\la}\nabla_\mu\mathcal{G}^\la-g^\ep_{\mu\nu}\nabla_\la
\mathcal{G}^\la \right)=\delta^2 T_{\mu\nu}(g^\ep, \phi;
\mathcal{V}(\phi)),
\end{equation}
where the covariant derivative $\nabla$ is for the metric $g^\ep$ on
the space $[0, T/\ep]\times\mathbb{R}^3$. Then a solution of the
full Einstein equations \eqref{EQUATIONSC} can be constructed as
follows: we write down \eqref{EEQred} under the Fermi coordinate
system and check that it is hyperbolic for the components of the
metric $g^\ep$. We then construct initial data $(g^\ep(0)$, $\pa_t
g^\ep(0))$ from the given data $(\bar g^\ep, \bar K^\ep)$ such that
the gauge condition \eqref{gaugecond} holds initially. Hence we can
get a unique short time solution $([0, t^*)\times\mathbb{R}^3,
g^\ep, \phi)$ of the reduced Einstein equations \eqref{EEQred}
coupled with the matter field equation for $\phi$(second equation in
\eqref{EQUATIONSC}). We then argue that the gauge condition is
propagated, i.e., the relation \eqref{gaugecond} holds on $[0,
t^*)\times\mathbb{R}^3$. This implies that the solution $([0,
t^*)\times\mathbb{R}^3, g^\ep, \phi)$ to the reduced Einstein
equations is, in fact, a solution of the full Einstein equations
\eqref{EQUATIONSC}.

\bigskip

Our first step is to check the hyperbolicity of the reduced Einstein
equations \eqref{EEQred}. We notice that under the fixed Fermi
coordinate system(can be viewed as a local coordinate system), the
full Einstein equations can be written as
\begin{equation*}
-(g^\ep)^{\a\b}\pa_{\a\b}g^\ep_{\mu\nu}+
g^\ep_{\nu\la}\pa_{\mu}\left((g^\ep)^{\a\b}\Ga_{\a\b}^{\la}\right)+g^\ep_{\mu\la}\pa_{\nu}\left((g^\ep)^{\a\b}
\Ga_{\a\b}^{\la}\right)+Q_{\mu\nu}(g^\ep)=\delta^2(2T_{\mu\nu}-tr
T\cdot g^\ep_{\mu\nu}),
\end{equation*}
where
\begin{equation*}
Q_{\mu\nu}(g^\ep)=C_{\rho\si\la\mu\nu}^{\a\b\ga}(g^\ep)\pa_{\a}g^\ep_{\b\ga}\pa_{\rho}g^\ep_{\si\la}
\end{equation*}
and $C(g^\ep)$ denotes polynomials of $g^\ep_{\a\b}$,
$(g^\ep)^{\a\b}$. Taking trace of the reduced Einstein equations
\eqref{EEQred} , we get
\[
-R(g^\ep)+\nabla_\mu \mathcal{G}^\mu=tr T.
\]
Plug this into \eqref{EEQred}. We have the reduced Einstein
equations
\begin{equation*}
-(g^\ep)^{\a\b}\pa_{\a\b}g^\ep_{\mu\nu}+
g^\ep_{\nu\la}\pa_{\mu}\left((g^\ep)^{\a\b}\hat{\Ga}_{\a\b}^{\la}\right)+g^\ep_{\mu\la}\pa_{\nu}\left((g^\ep)^{\a\b}
\hat{\Ga}_{\a\b}^{\la}\right)+Q_{\mu\nu}(g^\ep)-\mathcal{G}^\la
\pa_\la g^\ep_{\mu\nu}=\delta^2(2T_{\mu\nu}-tr T\cdot
g^\ep_{\mu\nu}).
\end{equation*}
Similarly, we recall the vacuum Einstein equations for $h^\ep$ under
the Fermi local coordinate system
\begin{equation*}
-(h^\ep)^{\a\b}\pa_{\a\b}h^\ep_{\mu\nu}+
h^\ep_{\nu\la}\pa_{\mu}\left((h^\ep)^{\mu\nu}\hat{\Ga}_{\mu\nu}^\la\right)+h^\ep_{\mu\la}\pa_{\nu}\left((h^\ep)^{\mu\nu}
\hat{\Ga}_{\mu\nu}^\la\right)+Q_{\mu\nu}(h^\ep)=0.
\end{equation*}
Subtract the above two equations. We can show that the reduced Einstein equations coupled
with the matter field equation for $\phi$ are equivalent to the
following hyperbolic system for the difference
$\psi^\ep=g^\ep-h^\ep$
\begin{equation}
\label{EQUATIONRED}
\begin{cases}
-(g^\ep)^{\a\b}\pa_{\a\b}\psi^\ep_{\mu\nu}+\delta P_{\mu\nu}+\delta Z_{\mu\nu}+\delta Q_{\mu\nu}
=\delta^2(2T_{\mu\nu}-
tr T\cdot g^\ep_{\mu\nu}),\\
\Box_{g^\ep}\phi-\mathcal{V}'(\phi)=0,
\end{cases}
\end{equation}
where
\begin{align}
\notag
 &\delta Q_{\mu\nu}=Q(g^\ep)-Q(h^\ep)-(g^\ep)^{\a\b}\Ga_{\a\b}^{\la}\pa_{\la}g^\ep_{\mu\nu}+
(h^\ep)^{\a\b}\hat{\Ga}_{\a\b}^{\la}\pa_{\la}h^\ep_{\mu\nu},\\
\label{defofPQ}
 &\delta Z_{\mu\nu} =\left(g^\ep_{\mu\la}(g^\ep)^{\a\b}-h^\ep_{\mu\la}(h^\ep)^{\a\b}\right)\pa_\nu \hat{\Ga}_{\a\b}^\la+
\left(g^\ep_{\nu\la}(g^\ep)^{\a\b}-h^\ep_{\nu\la}(h^\ep)^{\a\b}\right)\pa_\mu \hat{\Ga}_{\a\b}^\la
-(\psi^\ep)^{\a\b}\pa_{\a\b}h^\ep_{\mu\nu},\\
\notag & \delta
P_{\mu\nu}=\hat{\Ga}_{\a\b}^\la\left((g^\ep)^{\a\b}\pa_\la
g^\ep_{\mu\nu}+g^\ep_{\nu\la}\pa_\mu (g^\ep)^{\a\b}+
g^\ep_{\mu\la}\pa_\nu (g^\ep)^{\a\b}-(h^\ep)^{\a\b}\pa_\la
h^\ep_{\mu\nu}-h^\ep_{\nu\la}\pa_\mu (h^\ep)^{\a\b}-
h^\ep_{\mu\la}\pa_\nu (h^\ep)^{\a\b}\right).
\end{align}
Hence we have shown that the reduced Einstein equations \eqref{EEQred} are hyperbolic.

Secondly, we demonstrate that the gauge condition \eqref{gaugecond}
propagates. Suppose $(g^\ep, \phi)$ satisfies system
\eqref{EQUATIONRED} on $[0, t^*)\times\mathbb{R}^3$ for some small
positive time $t^*$. Take divergence of both sides of the reduced
Einstein equations \eqref{EEQred}. Using Bianchi's identity and the
fact that the energy momentum tensor $T_{\mu\nu}$ is divergence
free(due to the matter field equation of $\phi$), we obtain the
evolution equations for $\mathcal{G}^\la$
\[
 \nabla^\mu\nabla_\mu \mathcal{G}_\nu+[\nabla_\mu,
 \nabla_\nu]\mathcal{G}^\mu=0,\quad \nu=0, 1, 2, 3,
\]
where the commutator $[\nabla_\mu,
\nabla_\nu]=\nabla_\mu\nabla_\nu-\nabla_\nu\nabla_\mu$. Hence
$\mathcal{G}^{\la}$ satisfies the above wave equations on $[0,
t^*)\times\mathbb{R}^3$. Thus $\mathcal{G}^\la$ vanishes on $[0,
t^*)\times\mathbb{R}^3$ if $\mathcal{G}^\mu$, $\pa_t
\mathcal{G}^\mu$ vanish initially. We can make $\mathcal{G}^\mu$
vanish on the initial hypersurface $\Si_0$ by choosing initial data
for $g^\ep_{0\mu}, \pa_t g^\ep_{0\mu}$(recall that only
$\bar{g}^\ep$, $\bar K^\ep$ or $g^\ep_{ij}$, $\pa_t g^\ep_{ij}$ are
given). Once we have $\mathcal{G}^\mu=0$ on $\Si_0$, we can show
that $\pa_t \mathcal{G}^\mu$ also vanishes on $\Si_0$ due to the
constraint equations. In fact, since $(g^\ep, \phi)$ solves
\eqref{EEQred}, the constraint equations \eqref{constraineqsc} for
$\bar {g}^\ep$, $\bar K^\ep$ together with the vacuum constraint
equations for $\bar h^\ep$, $\bar k^\ep$ imply that
\[
 \nabla_0
 \mathcal{G}_\nu+\nabla_\nu\mathcal{G}_0-g^\ep_{0\nu}\nabla_\mu
 \mathcal{G}^\mu=0,\quad \nu=0, 1, 2, 3,
\]
where the covariant derivative $\nabla$ is for the metric
 $g^\ep$. Hence $\pa_t\mathcal{G}^\mu=0$ on $\Si_0$ if $\mathcal{G}^\mu=0$
 initially. Therefore, we have shown that as long as \eqref{gaugecond} holds on $\Si_0$,
 a solution $(\psi^\ep, \phi)$ of the reduced Einstein equations
 \eqref{EQUATIONRED} on $[0, t^*)\times\mathbb{R}^3$ leads to a solution
 $([0, t^*)\times\mathbb{R}^3, \psi^\ep+h^\ep, \phi)$ to the full Einstein equations \eqref{EQUATIONSC}.
Since the solution of the Einstein equations \eqref{EQUATIONSC}
exists locally and is unique up to diffeomorphism
\cite{LocalEx_bruhat}, \cite{Uniqu_bruhat}, it
suffices to consider the reduced Einstein equations
\eqref{EQUATIONRED} with initial data $(g^\ep(0, y), \pa_t g^\ep(0,
y))$ such that \eqref{gaugecond} holds on the initial hypersurface
$\Si_0$.

It remains to construct initial data $(g^\ep(0)$, $\pa_t g^\ep(0))$ from $(\bar g^\ep, \bar K^\ep)$ on $\Si_0$ such that the gauge condition
\eqref{gaugecond} holds initially. Let
\[
(g^\ep)^{00}(0)=-\bar {N}^{-2},\quad g^\ep_{0i}(0)=\b_i,\quad i=1,
2, 3,
\]
where $\bar N$ is the lapse function and $\b$ is the shift vector on
$\Si_0$. Since the Riemannian metric $\bar {g}^\ep$ is the
Lorentzian metric $g^\ep$ restricted to $\Si_0$ and $\bar K^\ep$ is
the second fundamental form, we can show that
\[
g^\ep_{ij}(0)=\bar g^\ep_{ij},\quad \pa_t g^\ep_{ij}(0)=2\bar N \bar
K^\ep_{ij}+\bar{\nabla}_i \b_j+\bar{\nabla}_{j}\b_i,
\]
where $\bar{\nabla}$ is the covariant derivative for $\bar g^\ep$ on
$\Si_0$. That is $(g^\ep(0), \pa_t g^\ep_{ij}(0))$ is uniquely determined by $(\bar g^\ep, \bar K^\ep)$, $\bar N$, $\b_i$.
We claim that $\pa_t g^\ep_{0\mu}(0)$ is given by the gauge condition \eqref{gaugecond}. In fact, the gauge condition implies that
\begin{align*}
 (2-(m_0)_0^\mu)\pa_t\psi^\ep_{0\mu}=&2\bar {N} ^2(2(m_0)_k^\mu (g^\ep)^{0l}-(g^\ep)^{kl}(m_0)_0^\mu)\pa_t\psi^\ep_{kl}+
\bar{N}^2(2(m_0)_0^\mu
(g^\ep)^{k\b}-(g^{\ep})^{\a\b}(m_0)_k^\mu)\pa_k
\psi^\ep_{\a\b}\\&+\bar {N}^2 g^\ep_{\la\mu}(g^\ep)^{\a\b}
(\psi^\ep)^{\la \nu}(2\pa_\a h^\ep_{\nu \b}-\pa_\nu
h^\ep_{\a\b}),\quad \psi^\ep=g^\ep-h^\ep.
\end{align*}
We choose $\pa_t g^\ep_{0\mu}$ on $\Si_0$ as above. Notice that
different lapse functions $\bar N$ and shift vectors $\b_i$ will
lead to the same solution of the full Einstein equation
\eqref{EQUATIONSC} up to a change of local coordinate system. To
better estimate the difference $\psi^\ep$, we simply set
\[
 (g^\ep)^{00}(0, y)=(h^\ep)^{00}(0, y),\quad g^\ep_{0i}(0, y)=h^\ep_{0i}(0,
 y).
\]
The above discussion shows that we have constructed the initial data $(g^\ep(0), \pa_t g^\ep(0))$ such that the gauge condition
\eqref{gaugecond} holds initially.

Finally, we show that $g^\ep$ is sufficiently close to $h^\ep(0)$ and hence is Lorentzian initially.
Notice that
\[
 \pa_t h^\ep_{ij}=2\bar N
(\bar k^\ep_0)_{ij}+\nabla_i \b_j+\nabla_{j}\b_i,
\]
where the covariant derivative is for the metric $\bar {h}^\ep$. By
\eqref{condfix}, \eqref{inicondgphi}, we can show that
\begin{equation}
 \label{inicondgf}
\|\nabla({g}^\ep-{h}^\ep)\|_{H^2(\Si_0)}+\|\pa_t(g^\ep-h^\ep)\|_{H^2(\Si_0)}\leq
C(h, C_0)\delta^2,
\end{equation}
where the constant $C(h, C_0)$ depends on $h$, $C_0$. Thus if $\ep$
is sufficiently small, $g^\ep$ is a Lorentzian metric initially. We
summarize what we have obtained in this section.
\begin{lem}
 \label{lemredeq}
Let $([0, T]\times\mathbb{R}^3, h)$ be a vacuum spacetime. The
Cauchy problem for \eqref{EQUATION} with initial data
$(\Si_0, \bar{g}, \bar K, \phi_0^\ep, \phi_1^\ep)$ is equivalent(up
to a diffeomorphism and scaling) to the hyperbolic system
\eqref{EQUATIONRED}, \eqref{defofPQ} with initial data
 $(\phi_0(0, y), \phi_1(0, y), g^\ep(0, y)$,
$\pa_t g^\ep(0, y))$ satisfying \eqref{inicondgphi} and
\eqref{inicondgf}.
\end{lem}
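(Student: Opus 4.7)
The plan is to establish the equivalence in four stages, each of which is carried out in the discussion immediately preceding the lemma; the proof really amounts to organizing that discussion into a clean argument.

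First I would record the scaling reduction. Given a solution $(g,\phi^\ep)$ of \eqref{EQUATION} on $[0,T]\times\mathbb{R}^3$, a direct calculation using \eqref{potenscal} shows that $g^\ep(s,y)=g(\ep s,\ep y)$ and $\phi(s,y)=\delta^{-1}\phi^\ep(\ep s,\ep y)$ solve \eqref{EQUATIONSC} on $[0,T/\ep]\times\mathbb{R}^3$, and conversely. The rescaled data $(\bar g^\ep,\bar K^\ep,\phi_0,\phi_1)$ inherit the constraint equations \eqref{constraineqsc} from \eqref{constrainteq} and satisfy \eqref{inicondgphi} in view of \eqref{initialcond}; then Lemma \ref{propfermiCord} supplies the Fermi coordinate system in which the geodesic is $(s,u_0 s)$ and $h(s,u_0s)=m_0$.

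Next I would derive \eqref{EQUATIONRED}. Working in the Fermi coordinates, the identities $R_{\mu\nu}=-\tfrac12 (g^\ep)^{\a\b}\pa_{\a\b}g^\ep_{\mu\nu}+g^\ep_{(\mu|\la|}\pa_{\nu)}((g^\ep)^{\a\b}\Ga_{\a\b}^\la)+\tfrac12 Q_{\mu\nu}(g^\ep)$ and its analogue for $h^\ep$ (which is vacuum), when combined with the trace of \eqref{EEQred} to rewrite the term $-\tfrac12 R g^\ep_{\mu\nu}$, reduce \eqref{EEQred} modulo $\mathcal{G}^\la$ to a quasilinear wave equation for $g^\ep_{\mu\nu}$ with principal part $-(g^\ep)^{\a\b}\pa_{\a\b}g^\ep_{\mu\nu}$. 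Subtracting the vacuum equation for $h^\ep$ and collecting lower-order pieces into $\delta P_{\mu\nu},\delta Z_{\mu\nu},\delta Q_{\mu\nu}$ as in \eqref{defofPQ} produces exactly \eqref{EQUATIONRED}; this is a mechanical but careful algebraic identification.

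Third I would build the initial data $(g^\ep(0,y),\pa_t g^\ep(0,y))$ from $(\bar g^\ep,\bar K^\ep)$ in such a way that the gauge \eqref{gaugecond} holds on $\Si_0$. I prescribe $g^\ep_{ij}(0)=\bar g^\ep_{ij}$ and $\pa_t g^\ep_{ij}(0)=2\bar N\bar K^\ep_{ij}+\bar\nabla_i\b_j+\bar\nabla_j\b_i$, set $(g^\ep)^{00}(0)=(h^\ep)^{00}(0)$ and $g^\ep_{0i}(0)=h^\ep_{0i}(0)$ to exploit the closeness to $h^\ep$, and then read off $\pa_t g^\ep_{0\mu}(0)$ from the algebraic relations $\mathcal{G}^\la(g^\ep,h^\ep)|_{\Si_0}=0$ as displayed in the text. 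The estimate \eqref{inicondgf} then follows by combining \eqref{inicondgphi} with the bounds on $h$ from \eqref{condfix}, noting that the lapse-shift of $h^\ep$ is $C^3$-bounded and the algebraic formula depends smoothly on the data.

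Finally, the main obstacle is the propagation of the gauge, which is also where one reverses the reduction. Taking the divergence of \eqref{EEQred} and using the contracted Bianchi identity $\nabla^\mu(R_{\mu\nu}-\tfrac12 R g^\ep_{\mu\nu})=0$ together with $\nabla^\mu T_{\mu\nu}=0$ (a consequence of the $\phi$-equation in \eqref{EQUATIONRED}), one obtains a homogeneous linear wave equation $\nabla^\mu\nabla_\mu \mathcal{G}_\nu+[\nabla_\mu,\nabla_\nu]\mathcal{G}^\mu=0$ for $\mathcal{G}^\la$. Because $\mathcal{G}^\mu(0,\cdot)=0$ by construction, uniqueness for this wave system requires also $\pa_t\mathcal{G}^\mu(0,\cdot)=0$; this vanishing is exactly the content of the constraint equations \eqref{constraineqsc} combined with the reduced equations on $\Si_0$, via the identity $\nabla_0\mathcal{G}_\nu+\nabla_\nu\mathcal{G}_0-g^\ep_{0\nu}\nabla_\mu \mathcal{G}^\mu=0$ on $\Si_0$. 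Once gauge propagation is established, a solution of \eqref{EQUATIONRED} automatically solves the full system \eqref{EQUATIONSC}, and the local existence and diffeomorphism-uniqueness theorems of Choquet-Bruhat (\cite{LocalEx_bruhat}, \cite{Uniqu_bruhat}) give the equivalence of Cauchy problems asserted in the lemma.
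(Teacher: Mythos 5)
Your proposal is correct and follows essentially the same route as the paper: scaling to \eqref{EQUATIONSC} in Fermi coordinates, the algebraic reduction of \eqref{EEQred} to the quasidiagonal wave system \eqref{EQUATIONRED} by subtracting the vacuum equation for $h^\ep$, the explicit construction of $(g^\ep(0),\pa_t g^\ep(0))$ from $(\bar g^\ep,\bar K^\ep)$ so that $\mathcal{G}^\la|_{\Si_0}=0$, and propagation of the gauge via the Bianchi identity, the wave equation for $\mathcal{G}^\la$, and the constraint-derived vanishing of $\pa_t\mathcal{G}^\mu|_{\Si_0}$, capped by the local existence and uniqueness theorems of Choquet-Bruhat. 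All four stages match the paper's discussion preceding the lemma, so nothing further is needed.
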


\section{Stability of Stable Solitons on a Fixed Background}
We have shown in the previous section that to solve the full
Einstein equations \eqref{EQUATION}, it suffices to consider the
hyperbolic system \eqref{EQUATIONRED} on the scaled space $[0, T/\ep]\times\mathbb{R}^3$
with initial data $(\phi_0(0, x), \phi_1(0, x), g^\ep(0, x)$, $\pa_t g^\ep(0, x))$ satisfying \eqref{inicondgphi} and
\eqref{inicondgf}. The standard local
existence results imply that there is a unique short time solution $(g^\ep, \phi)$ on $[0, t^*]\times\mathbb{R}^3$.
Using continuity argument, the solution can be extended to $T/\ep$ as long as $(g^\ep, \phi)$ satisfies condition
\eqref{inicondgphi}, \eqref{inicondgf} for a constant $C$ independent of $\ep$. The estimate for $g^\ep-h^\ep$ follows from
 energy estimates for hyperbolic equations if the source term $T_{\mu\nu}-\f12 tr T\cdot g^\ep_{\mu\nu}$ lies
 in $C([0, T/\ep]; H^3)$. Since it is expected that the unknown metric $g^\ep$ is close to the given vacuum metric
$h^\ep$, the main difficulty for proving the main theorem is to show that $\phi$ exists and
is close to some translated soliton up to time $T/\ep$, that is, orbital stability of stable solitons along a timelike
geodesic on the slowly varying background $([0, T/\ep]\times\mathbb{R}^3, g^\ep)$.

\bigskip

In this section, we will let $([0, T]\times\mathbb{R}^3, h)$ be a Lorentzian spacetime with the Fermi coordinate system $(t, x)$ constructed
 in Lemma \ref{propfermiCord}. More precisely, assume that $(t, u_0 t)$, $t\in[0, T]$ is a timelike geodesic, where the constant
vector $u_0\in\mathbb{R}^3$, $|u_0|<1$. Along this geodesic, we have
\[
 h(t, u_0 t)=m_0,\quad \Ga_{\mu\nu}^{\a}(t, u_0 t)=0,
\]
where $\Ga_{\mu\nu}^{\a}$ is the Christoffel symbols for $h$.
Moreover, the vector field
\[
 X=\pa_t+u_0^k\pa_{k}
\]
is uniformly timelike on $([0, T]\times\mathbb{R}^3, h)$. We assume the metric $h\in C^4([0, T]\times\mathbb{R}^3)$ and
satisfies \eqref{condfix} for some constant $K_0$. Let $g$ be another Lorentzian metric on $[0, T]\times \mathbb{R}^3$.
Consider the Cauchy problem for the rescaled nonlinear wave equation
\begin{equation}
 \label{FEQUATION}
\begin{cases}
\Box_{g^\ep} \phi-m^2 \phi+|\phi|^{p-1}\phi=0,\\
\phi(0, x)=\phi_0(x),\quad \pa_t\phi(0, x)=\phi_1(x)
\end{cases}
\end{equation}
on the rescaled space $[0, T/\ep]\times \mathbb{R}^3$, where the slowly varying metric $g^\ep(t, x)=g(\ep t, \ep x)$.
We show the orbital stability of stable solitons to \eqref{FEQUATION} if $g-h$ vanishes as $|x|\rightarrow \infty$ and satisfies
\begin{equation}
\label{bapsi}
 \|\pa^{s+1}\psi^\ep(t, \cdot)\|_{L^2(\mathbb{R}^3)}\leq 2\ep^2, \quad |s|\leq 2, \quad \forall t\in [0, T/\ep],
\end{equation}
where $\psi=g-h,\quad \psi^\ep=g^\ep-h^\ep$. This condition can be
viewed as a bootstrap assumption.

Our main result in this section is
\begin{thm}
\label{thmfix}
 Let $g, h$ be Lorentzian metrics satisfying \eqref{condfix}, \eqref{bapsi} on the space $[0,
T]\times \mathbb{R}^3$ with Fermi coordinate system $(t, x)$. Assume $(t, u_0 t)$ is a timelike geodesic for the metric
 $h$ for a vector $u_0\in\mathbb{R}^3$, $|u_0|<1$. Assume $2\leq p<\frac{7}{3}$. Then for all $\la_0=(\om_0, \th_0, 0,
u_0)\in\La_{\textnormal{stab}}$, there exists a positive number
$\ep^*$ depending on $h$, $\la_0$ such that for all positive
$\ep<\ep^*$, if the initial data $\phi_0$, $\phi_1$ satisfy
\begin{equation}
\label{initdata1}
 \|\phi_0(x)-\phi_S(x;\la_0)\|_{H^1(\mathbb{R}^3)}+\|\phi_1(x)-\psi_S(x;\la_0)\|_{L^2(\mathbb{R}^3)}\leq \ep,
\end{equation}
then there exists a unique solution $\phi(t, x)\in C([0, T/\ep];
H^1(\mathbb{R}^3))$ of the equation \eqref{FEQUATION} with the
following property: there is a $C^1$ curve $\la(t)=(\om(t), \th(t),
\xi(t)+u_0t, u(t)+u_0)\in\La_{\textnormal{stab}}$ such that
\begin{equation}
\label{obsphi}
\begin{split}
 \|\phi(t, x)-\phi_S(x;\la(t))\|_{H^1(\mathbb{R}^3)}+\|\pa_t\phi(t, x)-\psi_S(x;\la(t))\|_{L^2(\mathbb{R}^3)}
\leq C\ep,\quad \forall t\in [0, T/\ep].
\end{split}
\end{equation}
Moreover
\begin{equation}
\label{obsla} |\la(0)-\la_0|\leq C\ep,\quad
 |\dot \ga|=|\dot \la-V(\la)|\leq C\ep^2,\quad \forall
 t\in[0,T/\ep],
\end{equation}
where $\dot \ga$, $V(\la)$ are defined in \eqref{defofVla},
\eqref{lagaV} and the constant $C$ is independent of $\ep$.
\end{thm}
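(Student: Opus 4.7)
The plan is to use the modulation approach adapted to the slowly varying metric $g^\ep$, exploiting crucially that in the Fermi coordinate system the metric $h$ agrees with Minkowski to second order along the geodesic $(t,u_0t)$. First I would decompose the solution as
\[
\phi(t,x)=\phi_S(x;\la(t))+\e^{i\Th(x;\la(t))}v(t,x),\qquad \pa_t\phi(t,x)=\psi_S(x;\la(t))+\e^{i\Th(x;\la(t))}w(t,x),
\]
where the $C^1$ curve $\la(t)\in\La_{\textnormal{stab}}$ is chosen so that the orthogonality condition \eqref{orthcond} holds for all $t$, and I would write $\la=\ga+V(\la)$ as in \eqref{lagaV} so that $\dot\ga$ measures the deviation from the straight-line (geodesic) motion. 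Differentiating \eqref{orthcond} in $t$ and using \eqref{FEQUATION} together with \eqref{idenofphiS}, the orthogonality condition translates into a nondegenerate modulation ODE system of the form
\[
\mathcal{D}(\la)\dot\ga=\mathcal{S}(\la,v,w)+\mathcal{E}^\ep(\la,v,w),
\]
where $\mathcal{D}(\la)$ is the antisymmetric matrix whose invertibility on $\La_{\textnormal{stab}}$ is precisely the content of the computation appearing in the author's formulae before the introduction, and $\mathcal{E}^\ep$ collects all contributions from $\Box_{g^\ep}-\Box_{m_0}$ acting on the soliton and on $\e^{i\Th}v$.

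The key observation is the following size estimate, valid on the rescaled spacetime because of the Fermi property $h(t,u_0t)=m_0$, $\Ga_{\mu\nu}^\a(t,u_0t)=0$ combined with $h^\ep(t,x)=h(\ep t,\ep x)$ and the bootstrap bound \eqref{bapsi}: in a neighborhood of the geodesic of size comparable to the soliton width (which is $O(1)$ after rescaling),
\[
|g^\ep-m_0|+|x|^{-1}\cdot 0\lesssim \ep^2(1+|x|^2),\qquad |\pa g^\ep|\lesssim \ep^2(1+|x|),
\]
so that when paired against the exponentially decaying soliton $\phi_S$, every term in $(\Box_{g^\ep}-\Box_{m_0})\phi_S$ has size $O(\ep^2)$ in $L^2$. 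Consequently $\mathcal{E}^\ep=O(\ep^2)+O(\ep\|(v,w)\|_{H^1\times L^2})$, and the modulation ODE yields $|\dot\ga|\lesssim \ep^2+\|(v,w)\|_{H^1\times L^2}^2+\ep\|(v,w)\|_{H^1\times L^2}$.

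Next I would control $(v,w)$ via the almost-conserved energy $E_0(t)$ defined in \eqref{defofE0}. Rather than analyzing the linearized wave equation for $v$ directly (which would force bootstrap arguments as in \cite{psedoStuart}), I would follow the strategy outlined in the introduction: expand the energy and charge functionals of the full solution $\phi$ around the soliton $\phi_S$, so that $E_0(t)$ equals the conserved Hamiltonian minus the soliton contribution plus cross terms linear in $(v,w)$ which are killed by the orthogonality condition. The time derivative of $E_0$ then contains only (i) quadratic-in-$(v,w)$ terms multiplied by $\dot\ga$, giving $\ep\|(v,w)\|^2$ plus higher-order, and (ii) source terms generated by $\Box_{g^\ep}-\Box_{m_0}$ acting on $\phi_S$, which by the Fermi-coordinate decay are bounded by $\ep^2\|(v,w)\|_{H^1\times L^2}$. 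By Proposition \ref{positen}, $E_0(t)\simeq \|v\|_{H^1}^2+\|w\|_{L^2}^2$, so I would obtain
\[
\tfrac{d}{dt}E_0(t)\lesssim \ep^2\sqrt{E_0(t)}+\ep\, E_0(t)+E_0(t)^{3/2}.
\]
Applying Gronwall directly (no bootstrap with a small $t^*$) with initial datum $E_0(0)\lesssim \ep^2$ gives $E_0(t)\lesssim \ep^2$ on $[0,T/\ep]$, and then the modulation bound upgrades to $|\dot\ga|\lesssim \ep^2$, which is \eqref{obsla}.

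The main obstacle is precisely the Gronwall step: the time interval $T/\ep$ is long enough that any $O(\ep)$ error multiplying $E_0$ in $\frac{d}{dt}E_0$ would give an uncontrolled exponential factor $e^{CT}$, not a problem per se but it means I must track carefully which terms produce $\ep$ versus $\ep^2$ coefficients. This forces me to exploit the Fermi property to gain $\ep^2$ (not merely $\ep$) in the source, to verify that the modulation orthogonality cancels all $O(1)$ linear cross terms, and to estimate the cubic remainder $|v|^{p-1}v$ using the Sobolev embedding $H^1(\mathbb{R}^3)\hookrightarrow L^{p+1}$ valid for $p<5$. Existence on the full interval $[0,T/\ep]$ then follows by a continuity argument from local wellposedness in $H^1$ for \eqref{FEQUATION}, with $\ep^*$ chosen so that the smallness of $\ep$ absorbs the implicit Gronwall constants depending on $T$, $h$ and $\la_0$.
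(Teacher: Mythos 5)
Your outline captures the high-level strategy correctly---modulation theory, decomposition of almost-conserved quantities around the soliton, orthogonality to kill the linear cross-terms, Gronwall on $[0,T/\ep]$, and the exploitation of the Fermi coordinate system so that $h^\ep-m_0=O(\ep^2)$ near the geodesic together with \eqref{bapsi} provides the extra power of $\ep$ needed for Gronwall to close on the long interval. However, there is a genuine gap at the step where you invoke Proposition~\ref{positen} to conclude $E_0(t)\simeq\|v\|_{H^1}^2+\|w\|_{L^2}^2$. With the \emph{unweighted} ansatz $\phi=\phi_S+\e^{i\Th}v$, $\pa_t\phi=\psi_S+\e^{i\Th}w$, the quadratic-in-$(v,w)$ part obtained by decomposing the energy/momentum/charge of the full solution around $\phi_S$ does \emph{not} reduce to the Minkowski form $E_0(t)$: it carries the variable coefficients $(g^\ep)^{kl}$, $(g^\ep)^{0k}$, $(g^\ep)^{00}$, $\sqrt{-\det g^\ep}$. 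Condition \eqref{condfix} only gives uniform bounds $K_0^{-1}I\leq(h^{kl})\leq K_0 I$, $|h^{\mu\nu}|\leq K_0$; in particular the mixed components $(g^\ep)^{0k}$ are merely bounded, not $O(\ep^2)$, away from the geodesic, and the radiation $v$ has no spatial decay so the ``exponential decay of $\phi_S$ beats the metric error'' argument does not help for the $v$-$v$ and $v$-$w$ terms. The cross terms $\langle(g^\ep)^{0k}\pa_k(\e^{i\Th}v),\cdot\rangle$ can then overwhelm positivity. The paper's fix is the weighted decomposition \eqref{decomp}, inserting the factors $(q_\ep d_\ep)^{-1}$, $(p_\ep d_\ep)^{-1}$ into the ansatz and the multiplier $\b=p_\ep^{-1}q_\ep$ into $\Pi_k$, $Q$. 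The function $q_\ep$ is a deliberately engineered radial profile (Lemma~\ref{lemqq}, Corollary~\ref{corgposi}) that simultaneously absorbs the $O(|x|^2)$ quadratic deviation of $h^\ep$ near the geodesic and the full $K_0$-sized off-diagonal terms far away, producing the crucial lower bound \eqref{lowE}, $E(t)\geq\frac{1}{2\rho}E_0(t)-C_0\ep\|v\|_{H^1}^2$. Without these weights you have no route to positivity, so the Gronwall step is unjustified.

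Two smaller omissions are worth flagging. First, your estimate $|g^\ep-m_0|\lesssim\ep^2(1+|x|^2)$ is an estimate \emph{centered at the geodesic}, whereas the soliton is centered at $\xi(t)+u_0t$; you must therefore control $|\xi(t)|$ along the way, which the paper does through the bootstrap assumption \eqref{baxi} and the a priori bound \eqref{xiest}, recovering $|\xi|\lesssim C_2$ at the end via Gronwall. Second, the soliton contribution to the integrated conservation laws is not automatically $O(\ep^2)$: the integrand carries $\pa\b\sim\ep$ only, and the extra $\ep$ is earned by the integration-by-parts identity \eqref{patnabla}, which converts $(\pa_t+(u+u_0)\nabla_x)\b_0\cdot F$ into $\pa_t(\b_0 F)-\b_0 D_\la F\dot\ga+(u+u_0)D_x(\b_0F)$ and thereby exchanges $\pa\b$ for $\dot\ga=O(\ep^2)$ plus total derivatives. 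This identity, not simply the pointwise Fermi decay, is the mechanism that makes the source contributions $O(\ep^2)$ rather than $O(\ep)$ and is essential to your Gronwall closing.
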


\begin{remark}
When $g=h$, D. Stuart in \cite{psedoStuart} proved the orbital
stability result up to time $t^*/\ep$ for sufficiently small $t^*$.
\end{remark}

To show that the metric $g^\ep$ is $C^1$, we need to control the
energy momentum tensor $T_{\mu\nu}$ in $H^2$. We hence have to
estimate the higher Sobolev norm of $\phi$. If initially $\phi_0\in
H^3$, $\phi_1\in H^2$, then we have
\begin{prop}
\label{prophigherSobnorm} Assume
\[
\ep_1=\|\phi_0(x)-\phi_S(x;\la_0)\|_{H^3}+\|\phi_1(x)-\psi_S(x;\la_0)\|_{H^2}<\infty.
\]
Let $\la(t)$ be the curve obtained in Theorem
\ref{thmfix}. Then
\[
\sum\limits_{|s|\leq
3}\|\pa^s(\phi(t, x)-\phi_S(x;\la(t))\|_{L^2(\mathbb{R}^3)}\leq
C\max\{\ep, \ep_1\},\quad \forall t\in[0, T/\ep]
\]
for a constant $C$ independent of $\ep, \ep_1$.
\end{prop}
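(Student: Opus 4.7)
The plan is to prove the estimate by induction on the Sobolev order, ascending from the base $H^1$ case of Theorem \ref{thmfix} to $H^3$ in two steps. Using the identity \eqref{idenofphiS} and the decomposition $\dot\la = \dot\ga + V(\la)$, the remainder $v = \phi - \phi_S(\cdot;\la(t))$ satisfies
\begin{equation*}
\Box_{g^\ep} v - m^2 v + p|\phi_S|^{p-1} v + N_{\ge 2}(v) = G,
\end{equation*}
where $N_{\ge 2}$ is at least quadratic in $v$ and $G$ collects the background--geometry error $(\Box_{g^\ep}-\Box_{m_0})\phi_S$ together with the modulation error $D_\la\psi_S\cdot\dot\ga + D_\la\phi_S\cdot\ddot\ga$. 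Because $(t, u_0 t)$ is a timelike geodesic along which the Christoffel symbols of $h$ vanish and $h = m_0$, Taylor expansion combined with the exponential decay of $\phi_S$ and its derivatives (Theorem \ref{propoff}) yields $\|(\Box_{h^\ep}-\Box_{m_0})\phi_S\|_{H^2} \le C\ep^2$; the bootstrap \eqref{bapsi} controls the additional contribution from $g^\ep - h^\ep$, while \eqref{obsla} (together with the modulation equation differentiated once, giving $|\ddot\ga| \le C\ep^2$) ensures $\|G\|_{H^2} \le C\ep^2$.

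For the inductive step, commute the equation with $\pa^s$, $|s|\le 2$, and perform the energy estimate using the uniformly timelike multiplier $X = \pa_t + u_0\cdot \pa_x$ from Lemma \ref{propfermiCord}. Because the soliton center moves approximately along $(t, u_0 t)$ with residual velocity $u(t) = O(\ep)$ and $\pa g^\ep = O(\ep)$ by slow variation, the field $X$ satisfies $X\phi_S = O(\ep)$ pointwise; correspondingly the coefficient of the Gronwall--growth term in the energy identity for $\pa^s v$ is $O(\ep)$ rather than $O(1)$, and the exponential factor over $[0, T/\ep]$ is merely $e^{C\ep\cdot T/\ep} = e^{CT} = O(1)$. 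Commutators $[\pa^s, |\phi_S|^{p-1}]v$ involve only $\le s-1$ derivatives of $v$ and are controlled by the inductive hypothesis; the nonlinear remainder $\pa^s N_{\ge 2}(v)$ is handled by Moser estimates, the Sobolev embedding $H^2(\mathbb{R}^3) \hookrightarrow L^\infty$, and the regularity hypothesis $p \ge 2$ (which gives the nonlinearity enough smoothness to apply the chain rule up to order three).

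Combining these ingredients yields $\|v\|_{H^{s+1}}(t) \le C(\ep_1 + T\ep) \le C\max\{\ep,\ep_1\}$ for $|s|\le 2$, which is the claimed bound. The main obstacle is precisely controlling the Gronwall growth rate: naively using $\pa_t v$ as multiplier would introduce the factor $\pa_t|\phi_S|^{p-1}$, of size $O(1)$ because $\dot\la$ contains the $O(1)$ drift $V(\la)$, generating an unacceptable $e^{CT/\ep}$ factor on the long interval $[0, T/\ep]$. Replacing $\pa_t$ by the multiplier $X$ --- tangent to the unperturbed soliton worldline --- reduces this rate to $O(\ep)$, so that the exponential factor becomes harmless and the long--time higher--regularity estimate closes.
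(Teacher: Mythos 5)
Your plan has two genuine gaps, and both are exactly the places where the paper does something more delicate.

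First, you decompose around $\phi_S(\cdot;\la(t))$ with the curve $\la(t)$ from Theorem \ref{thmfix}. The source term $G$ then contains $D_\la\phi_S\cdot\ddot\ga$ (from $\pa_{tt}\phi_S$), so controlling $\|\pa^s G\|_{L^2}$ for $|s|\leq 2$ forces you to bound $\pa_t^4\ga$. That requires differentiating the modulation ODE \eqref{modeq} three times, hence taking three derivatives of the nonlinearity $\mathcal{N}(\la)\sim |\phi|^{p-1}\phi$; for $2\leq p<3$ this function is not $C^3$ in $\phi$, so the estimate is not available. The paper sidesteps this by introducing the modified curve $\tilde\la$ with $\pa_t\tilde\la = V(\la)$, decomposing $\phi=\phi_S(\cdot;\tilde\la)+\e^{i\Th(\tilde\la)}\tilde v$ (see \eqref{decompmod}), so that two time derivatives of the source only see $\pa_t^3\ga$, which Proposition \ref{hiorderga} controls.

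Second, commuting with general $\pa^s$ and using $X$ only as a multiplier does not give the required smallness of the commutator. The dangerous term is $[\pa_i,f_\om^{p-1}]v=(\pa_i f_\om^{p-1})\,v$. Since $\pa_i f_\om^{p-1}=O(1)$ and $\|v\|_{L^2}=O(\ep)$, this source is $O(\ep)$ \emph{absolutely}, not $O(\ep)$ times an already-small quantity. Plugging into the energy estimate of Lemma \ref{lemenergyL}, the contribution is $\ep^{-1}\int_0^{T/\ep}O(\ep^2)\,ds=O(T)$, so you only recover $\|\pa\pa_i v\|_{L^2}=O(1)$ — that is Corollary \ref{corhighSobnorm}, not the proposition. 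Using $X$ as multiplier only tames the Gronwall \emph{rate}; it does nothing for this commutator. You correctly identify that $Xf_\om=O(\ep)$ is what makes the bound close, but that bound is exploited in the paper by commuting \emph{with $X$ itself}, so that $[X,f_{\om_0}^{p-1}]\tilde v=(Xf_{\om_0}^{p-1})\tilde v=O(\ep^2)$. The energy estimate then controls only $\|\pa X\tilde v\|_{L^2}$, and the full $H^2$ (and later $H^3$) norm is recovered from the equation via the elliptic estimate Lemma \ref{lemellipt}, writing $\Box_{g^\ep}$ as an $X$-transport part plus a uniformly elliptic spatial operator. Your proposal is missing this elliptic recovery step entirely, and without it (and without the modified curve) the argument does not close.
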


A direct corollary of the above proposition is the boundedness of
the $H^3$ norm of the solution $\phi$.
\begin{cor}
\label{corhighSobnorm}
 If the initially data $\phi_0\in H^3$, $\phi_1\in H^2$, then
\[
\sum\limits_{|s|\leq 3}\|\pa^s\phi(t,
\cdot)\|_{L^2(\mathbb{R}^3)}\leq C, \quad \forall t\in[0,
T/\ep],
\]
where the constant $C$ is independent of $\ep$.
\end{cor}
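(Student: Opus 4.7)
The plan is to derive the corollary from Proposition \ref{prophigherSobnorm} by a triangle inequality, after checking that the soliton part $\phi_S(\cdot;\la(t))$ has an $H^3$ norm bounded uniformly in $t\in[0,T/\ep]$ and in $\ep$. Concretely, I would write
\[
\|\pa^s \phi(t,\cdot)\|_{L^2} \leq \|\pa^s(\phi(t,\cdot)-\phi_S(\cdot;\la(t)))\|_{L^2} + \|\pa^s \phi_S(\cdot;\la(t))\|_{L^2}
\]
for each $|s|\leq 3$. The first term is controlled by $C\max\{\ep,\ep_1\}$ thanks to Proposition \ref{prophigherSobnorm}, and since $\phi_0\in H^3$, $\phi_1\in H^2$ the quantity $\ep_1$ is finite and independent of $\ep$, so this contribution is bounded by a constant.

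The remaining task is to bound $\|\pa^s\phi_S(\cdot;\la(t))\|_{L^2}$ uniformly. From the explicit formula $\phi_S(x;\la)=e^{i\Th(x;\la)}f_{\om}(A_u(x-\xi))$ together with the chain rule, each spatial derivative produces either a derivative of $f_\om$ up to order $s$, or a factor coming from $\Th$ which is polynomial in $\om u$ and hence uniformly bounded as long as $\la$ stays in a compact subset of $\La_{\textnormal{stab}}$. Theorem \ref{thmfix}, specifically \eqref{obsla}, ensures that $|\la(0)-\la_0|\leq C\ep$ and $|\dot\la-V(\la)|\leq C\ep^2$, so on the time interval $[0,T/\ep]$ the curve $\la(t)$ stays $O(1)$-close to $\la_0+\int_0^t V(\la)\,ds$, which in turn stays in a compact subset of $\La_{\textnormal{stab}}$ provided $\ep^*$ is small enough (one only moves in the $(\th,\xi)$ directions to leading order, which does not affect the stability condition $\frac{p-1}{6-2p}<\om^2/m^2<1$ nor the timelike condition $|u|<1$).

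On such a compact subset, Theorem \ref{propoff} yields exponential decay of $f_\om$ up to fourth order derivatives with constants uniform in $\om$, and a change of variables $z=A_u(x-\xi)$ (whose Jacobian is uniformly bounded above and below) then gives
\[
\|\pa^s\phi_S(\cdot;\la(t))\|_{L^2(\mathbb{R}^3)} \leq C\sum_{|\a|\leq s}\|\nabla^\a f_{\om(t)}\|_{L^2(\mathbb{R}^3)} \leq C
\]
for a constant $C$ depending only on $h$, $\la_0$, $T$. Combining the two bounds and summing over $|s|\leq 3$ yields the desired uniform $H^3$ estimate.

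There is essentially no obstacle here; the only thing to be careful about is to verify that the curve $\la(t)$ really does remain in a compact subset of $\La_{\textnormal{stab}}$ on the full interval $[0,T/\ep]$, which uses the second estimate in \eqref{obsla} integrated over $[0,T/\ep]$ to control $|\dot\ga|\cdot(T/\ep)\leq CT\ep$, so the drift of $\ga$ is $O(\ep)$ and $\la(t)$ is a small perturbation of the trivial flow of $V$. Once that is in hand, the corollary follows immediately from the triangle inequality above.
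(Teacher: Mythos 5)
Your proof is correct and matches what the paper intends: the paper states this as an immediate consequence of Proposition~\ref{prophigherSobnorm} without elaboration, and your triangle-inequality argument together with the observation that the soliton part has uniformly bounded $H^3$ norm (since $\om(t)$, $u(t)$ stay within $O(\ep)$ of $\om_0$, $u_0$ by \eqref{obsla}, so the decay constants and the Jacobian of $z\mapsto A_u(x-\xi)$ are uniform) is exactly the expected filling-in of that step.
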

The above boundedness of $\phi$ in $H^3$ was used in \cite{einsteinStuart}. In this paper, we have to use the fact
that if initially the data $(\phi_0, \phi_1)$ are close to some soliton in $H^3$, then the solution $\phi$ stays close to
some translated solitons for all $t\leq T/\ep$.
\begin{cor}
\label{corsmall} If $\ep_1\leq C_0\ep$ for some constant
$C_0$, then
\[
\sum\limits_{|s|\leq
3}\|\pa^s(\phi-\phi_S(x;\la(t))\|_{L^2(\mathbb{R}^3)}\leq CC_0\ep,\quad
\forall t\in[0, T/\ep].
\]
\end{cor}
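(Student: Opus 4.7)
The plan is very short: this corollary follows by direct substitution into Proposition \ref{prophigherSobnorm}, which has already done all of the real work (the modulation estimates, the energy decomposition, and the higher-order Sobolev bounds on the remainder). I would simply invoke that proposition, with the same curve $\la(t)$ constructed in Theorem \ref{thmfix}, and then bound $\max\{\ep, \ep_1\}$ using the hypothesis $\ep_1 \leq C_0 \ep$.

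More precisely, the hypothesis on the initial data gives $\ep_1 \leq C_0 \ep < \infty$, so Proposition \ref{prophigherSobnorm} applies and yields
\[
\sum_{|s|\leq 3}\|\pa^s(\phi(t,x)-\phi_S(x;\la(t)))\|_{L^2(\mathbb{R}^3)} \leq C \max\{\ep,\ep_1\}
\]
for all $t\in[0,T/\ep]$. Since $\ep_1\leq C_0\ep$, we have $\max\{\ep,\ep_1\}\leq (1+C_0)\ep$, and absorbing the factor $1+C_0$ into the constant (or equivalently noting $(1+C_0)\leq 2\max\{1,C_0\}$) gives a bound of the form $C'C_0\ep$, which is the claimed estimate.

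There is no genuine obstacle here: the corollary is purely a bookkeeping statement that tracks how the size of the initial perturbation in $H^3\times H^2$ propagates under the conclusion of Proposition \ref{prophigherSobnorm}. The only thing to double-check is that the $\la(t)$ appearing in Corollary \ref{corsmall} is indeed the same $C^1$ curve produced by Theorem \ref{thmfix} (and used in Proposition \ref{prophigherSobnorm}); this is the case because that curve is determined by the $H^1\times L^2$ orbital stability statement and is independent of the stronger $H^3\times H^2$ closeness assumed here. Therefore the proof is essentially a one-line application of the proposition.
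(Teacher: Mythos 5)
Your proposal is correct and is exactly how the paper intends this corollary to be read: it is a direct substitution of the hypothesis $\ep_1\leq C_0\ep$ into the conclusion $C\max\{\ep,\ep_1\}$ of Proposition \ref{prophigherSobnorm}, using the same modulation curve $\la(t)$ from Theorem \ref{thmfix}. The only small wrinkle (present in the paper's statement as well, not a defect of your argument) is that when $C_0<1$ the bound one actually gets is $C\ep$, not $CC_0\ep$; the clean way to record the conclusion is $C\max\{1,C_0\}\ep$, which you effectively note.
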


To avoid too many constants, we make a convention that $A\les B$ means $A\leq C B$ for some universal constant $C$ depending on
 $h$, $T$, $\la_0$, $m$, $p$.

\subsection{Decomposition of the Solution}
Since solution $\phi$ of \eqref{FEQUATION} exists locally, to begin with, we decompose the solution as follows
\begin{equation}
 \label{decomp}
\begin{cases}
\phi(t, x)=\phi_S(x;\la(t))+\e^{i\Th(\la(t))}\frac{1}{q_\ep(t, x)d_\ep}v(t, x),\quad d_\ep(t, x)=(-\det g)^
{\frac{1}{4}}(\ep t, \ep x),\\
\pa_t\phi(t, x)=\psi_S(x;\la(t))+\e^{i\Th(\la(t))}\frac{1}{p_\ep(t,
x)d_\ep}w(t, x),\quad p_\ep(t, x)=\sqrt{-(g^\ep)^{00}(t, x)},
\end{cases}
\end{equation}
where $\la(t)\in \La$. We also denote
\begin{align*}
&\tilde{p}_\ep(t,x)=\sqrt{-(h^\ep)^{00}(t, x)}, \quad\tilde{p}(t,x)=\sqrt{-h^{00}(t, x)}.
\end{align*}
We define three functions depending on $d_\ep$, $p_\ep$, $q_\ep$ as follows
\[
 a_0=q_\ep d_\ep, \quad a_1=p_\ep d_\ep, \quad a=p_\ep^{-1}q_\ep=a_0 a_1^{-1}, \quad b=p_\ep^{-1}d_\ep.
\]
We now define the function $q_\ep(t, x)$ explicitly. Notice that $h(t, u_0 t)=m_0$ and the
Christoffel symbols $\Ga_{\mu\nu}^{\b}$ for the metric $h$ are vanishing along $(t, u_0 t)$. In
particular we have $\pa h(t, u_0 t)=0$. Hence we can show that
\begin{equation}
\label{pofhm}
\begin{split}
& |h^{\mu \nu}(t, u_0 t+ x)|\leq |(m_0)^{\mu \nu}|+C_0|x|^{2},\quad \forall |x|\leq \delta_0, \quad t\in[0, T],\\
&\left|\sum h^{kl}(t, u_0 t+x)\xi_k\xi_l\right|\geq
(1-C_0|x|^{2})\sum\limits_{k=1}^{3}|\xi_k|^2,\quad \forall |x|\leq
\delta_0, \quad t\in[0, T]
\end{split}
\end{equation}
for some positive constants $C_0$, $\delta_0$, where we recall
$\mu,\nu\in\{0,1, 2, 3\}$ and $k,l\in\{1, 2, 3\}$. Denote
$\rho_0=(1-|u_0|^2)^{-\f12}$. In particular, we have $\rho_0\geq 1$.
Without loss of generality, assume $\frac{1}{4}
K_0^{-3}\rho_0^{-1}\leq 1-3\rho_0C_0 \delta_0^{2}$, $K_0\geq10$ and
$C_0\delta_0^{2}\rho_0\leq \frac{1}{10}$. Then choose a function
$q(x)\in C^{3}(\mathbb{R}^3)$ as follows
\begin{equation*}
q(x)=
\begin{cases}
1-3C_0\rho_0|x|^{2}, \quad |x|\leq \f12 \delta_0,\\
\frac{1}{3} K_0^{-3}\rho_0^{-1}\delta_0^{-2}(|x|^2-\frac{\delta_0^2}{4})
+(\frac{4}{3}\delta_0^{-2}-C_0\rho_0)(\delta_0^2-|x|^2),\quad |x|\in(\f12 \delta_0, \delta_0),\\
\frac{1}{4} K_0^{-3}\rho_0^{-1},\qquad \quad \quad|x|\geq \delta_0.
 \end{cases}
\end{equation*}
We define $q_\ep(t, x)$ as
\begin{equation}
 q_\ep(t, x)=q(\ep (x-u_0t)).
\end{equation}
We prove an inequality.
\begin{lem}
 \label{lemqq}
Let $y=(y_1, y_2, y_3)\in \mathbb{R}^3$. Then
\begin{equation*}
 m^2q_\ep^{-2}b^2+q_\ep^{-2}(h^\ep)^{kl}y_k y_l-2(q_\ep \tilde{p}_\ep)^{-1}|(h^\ep)^{0k}y_k| |y\cdot u|-3\rho_0m(q_\ep \tilde{p}_\ep)^{-1}|(h^\ep)^{0k}y_k| |b|
\geq m^2 b^2 +|y|^2
\end{equation*}
for all $b\in \mathbb{R}$, $(t, x)\in[0,
T/\ep]\times\mathbb{R}^3$, $y=(y_1, y_2, y_3)\in\mathbb{R}^3$ and
$u\in \mathbb{R}^3$, $|u|\leq 1$.
\end{lem}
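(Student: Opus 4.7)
My plan is to reduce the desired inequality to a quadratic-form estimate in $(|b|,|y|)$ and then verify it by case analysis on $\zeta := \ep(x-u_0 t)$. Using $|u|\leq 1$ (so $|y\cdot u|\leq |y|$) and the Cauchy--Schwarz bound $|(h^\ep)^{0k}y_k|\leq |\vec{c}||y|$ with $\vec{c}=((h^\ep)^{0k})_{k=1,2,3}$, it suffices to show
\[
\tilde{A}\,b^2+\tilde{B}\,|y|^2-\tilde{C}\,|b||y|\geq 0,
\]
where $\tilde{A}=m^2(q_\ep^{-2}-1)$, $\tilde{B}=q_\ep^{-2}(h^\ep)^{kl}y_ky_l/|y|^2-1-2(q_\ep\tilde{p}_\ep)^{-1}|\vec{c}|$, and $\tilde{C}=3\rho_0 m(q_\ep\tilde{p}_\ep)^{-1}|\vec{c}|$. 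This is equivalent to the discriminant bound $\tilde{C}^2\leq 4\tilde{A}\tilde{B}$.

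In the far regime $|\zeta|\geq\delta_0$, $q_\ep$ takes its minimal value $\tfrac{1}{4}K_0^{-3}\rho_0^{-1}$, so $q_\ep^{-2}\geq 16K_0^6\rho_0^2$. Together with the general bounds \eqref{condfix} ($(h^\ep)^{kl}y_k y_l\geq K_0^{-1}|y|^2$, $\tilde{p}_\ep\geq K_0^{-1/2}$, $|\vec{c}|\leq \sqrt{3}K_0$), the factor $K_0^6$ in $\tilde{A}$ and $K_0^5$ in $\tilde{B}$ dwarf the error terms, which scale at most like $K_0^{9/2}\rho_0^j$; the margin $K_0^{3/2}$ with $K_0\geq 10$ is ample.

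In the near regime $|\zeta|\leq \delta_0/2$, set $\alpha=C_0|\zeta|^2$. The vanishing of Christoffel symbols along $(t,u_0t)$ forces $\partial h$ to vanish there, so Taylor expansion combined with \eqref{pofhm} yields $|\vec{c}|\leq \sqrt{3}\alpha$, $(h^\ep)^{kl}y_ky_l\geq (1-\alpha)|y|^2$, and $\tilde{p}_\ep^2\geq 1-\alpha$. With $q_\ep=1-3\rho_0\alpha$ one obtains $\tilde{A}\geq 6\rho_0 m^2\alpha + O(\alpha^2)$, $\tilde{B}\geq (6\rho_0-1-2\sqrt{3}\beta_0)\alpha + O(\alpha^2)$, and $\tilde{C}\leq 3\sqrt{3}\rho_0 m\beta_0\alpha$, where $\beta_0:=(q_\ep\tilde{p}_\ep)^{-1}\to 1$ as $\alpha\to 0$. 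To leading order, the discriminant condition reduces to $27\rho_0^2\leq 24\rho_0(6\rho_0-1-2\sqrt{3})$, equivalently $117\rho_0\geq 24+48\sqrt{3}\approx 107.1$, which holds for all $\rho_0\geq 1$. Higher-order corrections in $\alpha$ work in our favor because $q_\ep^{-2}$ is strictly convex in $\alpha$, so the inequality persists uniformly on $|\zeta|\leq \delta_0/2$. The intermediate strip $\delta_0/2\leq |\zeta|\leq \delta_0$ is then handled by continuity, using the monotone $C^3$ interpolation built into $q$.

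The main obstacle is the tightness in the near-geodesic case: the coefficient $3\rho_0$ in the definition of $q$ is essentially optimal --- any smaller multiple would fail the leading-order discriminant check when $\rho_0$ is close to $1$ --- so one must carefully retain rather than drop the higher-order terms in $\alpha$ in order to absorb the correction from $\beta_0>1$ and to close the estimate uniformly.
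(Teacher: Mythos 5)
Your overall plan — split on the size of $\zeta=\ep(x-u_0t)$, reduce to a quadratic form in $(|b|,|y|)$, and run a discriminant check — is the same as the paper's, and your far-regime computation is essentially right. But there are two genuine gaps, one structural and one quantitative.

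\textbf{The intermediate strip is not handled.} You divide into three cases and dispatch the annulus $\delta_0/2\leq|\zeta|\leq\delta_0$ ``by continuity, using the monotone $C^3$ interpolation built into $q$.'' This is not a proof: an inequality that holds on $\{|\zeta|\leq\delta_0/2\}$ and on $\{|\zeta|\geq\delta_0\}$ does not automatically propagate to the gap, and continuity of $q$ alone gives no uniform estimate there. The paper avoids this by treating all of $|\zeta|\leq\delta_0$ as a \emph{single} case: on the whole inner ball one still has the smallness $|(h)^{0k}|\leq C_0|\zeta|^2$ (so $|\vec c|\leq\sqrt{3}C_0\delta_0^2\leq\sqrt{3}/(10\rho_0)$ is small on the annulus too) \emph{and} the one-sided bound $q(\zeta)\leq 1-3C_0\rho_0|\zeta|^2$, which by the paper's assumption $\frac14 K_0^{-3}\rho_0^{-1}\leq 1-3C_0\rho_0\delta_0^2$ holds on the interpolation piece as well (both sides are linear in $|\zeta|^2$, agree at $\delta_0^2/4$, and are ordered at $\delta_0^2$). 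With only that upper bound on $q$ the near-geodesic estimate closes uniformly on $|\zeta|\leq\delta_0$. Your proposal never uses this fact, which is the actual mechanism that makes the annulus harmless.

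\textbf{The near-regime Taylor expansion is not closed.} You keep only the $O(\alpha)$ terms in $\tilde A,\tilde B$ with $\beta_0$ set to $1$, land on $117\rho_0\geq 24+48\sqrt 3\approx 107.1$ (about a $9\%$ margin at $\rho_0=1$), and then assert that ``higher-order corrections work in our favor because $q_\ep^{-2}$ is strictly convex.'' But $\beta_0=(q_\ep\tilde p_\ep)^{-1}>1$ for $\alpha>0$ \emph{hurts} both $\tilde B$ and $\tilde C$, and the $(1-\alpha)$ factor in $(h^\ep)^{kl}y_ky_l/|y|^2$ hurts $\tilde B$ as well; your claim that these are absorbed requires a computation you have not done, and the linear-in-$\alpha$ lower bounds $\tilde A\geq 6\rho_0 m^2\alpha$, $\tilde B\geq(6\rho_0-1-2\sqrt3\beta_0)\alpha$ in fact do \emph{not} close the discriminant inequality once the $\beta_0$ and $O(\alpha^2)$ corrections are inserted, precisely because the margin is thin. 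The paper sidesteps this by never Taylor-expanding: with $q\leq 1-3C_0\rho_0|\zeta|^2$ it computes exactly that $1-q^2-C_0|\zeta|^2-3qC_0|\zeta|^2\geq 2C_0\rho_0|\zeta|^2$ and $q^{-2}-1\geq 5C_0\rho_0|\zeta|^2$, so the final discriminant check becomes the clean constant comparison $36\leq 40$ with no dependence on $\alpha$, $\beta_0$, or $\rho_0$. Your convexity heuristic is correct in spirit (the true $q^{-2}$ lies above its linearization), but it must be replaced by exact algebra to actually close the estimate.

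Your remark on the near-optimality of the factor $3\rho_0$ is a reasonable sanity check, but it is not part of the proof; and your application of \eqref{pofhm} to conclude $\tilde p_\ep^2\geq 1-\alpha$ uses a two-sided bound $|h^{\mu\nu}-(m_0)^{\mu\nu}|\leq C_0|\zeta|^2$ that should be stated (the quoted display only gives an upper bound on $|h^{\mu\nu}|$); this follows from the same Taylor argument but is worth making explicit.
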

\begin{proof}
 Fix $t$. After scaling, at point $(t, u_0t+x)$, it suffices to
 show that
\begin{equation}
\label{qq00}
 m^2q^{-2}b^2+q^{-2}h^{kl}y_k y_l-2(q\tilde{ p})^{-1}|h^{0k}y_k| |y\cdot u|-3\rho_0 m(q \tilde{p})^{-1}|h^{0k}y_k| |b|
\geq m^2 b^2 +|y|^2,\forall t\in[0, T].
\end{equation}
When $|x|\geq \delta_0$, we have $q(x)=\frac{1}{4} K_0^{-3}\rho_0^{-1}$. Hence the left hand side of \eqref{qq00}
\begin{align*}
& \geq m^2 q^{-2}b^2+q^{-2}K_0^{-1}|y|^2-2\sqrt{3}q^{-1}K_0^2|y|^2-3\sqrt{3}\rho_0mq^{-1}K_0^2|y||b|\\
&\geq 15K_0^6\rho_0^2m^2b^2+(15K_0^5\rho_0^2-8\sqrt{3}K_0^5\rho_0)|y|^2-12\sqrt{3}K_0^5\rho_0^2m|y||b|+m^2b^2+|y|^2\\
& \geq m^2 b^2+|y|^2.
\end{align*}
When $|x|\leq \delta_0$, we have $|h^{0k}(t, u_0t+x)|\leq C_0 |x|^{2}$ by
\eqref{pofhm}. Notice that $q(x)\leq 1-3C_0\rho_0|x|^2$ for $|x|\leq \delta_0$.
By \eqref{condfix}, we can show that the left hand side of \eqref{qq00}
\begin{align*}
&\geq m^2q^{-2}b^2+q^{-2}(1-C_0|x|^{2})|y|^2-2q^{-1}(1+C_0|x|^{2})C_0|x|^{2}|y|^2\\
&\qquad -3\sqrt{3}\rho_0mq^{-1}(1+C_0|x|^{2})C_0|x|^{2}|y| |b|\\
&\geq (q^{-2}-1)m^2b^2+q^{-2}(1-q^2-C_0|x|^2-3qC_0|x|^2)|y|^2-6\rho_0 m q^{-1}C_0|x|^2|yb|+m^2b^2+|y|^2\\
&\geq 5C_0\rho_0|x|^2m^2b^2+q^{-2}2C_0\rho_0 |x|^2|y|^2-6\rho_0 m q^{-1}C_0|x|^2|yb|+m^2b^2+|y|^2\\
&\geq m^2b^2+|y|^2,
\end{align*}
where we recall that $C_0\rho_0|x|^2\leq C_0\delta_0^2\rho_0\leq \frac{1}{10}$ and $\rho_0=(1-|u_0|^2)^{-\f12}\geq 1$.
Hence the lemma holds.
\end{proof}
In application, we need a similar inequality for the metric $g^\ep$.
\begin{cor}
\label{corgposi} Let $g^\ep(t, x)=g(\ep t, \ep x)$, $h^\ep(t, x)=h(\ep t, \ep x)$. Assume $\|g-h\|_{C^0}\leq \ep$. Then
\begin{equation*}
 m^2q_\ep^{-2}b^2+q_\ep^{-2}(g^\ep)^{kl}y_k y_l-2(q_\ep p_\ep)^{-1}|(g^\ep)^{0k}y_k| |y\cdot u|-2m(q_\ep p_\ep)^{-1}|
(g^\ep)^{0k}y_k| |b| \geq (1-C\ep)(m^2 b^2 +|y|^2)
\end{equation*}
for all $b\in \mathbb{R}$, $(t, x)\in[0,
T/\ep]\times\mathbb{R}^3$, $y=(y_1, y_2, y_3)\in\mathbb{R}^3$ and
$u\in \mathbb{R}^3$, $|u|\leq 1$, where the constant $C$ depends only on $h$ and $m$.
\end{cor}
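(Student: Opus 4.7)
\medskip

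\textbf{Approach.} The plan is to deduce Corollary \ref{corgposi} from Lemma \ref{lemqq} by a direct perturbation argument, treating the unknown $g^\ep$ as an $\ep$-perturbation of the given vacuum metric $h^\ep$. Since $\|g-h\|_{C^0}\le\ep$ and the uniform Lorentzian bounds \eqref{condfix} hold for $h$, after shrinking $\ep^*$ if necessary the same type of bounds hold for $g^\ep$, so passing to inverses is Lipschitz: one gets $|(g^\ep)^{\mu\nu}-(h^\ep)^{\mu\nu}|\le C\ep$ and $|p_\ep^{-1}-\tilde p_\ep^{-1}|\le C\ep$ pointwise, with a constant $C=C(h)$. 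This is the only input from the hypothesis that I need.

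\medskip

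\textbf{Term-by-term comparison.} Let $A$ denote the LHS of the corollary and $B$ the LHS of Lemma \ref{lemqq}. I would compare the two expressions piece by piece. The diagonal piece $q_\ep^{-2}(g^\ep)^{kl}y_ky_l$ differs from its $h^\ep$-analog by at most $C\ep|y|^2$. The $|y\cdot u|$ cross term in $A$ differs from the corresponding term in $B$ by at most $C\ep|y|^2$, using $|y\cdot u|\le|y|$ together with the Lipschitz estimates above. For the $|b|$ cross term I would use the algebraic identity
\[
3\rho_0 m(q_\ep\tilde p_\ep)^{-1}|(h^\ep)^{0k}y_k||b| \;=\; 2m(q_\ep\tilde p_\ep)^{-1}|(h^\ep)^{0k}y_k||b| + (3\rho_0-2)m(q_\ep\tilde p_\ep)^{-1}|(h^\ep)^{0k}y_k||b|,
\]
where the second summand is nonnegative since $\rho_0\ge1$, and the first summand differs from its $g^\ep$ analog (the corresponding term in $A$) by at most $C\ep\, m|y||b|$. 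Altogether, $A\ge B - C\ep|y|^2 - C\ep\, m|y||b|$.

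\medskip

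\textbf{Absorption and conclusion.} By AM--GM, $m|y||b|\le\tfrac12(m^2 b^2+|y|^2)$, so the error terms are bounded by $C\ep(m^2 b^2+|y|^2)$ with $C=C(h,m)$. Applying Lemma \ref{lemqq} gives $B\ge m^2 b^2+|y|^2$, hence
\[
A \;\ge\; (1-C\ep)(m^2 b^2+|y|^2),
\]
which is the claim. I do not expect any serious obstacle here: the inequality in Lemma \ref{lemqq} already contains built-in slack (the proof shows strict inequalities with extra positive contributions in both regions $|x|\le\delta_0$ and $|x|\ge\delta_0$), so the $O(\ep)$ perturbation is readily absorbed. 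The only point requiring a line of care is that the constant $C$ depends only on $h$ and $m$ and is uniform in $(t,x)$, $b$, $y$, $u$; this follows from \eqref{condfix} together with the explicit positive lower bound on $q_\ep$ built into its construction.
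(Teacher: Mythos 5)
Your proof is correct. The paper states Corollary \ref{corgposi} without proof, and the perturbation argument you give is the natural one: write the $g^\ep$-expression as the corresponding $h^\ep$-expression (with coefficient $2$ on the $|b|$ cross term) plus $O(\ep)$ errors, observe that this modified $h^\ep$-expression dominates the left side of Lemma~\ref{lemqq} by the nonnegative slack $(3\rho_0-2)m(q_\ep\tilde p_\ep)^{-1}|(h^\ep)^{0k}y_k||b|$ coming from $\rho_0\ge1$, and absorb the $O(\ep)$ errors into $(m^2b^2+|y|^2)$ via $m|y||b|\le\tfrac12(m^2b^2+|y|^2)$. The uniformity of all constants follows from \eqref{condfix} and the explicit lower bound $q_\ep\ge\tfrac14 K_0^{-3}\rho_0^{-1}$ built into the definition of $q$, exactly as you note.

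One small point worth making explicit if you were to write this out in full: the passage from $\|g-h\|_{C^0}\le\ep$ to $|(g^\ep)^{\mu\nu}-(h^\ep)^{\mu\nu}|\le C\ep$ and $|p_\ep^{-1}-\tilde p_\ep^{-1}|\le C\ep$ requires $g^\ep$ to remain uniformly Lorentzian, which holds once $\ep$ is below a threshold determined by $K_0$ in \eqref{condfix}; since the conclusion $(1-C\ep)(\cdot)\ge0$ is vacuous unless $C\ep<1$, this smallness is implicitly part of the statement anyway.
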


\subsection{Orthogonality Condition and Modulation Equations}
The decomposition of the solution \eqref{decomp} relies on $\la(t)$, which we write as
\[
 \la(t)=(\om(t), \th(t), \xi(t)+u_0t, u(t)+u_0).
\]
Denote $\ga(t)=(\om(t), \pi(t), \eta(t), u(t))$ such that
\[
 \dot{\la}=\dot\ga+V(\la),
\]
where $V(\la)=(0, \frac{\om}{\rho}, u+u_0, 0)$, $\rho=(1-|u+u_0|^2)^{-\f12}$.
 Recall the notation defined in line \eqref{orthcond}. We choose $\la(t)$ such that the following orthogonality condition
\begin{equation}
 \label{orthcondfix}
<D_\la\phi_S, \e^{i\Th}w>_{dx}=<D_\la\psi_S, \e^{i\Th}v>_{dx}
\end{equation}
holds. Differentiate it with respect to $t$. We can conclude that
the orthogonality condition \eqref{orthcondfix} holds if it holds initially and the curve $\la(t)$ satisfies
\begin{equation*}
\begin{split}
 &<D_\la^2\phi_S\cdot\dot{\la}, \e^{i\Th}w>_{dx}+<D_\la\phi_S, \pa_t\left(\e^{i\Th}w\right)>_{dx}=
<D_\la^2\psi_S\cdot\dot{\la},  \e^{i\Th}v>_{dx}+<D_\la\psi_S, \pa_t\left(\e^{i\Th}v\right)>_{dx}.
\end{split}
\end{equation*}
Using the decomposition \eqref{decomp} and the relation $\dot{\la}=\dot{\ga}+V(\la)$,
we can show that the above equation is equivalent to
\begin{align}
\notag
 &\quad\left(<D_\la \psi_S, a_0 D_\la \phi_S>_{dx}-<D_\la\phi_S, a_1 D_\la\psi_S>_{dx}+
<D_\la^2\phi_S, \e^{i\Th}w>_{dx}-<D_\la^2\psi_S, \e^{i\Th}v>_{dx}\right)\dot{\ga}\\
\label{modeq0}
&=<D_\la(V(\la)D_\la\psi_S), \e^{i\Th}v>_{dx}-<D_\la\phi_S, a_1(\phi_{tt}-D_\la\psi_S V(\la))>_{dx}\\
\notag
&\qquad+<D_\la\psi_S, \dot a_0(\phi-\phi_S)>_{dx}
-<\dot a_1 D_\la\phi_S+(a_1-a_0)D_\la\psi_S, \phi_t-\psi_S>_{dx},
\end{align}
where we have replaced $<D_\la V(\la)\cdot D_\la \phi_S, \e^{i\Th}w>$ with $<D_\la V(\la)\cdot D_\la \psi_S, \e^{i\Th}v>$
by the orthogonality
condition \eqref{orthcondfix}. Denote
\begin{equation}
\label{defofH}
\begin{split}
 H(t, x)&=b\Box_{g^\ep}\phi+a_1\pa_{tt}\phi-\Delta \phi=
p_\ep^{-1} d_\ep\Box_{g^\ep}\phi+p_\ep d_\ep\pa_{tt}\phi-\Delta \phi\\
&=(p_\ep^{-1}d_\ep(g^\ep)^{ij}-(m_0)^{ij})\pa_{ij}\phi+2p_\ep^{-1}d_\ep(g^\ep)^{0k}\pa_{tk}\phi+
\frac{1}{p_\ep d_\ep}\pa_\mu(d_\ep^2(g^\ep)^{\mu\nu})\pa_\nu \phi\\
&=a^{\mu k}\pa_{\mu k}\phi+b^{\nu}\pa_\nu\phi,
\end{split}
\end{equation}
where $a^{\mu k}$, $b^\mu$ are the corresponding coefficients.
Recall the identity \eqref{idenofphiS} for $\phi_S$. Using
integration by parts and observing that
$Re(\e^{-i\Th}D_\la\phi_S)=D_\la|\phi_S|$, we can show that
\begin{align*}
<D_\la\phi_S, \Delta_x \phi>_{dx}&=<D_\la\phi_S, \Delta_x\phi_S>_{dx}+<\Delta_x D_\la\phi_S, \phi-\phi_S>_{dx}\\
&=<D_\la\phi_S, m^2\phi_S-|\phi_S|^{p-1}\phi_S+D_\la\psi_S\cdot V(\la)>_{dx}+<D_\la\Delta_x \phi_S, \phi-\phi_S>_{dx}\\
&=<D_\la(D_\la\psi_S\cdot V(\la)), \phi-\phi_S>_{dx}+<D_\la\phi_S, D_\la\psi_S\cdot V(\la)>_{dx}\\
&\quad -<D_\la\phi_S, -m^2\phi+|\phi_S|^{p-1}\phi+(p-1)|\phi_S|^{p-2}\phi_S Re(\e^{-i\Th}(\phi-\phi_S))>_{dx}.
\end{align*}
Now use the identity \eqref{defofH} to
 replace $p_\ep d_\ep \pa_{tt}\phi$ in \eqref{modeq0}. The equation
\eqref{FEQUATION} of $\phi$ then implies that the right hand side of \eqref{modeq0} can be written as
\begin{align}
\notag
 F(t;\la(t))=&<\dot a_0 D_\la\psi_S, \phi-\phi_S>_{dx}+<(a_0-a_1)D_\la\psi_S-\dot a_1 D_\la\phi_S, \phi_t-\psi_S>_{dx}\\
\notag
&+<D_\la(V(\la)D_\la \psi_S), (a_0-1)(\phi-\phi_S)>_{dx}+<D_\la \phi_S, (a_1-1)D_\la \psi_S \cdot V(\la)>_{dx}\\
\label{defofF}
&+<\pa_k(a^{k\mu}D_\la\phi_S)-b^\mu D_\la \phi_S, \pa_\mu\phi>_{dx}-<b D_\la \phi_S, \e^{i\Th}\mathcal{N}(\la)>_{dx}\\
\notag
&+<(b-1)D_\la\phi_S, m^2\phi-|\phi_S|^{p-1}\phi>_{dx}
-<(b-1)\phi_S D_\la|\phi_S|^{p-1}, \phi-\phi_S>_{dx},
\end{align}
where we denote
\begin{equation}
\label{defnonlin}
 \mathcal{N}(\la)=\e^{-i\Th}\left(|\phi|^{p-1}\phi-|\phi_S|^{p-1}\phi_S-|\phi_S|^{p-1}(\phi-\phi_S)-
(p-1)|\phi_S|^{p-1}\e^{i\Th}Re(\e^{-i\Th}(\phi-\phi_S))\right)
\end{equation}
as the nonlinearity depending also on $\la$. Here recall that $b=p_\ep^{-1}d_\ep$. In particular if $v=v_1+iv_2$ for real functions $v_1$, $v_2$,
then $Re(\e^{-i\Th}(\phi-\phi_S))=(q_\ep d_\ep)^{-1}v_1$ by the decomposition \eqref{decomp}.

\bigskip

To further simplify the above modulation equations, denote
\begin{equation}
\label{defofD}
\begin{split}
& D=<D_\la \psi_S,  D_\la \phi_S>_{dx}-<D_\la\phi_S, D_\la\psi_S>_{dx},\\
 & D_1=<D_\la \psi_S, (a_0-1) D_\la \phi_S>_{dx}-<D_\la\phi_S, (a_1-1)D_\la\psi_S>_{dx},\\
&  D_2=<D_\la^2\phi_S, \e^{i\Th}w>_{dx}-<D_\la^2\psi_S, \e^{i\Th}v>_{dx}.
\end{split}
\end{equation}
We point out here that $D$, $D_1$, $D_2$ are $8\times 8$ matrices. Hence we choose the curve $\la(t)$ such that
\begin{equation}
 \label{modeq}
(D+D_1+D_2)\dot{\ga}=F(t;\la(t)),\quad \dot{\la}=\dot\ga+V(\la), \quad \ga(0)=\la(0)
\end{equation}
and we require $\la(0)$ satisfies the orthogonality condition ~\eqref{orthcondfix}.

\bigskip

 Equations \eqref{modeq} are called the
modulation equations for the curve $\la(t)$. If the orthogonality condition \eqref{orthcondfix} holds initially and $\la(t)$
solves the ODE \eqref{modeq}, then \eqref{orthcondfix} holds as long as $\phi$, $\la(t)$ exist.

\subsection{Initial Data}
We have reduced the orthogonality condition \eqref{orthcondfix} to a coupled system of ODE's for $\la(t)$ if initially $\la(0)$ satisfies
\eqref{orthcondfix}. We use the implicit functional theorem to show the existence of the initial data $\la(0)$ satisfying
the orthogonality condition.
\begin{lem}
\label{datapre}
Denote
\[
 \ep=\|\phi_0(x)-\phi_S(x;\la_0)\|_{H^1(\mathbb{R}^3)}+\|\phi_1(x)-\psi_S(x;\la_0)\|_{L^2(\mathbb{R}^3)}
\]
for some $\la_0\in \La_{\textnormal{stab}}$. Then there exists a positive constant $\ep_1(\la_0)$,
 depending only on $\la_0$, such that if $\ep<\ep_1(\la_0)$, then there exists
$\la(0)\in  \La_{\textnormal{stab}}$ with the property that if
\begin{equation*}
\begin{cases}
\phi_0(x)=\phi_S(x;\la(0))+\e^{i\Th(\la(0))}\frac{1}{q_\ep(0, x)d_\ep}v(x;\la(0)),\\
\phi_1( x)=\psi_S(x;\la(0))+\e^{i\Th(\la(0))}\frac{1}{p_\ep(0,
x)d_\ep}w(x;\la(0)),
\end{cases}
\end{equation*}
then the orthogonality condition holds
\begin{align*}
&<D_\la\psi_S(x;\la(0)),
\e^{i\Th(\la(0))}v(x;\la(0))>_{dx}=<D_\la\phi_S(x;\la(0)),
\e^{i\Th(\la(0))}w(x;\la(0))>_{dx}.
\end{align*}
Moreover, we have
\begin{align*}
&|\la(0)-\la_0|\leq C(\la_0)\ep,\\
& \|v(x;\la(0))\|_{H^1}+\|w(x;\la(0))\|_{L^2}\leq C(\la_0)\ep
\end{align*}
for some constant $C(\la_0)$ depending only on $\la_0$.
\end{lem}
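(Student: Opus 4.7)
The plan is a quantitative implicit function theorem argument, with $\la_0$ as the base point. For given data $(\phi_0,\phi_1)$ and any $\la\in\La_{\textnormal{stab}}$ sufficiently close to $\la_0$, define the remainder fields
\[
v(x;\la)=q_\ep(0,x)d_\ep(0,x)\,\e^{-i\Th(\la)}\bigl(\phi_0(x)-\phi_S(x;\la)\bigr),\quad
w(x;\la)=p_\ep(0,x)d_\ep(0,x)\,\e^{-i\Th(\la)}\bigl(\phi_1(x)-\psi_S(x;\la)\bigr),
\]
and the $8$-vector
\[
G(\la):=\bigl\langle D_\la\psi_S(\cdot;\la),\e^{i\Th(\la)}v(\cdot;\la)\bigr\rangle_{dx}-\bigl\langle D_\la\phi_S(\cdot;\la),\e^{i\Th(\la)}w(\cdot;\la)\bigr\rangle_{dx}.
\]
Finding $\la(0)$ as claimed is exactly solving $G(\la(0))=0$.

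First I would verify the base-point estimate: since $\|\phi_0-\phi_S(\la_0)\|_{H^1}+\|\phi_1-\psi_S(\la_0)\|_{L^2}\leq\ep$ and the weight functions $p_\ep,q_\ep,d_\ep$ are bounded at $t=0$, one has $\|v(\cdot;\la_0)\|_{H^1}+\|w(\cdot;\la_0)\|_{L^2}\lesssim\ep$; pairing against the Schwartz-class functions $D_\la\phi_S(\la_0)$ and $D_\la\psi_S(\la_0)$ gives $|G(\la_0)|\lesssim\ep$. Next I would compute the Jacobian $D_\la G(\la_0)$. Differentiating the pairings, the terms involving $D_\la$ acting on $v$ or $w$ produce the matrix $D_\la\phi_S\otimes D_\la\psi_S$-type entries. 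At the model point $(\phi_S(\la_0),\psi_S(\la_0))$ one has $v=w=0$, so only the pieces coming from differentiating $\phi_S(\la),\psi_S(\la),\Th(\la)$ survive, and they combine into exactly the antisymmetric matrix
\[
D(\la_0)=\langle D_\la\psi_S,D_\la\phi_S\rangle_{dx}-\langle D_\la\phi_S,D_\la\psi_S\rangle_{dx}
\]
from \eqref{defofD}, modified by bounded multiplicative factors $q_\ep d_\ep,\ p_\ep d_\ep$ evaluated at $t=0$ near the center of the soliton, which differ from the Minkowski weights by $O(\ep)$ thanks to the Fermi-coordinate normalization $h(0,0)=m_0$ and the exponential localization of $f_{\om_0}$. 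For perturbed data a standard continuity argument yields $\|D_\la G(\la)-D(\la_0)\|\lesssim\ep+|\la-\la_0|$.

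The key input is invertibility of $D(\la_0)$. For $\la_0\in\La_{\textnormal{stab}}$ this is exactly the block computation sketched in the commented material: after using the explicit formulas for $D_\om\phi_S,D_\th\phi_S,D_\xi\phi_S,D_u\phi_S$ (and the corresponding $\psi_S$ derivatives), the matrix $D$ becomes block antisymmetric with $\det D=|D_\om(\om\|f_\om\|_{L^2}^2)|^2B^6\rho^{10}$, where $B$ is defined by \eqref{defofB}. The stability range $\tfrac{p-1}{6-2p}<\om^2/m^2<1$ precisely guarantees $D_\om(\om\|f_\om\|_{L^2}^2)<0$, so $D(\la_0)$ is invertible with a quantitative bound depending only on $\la_0$.

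With $G(\la_0)=O(\ep)$ and $D_\la G$ boundedly invertible on a neighborhood of $\la_0$, I would apply the quantitative implicit function theorem (or a contraction mapping on $\la\mapsto\la-D_\la G(\la_0)^{-1}G(\la)$) to produce a unique $\la(0)$ with $|\la(0)-\la_0|\leq C(\la_0)\ep$ solving $G(\la(0))=0$, provided $\ep<\ep_1(\la_0)$ is small enough to keep us inside $\La_{\textnormal{stab}}$. The remainder bounds $\|v(\cdot;\la(0))\|_{H^1}+\|w(\cdot;\la(0))\|_{L^2}\lesssim\ep$ follow by writing
\[
\phi_0-\phi_S(\la(0))=\bigl(\phi_0-\phi_S(\la_0)\bigr)+\bigl(\phi_S(\la_0)-\phi_S(\la(0))\bigr)
\]
(and similarly for $\phi_1,\psi_S$), using $\|\phi_S(\la_0)-\phi_S(\la(0))\|_{H^1}\lesssim|\la_0-\la(0)|\lesssim\ep$ from the smooth dependence of $\phi_S$ on $\la$. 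The main obstacle is the invertibility step: it requires carefully checking that the Fermi-coordinate weight factors $p_\ep d_\ep,\ q_\ep d_\ep$ at $t=0$ and at the soliton center differ from $1$ only by $O(\ep)$, so that the perturbed Jacobian is still a small perturbation of the Minkowski matrix $D(\la_0)$ whose nondegeneracy is dictated by the stability condition on $\la_0$.
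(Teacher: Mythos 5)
Your proposal is correct and follows essentially the same route as the paper: the paper also applies the implicit function theorem to the orthogonality functional $\mathcal{F}(v,w,\la)=\langle a_1 D_\la\phi_S,\phi_1-\psi_S\rangle_{dx}-\langle a_0 D_\la\psi_S,\phi_0-\phi_S\rangle_{dx}$, whose Jacobian in $\la$ at the base point is exactly $D+D_1$, with nondegeneracy of $D$ from Lemma \ref{nondegD} and smallness of the weight perturbation $D_1$ from Lemma \ref{lemDF} (which in turn uses the exponential decay of $f_\om$ and the Fermi normalization $h(t,u_0t)=m_0$, just as you indicate). Your reformulation in terms of $G(\la)$ with the data frozen is simply a specialization of the paper's $\mathcal{F}$ in the $(v,w)$ variables, and the quantitative IFT/contraction you invoke is the same mechanism.
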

\begin{proof}
Define a functional $\mathcal{F}: H^1\times L^2\times
\mathbb{R}^8\rightarrow \mathbb{R}^8$ such that
\begin{align*}
\mathcal{F}(v, w, \la)=&<a_1 D_\la\phi_S(\la;x),\phi_1(x)
-\psi_S(\la;x)>_{dx}-<a_0
D_\la\psi_S(\la;x),\phi_0(x)-\phi_S(\la;x)
>_{dx}.
\end{align*}
In particular, we have $\mathcal{F}(0, 0, \la_0)=0$. Notice that
\begin{align*}
F_\la(0, 0, \la_0)=<a_0 D_\la\psi_S, D_\la \phi_S>_{dx}-<a_1 D_\la\phi_S, D_\la \psi_S>_{dx}
=D+D_1.
\end{align*}
By Lemma \ref{nondegD} and Lemma \ref{lemDF} proven later, we can conclude that if $\ep$ is sufficiently small, then $D+D_1$
 is nondegenerate initially. Since $\mathcal{F}$ is
Lipschitz continuous in $(v, w)$, the implicit function theorem then implies that there exists
$\la(0)\in \La_{\textnormal{stab}}$ satisfying
the orthogonality condition \eqref{orthcondfix} as well as the estimates in the lemma.
\end{proof}

\subsection{Bootstrap Argument}
By Lemma \ref{datapre}, we can choose $\la(0)$ close to $\la_0$ such that the orthogonality condition \eqref{orthcondfix} holds
initially. The local existence result shows that there is a short time solution $\phi(t, x)$ of \eqref{FEQUATION}. To prove
the existence of solution $\la(t)$ of the modulation equations \eqref{modeq}, we have to demonstrate that the $8\times 8$
matrix $D+D_1+D_2$ is nondegenerate, which depends on $\la(t)$ itself. We thus need to consider the modulation equations
coupled to the nonlinear wave equation \eqref{FEQUATION}. Since initially the radiation term $(v, w)$ is small in $H^1\times L^2$,
we show that for the coupled equations, $(v, w)$ stays small in $H^1\times L^2$ for all $t\leq T/\ep$, which implies that
the modulation equations are solvable and we can control the modulation curve $\la(t)$.
\begin{prop}
\label{propH1est}
Let $(\phi(t, x), \la(t))$ be solutions of \eqref{FEQUATION}, \eqref{modeq} on $[0, T/\ep]\times\mathbb{R}^3$. Assume the complex
functions $v(t, x)$, $w(t, x)$ satisfy the decomposition \eqref{decomp}. Then for sufficiently small $\ep$, we have
\begin{align*}
&\|v(t, x)\|_{H^1(\mathbb{R}^3)}+\|w(t, x)\|_{L^2(\mathbb{R}^3)}\leq C\ep,\quad \forall t\in[0,T/\ep],\\
&|\dot{\ga}(t)|\leq C\ep^2,\quad \forall t\in[0, T/\ep]
\end{align*}
for some constant $C$ independent of $\ep$.
\end{prop}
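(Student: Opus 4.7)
The plan is to run the modulation argument of \cite{moduStuart} on the slowly varying background, using a timelike multiplier to generate an almost-conserved functional, and to avoid a bootstrap in $(v,w)$ by obtaining a \emph{linear} differential inequality in time that Gronwall integrates to give an $O(\ep)$ bound up to time $T/\ep$. The initial data side is already in place by Lemma \ref{datapre}, which produces $\lambda(0)$ close to $\lambda_0$, an orthogonality condition \eqref{orthcondfix} valid at $t=0$, and initial smallness $\|v(0)\|_{H^1}+\|w(0)\|_{L^2}\lesssim \ep$.

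The first step is to construct an energy-momentum functional using the uniformly timelike vector $X=\partial_t+u_0^k\partial_k$ constructed in Lemma \ref{propfermiCord}: contract $T_{\mu\nu}(g^\ep,\phi;\mathcal{V}(\phi))$ with $X$ and integrate against the normal to $\{t\}\times\mathbb{R}^3$. The weight $q_\ep$ appearing in the decomposition \eqref{decomp} is precisely designed so that, using Corollary \ref{corgposi}, the resulting quadratic form dominates $m^2|\phi_t|^2+|\nabla\phi|^2$ pointwise up to a factor $(1-C\ep)$. Because $\pa_t+u_0\pa_x$ is Killing for $m_0$, and because $h(s,u_0s)=m_0$ with $\partial h(s,u_0s)=0$ along the geodesic, the deformation tensor $\,^{(X)}\pi$ of $X$ with respect to $g^\ep$ is $O(\ep)$ pointwise on any compact set and decays appropriately in $x$ (using \eqref{condfix} and \eqref{bapsi}). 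Consequently the time derivative of this functional is bounded by $C\ep$ times the functional itself plus error terms supported where the soliton is exponentially small.

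Next, plugging the decomposition \eqref{decomp} into the functional and expanding, one recognises three pieces: (i) a soliton contribution $\mathcal{E}(\phi_S(\cdot;\lambda))$ which is explicit in $\lambda$ and whose time derivative reduces to $D_\lambda \mathcal{E}\cdot(\dot\lambda-V(\lambda))=D_\lambda\mathcal{E}\cdot\dot\gamma$, since $\mathcal{E}$ is stationary along the soliton flow; (ii) cross terms linear in $(v,w)$ which vanish by the orthogonality condition \eqref{orthcondfix} after using the identity \eqref{idenofphiS}; (iii) a quadratic form in $(v,w)$ that, after the $q_\ep$ rescaling, coincides up to $O(\ep)$ with the Minkowski energy $E_0(t)$ defined in \eqref{defofE0}. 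By Proposition \ref{positen}, under the orthogonality condition \eqref{orthcondfix} this quadratic form is equivalent to $\|v\|_{H^1}^2+\|w\|_{L^2}^2$. In parallel, the modulation equation \eqref{modeq}, combined with the nondegeneracy of $D+D_1+D_2$ (as invoked in the proof of Lemma \ref{datapre}) and with the bound $|F|\lesssim \ep\,(\|v\|_{H^1}+\|w\|_{L^2})+\|v\|_{H^1}^2+\|w\|_{L^2}^2$ on the right-hand side \eqref{defofF}, gives
\[
|\dot\gamma(t)|\lesssim \ep\bigl(\|v\|_{H^1}+\|w\|_{L^2}\bigr)+\|v\|_{H^1}^2+\|w\|_{L^2}^2.
\]
Substituting this into the derivative of the soliton piece and combining all error contributions produces a differential inequality of the form
\[
\tfrac{d}{dt}E_0(t)\ \lesssim\ \ep\, E_0(t)+\ep\,\sqrt{E_0(t)}+E_0(t)^{3/2}.
\]
Gronwall's inequality on $[0,T/\ep]$, starting from $E_0(0)\lesssim \ep^2$, closes the estimate and gives $E_0(t)\lesssim \ep^2$, hence $\|v\|_{H^1}+\|w\|_{L^2}\lesssim \ep$ and, feeding back into the modulation equation, $|\dot\gamma|\lesssim \ep^2$.

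The main obstacle is preserving the coercivity of Proposition \ref{positen} at all times and at all spatial scales on the slowly varying background: away from the soliton core the metric $g^\ep$ is genuinely different from $m_0$, so the quadratic form arising from the $X$-energy is not literally $E_0$. This is the reason for the carefully chosen cutoff $q_\ep$ and for Corollary \ref{corgposi}; the delicate point is to verify that $q_\ep$ is compatible with the soliton scaling (its gradient is $O(\ep)$ and only supported where $f_\omega$ is exponentially small), so that all the commutator-type errors introduced by inserting $q_\ep d_\ep$ into the decomposition remain at the $O(\ep)$ level and do not spoil the linear-in-time Gronwall bound required to reach the long time $T/\ep$.
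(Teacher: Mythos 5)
Your plan captures several genuine ingredients of the proof (orthogonality kills the cross terms, the weight $q_\ep$ makes the quadratic form coercive via Corollary \ref{corgposi}, Gronwall closes on $[0,T/\ep]$), but the argument as stated has two real gaps that would prevent it from producing an $O(\ep)$ bound.

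\textbf{The differential inequality does not close.} You arrive at
$\frac{d}{dt}E_0 \lesssim \ep E_0 + \ep\sqrt{E_0} + E_0^{3/2}$,
the $\ep\sqrt{E_0}$ term coming from substituting your bound $|\dot\gamma|\lesssim\ep\sqrt{E_0}+E_0$ into $\frac{d}{dt}\mathcal{E}(\phi_S)=D_\la\mathcal{E}\cdot\dot\gamma$. On the time interval $[0,T/\ep]$ with the bootstrap $E_0\leq K\ep^2$ this term contributes $\ep\int_0^{T/\ep}\sqrt{E_0}\lesssim\sqrt K\, T\,\ep$, which is $O(\ep)$, not $O(\ep^2)$; you can only conclude $\|v\|_{H^1}+\|w\|_{L^2}\lesssim\sqrt\ep$. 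The paper avoids this by \emph{not} splitting off and separately controlling the soliton energy. Instead it tracks the Lagrangian $\mathcal{H}(t)-(u(0)+u_0)\cdot\Pi(t)-\frac{\om}{\rho(0)}Q(t)$ with multipliers frozen at $t=0$. The soliton contribution $d\mathcal{H}(t)$ is then a \emph{second-variation} quantity, convex in $(\om,u)$ (Lemma \ref{lempositdH}), giving $d\mathcal{H}\gtrsim|\om-\om(0)|^2+|u-u(0)|^2$ directly. The modulation parameters are thus controlled as part of the positive Lyapunov functional, and one never needs to feed the $\dot\gamma$ bound into a soliton-energy derivative. This is what produces the integral inequality $\mathcal{E}(t)\lesssim\ep^2+\ep\int_0^t\mathcal{E}(s)\,ds$ with \emph{no} $\sqrt{\mathcal{E}}$ on the right, to which Gronwall applies cleanly over $T/\ep$.

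\textbf{The charge $Q$ is missing.} Contracting $T_{\mu\nu}$ with $X$ produces only $\mathcal{H}-u_0\cdot\Pi$; the phase-symmetry charge $Q(t)=\int a\langle i\pa^t\phi,\phi\rangle\,d\si$ must be built in separately since it is not generated by a spacetime vector field. The term $-\frac{\om}{\rho(0)}Q$ is essential: in the quadratic form $E(t)$ it produces the contribution $2\frac{\om}{\rho}\langle iw,v\rangle$ which, combined with the momentum terms, completes the square to $\|w+\rho(u+u_0)\nabla_z v-i\rho\om v\|_{L^2}^2$. It is exactly this combination (together with $\langle v_1,L_+v_1\rangle+\langle v_2,L_-v_2\rangle$) that Proposition \ref{positen} shows is coercive under the orthogonality condition. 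Without the charge, the coercivity fails.

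Two smaller remarks. Your pointwise claim $\,^{(X)}\pi=O(\ep)$ is the global bound; to make the cross-term error quadratic one needs the stronger $O(\ep^2)$ near the geodesic (Fermi coordinates make $\pa h$ vanish on $(t,u_0 t)$), and this must be combined with the integration-by-parts identity \eqref{patnabla}, which exploits that $f_\om(z)$ propagates in the direction $X$, to reduce the soliton--soliton contribution of $\mathcal{H}_S^\a$ to $\ep^2+\ep\int(\text{quadratic})$. And the correct bound on the modulation forcing is $\|F\|\lesssim(C_2\ep)^2+\|v\|_{H^1}^2$ (Proposition \ref{lemDF}); your extra $\ep(\|v\|_{H^1}+\|w\|_{L^2})$ term is spurious and is precisely what introduces the problematic $\ep\sqrt{E_0}$.
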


We use bootstrap argument to prove this proposition. To estimate $|\dot \ga|$, we rely on the modulation equations \eqref{modeq}
together with the fact that $D+D_1+D_2$, as an $8\times 8$ matrix, is nondegenerate. It turns out that $D$ depends only on
$\om$ and is nondegenerate for $\om=\om_0$. Thus if $\om(t)$ is close to $\om_0$, we have the nondegeneracy of $D$.
From this point of view, we take a subset of $\La_{\textnormal{stab}}$ defined as follows
\begin{equation}
\label{defofLadel}
 \La_{\delta_0}=\{(\om, \th, \xi, u)|\om\in [\om_0-\delta_0, \om_0+\delta_0]\subset\{\om|\frac{p-1}{6-2p}<\frac{\om^2}{m^2}<1\}
,\quad |u-u_0|\leq \delta_0\},
\end{equation}
where $\delta_0$ is a positive constant, which will be
determined in Lemma \ref{lempositdH}. Here we recall that $\la_0=(\om_0, \th_0, 0, u_0)\in \La_{\textnormal{stab}}$. Thus
we can choose $\delta_0$ sufficiently small such that $\La_{\delta_0}$ is nonempty. Our first bootstrap assumption is that $\la(t)\in
\La_{\delta_0}$ for all $t\in[0, T/\ep]$.

By the definition of $D_1$, $D_2$ in line \eqref{defofD}, we see that $D_1$ has size $\ep$ if the center $(t, \xi(t)+u_0t)$ of the soliton
$\phi_S(x;\la(t))$ does not diverge far away from the timelike geodesic $(t, u_0t)$.
Hence we expect that $|\xi(t)|$ is uniformly bounded, which is also suggested by Proposition \ref{propH1est}.
In fact since $\dot \xi=u(t)+\dot\eta$, if $|\dot \ga|=|(\dot \om, \dot \pi, \dot\eta, \dot u)|\leq C\ep^2$, then
\[
|\xi(t)|\leq |\xi(0)|+|u(0)|t+Ct^2\ep^2\leq |\xi(0)|+(C+1)T^2,\quad \forall t\leq T/\ep.
\]
We remark here that this is compatible with Theorem \ref{thmbaby} as if we scale it back to the
space $[0, T]\times \mathbb{R}^3$, the center of the soliton becomes $(t, \ep\xi+u_0 t)$ which is close to the geodesic
$(t, u_0 t)$. Similarly, $D_2$ is an error term if $\|v\|_{H_1(\mathbb{R}^3)}+\|v\|_{L^2(\mathbb{R}^3)}$ is small.

To prove Proposition \ref{propH1est}, in addition to the assumption that $\la(t)\in \La_{\delta_0}$, we assume
\begin{align}
&\label{baxi}
 |\xi(t)|\leq 2C_2, \quad \forall t\in[0, T/\ep],\\
 &\label{bawv}
 \|w(t, x)\|_{L^2(\mathbb{R}^3)}+\|v(t, x)\|_{H^1(\mathbb{R}^3)}\leq \delta_1,\quad \forall t\in[0, T/\ep]
 \end{align}
 for some constants $C_2$, $\delta_1$ which will be fixed later on. Without loss of generality, we assume
$C_2>1$, $\delta_1<1$, $C_2^4\ep<1$. These are the bootstrap assumptions
 in this subsection.

As having mentioned previously, we do not have estimates for $\|g^\ep-h^\ep\|_{L^2(\Si_0)}$ initially. In fact, our construction
of initial data implies that $g^\ep-h^\ep$ is not bounded in $L^2$ for general data. To bound $g^\ep-h^\ep$, we rely on Hardy's
inequality.
\begin{lem}
 \label{lemhardyineq}
Let $f(x)\in C^1(\mathbb{R}^3)$. Assume $f(x)\rightarrow 0$ as $|x|\rightarrow \infty$. Then
\[
 \|f(x)|x|^{-1}\|_{L^2(\mathbb{R}^3)}\leq 6\|\nabla f(x)\|_{L^2(\mathbb{R}^3)}.
\]
 \end{lem}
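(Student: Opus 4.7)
The plan is to prove a classical Hardy inequality in $\mathbb{R}^3$ by exploiting the key pointwise identity
\[
 \nabla\cdot\left(\frac{x}{|x|^2}\right)=\frac{1}{|x|^2}
\]
(a direct computation: $\nabla\cdot(x/|x|^2)=3/|x|^2-2x\cdot x/|x|^4=1/|x|^2$). Writing the left-hand side as
\[
 \int_{\mathbb{R}^3}\frac{|f|^2}{|x|^2}\,dx=\int_{\mathbb{R}^3}|f|^2\,\nabla\cdot\frac{x}{|x|^2}\,dx,
\]
I would integrate by parts to transfer the divergence onto $|f|^2$, producing $-2\,\mathrm{Re}(\bar f\nabla f)\cdot x/|x|^2$, and then apply Cauchy--Schwarz to split off $\|f/|x|\|_{L^2}$ and $\|\nabla f\|_{L^2}$. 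Dividing through by $\|f/|x|\|_{L^2}$ yields the desired bound (with constant $2$, which is more than enough to give the stated factor $6$).

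To implement this rigorously I would first carry out the integration by parts on an annulus $\{r<|x|<R\}$, producing two boundary pieces. The outer boundary contributes $R^{-1}\int_{|x|=R}|f|^2\,d\sigma$; under $f\to 0$ at infinity together with $\nabla f\in L^2$ (without which the right-hand side is infinite and there is nothing to prove), one can find a sequence $R_n\to\infty$ along which this vanishes. The inner boundary contributes $r^{-1}\int_{|x|=r}|f|^2\,d\sigma$; since $f\in C^1$ is bounded near $0$, this is $O(r)$ and drops out as $r\to 0$. This annular truncation is the only delicate point and is the obstacle one has to address carefully; once it is handled, the rest is immediate.

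As a cleaner alternative I could argue by density: it suffices to prove the inequality for $f\in C_c^\infty(\mathbb{R}^3\setminus\{0\})$, where boundary terms trivially vanish, and then extend by approximation using the assumption $f\in C^1$, $f\to 0$, together with $\nabla f\in L^2$. Either way, the final bound
\[
 \left\|\frac{f}{|x|}\right\|_{L^2(\mathbb{R}^3)}\leq 2\,\|\nabla f\|_{L^2(\mathbb{R}^3)}\leq 6\,\|\nabla f\|_{L^2(\mathbb{R}^3)}
\]
follows. No technique beyond integration by parts and Cauchy--Schwarz is required; the content of the lemma is purely the pointwise divergence identity and the cancellation it provides.
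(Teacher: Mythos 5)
Your proof matches the standard approach the paper points to (polar-coordinate integration by parts is the same computation as your Cartesian divergence identity), and it yields the sharp constant $2\le 6$. The one step you leave unfilled is the vanishing of the outer boundary term: you cannot appeal to finiteness of $\int f^2/|x|^2\,dx$ to extract a good sequence $R_n$, since that is precisely the quantity you are trying to bound, and $f\to 0$ alone does not control $R^{-1}\int_{|x|=R}|f|^2\,d\sigma$ because the surface measure grows like $R^2$. A clean, subsequence-free fix: for a.e.\ $\omega\in\mathbb{S}^2$, writing $f(R\omega)=-\int_R^\infty\partial_r f(s\omega)\,ds$ (valid since $f\to 0$ and $\nabla f\in L^2$) and applying Cauchy--Schwarz with weight $s$ gives $|f(R\omega)|^2\le R^{-1}\int_R^\infty s^2|\partial_r f(s\omega)|^2\,ds$, whence
\[
 R^{-1}\int_{|x|=R}|f|^2\,d\sigma \;=\; R\int_{\mathbb{S}^2}|f(R\omega)|^2\,d\omega \;\le\; \int_{|x|>R}|\nabla f|^2\,dx \;\longrightarrow\; 0
\]
as $R\to\infty$. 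The inner boundary is $O(r)$ as you note, since $f$ is $C^1$ near the origin. With this your argument closes.
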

This inequality can be proven by using integration by parts under polar coordinate system.
Detailed proof could be found in \cite{dr3}, \cite{yang1}. In particular, using Sobolev embedding, the assumption \eqref{bapsi}
 implies that
\begin{equation}
 \label{psiep}
 \|\psi^\ep\|_{C^{1,\f12}(\mathbb{R}^3)}+\|\psi^\ep(1+|x|)^{-1}\|_{L^2(\mathbb{R}^3)}+\|\pa^{s+1}\psi^\ep\|_{L^2(\mathbb{R}^3)}
\les \ep^2,\quad |s|\leq 2, \quad \forall t\in[0, T/\ep].
\end{equation}

To prove Proposition \ref{propH1est}, we use bootstrap argument. We show that the matrix $D+D_1+D_2$ are nondegenerate initially. Thus
the modulation equations can be solved locally. If under the bootstrap assumptions \eqref{baxi}, \eqref{bawv}, we can show that
Proposition \ref{propH1est} holds for some constant $C$ which is independent of $C_2$, then we can close the
bootstrap assumptions if we take
$C_2=|\xi(0)|+(C+1)T^2$, $\delta_1=2\ep$, $\delta_0=2CT\ep$. We thus can conclude Proposition \ref{propH1est} if $\ep$ is sufficiently
small.

The strategy is as follows: we first show the nondegeneracy the
modulation equations and obtain the estimates for $|\dot\ga(t)|$.
Under the orthogonality condition \eqref{orthcondfix}, we use energy
estimate to demonstrate that the radiation $(v, w)$ is small in
$H^1\times L^2$.

\subsubsection{Nondegeneracy of the Modulation Equations}
To obtain estimates of $\la(t)$ and to show the local existence of
the modulation equations \eqref{modeq}, we demonstrate that, under
the bootstrap assumption $\la(t)\in \La_{\delta_0}$, the leading
coefficient $D$ in \eqref{modeq} is nondegenerate.
\begin{lem}
 \label{nondegD}
Let $D$ be the $8\times 8$ matrix defined in \eqref{defofD}. If
$\la=(\om, \th, \xi, u)\in \La_{\delta_0}$, then
\begin{equation}
 |\det D|\geq C(\delta_0,\la_0)>0
\end{equation}
for some constant $C(\delta_0,\la_0)$ depending on $\delta_0$, $\la_0$. In particular, $D$ is nondegenerate.
\end{lem}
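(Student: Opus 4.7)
The plan is to exploit the explicit structure of $\phi_S$ and $\psi_S=D_\la\phi_S\cdot V(\la)$ to compute the $8\times 8$ matrix $D$ block by block, and to observe that almost all blocks vanish. First I would note that $D$ is antisymmetric by construction: swapping the order of $D_\la\phi_S$ and $D_\la\psi_S$ in the pairing and using that $\langle\cdot,\cdot\rangle_{dx}$ is symmetric (it is the real part of the Hermitian pairing) shows $D^T=-D$. After changing to the boosted variable $z=A_u(x-\xi)$, where $f_\om(z)$ is radial, most pairings split into a product of a $z$-even factor and a $z$-odd factor and hence vanish. This kills all the blocks involving $(\om,u)$, $(\th,\xi)$, $(\th,u)$, and the diagonal $(\xi,\xi),(u,u),(\om,\om),(\th,\th)$ blocks, leaving only the scalar $D_{\om\th}$ and the $3\times 3$ block $D_{\xi u}$ (and their antisymmetric transposes).

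Next I would evaluate these surviving entries using the formulas listed in the Preliminaries section. For $D_{\om\th}$, using $\e^{-i\Th}D_\th\phi_S=if$ and $\e^{-i\Th}D_\om\psi_S=i\rho f+i\rho\om D_\om f+\ldots$, one converts the integral to the $dz$ measure (which introduces a factor $\rho^{-1}$ from the Jacobian) and obtains
\[
D_{\om\th}=D_\om(\om\|f_\om\|_{L^2}^2).
\]
For $D_{\xi u}$, a similar but more involved computation using the identities $-\partial z/\partial\xi=\partial z/\partial x=A$, $D_u(\rho u)=\rho A^2$, and integration by parts in $z$ produces
\[
D_{\xi u}=-\rho B(I+\rho^2 u\otimes u),\qquad B=\om^2\|f_\om\|_{L^2}^2+\tfrac13\|\nabla f_\om\|_{L^2}^2.
\]
Since $D$ is antisymmetric with the block structure above, its determinant factors as
\[
|\det D|=|D_{\om\th}|^2\,|\det D_{\xi u}|^2=|D_\om(\om\|f_\om\|_{L^2}^2)|^2\, B^6\rho^{10}\det(I+\rho^2 u\otimes u)^2.
\]

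Finally I would verify that each factor is bounded below on $\La_{\delta_0}$. The matrix $I+\rho^2 u\otimes u$ is positive definite with $\det=1+\rho^2|u|^2\geq 1$; the quantity $B>0$ is continuous and strictly positive in $\om$; and $\rho\geq 1$. The only delicate factor is $D_\om(\om\|f_\om\|_{L^2}^2)$: substituting the scaling identity $f_\om(x)=(m^2-\om^2)^{1/(p-1)}f(\sqrt{m^2-\om^2}\,x)$ from Theorem \ref{propoff} gives $\|f_\om\|_{L^2}^2=(m^2-\om^2)^{\frac{2}{p-1}-\frac{3}{2}}\|f\|_{L^2}^2$, and a direct differentiation shows that $D_\om(\om\|f_\om\|_{L^2}^2)<0$ exactly when $\frac{p-1}{6-2p}<\om^2/m^2<1$, which is precisely the stability range defining $\La_{\textnormal{stab}}\supset\La_{\delta_0}$. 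Since all these quantities depend continuously on $\om\in[\om_0-\delta_0,\om_0+\delta_0]$ and on $u$ with $|u-u_0|\leq\delta_0$ (a compact set inside $\La_{\textnormal{stab}}$), we extract a uniform lower bound $C(\delta_0,\la_0)>0$, proving nondegeneracy.

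The main obstacle is the bookkeeping in the $D_{\xi u}$ computation: one must carefully track the Jacobians $\partial z/\partial\xi,\partial z/\partial u$ together with the phase $\Th(x;\la)=\th-\om u\cdot z$ so that integration by parts in $z$ produces the claimed closed form $-\rho B(I+\rho^2 u\otimes u)$. The sign calculation for $D_\om(\om\|f_\om\|_{L^2}^2)$ is standard once the scaling identity is invoked, but it is the single place where the soliton \emph{stability} assumption enters the entire proof.
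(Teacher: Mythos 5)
Your proposal follows the same route as the paper: antisymmetry of $D$, direct computation of the surviving blocks, the formula $|\det D|=D_{\om\th}^2\,|\det D_{\xi u}|^2$, and the sign of $D_\om(\om\|f_\om\|_{L^2}^2)$ via the scaling identity. Two small computational slips are worth flagging. First, the block $D_{\om\xi}$ does \emph{not} vanish; the paper computes $D_{\om\xi}=\rho\,u\,D_\om B$, which is nonzero when $u\neq0$. It simply never enters the determinant, because expanding along the $\th$-row (which has $D_{\om\th}$ as its only nonzero entry) reduces the computation to the $2\times2$ block antidiagonal in $D_{\xi u}$, killing the $(\om,\xi)$ contribution. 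Your argument silently assumes all blocks except $D_{\om\th}$ and $D_{\xi u}$ vanish, which is false as stated but happens to lead to the right answer once the cofactor structure is noticed. Second, your final formula double-counts a factor: from $D_{\xi u}=-\rho B(I+\rho^2 u\otimes u)$ one has $|\det D_{\xi u}|^2=(\rho B)^6\det(I+\rho^2 u\otimes u)^2=\rho^6 B^6\rho^4=\rho^{10}B^6$ (using $1+\rho^2|u|^2=\rho^2$), so the paper's $B^6\rho^{10}$ already \emph{includes} the $\det(I+\rho^2 u\otimes u)^2$ factor; writing $B^6\rho^{10}\det(I+\rho^2 u\otimes u)^2$ inserts an extra $\rho^4$. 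Neither slip undermines the proof, since all the factors are bounded below uniformly on $\La_{\delta_0}$, but the cofactor-expansion step with $D_{\om\xi}\neq0$ should be spelled out.
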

\begin{proof} Direct calculations show that
\begin{align*}
 D_{\om\th}&=D_\om(\om \|f_\om\|_{L^2}^2),\quad D_{\om\xi}= \rho u D_\om B,\quad  D_{\xi u}=-\rho B(I+\rho^2 u\cdot u),\\
D_{\om u}&= D_{\th \xi}=D_{\xi\xi}=D_{uu}=D_{\th u}= D_{\th\th}=0,
\end{align*}
where we denote
\begin{equation}
 \label{defofB}
B(t)=\om^2 \|f_\om(x)\|_{L^2(\mathbb{R}^3)}^2+\frac{1}{3}\|\nabla_x f_\om(x)\|_{L^2(\mathbb{R}^3)}^2.
\end{equation}
Notice that $D$ is antisymmetric. We can show that
\[
\det D=|D_\om(\om \|f_\om\|_{L^2}^2)|^2\cdot |\det D_{\xi
u}|^2=|D_\om(\om \|f_\om\|_{L^2}^2)|^2 B^{6}\rho^{10}.
\]
Using the scaling property \eqref{scalling} of $f_{\om}(x)$, when
$
\frac{p-1}{6-2p}<\frac{\om^2}{m^2}<1
$, we have
\[
D_\om(\om \|f_\om\|_{L^2}^2)=(m^2-\frac{6-2p}{p-1}\om^2)(m^2-\om^2)^{\frac{9-5p}{2(p-1)}}\|f\|_{L^2(\mathbb{R}^3)}^2<0.
\]
Since $\rho\geq 1$, the lemma then follows.
\end{proof}

\subsubsection{Estimates for the Modulation Curve}
We have shown that $D$ is nondegenerate. To estimate $\ga(t)$ by
using the modulation equations, we have to show that $D_1$, $D_2$,
$F(t;\la(t))$ are error terms. Estimates for $D_1$, $D_2$ can be
obtained by using Cauchy-Schwartz inequality. The main difficulty
for estimating $F(t;\la(t))$ lies in the nonlinear term
$<D_\la\phi_S, \e^{i\Th}\mathcal{N}(\la)>_{dx}$. We first prove two lemmas. The first lemma gives control of the
nonlinearity $\mathcal{N}(\la)$. Let
\begin{equation*}
 N(x)= \frac{1}{p+1}|x|^{p+1}
\end{equation*}
for complex number $x$. In particular, we have $D_{\bar x}N=\f12
|x|^{p-1}x$, where $\bar x$ is the complex conjugate of $x$. By the
definition \eqref{defnonlin} of the nonlinearity $\mathcal{N}(\la)$,
we can write
\[
\mathcal{N}(\la)=2D_{\bar x}N(f_\om+(q_\ep d_\ep)^{-1}v)-2D_{\bar x}N(f_\om)-2D_{\bar x}DN(f_\om)\cdot
(q_\ep d_\ep)^{-1}v,
\]
where $DN\cdot v=D_xNv+D_{\bar x}N\bar v$.
\begin{lem}
 \label{lemnonlinear}
Let $\mathcal{N}(\la)$ be defined in line \eqref{defnonlin}. Assume $\phi$ decomposes as \eqref{decomp}. For all $p\geq 2$, we have
\begin{align}
\label{Nvpt}
 &|\mathcal{N}(\la)|\les |v|^2+|v|^p,\\
\label{nonlinq} &\left|N(f_\om+v)-N(f_\om)-DN(f_\om)v-\f12
vD^2N(f_\om)v\right|\les |v|^3+|v|^{p+1}.
\end{align}
\end{lem}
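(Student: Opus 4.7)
The plan is to prove both estimates by a case split based on the relative size of $v$ and $f_\om$, combined with Taylor's theorem with integral remainder. Since $f_\om$ is bounded (in fact exponentially decaying by Theorem \ref{propoff}) and the factor $(q_\ep d_\ep)^{-1}$ is uniformly bounded above and below (by Corollary \ref{corgposi} and the definition of $q_\ep$), I will freely replace $w := (q_\ep d_\ep)^{-1}v$ by $v$ in all pointwise estimates up to a harmless multiplicative constant.

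For \eqref{Nvpt}, I will first identify $\mathcal{N}(\la)$ with the first-order Taylor remainder of $F(x) := 2D_{\bar x}N(x) = |x|^{p-1}x$ at $f_\om$ applied to $w$. Viewing $F$ as a function of $x$ and $\bar x$ separately, direct differentiation yields the homogeneity bounds $|D^k F(x)| \les |x|^{p-k}$ for $k = 0, 1, 2$. In the regime $|w| \leq |f_\om|$, I would apply the integral form
\[
\mathcal{N}(\la) = \int_0^1 (1-s)\, D^2 F(f_\om + sw)(w,w)\,ds,
\]
and bound the integrand by $|f_\om|^{p-2}|w|^2 \les |v|^2$, using $p \geq 2$ together with boundedness of $f_\om$. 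In the complementary regime $|w| > |f_\om|$, each of the three terms defining $\mathcal{N}(\la)$ can be bounded separately by $\les |w|^p \les |v|^p$, since $|f_\om|^{p-1}|w| \leq |w|^p$ under this assumption. The proof of \eqref{nonlinq} will be structurally identical but one order higher: the quantity to be estimated is the second-order Taylor remainder of $N$ at $f_\om$ applied to $v$; one uses the integral form involving $D^3 N$ together with $|D^3 N(x)| \les |x|^{p-2}$ in the small-$v$ regime to produce the $|v|^3$ contribution, and in the large-$v$ regime bounds each of the four terms $N(f_\om+v), N(f_\om), DN(f_\om)v, vD^2N(f_\om)v$ individually by $|v|^{p+1}$ using the pointwise bounds on $N, DN, D^2N$.

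The main subtlety to address will be the endpoint case $p=2$, at which $D^2 F$ and $D^3 N$ exist only as essentially bounded (and discontinuous at the origin) functions. This is harmless for the argument, because the integral remainder formula requires only $DF$ (respectively $D^2 N$) to be Lipschitz on bounded sets, which is precisely what $p \geq 2$ guarantees. This also clarifies why the hypothesis $p \geq 2$ is essential for this lemma: for $p<2$ the exponent $p-2$ would be negative, the derivatives would blow up at the origin, and one would be forced to weaker H\"older-type remainder bounds rather than the clean quadratic and cubic estimates \eqref{Nvpt}, \eqref{nonlinq}.
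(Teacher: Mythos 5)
Your proposal is correct and is essentially the same argument as the paper's: both identify $\mathcal{N}(\la)$ (resp.\ the quantity in \eqref{nonlinq}) as an integral Taylor remainder, use the homogeneity bound $|D^{k}N(x)|\les |x|^{p+1-k}$, and exploit $p\geq 2$ together with the uniform boundedness of $f_\om$ and $(q_\ep d_\ep)^{-1}$. The only difference is organizational: where you split into the cases $|w|\leq |f_\om|$ and $|w|>|f_\om|$, the paper keeps the integral form throughout and applies the one pointwise bound $|D^2D_{\bar x}N(f_\om+ts\tilde v)|\les 1+|\tilde v|^{p-2}$, which yields $|\tilde v|^2+|\tilde v|^p$ in a single step; both routes land in the same place.
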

\begin{proof}
Let $\tilde{v}=(q_\ep d_\ep)^{-1}v$. We can write $\mathcal{N}(\la)$ as an integral
\begin{align*}
\mathcal{N}(\la)=4\int_{0}^{1}\int_{0}^{1}s\tilde{v}^2 D^2D_{\bar
x}N(f_\om+ts\tilde{v})dtds.
\end{align*}
Since $p\geq 2$, we can show that
\[
|D^2 D_{\bar x}N(f_\om+ts\tilde{v})|\les 1+|f_\om + ts \tilde{v}|^{p-2}\les 1+|\tilde{v}|^{p-2}.
\]
Hence \eqref{Nvpt} holds. The second inequality \eqref{nonlinq} follows similarly.
\end{proof}
This lemma implies that the nonlinearity in $F(t;\la(t))$ is in fact
higher order error term. For the other terms in $F(t;\la(t))$,
observe that if $g=h$, then $p_\ep-1$, $d_\ep-1$, $q_\ep-1$ have
size $\ep^2$ near the geodesic $(t, u_0t )$ by \eqref{pofhm} and the
fact that $\phi_S$ decays exponentially. To pass $h^\ep$ to $g^\ep$
satisfying \eqref{psiep}, we use the following lemma.
\begin{lem}
\label{lemb}
Let $F_1(x)$, $F_2(x)$ be two $C^1$ functions such that $F_1(0)=F_2(0)=0$. Then we have
\begin{equation*}
 \left\|F_1(\phi_S(x;\la))(F_2(g^\ep)-F_2(m_0))\right\|_{L^r(\mathbb{R}^3)}\les \|F_1\|_{C^1}\|F_2\|_{C^1}(1+|\xi|^2)\ep^{2}
,\quad \forall r\in[1, 2],
\end{equation*}
where $\la=(\om, \th, \xi+u_0 t, u+u_0)\in\La_{\textnormal{stab}}$. Similarly
\begin{equation*}
 \left\|F_1(\phi_S(x;\la))F_2(\pa g^\ep)\right\|_{L^r(\mathbb{R}^3)}\les \|F_1\|_{C^1}\|F_2\|_{C^{1}}(1+|\xi|)\ep^{2},
\quad \forall r\in[1, \infty].
\end{equation*}
\end{lem}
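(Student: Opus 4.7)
The plan is to reduce each inequality to two simple ingredients: first, a size estimate on $F_2(g^\ep)-F_2(m_0)$ (respectively $F_2(\partial g^\ep)$) that is quadratic (respectively linear) in the distance $|x-u_0t|$ to the geodesic times $\ep^2$; second, exponential decay of $\phi_S(\,\cdot\,;\la)$ about its center $\xi+u_0t$, which absorbs polynomial weights in $|x-u_0t|$ and produces only a polynomial factor in $|\xi|$.

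I would begin by splitting
\[
F_2(g^\ep)-F_2(m_0)=\bigl[F_2(g^\ep)-F_2(h^\ep)\bigr]+\bigl[F_2(h^\ep)-F_2(m_0)\bigr].
\]
For the first bracket, the mean value theorem together with the bootstrap bound \eqref{psiep} gives
$|F_2(g^\ep)-F_2(h^\ep)|\le \|F_2\|_{C^1}|\psi^\ep|\les \|F_2\|_{C^1}\ep^2$ uniformly.
For the second bracket, the Fermi coordinate identities $h(t,u_0t)=m_0$ and $\Ga_{\mu\nu}^\a(t,u_0t)=0$ (which forces $\partial h(t,u_0t)=0$), combined with the rescaling $h^\ep(t,x)=h(\ep t,\ep x)$ and \eqref{pofhm}, yield
\[
|h^\ep(t,x)-m_0|\les \ep^2|x-u_0t|^2 \quad\text{for } \ep|x-u_0t|\le \delta_0,
\]
so $|F_2(h^\ep)-F_2(m_0)|\les \|F_2\|_{C^1}\ep^2|x-u_0t|^2$ in this near-geodesic region.

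For the near region, I would then multiply by $|F_1(\phi_S)|\le \|F_1\|_{C^1}|\phi_S|$, which is a translate of $f_\om$ centered at $\xi+u_0t$ decaying exponentially in $|A_u(x-\xi-u_0t)|\gtrsim |x-\xi-u_0t|$. Writing $|x-u_0t|^2\le 2|x-\xi-u_0t|^2+2|\xi|^2$ and changing variables $z=x-\xi-u_0t$, the weighted integral $\||z|^2 f_\om(z)\|_{L^r_z}$ is finite for every $r\in[1,2]$ by Theorem \ref{propoff}, and the $|\xi|^2$ term factors out, producing the prefactor $(1+|\xi|^2)\ep^2$. In the far region $\ep|x-u_0t|>\delta_0$, the Taylor bound fails but $|\phi_S|\les e^{-c|x-\xi-u_0t|}\le e^{-c\delta_0/(2\ep)}$ after taking into account $|\xi|\le 2C_2$ from \eqref{baxi}, and this super-exponential factor dominates the merely bounded $|F_2(g^\ep)-F_2(m_0)|\les\|F_2\|_{C^1}$.

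For the second inequality the approach is identical, but with one improvement: since $\partial h(t,u_0t)=0$ already, one Taylor expansion gives $|\partial h^\ep(t,x)|\les \ep^2|x-u_0t|$, and \eqref{psiep} provides $\|\partial\psi^\ep\|_{L^\infty}\les \ep^2$. Thus $|F_2(\partial g^\ep)|\les \|F_2\|_{C^1}\ep^2(1+|x-u_0t|)$ near the geodesic, producing only a linear weight and hence only a factor $(1+|\xi|)$ after pairing with $F_1(\phi_S)$; because the resulting weight is in $L^\infty_z$ as well as every $L^r_z$, the estimate holds for all $r\in[1,\infty]$. The main (minor) obstacle is bookkeeping the two natural center points: the geodesic $u_0 t$ where the metric Taylor expansion is sharp, and the soliton center $\xi+u_0t$ where $\phi_S$ is concentrated; reconciling them via the triangle inequality is what gives rise to the $(1+|\xi|^2)$ and $(1+|\xi|)$ factors and nothing worse.
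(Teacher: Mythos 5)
Your proof is correct and its skeleton matches the paper's: split $F_2(g^\ep)-F_2(m_0)$ through $F_2(h^\ep)$, Taylor expand $h^\ep$ about the Fermi geodesic where $h=m_0$ and $\pa h=0$, and absorb the resulting polynomial weight $|x-u_0t|^2$ (respectively $|x-u_0t|$) against the exponential decay of $\phi_S$ centered at $\xi+u_0t$, with the triangle inequality producing the $(1+|\xi|^2)$ and $(1+|\xi|)$ prefactors. Where you differ from the written proof is in the treatment of the $g^\ep-h^\ep$ contribution: you invoke the pointwise bound $|\psi^\ep|\les\ep^2$, whereas the paper pairs the weighted $L^2$ estimate $\|\psi^\ep(1+|z|)^{-1}\|_{L^2}\les\ep^2$ (Hardy's inequality, Lemma \ref{lemhardyineq}) against $F_1(\phi_S)(1+|z|)\in L^{\frac{2r}{2-r}}$ via H\"older. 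Both are legitimate consequences of \eqref{bapsi}: your $L^\infty$ bound does hold, since $\nabla\psi^\ep\in H^2\hookrightarrow L^6$ together with $\psi^\ep\to 0$ at infinity gives $\psi^\ep\in W^{1,6}(\mathbb{R}^3)\hookrightarrow L^\infty$; the H\"older/Hardy route is what the paper actually writes out and avoids leaning on the $\|\psi^\ep\|_{L^\infty}$ piece of \eqref{psiep}. One simplification you could make: the near/far region split is unnecessary. Because $\|h\|_{C^2}\leq K_0$ holds globally and $\pa h(t,u_0t)=0$, Taylor's theorem with Lagrange remainder already gives $|h^\ep(t,x)-m_0|\les\ep^2|x-u_0t|^2$ for \emph{all} $x$, not only for $\ep|x-u_0t|\leq\delta_0$, so one estimate covers $\mathbb{R}^3$ and the super-exponential far-field discussion can be dropped.
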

\begin{proof}
Recall that $\phi_S(x;\la)=\e^{i\Th}f_{\om}(z)$,
$z=A_{u+u_0}(x-\xi-u_0t)$. We conclude that $|z|^2\geq
|x-\xi-u_0t|^2$. Hence by Theorem \ref{propoff}, we have
\[
|F_1(\phi_S(x;\la))|\les \|F_1\|_{C^1}\e^{-c(\om) |z|}
\les\|F_1\|_{C^1}\e^{-c(\om) |x-\xi-u_0t|}.
\]
Since $\pa h(t, u_0 t)=0$, $h(t, u_0t)=m_0$, we can show that
\[
 |F_2(g^\ep)(t, u_0t +x)-F_2(m_0)(t, u_0 t+x)|\leq \|F_2\|_{C^1}(\ep^2|x|^2\|h\|_{C^2}+|g^\ep-h^\ep|).
\]
Then \eqref{psiep} and H$\ddot{o}$lder's inequality imply that
\begin{align*}
 &\left\|F_1(\phi_S(x;\la))(F_2(g^\ep)-F_2(m_0))\right\|_{L^r(\mathbb{R}^3)}\\
&\les
 \|F_1(\phi_S)(1+|z|)\|_{L^{\frac{2r}{2-r}}}
\|F_2\|_{C^1}\|\psi^\ep(1+|z|)^{-1}\|_{L^2}+\|F_1\|_{C^1}\|F_2\|_{C^2}
(1+|\xi|^2)\ep^2\\
&\les \|F_1\|_{C^1}\|F_2\|_{C^1}(1+|\xi|^2)\ep^2,\quad r\in[1, 2].
\end{align*}
Similarly, by \eqref{psiep}, we have
\begin{align*}
 \left\|F_1(\phi_S(x;\la))F_2(\pa g^\ep)\right\|_{L^r(\mathbb{R}^3)}&\les \left\|F_1(\phi_S(x;\la))F_2(\pa h^\ep)\right\|_{L^r(\mathbb{R}^3)}
+\|F_2\|_{C^1}\|F_1(\phi_S)\pa\psi^\ep\|_{L^r}\\
&\les \|F_1\|_{C^1}\|F_2\|_{C^1}(1+|\xi|)\ep^2,\quad r\in[1, \infty].
\end{align*}
\end{proof}
Having proven the above two lemmas, we are able to estimate $\la(t)$. It suffices to show that $D_1$, $D_2$, $F$ are error terms.
\begin{prop}
 \label{lemDF}
Under the bootstrap assumptions \eqref{baxi}, \eqref{bawv}, we have
\begin{align}
\label{D1}
  \|D_1\|&\les (1+|\xi|^{2})\ep^{2}\les (C_2\ep)^2,\\
\label{D2}
\|D_2\|&\les \|w\|_{L^2(\mathbb{R}^3)}+\|v\|_{H^1(\mathbb{R}^3)}\les \delta_1,\\
\label{F} \|F\|&\les (C_2\ep)^2+\|v\|_{H^1}^2.
\end{align}
\end{prop}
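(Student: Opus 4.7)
My plan is to bound $D_1$, $D_2$, and $F$ separately, exploiting that in each case every factor beyond the soliton itself either vanishes to order $\ep^2$ near the geodesic $(t, u_0 t)$ (because $h(t, u_0 t) = m_0$ and $\pa h(t, u_0 t) = 0$) or is controlled by the bootstrap bounds \eqref{baxi} and \eqref{bawv}. The common technical device is Lemma \ref{lemb}, which turns ``$g^\ep$ agrees with $m_0$ at the geodesic up to order $\ep^2$'' into an integral bound when paired with a soliton-supported weight, with the $(1+|\xi|^2)\ep^2$ output converted into $(C_2\ep)^2$ via \eqref{baxi}.

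For $D_1$, I would decompose $a_0 - 1 = (q_\ep - 1) d_\ep + (d_\ep - 1)$ and likewise for $a_1 - 1$. The cutoff satisfies $q(y) - 1 = -3 C_0 \rho_0 |y|^2$ for small $y$; by \eqref{pofhm} together with $\pa h(t, u_0 t) = 0$ one has $g^\ep(t, u_0 t + x) - m_0 = O(\ep^2 |x|^2) + \psi^\ep$, and \eqref{psiep} controls $d_\ep - 1$ and $p_\ep - 1$ by the same template. A direct application of Lemma \ref{lemb} to $\langle (a_0 - 1) D_\la \psi_S, D_\la \phi_S\rangle_{dx}$ and $\langle D_\la \phi_S, (a_1 - 1) D_\la \psi_S\rangle_{dx}$, with the exponential decay of the soliton derivatives centered at $\xi + u_0 t$ absorbing the polynomial weight $|x|^2$, gives $\|D_1\| \lesssim (1 + |\xi|^2) \ep^2 \lesssim (C_2\ep)^2$. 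For $D_2$ the Cauchy--Schwarz inequality is enough: $D_\la^2 \phi_S$ has uniform $L^2$ norm on $\La_{\delta_0}$ by Theorem \ref{propoff}, and the spatial-derivative terms hidden in $D_\la^2 \psi_S$ (coming from $\psi_S = D_\la \phi_S \cdot V(\la)$ and the $\nabla_z f_\om$ factor in $\psi_S$) are handled by an integration by parts moving one derivative onto $v$, producing $\|D_2\| \lesssim \|w\|_{L^2} + \|v\|_{H^1} \leq \delta_1$ via \eqref{bawv}.

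For $F$, I would inspect the eight terms in \eqref{defofF} one by one. Seven of them fit the template (small coefficient) $\times$ (soliton factor) $\times$ (radiation factor $v$, $w$, $\phi - \phi_S$, or $\phi_t - \psi_S$). The coefficients $\dot a_0$, $\dot a_1$ carry an explicit $\ep$ from the chain rule and a further $\ep$ from $\pa h(t, u_0 t) = 0$, so they are $O(\ep^2)$ near the geodesic; $a_0 - a_1$, $a_i - 1$, $b - 1$ are $O(\ep^2)$ by the $D_1$ analysis; and $a^{k\mu}$, $b^\mu$ are $O(\ep^2)$ because the operator $b \Box_{g^\ep} + a_1 \pa_t^2 - \Delta$ vanishes when $g^\ep = m_0$ and $\pa g^\ep = 0$, which holds at the geodesic up to the $O(\ep^2)$ error from $\pa \psi^\ep$ via \eqref{psiep}. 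In each such term, Lemma \ref{lemb} and Cauchy--Schwarz yield a bound $(C_2 \ep) \ep \cdot \|v\|_{L^2}$ (or the analogue for $w$), which by AM--GM and $\ep \leq 1$ is $\leq (C_2 \ep)^2 + \|v\|_{H^1}^2$. The nonlinear term $\langle b D_\la \phi_S, e^{i\Th} \mathcal{N}(\la)\rangle$ is dispatched by the pointwise bound $|\mathcal{N}| \lesssim |v|^2 + |v|^p$ from \eqref{Nvpt}, Cauchy--Schwarz against the exponentially decaying $D_\la \phi_S$, and the Sobolev embedding $H^1(\mathbb{R}^3) \hookrightarrow L^{2p}(\mathbb{R}^3)$, available since $2 \leq p < 7/3 < 3$; the $\|v\|_{H^1}^p$ piece is then absorbed into $\|v\|_{H^1}^2$ using $\|v\|_{H^1} \leq \delta_1 < 1$ and $p \geq 2$.

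The main obstacle is the $H$-related term $\langle \pa_k(a^{k\mu} D_\la \phi_S) - b^\mu D_\la \phi_S, \pa_\mu \phi\rangle$: one must verify that $a^{k\mu}$ and $b^\mu$ themselves (not just their products with the soliton) inherit the $O(\ep^2)$ smallness from $h(t, u_0 t) = m_0$ and $\pa h(t, u_0 t) = 0$, carefully track whether the derivative $\pa_k$ falls on $a^{k\mu}$ or on $D_\la \phi_S$, and when $\pa_\mu \phi$ is split into soliton and radiation via \eqref{decomp}, ensure the radiation contribution can be bounded by $\|v\|_{H^1}$ rather than $\|v\|_{H^2}$, which is unavailable at this stage. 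Either a careful exploitation of the explicit structure of $a^{k\mu}$, $b^\mu$ near the geodesic or an integration by parts transferring a derivative off $v$ should resolve this.
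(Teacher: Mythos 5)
Your proposal follows essentially the same route as the paper: Lemma \ref{lemb} for the metric-dependent coefficients paired against soliton weights, Cauchy--Schwarz for $D_2$, and Lemma \ref{lemnonlinear} for the nonlinearity. The one place you leave a gap is exactly where the paper's structure already does the work: the coefficients $a^{k\mu}, b^\mu$ are, by construction of $H$ in \eqref{defofH}, of the form $F_1(g^\ep)-F_1(m_0)$ (with $F_1$ involving at most one derivative of $g^\ep$ for $b^\mu$), so the second inequality of Lemma \ref{lemb} applies directly; and the term $<\pa_k(a^{k\mu}D_\la\phi_S)-b^\mu D_\la\phi_S,\ \pa_\mu\phi>_{dx}$ in the definition \eqref{defofF} of $F$ is \emph{already} integrated by parts, so $\pa_\mu\phi$ carries at most first derivatives: splitting $\pa_\mu\phi$ via \eqref{decomp} produces a soliton piece (killed by the $\ep^2$-smallness of the coefficient) plus a radiation piece controlled by $\|v\|_{H^1}$, $\|w\|_{L^2}$ without ever needing $\|v\|_{H^2}$. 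Also, for $D_2$ the extra integration by parts you suggest is unnecessary: $D_\la^2\psi_S$ involves only $z$-derivatives of $f_\om$, which by Theorem \ref{propoff} is $C^4$ with exponential decay, so $\|D_\la^2\psi_S\|_{L^2}$ is finite and Cauchy--Schwarz gives \eqref{D2} directly.
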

If $\ep$, $\delta_1$ is sufficiently small, by Lemma \ref{nondegD}, estimates \eqref{D1}, \eqref{D2} show that $D$ dominates
 $D_1+D_2$. Hence $D+D_1+D_2$ is nondegenerate. Then using \eqref{F}, we can
estimate $\dot\ga$.

\begin{proof}
Inequality \eqref{D1} follows from Lemma \ref{lemb} and the bootstrap assumption \eqref{baxi}. Estimate \eqref{D2} for
$D_2$ can be obtained by using Cauchy-Schwartz's inequality and the assumption \eqref{bawv}.

For \eqref{F}, we apply Lemma \ref{lemnonlinear} to control the
nonlinearity $<bD_\la\phi_S, \e^{i\Th}\mathcal{N}(\la)>_{dx}$ and
use Lemma \ref{lemb} to estimate other terms by observing that
\[
b-1=p_\ep^{-1}d_\ep-1, \quad a_0-1=q_\ep d_\ep-1, \quad a_1-1=p_\ep d_\ep -1,\quad
 a^{k\mu}, \quad b^\mu
\]
 can be written as the form $F_1(g^\ep)-F_1(m_0)$.
Hence we can show that
\[
 \|F\|\les\ep^2(1+|\xi|^2)+\||v|^2+|v|^p\|_{L^1}\les (C_2\ep)^2+\|v\|_{H^1}^2.
\]
\end{proof}

To make $D+D_1+D_2$ to be nondegenerate, we choose $\delta_1$ in the following way. Fix $\delta_0$. Then let $\ep$, $\delta_1$
 be sufficiently small such that
\begin{equation*}
 \|D_1\|+\|D_2\|\leq C(C_2\ep)^2+C\delta_1\leq
\frac{1}{10} C(\delta_0, \la_0)^{\frac{1}{8}},
\end{equation*}
where $C$ is the implicit constant in Proposition \ref{lemDF}, which by our
notations is independent of $\ep$, $C_2$. Here $C(\delta_0,
\la_0)$ is the constant in Lemma \ref{nondegD}. So far, only $\delta_0$, $C_2$ are unknown constants. However,
all the implicit constants are independent of $C_2$, $\ep$. With this choice of $\delta_1$, we can estimate $\dot \ga$.
\begin{cor}
\label{corcontrga}
Let $\delta_1$ be chosen as above.
Suppose $\la(t)\in \La_{\delta_0}$ and $\xi(t)$, $w, v$ satisfy the bootstrap assumptions \eqref{baxi},
\eqref{bawv}. Then we have
\begin{equation*}
|\dot{\ga}|\les \|F\|\les(C_2\ep)^2+\|v\|_{H^1}^2.
\end{equation*}
\end{cor}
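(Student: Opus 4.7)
The plan is straightforward: invert $D+D_1+D_2$ in the modulation equation \eqref{modeq} and combine the resulting estimate with the bound \eqref{F} on the source. Everything needed has been assembled in Lemma \ref{nondegD} and Proposition \ref{lemDF}; only a short algebraic manipulation remains.

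First, by Lemma \ref{nondegD}, for every $\la(t)\in\La_{\delta_0}$ the antisymmetric $8\times 8$ matrix $D$ satisfies $|\det D|\geq C(\delta_0,\la_0)>0$, so the operator norm of $D^{-1}$ is bounded by a constant depending only on $\delta_0$ and $\la_0$. Writing $D+D_1+D_2=D\bigl(I+D^{-1}(D_1+D_2)\bigr)$, one sees that invertibility of the full coefficient matrix reduces to smallness of $\|D^{-1}(D_1+D_2)\|$. The choice of $\delta_1$ in the paragraph preceding the corollary was precisely made so that Proposition \ref{lemDF} gives
\[
\|D_1\|+\|D_2\|\les (C_2\ep)^2+\delta_1\leq \tfrac{1}{10}\,C(\delta_0,\la_0)^{1/8},
\]
and since $C(\delta_0,\la_0)^{1/8}$ controls $\|D^{-1}\|^{-1}$ up to a universal factor, one gets $\|D^{-1}(D_1+D_2)\|\leq \tfrac12$ for $\ep$, $\delta_1$ small enough.

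Consequently the Neumann series yields
\[
\bigl\|(D+D_1+D_2)^{-1}\bigr\|\leq \frac{\|D^{-1}\|}{1-\|D^{-1}(D_1+D_2)\|}\les 1,
\]
with an implicit constant depending only on the parameters $h$, $T$, $\la_0$, $m$, $p$ (in particular independent of $\ep$ and $C_2$). Applying this inverse to the modulation equation \eqref{modeq} gives
\[
|\dot{\ga}|=\bigl|(D+D_1+D_2)^{-1}F(t;\la(t))\bigr|\les \|F\|.
\]
The second inequality in the corollary is then nothing but \eqref{F} from Proposition \ref{lemDF}.

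The only place where care is required is the quantitative matching between the bound on $D^{-1}$ coming from Cramer's rule (which controls $\|D^{-1}\|$ by $C(\delta_0,\la_0)^{-1}$ up to a factor depending on $\|D\|$, hence on $\la_0$) and the smallness condition on $\|D_1\|+\|D_2\|$ fixed earlier. But since $\delta_0$ and the ambient bounds on $\la_0$ are already fixed once and for all before $\delta_1$ is chosen, this is a bookkeeping point rather than a genuine obstacle, and the conclusion $|\dot\ga|\les (C_2\ep)^2+\|v\|_{H^1}^2$ follows.
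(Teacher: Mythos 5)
Your proposal is correct and follows essentially the same route as the paper: invoke the nondegeneracy of $D$ from Lemma \ref{nondegD}, use the bounds on $D_1$, $D_2$, $F$ from Proposition \ref{lemDF} together with the choice of $\delta_1$ to conclude $D+D_1+D_2$ is invertible with uniformly bounded inverse, and read off $|\dot\ga|\les\|F\|$ from the modulation equations \eqref{modeq}. The Neumann-series phrasing is a cosmetic tidying of the paper's ``$D$ dominates $D_1+D_2$'' remark; the substance is identical.
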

This corollary shows that as long as the radiation term $v$ in the decomposition \eqref{decomp} of the solution \eqref{decomp}
is small, we can solve the modulation equations \eqref{modeq} and obtain estimates for the modulation curve $\dot \ga(t)$.
The modulation equations are used to guarantee the orthogonality condition \eqref{orthcondfix}. Next, we show that
under the orthogonality condition, the energy $\|v\|_{H^1}+\|w\|_{L^2}$ of the radiation term $(v, w)$ is small.

\subsubsection{Energy Decomposition}
We use energy estimates to show that the radiation term $(v, w)$ is
small in $H^1\times L^2$. We consider the almost conserved energies
for the full solution $\phi$ of the nonlinear wave equation
\eqref{FEQUATION}. We mention here that similar almost conservations laws have also been studied in \cite{davidKGM}. Using the decomposition \eqref{decomp}, we
decompose the energies for $\phi$ around the solitons $\phi_S$. The
associated energy for the solitons $\phi_S$ can be computed
explicitly up to an error. Then combining with the orthogonality
condition, we can obtain estimates for $\|v\|_{H^1}+\|v\|_{L^2}$. We
first define almost conserved energies for $\phi$ and decompose them
around the solitons $\phi_S$.

\bigskip

 We recall the energy momentum
tensor with respect to the metric $g^\ep$
\[
 T_{\mu\nu}[\phi]=<\pa_\mu\phi, \pa_\nu\phi>-\f12 g^\ep_{\mu\nu}(<\pa^{\ga}\phi, \pa_\ga \phi>+2\mathcal{V}(\phi)),
\]
where
\[
\mathcal{V}(\phi)=\frac{m^2}{2}|\phi|^2-\frac{1}{p+1}|\phi|^{p+1}=\frac{m^2}{2}|\phi|^2-N(\phi).
\]
For a vector field $Y$ and a real function $\b$, we have the identity
\begin{equation}
\label{enerestfor}
 D^{\mu}(\b T_{\mu\nu}[\phi]Y^\nu)=\b T^{\mu\nu}[\phi]\pi^{Y}_{\mu\nu}+\b<\Box_{g^\ep}\phi-m^2\phi+|\phi|^{p-1}\phi, Y(\phi)>
+T(Y, D\b),
\end{equation}
where $\pi^{Y}_{\mu\nu}=\f12 \mathcal{L}_Y g^\ep_{\mu\nu}$ is the
deformation tensor of $Y$ and $D\b$ is the gradient of the function $\b$. Consider the region $\mathbb{R}^3\times
[0, t]$. First we take $\b \equiv 1$, $Y=\pa_t$. Using Stoke's
formula and equation \eqref{FEQUATION}, we have
\begin{equation}
 \label{EnergyC}
\mathcal{H}(t)=\mathcal{H}(0)-\int_{0}^{t}\int_{\mathbb{R}^3}T^{\mu\nu}[\phi]\pi^{\pa_t}_{\mu\nu}d\vol,
\end{equation}
where
\begin{equation}
\label{defenergy}
\begin{split}
 \mathcal{H}(t)&=-\int_{\mathbb{R}^3}-T_{0\nu}[\phi](g^{\ep})^{0\nu}d\si=\f12
 \int_{\mathbb{R}^3}<\pa^k \phi,\pa_k \phi>-<\pa^t\phi, \pa_t\phi>+ 2\mathcal{V}(\phi) d\si.
\end{split}
\end{equation}
Here we recall that $d\si=\sqrt{-\det g^\ep}=d_\ep^2 dx$,  $d\vol =d_\ep^2 dt dx$.

Then let $\b=a=p_\ep^{-1}q_\ep$, $Y=\pa_k$. We have
\begin{equation}
 \label{MomentumC}
\begin{split}
\Pi_k(t)=\Pi_k(0)+\int_0^{t}\int_{\mathbb{R}^3}a
T^{\mu\nu}[\phi]\pi_{\mu\nu}^{\pa_k}+ \pa^\mu a T_{\mu
k}[\phi]d\vol,
\end{split}
\end{equation}
where
\begin{equation}
 \label{defmomentum}
\Pi_k(t)=\int_{\mathbb{R}^3}a T_{\mu
k}[\phi](g^\ep)^{0\mu}d\si=\int_{\mathbb{R}^3}p_\ep^{-1}q_\ep<\pa^t\phi,
\pa_k\phi> d\si.
\end{equation}
We remark here that $\mathcal{H}$, $\Pi_k$ are conserved quantities
of the equation \eqref{FEQUATION} if the metric $g^\ep$ is flat. For general slowly varying metric $g^\ep$, we will show in the
next section that these quantities are almost conserved, that is, the error is of higher order.

Since $\phi$ is complex valued, we define the charge
of the solution $\phi$
\begin{equation}
 \label{defcharge}
Q(t)=\int_{\mathbb{R}^3}a<i\pa^t\phi, \phi>d\si,\quad d\si=d_\ep^2dx.
\end{equation}
Using the equation ~\eqref{FEQUATION}, we can get an integral form of $Q(t)$
\begin{equation}
 \label{ChargeC}
\begin{split}
Q(t)&=Q(0)+\int_{0}^{t}\int_{\mathbb{R}^3}\frac{\pa}{\pa t}(a d_\ep^2)<i\pa^t\phi, \phi>
 +a d_\ep^2
\left(<i\pa_t\pa^t\phi, \phi>+<i\pa^t\phi, \pa_t\phi>\right) dxds\\
&=Q(0)+\int_{0}^{t}\int_{\mathbb{R}^3}d_\ep^2<i\pa_\mu a\pa^\mu\phi, \phi>+a d_\ep^2<i\pa^\mu\phi, \pa_\mu\phi> dxds\\
&=Q(0)+\int_{0}^{t}\int_{\mathbb{R}^3}<i\pa_\mu a\pa^\mu\phi, \phi> d_\ep^2dxds,
\end{split}
\end{equation}
where we have used the equation
\[
 \pa_\mu\pa^\mu\phi+d_\ep^{-2}\pa_\mu(d_\ep^2)\pa^\mu\phi-m^2\phi+|\phi|^{p-1}\phi=0
\]
together with the fact that the quadratic form $<i(g^\ep)\cdot, \cdot>$ is
antisymmetric.

\begin{remark}
The reason that we put some weight in the decomposition
\eqref{decomp}, also in the definition of the almost conserved
quantities $\Pi_k$, $Q$, is to reduce the positivity of the energy
of the radiation term $(v,w)$ to the case in Minkowski space, which has been
proven in \cite{moduStuart}, see Proposition \ref{positen}.
\end{remark}

Next, we expand $\mathcal{H}$, $\Pi_k$, $Q$, as functionals of the
full solution $\phi$, around the soliton $\phi_S$. The soliton part
can be calculated explicitly. The crossing terms are close to the
orthogonality condition \eqref{orthcondfix} and hence are small. A
combination of the quadratic terms in $v, w$ gives the energy
$E_0(t)$ defined in \eqref{defofE0}, which is positive definite by
Proposition \ref{positen}. We thus end up with an estimate for
$\|w\|_{L^2}+\|v\|_{H^1}$ if we can further show that
 $\mathcal{H}$, $\Pi_k$, $Q$ are almost conserved.

\bigskip

We first consider the angular momentum $\Pi_k(t)$. Using the decomposition
~\eqref{decomp}, we can show that
\begin{equation*}
\begin{split}
\Pi_k(t)&=\int_{\mathbb{R}^3}p_\ep^{-1}q_\ep<\pa^t\phi, \pa_k\phi>d\si \\
&=\int_{\mathbb{R}^3}p_\ep^{-1}q_\ep <(g^\ep)^{00}\pa_t\phi, \pa_k\phi>+p_\ep^{-1}q_\ep (g^\ep)^{0l}<\pa_l\phi, \pa_k\phi>d\si\\
&=\int_{\mathbb{R}^3}p_\ep^{-1}q_\ep (g^\ep)^{00}<\psi_S+\e^{i\Th}(p_\ep d_\ep)^{-1}w, \pa_k(\phi_S+\e^{i\Th}(q_\ep d_\ep)^{-1}v)>\\
&\qquad+p_\ep^{-1}q_\ep (g^\ep)^{0l}<\pa_l(\phi_S+\e^{i\Th}(q_\ep
d_\ep)^{-1}v), \pa_k(\phi_S+\e^{i\Th}(q_\ep d_\ep)^{-1}v)> d\si.
\end{split}
\end{equation*}
Recall that $h\in C^4$ and $h=m_0$ along the geodesic $(t, u_0t)$.
Since $g^\ep$ is close to $h^\ep$ in terms of the condition
\eqref{psiep}, we compare the above integral with that associated to
the metric $h$. We hence can write $\Pi_k(t)$ as a sum of main terms
plus an error
\begin{equation*}
\begin{split}
\Pi_k(t)=&-<\psi_S, \pa_k\phi_S>_{dx}-<\psi_S, \pa_k(\e^{i\Th}v)>_{dx}-<\e^{i\Th}w, \pa_k\phi_S>_{dx}-
<\e^{i\Th}w, \pa_k(\e^{i\Th}v)>_{dx}\\
& +<(p_\ep
q_\ep)^{-1}(g^\ep)^{0l}\pa_l(\e^{i\Th}v),\pa_k(\e^{i\Th}v)>_{dx}+Err(\Pi_k),
\end{split}
\end{equation*}
in which, by using Lemma \ref{lemb}, we can show that the error term $Err(\Pi_k)$ can be bounded as follows
\begin{equation}
 \label{errpi}
\begin{split}
|Err(\Pi_k)|&\les\ep^2
(1+|\xi|^{2})(1+\|v\|_{H^1})+\|\pa g^\ep\|_{L^\infty}(\|w\|_{L^2}\|v\|_{H^1}+\|v\|_{H^1}^2)\\
&\les(1+|\xi|^2)\ep^2+\ep\|w\|_{L^2}^2+\ep\|v\|_{H^1}^2.
\end{split}
\end{equation}
We show the crossing terms are vanishing due to the orthogonality condition. In fact,
recall the definition of $z$ in line \eqref{z}. We find that
$$D_\xi\phi_S=D_z\phi_S\cdot\frac{\pa z}{\pa \xi}=-D_z\phi_S\cdot\frac{\pa z}{\pa x}=-\nabla_x\phi_S.$$
Since $\la(t)$ solves the modulation equations \eqref{modeq} and hence the orthogonality condition \eqref{orthcondfix}
holds, integration by parts implies that
\[
 <\psi_S, \nabla_x(\e^{i\Th}v)>_{dx}+<\e^{i\Th}w, \nabla_x\phi_S>_{dx}=-<\e^{i\Th}w, D_\xi\phi_S>_{dx}+<D_\xi\psi_S,
 \e^{i\Th}v>_{dx}=0.
\]
We now compute the soliton part. Since $dz=\rho dx$, we can compute
\begin{align*}
 <\psi_S, \nabla_x\phi_S>_{dx}&=<\e^{i\Th}(i\rho \om f_\om-\rho (u+u_0)\cdot \nabla_z f_\om), \e^{i\Th}(-i\rho\om f_\om (u+u_0)+\nabla_zf_\om \cdot\frac{\pa z}{\pa x})>_{dx}\\
&=-\int_{\mathbb{R}^3}\rho^2\om^2 f_\om^2 (u+u_0)+ \rho (u+u_0)\cdot \nabla_z f_\om \nabla_z f_\om \cdot \frac{\pa z}{\pa x}dx\\
&=-\rho B (u+u_0),
\end{align*}
where by Theorem \ref{propoff}, the ground state $f_\om$ is spherical symmetric and
$B$ is given in line \eqref{defofB}. We hence can write
\begin{equation}
 \label{Pidecomp}
\begin{split}
\Pi_k(t)=&\rho B (u^k+u_0^k)-<\e^{i\Th}w, \pa_k(\e^{i\Th}v)>_{dx}
+<(p_\ep
q_\ep)^{-1}(g^\ep)^{0l}\pa_l(\e^{i\Th}v),\pa_k(\e^{i\Th}v)>_{dx}+Err(\Pi_k),
\end{split}
\end{equation}
where $u=(u^1, u^2, u^3)$, $u_0=(u_0^1, u_0^2, u_0^3)$ and the error term $Err(\Pi_k)$ satisfies \eqref{errpi}.

\bigskip

We decompose the charge $Q(t)$ in a similar way. Recall that
$\b=p_\ep^{-1}q_\ep $. We can show that
\begin{equation}
 \label{Qdecomp0}
\begin{split}
Q(t)&=\int_{\mathbb{R}^3}p_\ep^{-1}q_\ep<i(g^\ep)^{00}(\psi_S+\e^{i\Th}(p_\ep d_\ep)^{-1}w),
\phi_S+\e^{i\Th}(q_\ep d_\ep)^{-1}v>\\
&\qquad+p_\ep^{-1}q_\ep<i(g^\ep)^{0k}\pa_k(\phi_S+\e^{i\Th}(q_\ep d_\ep)^{-1}v),\phi_S+\e^{i\Th}(q_\ep d_\ep)^{-1}v> d\si\\
&=-<i\psi_S, \phi_S>_{dx}-<iw, v>_{dx}-<i\psi_S, \e^{i\Th}v>_{dx}-<i\e^{i\Th}w, \phi_S>_{dx}\\
&\qquad+<(p_\ep
q_\ep)^{-1}(g^\ep)^{0k}i\pa_k(\e^{i\Th}v),\e^{i\Th}v>_{dx}+Err(Q),
\end{split}
\end{equation}
where the error term $Err(Q)$ satisfies the estimate
\begin{equation}
\label{errq}
 |Err(Q)|\les(1+|\xi|^2)\ep^2+\ep\|w\|_{L^2}^2+\ep\|v\|_{H^1}^2.
\end{equation}
Now observe that $D_\th\phi_S=i\phi_S, D_\th\psi_S=i\psi_S$. The orthogonality condition \eqref{orthcondfix} together with
integration by parts implies that
\[
 <i\psi_S, \e^{i\Th}v>_{dx}+<i\e^{i\Th}w, \phi_S>_{dx}=<D_\th\psi_S, \e^{i\Th}v>_{dx}-<D_\th\phi_S, \e^{i\Th}w>_{dx}=0.
\]
For the soliton part, we can compute
\[
 <i\psi_S, \phi_S>_{dx}=\int_{\mathbb{R}^3}-\rho\om f_\om^2 dx=-\om\|f_\om\|_{L^2}.
\]
Therefore, we can write
\begin{equation}
 \label{Qdecomp}
Q(t)=\om\|f_\om\|_{L^2}^2-<iw, v>_{dx}+<(p_\ep q_\ep
)^{-1}(g^\ep)^{0k}i\pa_k(\e^{i\Th}v),\e^{i\Th}v>_{dx}+Err(Q).
\end{equation}
The error term $Err(Q)$ satisfies \eqref{errq}.

\bigskip

Finally, we consider the main energy $\mathcal{H}(t)$, containing of quadratic terms and a higher order term corresponding to
the nonlinearity. The quadratic part can be
decomposed as follows
\begin{equation*}
\begin{split}
&\quad <\pa^k\phi, \pa_k\phi>_{d\si}-<\pa^t\phi,
\pa_t\phi>_{d\si}+m^2<\phi, \phi>_{d\si}\\
&=<(g^\ep)^{kl}\pa_k\phi,
\pa_l\phi>_{d\si}-<(g^\ep)^{00}\pa_t\phi, \pa_t\phi>_{d\si}+m^2<\phi_S+\e^{i\Th}(q_\ep d_\ep)^{-1}v,\phi_S+\e^{i\Th}(q_\ep d_\ep)^{-1}v>_{d\si}\\
&=\int_{\mathbb{R}^3}|\nabla_x\phi_S|^2+|\psi_S|^2+m^2|\phi_S|^2dx+m^2<q_\ep^{-2}v,v>_{dx}+
<q_\ep^{-2}(g^\ep)^{kl}\pa_k(\e^{i\Th}v),
\pa_l(\e^{i\Th}v)>_{dx}\\
&+2<\nabla_x\phi_S, \nabla_x(\e^{i\Th}v)>_{dx}+<w,
w>_{dx}+2<\e^{i\Th}w, \psi_S>_{dx}+2m^2<\phi_S,
\e^{i\Th}v>_{dx}+Err(Hq),
\end{split}
\end{equation*}
where
\begin{equation}
\label{errhq}
|Err(Hq)|\les(1+|\xi|^2)\ep^2+\ep\|w\|_{L^1}^2+\ep\|v\|_{H^1}^2.
\end{equation}
We expand the nonlinear term up to second order
\begin{align*}
-\int_{\mathbb{R}^3}N(\phi)d\si&=-\int_{\mathbb{R}^3}N(f_\om+(q_\ep
d_\ep)^{-1}v)-N(f_\om)-DN(f_\om)(q_\ep d_\ep
)^{-1}v-\f12(q_\ep d_\ep)^{-2}vD^2N(f_\om)vd\si\\
&\quad - \int_{\mathbb{R}^3}\frac{1}{p+1}f_\om^{p+1}+f_\om^p (q_\ep
d_\ep)^{-1}v_1+\f12 f_\om^{p-1}(q_\ep
d_\ep)^{-2}(|v|^2+(p-1)|v_1|^2)\quad d\si\\
&=- \int_{\mathbb{R}^3}\frac{1}{p+1}f_\om^{p+1}+f_\om^p
v_1+\f12f_\om^{p-1}(|v|^2+(p-1)|v_1|^2)\quad dx+Err(Hq),
\end{align*}
where by using Lemma \ref{lemnonlinear}, we can control the error term
\begin{equation}
 \label{errhn}
|Err(Hn)|\les
\|v\|_{H^1}^3+\|v\|_{H^1}^{p+1}+(1+|\xi|^{2})\ep^{2}\les
\|v\|_{H^1}^3+(1+|\xi|^2)\ep^2.
\end{equation}
Group these together. We end up with
\begin{align*}
\mathcal{H}(t)&=\f12\int_{\mathbb{R}^3}|\nabla_x\phi_S|^2+|\psi_S|^2+m^2f_\om^2-\frac{2}{p+1}f_\om^{p+1}
+m^2q_\ep^{-2}|v|^2
+<q_\ep^{-2}(g^\ep)^{kl}\pa_k(\e^{i\Th}v), \pa_l(\e^{i\Th}v)>dx\\
&+<\nabla_x\phi_S, \nabla_x(\e^{i\Th}v)>_{dx}+\f12<w,
w>_{dx}+<\e^{i\Th}w, \psi_S>_{dx}+m^2<\phi_S,
\e^{i\Th}v>_{dx}+Err(Hq)\\
& -<f_\om^p, v_1>_{dx}-\f12\int_{\mathbb{R}^3}f_\om^{p-1}|v|^2+(p-1)f_\om^{p-1}v_1^2 dx+Err(Hn).
\end{align*}
We can compute the soliton part
\begin{align*}
& \f12\int_{\mathbb{R}^3}|\nabla_x\phi_S|^2+|\psi_S|^2+m^2f_\om^2-\frac{2}{p+1}f_\om^{p+1}dx\\
=& \f12 \int_{\mathbb{R}^3}\rho^2\om^2|u|^2f_\om^2+|\nabla_z f\frac{\pa z}{\pa x}|^2+\rho^2\om^2 f_\om^2+\rho^2\om^2 f_\om^2+m^2f_\om^2-\frac{2}{p+1}f_\om^{p+1}dx\\
=& \f12\int_{\mathbb{R}^3}2\rho^2\om^2 f_\om^2+(m^2-\om^2)f_\om^2+(\frac{1}{3}+\frac{2}{3}\rho^2)|\nabla_z f|^2-\frac{2}{p+1}f_\om^{p+1}dx\\
=& \rho\left(\om^2\|f_\om\|_{L^2}^2+\frac{1}{3}\|\nabla_z f_\om\|_{L^2}^2\right)=\rho B.
\end{align*}
Using the identity \eqref{idenofphiS} and the orthogonality
condition \eqref{orthcondfix}, we can eliminate the crossing terms
\begin{align*}
& <\nabla_x \phi_S, \nabla_x(\e^{i\Th}v)>_{dx}+<\psi_S, \e^{i\Th}w>_{dx}+<m^2\phi_S, \e^{i\Th}v>_{dx}-<f_\om^p, v_1>_{dx}\\
=&<\psi_S, \e^{i\Th}w>_{dx}+<-\Delta_x \phi_S+m^2\phi_S-\e^{i\Th}f_\om^p,\e^{i\Th}v>\\
=&<D_\la \phi_S\cdot V(\la), \e^{i\Th}w>-<D_\la \psi_S \cdot V(\la), \e^{i\Th}v>=0.
\end{align*}
Combining all these together, we can write
\begin{align}
 \notag
\mathcal{H}(t)&=\rho B+\f12\int_{\mathbb{R}^3}m^2q_\ep^{-2}|v|^2
+<q_\ep^{-2}(g^\ep)^{kl}\pa_k(\e^{i\Th}v), \pa_l(\e^{i\Th}v)>+|w|^2-f_\om^{p-1}|v|^2\\
\label{Hdecomp}
&\qquad\qquad\qquad-(p-1)f_\om^{p-1}v_1^2 dx+Err(Hn)+Err(Hq),
\end{align}
where the errors terms satisfy \eqref{errhq}, \eqref{errhn}.

\bigskip

We proceed by arguing that the energy of the radiation term $(v, w)$ is positive definite. Using the Fermi coordinate system,
we have turned the stability of solitons on a slowly varying background into that on a small
perturbation of Minkowski space. In Minkowski space, observe that the associated quantities $\mathcal{H}$, $\Pi_k$, $Q$,
 as functionals of the solution $\phi$ of \eqref{EquationMink}, are conserved
and that the soliton $\phi_S$ is critical point of the Lagrange
\[
 \mathcal{H}-u^k\Pi_k-\frac{\om}{\rho}Q
\]
with Lagrange multipliers $u^k$, $\frac{\om}{\rho}$. To study the stability of stable solitons, we expand the above
Lagrange around $\phi_S$. It turns out that the soliton part is convex function of $\om$, $u$. The crossing term is vanishing since
 $\phi_S$ is critical point. We remark here that this is also given by the orthogonality condition \eqref{orthcond}, which
has been shown above. The quadratic part gives the energy of the remainder $\phi-\phi_S$, which is proven to be positive definite under
the orthogonality condition. One thus can obtain the orbital stability result of Theorem \ref{thmstbMin} in Minkowski space, see the
work of D. Stuart \cite{moduStuart}.
We use a similar idea in our case by considering
\[
 \mathcal{H}(t)-(u^k(0)+u_0^k)\cdot \Pi_k(t)-\frac{\om}{\rho(0)}Q(t).
\]
Having the decomposition formulae \eqref{Pidecomp}, \eqref{Qdecomp}, \eqref{Hdecomp}, we can group the soliton part
\[
 \rho B(1-(u(0)+u_0)\cdot (u+u_0))-\frac{\om^2}{\rho(0)}\|f_\om\|_{L^2(\mathbb{R}^3)}^2.
\]
We denote the quadratic part
\begin{align*}
E(t)&=\f12\int_{\mathbb{R}^3}m^2q_\ep^{-2}|v|^2+<q_\ep^{-2}(g^\ep)^{kl}\pa_k(\e^{i\Th}v), \pa_l(\e^{i\Th}v)>
+|w|^2-f_\om^{p-1}|v|^2\\
&-(p-1)f_\om^{p-1}v_1^2+2\frac{\om}{\rho}<iw, v>-2\frac{\om}{\rho(0)}<(p_\ep q_\ep)^{-1}(g^\ep)^{0l}i\pa_l(\e^{i\Th}v),
 \e^{i\Th}v>\\
&+2<\e^{i\Th}w, \pa_k(\e^{i\Th}v)>(u^k+u_0^k)-2(u^k(0)+u_0^k)<(p_\ep q_\ep)^{-1}(g^\ep)^{0l}\pa_l(\e^{i\Th}v),
 \pa_k(\e^{i\Th}v)>dx
\end{align*}
as the energy of the radiation term $(v, w)$. Since $|u(0)+u_0|<1$, $\rho(0)\geq 1$, $|\om|\leq m$, using Corollary \ref{corgposi}
and condition \eqref{psiep}, we can conclude that
\begin{align}
\label{lowE}
E(t)&\geq\f12 \int_{\mathbb{R}^3}m^2|v|^2+|\nabla_x(\e^{i\Th}v)|^2+|w|^2-f_\om^{p-1}|v|^2-
(p-1)f_\om^{p-1}v_1^2+2\frac{\om}{\rho}<iw, v>\\
\notag&\qquad+2<\e^{i\Th}w, \pa_k(\e^{i\Th}v)>(u^k+u_0^k) dx-C_0\ep\|v\|_{H^1}^2\\
\notag&=\frac{1}{2\rho}\left(\|w+\rho (u+u_0)\nabla_z v-i\rho \om
v\|_{L^2(dz)}^2+<v_1, L_{+}v_1>_{dz}+<v_2,
L_{-}v_2>_{dz}\right)-C_0\ep\|v\|_{H^1}^2\\
\notag
&=\frac{1}{2\rho}E_0(t)-C_0\ep\|v\|_{H^1}^2,
\end{align}
where the constant $C_0$ depends only on $h$, $m$. In the last line, we must view $v$, $w$ as functions of $z$ instead of $x$.
The operators $L_{+}$, $L_{-}$ are defined in \eqref{L+}. We hence can write
\begin{equation*}
\begin{split}
 \mathcal{H}(t)-(u^k(0)+u_0^k)\cdot \Pi_k(t)-\frac{\om}{\rho(0)}Q(t)=&\rho B(1-(u(0)+u_0)\cdot
 (u+u_0))-\frac{\om^2}{\rho(0)}\|f_\om\|_{L^2}^2\\
&+E(t)+Err_1+Err_2,
\end{split}
\end{equation*}
where $Err_1$ consists of errors in $\mathcal{H}(t)$, $\Pi_k(t)$, $Q(t)$ satisfying the estimate
\begin{align}
\notag
 |Err_1|&=\left|Err(Hn)+Err(Hq)-(u^k(0)+u_0^k)Err(\Pi_k)-\frac{\om}{\rho(0)}Err(Q)\right|\\
\label{err1}
&\les\|v\|_{H^1}^3+(1+|\xi|^2)\ep^2+\ep\|v\|_{H^1}^2+\ep\|w\|_{L^1}^2,
\end{align}
 $Err_2$ denotes the errors from the coefficients $u+u_0$, $\frac{\om}{\rho}$, obeying the estimates
\begin{align}
\notag
 |Err_2|&=\left|-(u^k-u^k(0))<\e^{i\Th}w, \pa_k(\e^{i\Th}v)>_{dx}-\left(\frac{\om}{\rho}-\frac{\om}{\rho(0)}\right)<iw, v>_{dx}\right|\\
\label{err2} &\les |u(t)-u(0)|\|w\|_{L^2}\|v\|_{H^1}.
\end{align}
Now denote
\begin{equation}
 \label{defdeltaH}
\begin{split}
d\mathcal{H}(t)&=\rho B(1-(u(0)+u_0)\cdot (u+u_0))
-\frac{\om^2}{\rho(0)}\|f_\om\|_{L^2}^2-\frac{B(0)}{\rho(0)}
+\frac{\om(0)\om}{\rho(0)}\|f_{\om}\|_{L^2}^2(0)
\end{split}
\end{equation}
and
\begin{equation*}
\begin{split}
Err_3=
\mathcal{H}(0)-(u^{k}(0)+u_0^k)\Pi_k(0)-\frac{B(0)}{\rho(0)}-\frac{\om}{\rho(0)}\left(Q(0)-\om(0)\|f_{\om}\|_{L^2}^2(0)\right).
\end{split}
\end{equation*}
We obtain
\begin{equation}
 \label{energycomb1}
\begin{split}
E(t)+d \mathcal{H}(t) =&\mathcal{H}(t)-\mathcal{H}(0)-(u^k(0)+u_0^k)\cdot (\Pi_k(t)-\Pi_k(0))\\
 &-\frac{\om}{\rho(0)}(Q(t)-Q(0))-Err_1-Err_2+Err_3.
\end{split}
\end{equation}
Since
\[
 \rho(0)B(0)-|u(0)+u_0|^2\rho(0)B(0)-\frac{B(0)}{\rho(0)}=0,
\]
by Lemma \ref{datapre}, we can show that
\begin{equation}
 \label{err3est}
|Err_3|\les \ep^2.
\end{equation}
Since $(v, w)$ satisfies the orthogonality condition
\eqref{orthcondfix}, Proposition \ref{positen} implies that $E_0(t)$
is equivalent to the energy
$\|v\|_{H^1(\mathbb{R}^3)}^2+\|w\|_{L^2(\mathbb{R}^3)}^2$ when $\ep$
is sufficiently small. Hence from \eqref{lowE}, $E(t)$ is equivalent
to $\|v\|_{H^1(\mathbb{R}^3)}^2+\|w\|_{L^2(\mathbb{R}^3)}^2$ up to
an error. The right hand side of \eqref{energycomb1} will be proven
to be small in the next section. To obtain estimates for
$\|v\|_{H^1(\mathbb{R}^3)}^2+\|w\|_{L^2(\mathbb{R}^3)}^2$, it
suffices to show that $d\mathcal{H}$ is positive definite. In fact,
we can show
\begin{lem}
 \label{lempositdH}
Let $\la(t)\in \La_{\delta_0}$. If $\delta_0$ is sufficiently small, depending only $\la(0)$, then
\[
d \mathcal{H}(t)\geq c(|\om(t)-\om(0)|^2+|u(t)-u(0)|^2)
\]
for some positive constant $c$ depending only on $\la(0)$.
\end{lem}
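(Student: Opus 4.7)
The plan is to view $d\mathcal{H}(t)$ as a smooth function $F(\om,v)$ of the variables $\om=\om(t)$ and $v=u(t)+u_0$, with $\om_0:=\om(0)$, $v_0:=u(0)+u_0$, $\rho_0=\rho(0)$, $B_0=B(0)$ regarded as fixed constants. Since $\la(t)\in\La_{\delta_0}$ keeps $(\om,v)$ in a $\delta_0$-neighborhood of $(\om_0,v_0)$, it suffices to establish: (i) $F(\om_0,v_0)=0$; (ii) $\nabla F(\om_0,v_0)=0$; (iii) $\textnormal{Hess}\,F(\om_0,v_0)$ is positive definite, with eigenvalue bounds depending only on $\la(0)$. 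Continuity of the Hessian and Taylor's theorem then yield $F(\om,v)\geq c(|\om-\om_0|^2+|v-v_0|^2)$ for $\delta_0$ small, which is the desired estimate since $|v-v_0|=|u(t)-u(0)|$.

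Statement (i) is a direct check using $1-|v_0|^2=\rho_0^{-2}$, which makes the leading soliton contributions cancel. The vanishing of $\partial_v F$ at $(\om_0,v_0)$ follows from $\partial_{v_i}\rho=\rho^3 v_i$ together with the same identity. The vanishing of $\partial_\om F$ is the main delicate point: after simplification it reduces to
\[
 \rho_0\,\partial_\om F|_{(\om_0,v_0)}=\frac{1}{3}\partial_\om\|\nabla f_\om\|_{L^2}^2\big|_{\om_0}+\om_0\|f_{\om_0}\|_{L^2}^2.
\]
Invoking the scaling $\|f_\om\|_{L^2}^2=(m^2-\om^2)^{(7-3p)/(2(p-1))}\|f\|_{L^2}^2$ from \eqref{scalling} and the Pohozaev-type identities \eqref{lempropf} of Theorem \ref{propoff}, one computes $\partial_\om\|\nabla f_\om\|_{L^2}^2=-3\om\|f_\om\|_{L^2}^2$, which forces the right-hand side to vanish. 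This is the place where the structural properties of the ground state $f_\om$ are really used, and it is the step I expect to be the main obstacle.

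For the Hessian, the mixed partials $\partial_{v_i}\partial_\om F$ vanish at $(\om_0,v_0)$ by exactly the same two cancellations, so $\textnormal{Hess}\,F|_{(\om_0,v_0)}$ is block diagonal. Using $B'(\om)=\partial_\om(\om^2\|f_\om\|_{L^2}^2)-\om\|f_\om\|_{L^2}^2$, the $\om$--$\om$ entry reduces to
\[
 \partial_\om^2 F|_{(\om_0,v_0)}=-\frac{1}{\rho_0}\partial_\om(\om\|f_\om\|_{L^2}^2)\big|_{\om_0},
\]
which is strictly positive on $\La_{\textnormal{stab}}$ by the computation already performed in Lemma \ref{nondegD}. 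For the $v$-block, $\partial_{v_i}\partial_{v_j}\rho=3\rho^5 v_iv_j+\rho^3\delta_{ij}$ leads to
\[
 \partial_{v_i}\partial_{v_j}F|_{(\om_0,v_0)}=\rho_0 B_0\bigl(\delta_{ij}+\rho_0^2(v_0)_i(v_0)_j\bigr),
\]
a positive multiple of $I+\rho_0^2\,v_0\otimes v_0$, whose eigenvalues are $1$ and $1+\rho_0^2|v_0|^2$; hence this block is positive definite with lower bound $\rho_0 B_0>0$.

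Given (i)--(iii), the $C^2$-regularity of $F$ allows us to shrink $\delta_0$ (depending only on $\la(0)$) so that $\textnormal{Hess}\,F\geq\frac{1}{2}\textnormal{Hess}\,F|_{(\om_0,v_0)}$ holds uniformly on the $\delta_0$-neighborhood. A Taylor expansion about $(\om_0,v_0)$ then delivers the quadratic lower bound of the lemma with $c$ equal to half the smallest eigenvalue of $\textnormal{Hess}\,F|_{(\om_0,v_0)}$, a positive constant depending only on $\la(0)$.
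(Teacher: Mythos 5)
Your proof is correct and follows the same second-order Taylor strategy as the paper's own proof: both rest on the vanishing of the gradient of $d\mathcal{H}$ at $(\om_0,v_0)$ (which needs the identity $D_\om\|\nabla f_\om\|_{L^2}^2=-3\om\|f_\om\|_{L^2}^2$, a consequence of the scaling law \eqref{scalling} and the Pohozaev-type identities \eqref{lempropf} of Theorem~\ref{propoff}) and on the positive-definiteness of its Hessian there, with the $\om$-entry $-\rho_0^{-1}D_\om(\om\|f_\om\|_{L^2}^2)>0$ drawn from the same computation as Lemma~\ref{nondegD} and the $u$-block equal to $\rho_0 B_0\bigl(I+\rho_0^2\,v_0\otimes v_0\bigr)$. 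Your only organizational departure is to assemble a joint Hessian, which obliges you to note that the mixed partials vanish — they do, since $\partial_{v_i}\bigl[\rho(1-v_0\cdot v)\bigr]\big|_{v_0}=0$ already annihilates them — whereas the paper factors out $\rho(1-v_0\cdot v)$, bounds it below by $\rho_0^{-1}+c_1|u(t)-u(0)|^2$, and then expands the remaining expression in $\om$ alone; this is a cosmetic difference and the substance is identical.
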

\begin{proof}
As functions of $u$, we can compute
\begin{align*}
 D_u (\rho(1-(u(0)+u_0)\cdot (u+u_0)))(0)=&D\rho(0)\rho(0)^{-2}-\rho(0) (u(0)+u_0)=0,\\
 D^2_u (\rho(1-(u(0)+u_0)\cdot (u+u_0)))(0)=&D^2\rho(0)
(1-|u(0)+u_0|^2)-2 D\rho(0) \cdot(
u(0)+u_0)\\
=&\rho(0)I+\rho(0)^{3}(u(0)+u_0)\cdot (u(0)+u_0),
\end{align*}
which implies that the Hessian of $\rho(1-u(0)\cdot u)$ with respect to $u$ is positive definite. Hence, if $\delta_0$ is
 sufficiently small, depending only on $\la(0)$, then
\[
 \rho(1-(u(0)+u_0)\cdot (u+u_0))\geq \rho(0)^{-1}+c_1|u(t)-u(0)|^2,\quad |u(t)-u(0)|\leq \delta_0
\]
for some constant $c_1$ depending only on $u(0)+u_0$.

For the other part on the right hand side of \eqref{defdeltaH}, we rely on the properties of the ground state $f_\om$.
Recall the definition \eqref{defofB} of $B(t)$ and the scaling property \eqref{scalling} as well as the energy identities,
 we can show that
\begin{align*}
& D_\om\left(B(t)-\om^2\|f_\om\|_{L^2}^2-B(0)+\om(0)\om\|f_\om\|_{L^2}^2(0)\right)(0)=0,\\
& D_\om^2\left(B(t)-\om^2\|f_\om\|_{L^2}^2-B(0)+\om(0)\om\|f_\om\|_{L^2}^2(0)\right)(0)\\
&\quad=\left(\frac{6-2p}{p-1}\om(0)^2-m^2\right)(m^2-\om(0)^2)^{\frac{9-5p}{2(p-1)}}\|f\|_{L^2}^2>0.
\end{align*}
The positivity is due to the assumption $\la(0)\in \La_{\delta_0}$. Hence if $\delta_0$ is sufficiently small, we can conclude that
\[
 B(t)-\om^2\|f_\om\|_{L^2}^2-B(0)+\om(0)\om\|f_\om\|_{L^2}^2(0)\geq c_2|\om(t) -\om(0)|^2,\forall \om(t)\in(\om_0-\delta_0,
 \om_0+\delta_0)
\]
for some constant $c_2$ depending only on $\la(0)$.

Therefore, we have shown that
\begin{align*}
 d\mathcal{H}(t)&\geq \rho(0)^{-1}B(t)+c_1|u(t)-u(0)|^2B(t)-\rho(0)^{-1}(\om^2\|f_\om\|_{L^2}^2+B(0)-\om(0)\om\|f_{\om}\|_{L^2}^2(0))\\
&\geq c_1|u(t)-u(0)|^2B(t)+c_2\rho(0)^{-1}|\om(t)-\om(0)|^2\geq c(|\om(t)-\om(0)|^2+|u(t)-u(0)|^2)
\end{align*}
for some constant c depending only on $\la(0)$ if $\delta_0$ is sufficiently small.
\end{proof}

We now choose $\delta_0$ such that Lemma \ref{lempositdH} holds and the set $\La_{\delta_0}$ defined in line \eqref{defofLadel} is
nonempty. By Proposition \ref{positen}, we have shown the left hand side of \eqref{energycomb1} is bounded below as follows
\[
 E(t)+d\mathcal{H}\geq c(\|v\|_{H^1}(t)+\|w\|_{L^2}(t)+|\om(t)-\om(0)|^2+|u(t)-u(0)|^2)
\]
for some constant $c$ depending only on $\la(0)$ if $\ep$ is sufficiently small.

\subsubsection{Energy Estimates}
We estimate the right hand side of \eqref{energycomb1} in this section by using the integral formulae
\eqref{EnergyC}, \eqref{MomentumC}, \eqref{ChargeC}. Denote the soliton part
\begin{equation*}
\mathcal{H}_S^{\a}(t)=\int_{0}^{t}\int_{\mathbb{R}^3}T^{\mu\nu}[\phi_S]\pi_{\mu\nu}^{\pa_{\a}}dxdt,\quad
\a\in\{0, 1, 2, 3\},
\end{equation*}
where $\pa_t\phi_S$ should be replaced with $\psi_S$.
\begin{prop}
 \label{propcon}
We have
\begin{align*}
|\mathcal{H}(t)-\mathcal{H}(0)+\mathcal{H}_S^{0}(t)|&\les \ep^{2}+\ep\int_{0}^{t}|u|^2+\|v\|_{H^1}^2+\|w\|_{L^2}^2ds,\\
|\Pi_k(t)-\Pi_k(0)-\mathcal{H}_S^k(t)|&\les\ep^{2}+\ep\int_{0}^{t}|u|^2+\|v\|_{H^1}^2+\|w\|_{L^2}^2ds,\\
|Q(t)-Q(0)|&\les\ep^{2}+\ep\int_{0}^{t}|u|^2+\|v\|_{H^1}^2+\|w\|_{L^2}^2ds
\end{align*}
if $\ep$ is sufficiently small.
\end{prop}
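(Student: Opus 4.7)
\textbf{Proof proposal for Proposition \ref{propcon}.}
The plan is to start from the identities \eqref{EnergyC}, \eqref{MomentumC}, \eqref{ChargeC}, substitute the decomposition \eqref{decomp} of $\phi$ into soliton plus radiation, and expand each integrand quadratically. Schematically, $T^{\mu\nu}[\phi]=T^{\mu\nu}[\phi_S]+T^{\mu\nu}_{\text{cross}}(\phi_S,v,w)+T^{\mu\nu}_{\text{quad}}(v,w)+T^{\mu\nu}_{\text{nl}}$, where the last piece collects all contributions of the nonlinearity $|\phi|^{p+1}$ above the second order in $v$. The pure soliton piece reproduces $\mathcal{H}_S^{\a}(t)$ exactly, except that the volume element $d\vol=d_\ep^2\,dx\,dt$ differs from $dx\,dt$. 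Since $h(t,u_0t)=m_0$ and $\pa h(t,u_0t)=0$, Taylor expansion gives $|d_\ep^2-1|\les \ep^2|x-u_0t|^2+|\psi^\ep|$ near the geodesic, and since $\phi_S$ decays exponentially from $u_0t+\xi$ with $|\xi|\les C_2$, Lemma \ref{lemb} applied to $F_1(\phi_S)=T^{\mu\nu}[\phi_S]$ and $F_2=d_\ep^2-1$ (or to $F_2=\pi^{\pa_\a}$ directly) produces an $L^1$ bound $\les (1+|\xi|^2)\ep^2$ per unit time. Integrating over $[0,t]\subset[0,T/\ep]$ and using the boostrap bound \eqref{baxi} yields the stated $\ep^2$ contribution.

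For the cross terms, one uses that $\pi^{\pa_\a}_{\mu\nu}=\f12\pa_\a g^\ep_{\mu\nu}$ and $\pa^\mu a$ are both globally $O(\ep)$ and are in fact of size $O(\ep^2(1+|x-u_0t|))$ on a neighbourhood of the geodesic in view of $\pa h(t,u_0t)=0$ and \eqref{psiep}. After inserting the decomposition \eqref{decomp}, every cross term is of the form $\int F_1(\phi_S)F_2(\pa g^\ep)\cdot(\e^{i\Th}v\text{ or }\e^{i\Th}w)\,dx$. A Cauchy--Schwartz split, together with the $L^r$ estimate of Lemma \ref{lemb} on the factor $F_1(\phi_S)F_2(\pa g^\ep)$ and the boostrap \eqref{bawv}, bounds each such integrand by $\les \ep^2(1+|\xi|^2)(\|v\|_{H^1}+\|w\|_{L^2})\les \ep^{2}+\ep(\|v\|_{H^1}^2+\|w\|_{L^2}^2)$ after an AM--GM step. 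Integration in time produces the two terms $\ep^2$ and $\ep\int_0^t(\|v\|_{H^1}^2+\|w\|_{L^2}^2)\,ds$ appearing on the right-hand side of the proposition.

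The pure radiation quadratic pieces are of the shape $\int\pi^{\pa_\a}\cdot(\pa(\e^{i\Th}v)\cdot\pa(\e^{i\Th}v)+|w|^2+m^2|v|^2)\,d\vol$ and, for $\Pi_k$, $\int \pa^\mu a\cdot\pa(\e^{i\Th}v)\cdot w$. Since $\|\pa g^\ep\|_{L^\infty}+\|\pa a\|_{L^\infty}\les \ep$ (globally, from the slow variation $g^\ep(t,x)=g(\ep t,\ep x)$ and the fact that $a$ is built from $g^\ep$ and $q_\ep$ which are $\ep$-scaled), each of these terms is pointwise bounded by $\ep(|\pa v|^2+|w|^2+|v|^2)$, and time-integrating gives the $\ep\int_0^t(\|v\|_{H^1}^2+\|w\|_{L^2}^2)\,ds$ contribution. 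To close, we also need a term in $|u|^2$: it enters via the pure radiation estimates whenever $\nabla_x v$ is converted to $\nabla_z v$ (the change of coordinates produces a factor of $u+u_0$) and through the decomposition of the $\psi_S$ dependence on $u$; the simplest route is to absorb these into $\ep \int_0^t|u|^2\,ds$ using that $|u|$ is bounded.

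Finally, the nonlinear contribution $-\frac{1}{p+1}|\phi|^{p+1}$ in $T_{\mu\nu}$ (entering $\mathcal{H}$ through $\mathcal{V}(\phi)$) is handled by writing $|\phi|^{p+1}=|\phi_S|^{p+1}+\text{linear in }v+\text{higher}$ and applying Lemma \ref{lemnonlinear}\eqref{nonlinq}: the soliton piece is absorbed into $\mathcal{H}_S^0$, the linear term pairs with $\pa_t g^\ep=O(\ep^2)$ on the soliton's support and goes into the $\ep^2$ error by Lemma \ref{lemb}, while the higher-order piece $\les \|v\|_{H^1}^3+\|v\|_{H^1}^{p+1}\les \ep\|v\|_{H^1}^2$ (using $\|v\|_{H^1}\les\delta_1\ll 1$ from \eqref{bawv}) is of the desired type. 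The charge identity \eqref{ChargeC} is treated identically but is simpler since only first derivatives of $a$ appear. I expect the main bookkeeping obstacle to be the momentum equation: its source contains the extra term $\pa^\mu a\,T_{\mu k}[\phi]$, and one has to check that $\pa^\mu a$ also vanishes to the right order along the geodesic so that the soliton--soliton part of this source is already included in $\mathcal{H}_S^k$ up to an $\ep^2$ error; this follows from the construction of $q_\ep$ from the cutoff function $q$ in line before \eqref{pofhm} together with $\pa h(t,u_0t)=0$ and \eqref{psiep}.
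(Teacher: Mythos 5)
Your decomposition of $T^{\mu\nu}[\phi]$ into soliton, cross, quadratic, and nonlinear pieces, and your use of Lemmas~\ref{lemb} and~\ref{lemnonlinear} for the cross and nonlinear parts, does match the paper's bookkeeping, and the way you account for the volume factor $d_\ep^2-1$ in $\mathcal{H}$ is fine. But there are two places where the argument you sketch does not close.

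First, the $|u|^2$ term does not arise ``via the pure radiation estimates whenever $\nabla_x v$ is converted to $\nabla_z v$'' or through the $u$-dependence of $\psi_S$. It enters because the naive estimates carry factors $(1+|\xi(s)|^2)$, the boostrap only gives $|\xi|\leq 2C_2$, and the constant $C_2$ must not appear in the final bound (otherwise you would need smallness in $T$, exactly what the paper is trying to avoid). The paper instead substitutes the Gronwall-type inequality \eqref{xiest}, $|\xi(t)|^2\les 1+\ep^{-1}\int_0^t|u|^2+\|v\|_{H^1}^2\,ds$, obtained from $\dot\xi=u+\dot\eta$ and the smallness of $|\dot\ga|$. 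Feeding this into $\ep^3\int_0^t(1+|\xi|^2)ds$ produces both the $\ep^2$ and the $\ep\int_0^t|u|^2\,ds$ terms. Without that step you would still have $C_2$ in the estimate and the bootstrap would not close.

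Second, and more seriously, your treatment of the extra soliton source terms in the momentum and charge identities does not work. You claim the soliton--soliton part of $\pa^\mu a\,T_{\mu k}[\phi_S]$ is ``already included in $\mathcal{H}_S^k$ up to an $\ep^2$ error,'' but $\mathcal{H}_S^k(t)=\int_0^t\int T^{\mu\nu}[\phi_S]\pi_{\mu\nu}^{\pa_k}\,dx\,ds$ contains no $\pa a$ factor at all; the contribution $\int_0^t\int(m_0)^{\mu\nu}\pa_\nu\b\,T_{\mu k}^{m_0}[\phi_S]\,dx\,ds$ (and the analogous piece $\int_0^t\int\langle i\pa_\mu\b(m_0)^{\mu\nu}\pa_\nu\phi_S,\phi_S\rangle\,dx\,ds$ for $Q$) is a genuinely separate term. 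Moreover, the estimate you propose --- use Lemma~\ref{lemb} to get that $\pa\b$ is $O(\ep^2(1+|\xi|^2))$ against the soliton --- gives at best $\les\ep^2\int_0^t(1+|\xi(s)|^2)\,ds$, which after $t\sim T/\ep$ and \eqref{xiest} is only $O(\ep)$, one power of $\ep$ short of what is needed. The missing idea is the transport identity \eqref{patnabla}: because the integrand is $\rho\om f_\om^2(z)(\pa_t+(u+u_0)\nabla_x)\b$ (and similarly for $\Pi_k$, after using \eqref{idenofphiS} and $\dot\la=V(\la)+\dot\ga$), one can rewrite $(\pa_t+(u+u_0)\nabla_x)\b_0\cdot F=\pa_t(\b_0 F)-\b_0 D_\la F\cdot\dot\ga+(u+u_0)\cdot D_x(\b_0 F)$. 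The total time derivative integrates to a boundary term of size $\ep^2(1+|\xi|^2)$, the $(u+u_0)\cdot D_x$ piece integrates to zero in $x$, and the $D_\la F\cdot\dot\ga$ term carries the extra smallness $|\dot\ga|\les(C_2\ep)^2+\|v\|_{H^1}^2$ from Corollary~\ref{corcontrga}. This is precisely the structural observation the paper singles out as what makes the sharp condition $q>1$ possible, and without it the $\Pi_k$ and $Q$ estimates of the proposition do not hold.
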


\begin{proof}
We decompose the energy momentum tensor $T_{\mu\nu}[\phi]$ as follows
\begin{equation*}
T_{\mu\nu}[\phi]=T_{\mu\nu}[\phi_S]+T_{\mu\nu}[\phi_S,
\phi-\phi_S]+QT_{\mu\nu}[\phi-\phi_S],
\end{equation*}
where
\begin{align*}
 T_{\mu\nu}[\phi_S, \phi-\phi_S]=&<\pa_\mu\phi_S, \pa_\nu(\phi-\phi_S)>+<\pa_\nu\phi_S, \pa_\mu(\phi-\phi_S)>\\
&-g^\ep_{\mu\nu}(<\pa^\ga\phi_S, \pa_\ga(\phi-\phi_S)>+<\mathcal{V}'(\phi_S), \phi-\phi_S>),
\end{align*}
$QT_{\mu\nu}[\phi-\phi_S]$ is at least quadratic in $v, w$. Again, $\pa_t\phi_S$ should be replaced with
$\psi_S$ in the above expressions as well as in the following argument of the proof. Recall the deformation tensor
$(\pi^{\pa_\a})^{\mu\nu}=-\f12\pa_\a (g^\ep)^{\mu\nu}$. Using Lemma \ref{lemnonlinear} to estimate the nonlinear terms
in $QT_{\mu\nu}[\phi-\phi_S]$, we can show that
\[
\left|\int_{0}^{t}\int_{\mathbb{R}^3}T_{\mu\nu}[\phi-\phi_S](\pi^{\pa_\a})^{\mu\nu}d\vol\right|\les \ep\int_{0}^{t}\|
v\|_{H^1}^2+\|w\|_{L^2}^2ds,\quad \forall \a\in\{0, 1, 2, 3\}.
\]
For the linear term $T_{\mu\nu}[\phi_S, \phi-\phi_S]$, we apply Lemma \ref{lemb} to get
\[
\left|\int_{0}^{t}\int_{\mathbb{R}^3}T_{\mu\nu}[\phi_S,\phi-\phi_S](\pi^{\pa_\a})^{\mu\nu}d\vol\right|\les
\ep^2\int_{0}^{t}(\|v\|_{H^1}+\|w\|_{L^2})(1+|\xi(s)|)ds,\quad \forall \a\in\{0, 1, 2 ,3\}.
\]
For the soliton part, first we can drop the volume factor $d_\ep^2$
as
\[
\left|\int_{0}^{t}\int_{\mathbb{R}^3}T_{\mu\nu}[\phi_S](\pi^{\pa_t})^{\mu\nu}(d_\ep^2-1)dxdt\right|\les
\ep^3\int_{0}^{t}1+|\xi(s)|^2ds.
\]
The above estimates rely on the unknown upper bound of $|\xi(t)|$. Although it is bounded by $C_2$ as a bootstrap assumption,
we do not want $C_2$ to appear in the following estimates. Otherwise, we need smallness on $t$ in order to close our bootstrap
argument. We will instead use Gronwall's inequality to control $|\xi|$. First, we use the relation $\dot{\xi}=u+\dot{\eta}$
together with Corollary \ref{corcontrga} to control $\xi$ in terms of $u$, $v$, $w$.
We can control $\dot \eta$ by Corollary \ref{corcontrga} as follows
\[
|\dot \eta|^2\leq |\dot{\ga}|^2\les (C_2\ep)^4+\|v\|_{H^1}^4\les \ep^3+\|v\|_{H^1}^2.
\]
If $C_2^4\ep<1$, then we can show that
\begin{equation}
\label{xiest}
 |\xi(t)|^2\les
|\xi(0)|^2+t\int_{0}^{t}|u|^2+|\dot\ga|^2ds\les
1+\ep^{-1}\int_{0}^{t}|u|^2+\|v\|_{H^1}^2ds,\quad \forall t\leq
T/\ep.
\end{equation}
Plug this into above estimates. We conclude that
\begin{align*}
|\mathcal{H}(t)-\mathcal{H}(0)+\mathcal{H}_S^{0}(t)|&\les
\ep^3\int_{0}^{t}1+|\xi|^2ds+\ep\int_{0}^{t}\|v\|_{H^1}^2+\|w\|_{L^2}^2ds\\
&\les \ep^2+\ep\int_{0}^{t}|u|^2+\|v\|_{H^1}^2+\|w\|_{L^2}^2ds.
\end{align*}
Similarly, we can show that
\begin{align*}
 \left|\Pi_k(t)-\Pi_k(0)-\mathcal{H}_{S}^k(t)-\int_{0}^{t}\int_{\mathbb{R}^3}(m_0)^{\mu\nu}\pa_\nu\b T^{m_0}_{\mu k}[\phi_S]dxds\right|&\les
 \ep^2+\ep\int_{0}^{t}|u|^2+\|v\|_{H^1}^2+\|w\|_{L^2}^2ds,\\
\left|Q(t)-Q(0)-\int_{0}^{t}\int_{\mathbb{R}^3}<i\pa_\mu
\b (m_0)^{\mu\nu}\pa_\nu\phi_S,
\phi_S>dxds\right|&\les\ep^2+\ep\int_{0}^{t}|u|^2+\|v\|_{H^1}^2+\|w\|_{L^2}^2ds,
\end{align*}
where we recall that $m_0$ is the Minkowski metric, $\b=p_\ep^{-1}q_\ep$ and we denote
$T_{\mu\nu}^{m_0}[\phi_S]$ as the energy momentum tensor associated to the Minkowski metric $m_0$.
We thus can conclude the proposition if we can
control the two integrals
\[
 \int_{0}^{t}\int_{\mathbb{R}^3}(m_0)^{\mu\nu}\pa_\nu\b T^{m_0}_{\mu k}[\phi_S]dxds,\quad
\int_{0}^{t}\int_{\mathbb{R}^3}<i\pa_\mu
\b (m_0)^{\mu\nu}\pa_\nu\phi_S,
\phi_S>dxds.
\]
The only smallness in the above integrals is contributed by $\pa\b$, which has size $\ep$. To prove that they are higher order
error terms, we have to exploit the properties of the solitons $\phi_S$. The key observation is that the solitons travel
in the direction $\pa_t +u_0^k\pa_k$.
More precisely, we compute the second integral
\begin{align*}
&<i\pa_\mu\b(m_0)^{\mu\nu}\pa_\nu\phi_S, \phi_S>=-\pa_t\b<i\psi_S,
\phi_S>+\pa_k\b<i\pa_k\phi_S, \phi_S>=\rho\om
f_{\om}^2(z)(\pa_t+(u+u_0)\nabla)\b.
\end{align*}
Observing that $z=A_u(x-\xi-u_0t)$, $f_{\om}(z)$ travels along the geodesic $(t, u_0t)$, thus integration by parts
may allow us to gain extra smallness. For any function $F(z, \la(t))$ independent of $\Th$, we have the identity
\begin{equation}
\label{patnabla}
\begin{split}
(\pa_t+(u+u_0)\nabla_x)\b_0\cdot F
 &=\pa_t(\b_0 F)-\b_0 D_\la F\cdot
(V(\la)+\dot{\ga})+(u+u_0)\nabla_x\b_0 \cdot F\\
&=\pa_t\left(\b_0F\right)-\b_0 D_\la F \dot\ga-\b_0(u+u_0)D_\xi F+(u+u_0)\nabla_x\b_0 \cdot F\\
&=\pa_t\left(\b_0 F\right)-\b_0 D_\la F\dot\ga+(u+u_0)D_x(\b_0 F),
\end{split}
\end{equation}
where we have used $D_\xi F= -D_x F$ as $z=A_u(x-\xi-u_0t)$. This key observation \eqref{patnabla} is of particular
 importance in this paper. It allows us to prove Theorem \ref{thmbaby} under the sharp condition $q>1$.
Now let $\b_0=\b-1$, $F=\rho \om f_{\om}^2(z)$. The inequality \eqref{patnabla} yields the estimate
\begin{align*}
\left|\int_{0}^{t}\int_{\mathbb{R}^3}<i\pa_\mu \b
(m_0)^{\mu\nu}\pa_\nu\phi_S,
\phi_S>dxds\right|&\les\ep^2(1+|\xi(t)|^2)+\int_{0}^{t}|\dot\ga|\ep^2(1+|\xi(s)|^2)ds\\
&\les \ep^2(1+\ep^{-1}\int_{0}^t|u|^2+\|v\|_{H^1}^2ds)+C_2^2\ep^2\int_{0}^{t}(C_2\ep)^2+\|v\|_{H^1}^2ds\\
&\les\ep^2+\ep\int_{0}^{t}|u|^2+\|v\|_{H^1}^2+ \|w\|_{L^2}^2ds
\end{align*}
for all $t\leq T/\ep$ by Corollary \ref{corcontrga} and the bootstrap assumption \eqref{baxi}, $C_2^4\ep<1$.
We thus conclude the first and the third inequality of this proposition.

\bigskip

To show the second inequality of this proposition, it suffices to estimate the first integral above. Similarly, we can compute
\begin{align*}
&(m_0)^{\mu\nu}\pa_\nu\b T_{\mu k}^{m_0}[\phi_S]=(m_0)^{\mu\nu}\pa_\nu\left((\b-1)T_{\mu k}^{m_0}[\phi_S]\right)
-(\b-1)(m_0)^{\mu\nu}\pa_\nu T_{\mu k}^{m_0}[\phi_S]\\
&\quad=(m_0)^{\mu\nu}\pa_\nu\left((\b-1)T_{\mu k}^{m_0}[\phi_S]\right)-(\b-1)<-\pa_t\psi_S+\Delta_x\phi_S-m^2\phi_S+|\phi_S|^{p-1}
\phi_S,\pa_k\phi_S>\\
&\quad =(m_0)^{\mu\nu}\pa_\nu\left((\b-1)T_{\mu k}^{m_0}[\phi_S]\right)+(\b-1)<D_\la\psi_S \cdot \dot\ga, \pa_k\phi_S>
\end{align*}
by using the identity \eqref{idenofphiS} and the relation $\dot \la=V(\la)+\dot \ga$. Hence according to Lemma \ref{lemb}, we can
show that
\begin{align*}
 \left|\int_{0}^{t}\int_{\mathbb{R}^3}(m_0)^{\mu\nu}\pa_\nu\b T^{m_0}_{\mu k}[\phi_S]dxds\right|
&\les\ep^2(1+|\xi(t)|^2)+\int_{0}^{t}|\dot\ga|\ep^2(1+|\xi(s)|^2)ds\\
&\les \ep^2+\ep\int_{0}^{t}|u|^2+\|v\|_{H^1}^2+ \|w\|_{L^2}^2ds.
\end{align*}
We hence have proven the proposition.
\end{proof}

This proposition allows us to control the right hand side of \eqref{energycomb1}.
\begin{cor}
\label{corrightb}We have
\begin{align*}
&\left|\mathcal{H}(t)-\mathcal{H}(0)-(u^k(0)+u_0^k)\cdot
(\Pi_k(t)-\Pi_k(0))-\frac{\om}{\rho(0)}(Q(t)-Q(0))\right|\\&\les\ep^2+\ep\int_{0}^{t}|u|^2+\|v\|_{H^1}^2+\|w\|_{L^2}^2ds.
\end{align*}
\end{cor}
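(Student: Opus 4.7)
My plan is to derive the corollary by taking the appropriate linear combination of the three estimates in Proposition \ref{propcon}. Multiplying the first by $1$, the second by $-(u^k(0)+u_0^k)$, and the third by $-\om/\rho(0)$, and summing, the problem reduces to controlling the residual soliton integral
\begin{equation*}
I := \mathcal{H}_S^0(t)+(u^k(0)+u_0^k)\mathcal{H}_S^k(t) = \frac{1}{2}\int_0^t\int_{\mathbb{R}^3} T^{\mu\nu}[\phi_S]\,Y_0 g^\ep_{\mu\nu}\,dx\,ds,\qquad Y_0 := \pa_t+(u^k(0)+u_0^k)\pa_k,
\end{equation*}
by the same right-hand side $\ep^2+\ep\int_0^t(|u|^2+\|v\|_{H^1}^2+\|w\|_{L^2}^2)ds$. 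Since $|u(0)|\les\ep$ by Lemma \ref{datapre}, the vector $Y_0$ differs from the geodesic tangent $X=\pa_t+u_0^k\pa_k$ by an $O(\ep)$ multiple of $\pa_k$, so it is essentially the initial direction of soliton motion.

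I would then split $g^\ep=h^\ep+\psi^\ep$ and treat the two pieces separately. For the $h^\ep$ piece, the Fermi coordinate construction (Lemma \ref{propfermiCord}) gives the expansion $h(\tilde s,u_0\tilde s+w)=m_0+O(|w|^2)$, and a direct Taylor computation shows that $X$ annihilates the linear-in-$w$ terms so that $|Xh|\les|w|^2$. Combined with $|\pa h|\les|w|$ near the geodesic and $|u(0)|\les\ep$, this yields the pointwise bound $|Y_0 h^\ep(s,u_0s+y)|\les\ep^3(1+|y-u_0s|^2)$. Since $T[\phi_S]$ is exponentially localized around $\xi(s)+u_0s$, the spatial integral is then $\les\ep^3(1+|\xi(s)|^2)$; invoking the Gronwall-type control \eqref{xiest} on $|\xi|^2$ together with $t\le T/\ep$ produces the desired bound of the form $\ep^2+\ep\int\cdots$, by the same mechanism used in the proof of Proposition \ref{propcon}.

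For the $\psi^\ep$ piece, the naive pointwise bound $\|\pa\psi^\ep\|_{L^\infty}\les\ep^2$ from \eqref{psiep} only delivers $O(\ep^2\cdot t)=O(\ep)$ after time integration, falling short by one factor of $\ep$. To recover it, I would integrate by parts along the constant-coefficient vector field $Y_0$, noting that spatial divergences vanish upon integration in $y$:
\begin{equation*}
\int_0^t\int T^{\mu\nu}[\phi_S]\,Y_0\psi^\ep_{\mu\nu}\,dy\,ds = \Big[\int T^{\mu\nu}[\phi_S]\,\psi^\ep_{\mu\nu}\,dy\Big]_0^t - \int_0^t\int \big(Y_0 T^{\mu\nu}[\phi_S]\big)\psi^\ep_{\mu\nu}\,dy\,ds.
\end{equation*}
The boundary term is controlled by $\|\psi^\ep\|_{L^\infty}\|T[\phi_S]\|_{L^1}\les\ep^2$. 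For the bulk, I rewrite $Y_0=Y(s)+(u^k(0)-u^k(s))\pa_k$ with $Y(s):=\pa_t+(u(s)+u_0)^k\pa_k$ the current direction of motion; the mechanism behind the key identity \eqref{patnabla} expresses that $\phi_S$ is transported by $Y(s)$ up to a $\dot\ga$-correction, giving $|Y(s)T[\phi_S]|\les|\dot\ga|\,e^{-c|y-\xi-u_0s|}$, while $|u(s)-u(0)|\les\int_0^s|\dot\ga|\,ds'\les\ep$ by Corollary \ref{corcontrga}. Therefore $|Y_0 T[\phi_S]|\les(\ep+|\dot\ga|)e^{-c|y-\xi-u_0s|}$, and the bulk integral is $\les\ep^2\int_0^t(\ep+|\dot\ga|)ds\les\ep^2+\ep\int_0^t\|v\|_{H^1}^2\,ds$ after one more application of Corollary \ref{corcontrga}.

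The main obstacle will be precisely this $\psi^\ep$ contribution: the naive bound loses one factor of $\ep$, and the recovery hinges on integrating by parts in a direction $Y_0$ almost tangent to the soliton's trajectory, so that the derivative falls on $T[\phi_S]$ and acquires the extra smallness $\ep+|\dot\ga|$ through both the transport mechanism behind \eqref{patnabla} and the $O(\ep)$ bound on $|u(s)-u(0)|$.
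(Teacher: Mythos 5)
Your proposal follows essentially the same strategy as the paper: take the appropriate linear combination of Proposition \ref{propcon}, reduce to controlling the residual soliton integral $\mathcal{H}_S^0(t)+(u^k(0)+u_0^k)\mathcal{H}_S^k(t)$, and exploit the transport structure of $\phi_S$ along (approximately) the soliton's worldline via an integration by parts. Your additional split $g^\ep=h^\ep+\psi^\ep$ and the pointwise Taylor treatment of the $h^\ep$ piece are cosmetic reorganizations of what the paper achieves by applying \eqref{patnabla} and Lemma \ref{lemb} directly to $g^\ep-m_0$; both land on the same mechanism and the same quantitative bounds.

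There is, however, one genuine logical gap. You claim $|u(s)-u(0)|\les\int_0^s|\dot\ga|\,ds'\les\ep$ "by Corollary \ref{corcontrga}," and use this to assert $|Y_0 T[\phi_S]|\les(\ep+|\dot\ga|)\,e^{-c|\cdot|}$ so that the bulk integral closes as $\ep^2+\ep\int_0^t\|v\|_{H^1}^2\,ds$. But Corollary \ref{corcontrga} only gives $|\dot\ga|\les(C_2\ep)^2+\|v\|_{H^1}^2$; to conclude $\int_0^s|\dot\ga|\,ds'\les\ep$ over the long interval $s\leq T/\ep$, you would need $\|v\|_{H^1}\les\ep$ — but this is precisely the bound the Gronwall/bootstrap argument is in the process of establishing, so the step is circular. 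At this stage only the bootstrap bound $\|v\|_{H^1}\leq\delta_1$ is available, and $\delta_1$ has not yet been pinned to size $\ep$. The correct move — and what the paper does — is to invoke Lemma \ref{datapre} for $|u(0)|\les\ep$ and write $|u(s)-u(0)|\les|u(s)|+\ep$, keeping $|u(s)|$ in the integrand; after Young's inequality ($|u|\les\ep+\ep^{-1}|u|^2$), the bulk contribution becomes $\ep^2+\ep\int_0^t|u|^2\,ds+\cdots$, which is exactly why the $\ep\int_0^t|u|^2\,ds$ term appears on the right-hand side of the corollary and then feeds into $\mathcal{E}(t)$ in the subsequent Gronwall closure.
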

\begin{proof}
Notice that $T^{\mu\nu}[\phi_S]$ is a function of $(z, \la(t))$ and is independent of $\Th$. By applying
\eqref{patnabla}, we can show that
\begin{align*}
\left|\int_{0}^{t}\int_{\mathbb{R}^3}T^{\mu\nu}[\phi_S](\pa_t+(u+u_0)\nabla_x)(g^\ep)_{\mu\nu}dxds\right|
\les\ep^2+\ep\int_{0}^{t}|u|^2+\|v\|_{H^1}^2+\|w\|_{L^2}^2ds.
\end{align*}
Since $\pi_{\mu\nu}^{\pa_\a}=\f12\pa_\a(g^\ep)_{\mu\nu}$, we have
\begin{align*}
\left|\mathcal{H}_S^{0}(t)+(u(0)+u_0)^k\mathcal{H}_S^{k}(t)\right|&\les
\left|\int_{0}^{t}\int_{\mathbb{R}^3}T^{\mu\nu}[\phi_S](\pa_t+(u+u_0)\nabla_x)(g^\ep)_{\mu\nu}dxds\right|\\
&\quad +\left|\int_{0}^{t}\int_{\mathbb{R}^3}T^{\mu\nu}[\phi_S](u(s)-u(0))^k
\pa_k(g^\ep)_{\mu\nu}dxds\right|\\
&\les
\ep^2+\ep\int_{0}^{t}|u|^2+\|v\|_{H^1}^2+\|w\|_{L^2}^2ds+\ep^2\int_{0}^{t}(1+|\xi|)(|u|+\ep)ds\\
&\les\ep^2+\ep\int_{0}^{t}|u|^2+\|v\|_{H^1}^2+\|w\|_{L^2}^2ds,
\end{align*}
where we have used $|u(s)-u(0)|\les |u(s)|+\ep$ by Lemma
\ref{datapre}. Then the corollary follows from Proposition \ref{propcon}.
\end{proof}

\subsubsection{Proof of Proposition \ref{propH1est} and Theorem \ref{thmfix}}
We are now able to improve the bootstrap assumptions and to conclude Proposition \ref{propH1est} and Theorem \ref{thmfix}.
Denote
\[
 \mathcal{E}(t):=| u|^2+|\om(t)-\om(0)|^2+\|w\|_{L^2}^2+\|v\|_{H^1}^2.
\]
According to  Proposition \ref{positen} and Lemma \ref{lempositdH}, we have
\[
E(t)+d\mathcal{H}(t)\geq c(1-C_0\ep)\mathcal{E}(t)
\]
for some positive constants $c$, $C_0$ independent of $\ep$, $C_2$. Hence by \eqref{energycomb1}, \eqref{err1},
\eqref{err2}, \eqref{err3est} and Corollary \ref{corrightb}, for sufficiently small $\ep$,
we can show that
\begin{equation*}
\begin{split}
 \mathcal{E}(t)&\les \ep^2(1+|\xi|^2)+\ep\int_{0}^{t}\mathcal{E}(s)ds+\mathcal{E}(t)^{\frac{3}{2}}+\ep \mathcal{E}(t)\\
&\les \ep^2+\ep\int_{0}^{t}\mathcal{E}(s)ds+\mathcal{E}(t)^{\frac{3}{2}}+\ep \mathcal{E}(t),
\end{split}
\end{equation*}
where we have used inequality \eqref{xiest} to control $|\xi|^2$.
Since the implicit constant is independent of $\ep$ and initially
$\mathcal{E}(0)\les \ep^2$, using Gronwall's inequality, we have
\[
\mathcal{E}(t)\les \ep^2, \quad \forall t\leq T/\ep.
\]
Let $C_3$ be the universal implicit constant
appeared before. By our notation, $C_3$ depends on $h$, $m$,
$\la_0$, $T$ and is independent of $\ep$, $C_2$. In particular, we have
\[
\mathcal{E}(t)=|u(t)|^2+| \om(t)-\om(0)|^2+\|v\|_{H^1}^2(t)+\|w\|_{L^2}^2(t)\leq
C_3 \ep^2,\quad \forall t\leq T/\ep.
\]
By Corollary \ref{corcontrga}, this implies that
\begin{equation*}
 \begin{split}
 |\xi(t)|&\leq |\xi(0)|+\int_{0}^{t}|u|+|\dot{\ga}|ds\\
&\leq C_3\ep+C_3\int_{0}^{t}C_3^{\f12}\ep+C_2^2\ep^2+\|v\|_{H^1}^2ds\\
&\leq C_3\ep+C_3^{\frac{3}{2}}T+C_3 T+C_3^2\ep T,
 \end{split}
\end{equation*}
where we let $\ep$ to be small such that $C_2^4\ep<1$. Take
\begin{equation*}
 C_2=C_3+C_3^{\frac{3}{2}}T+C_3 T+2C_3^2 T.
\end{equation*}
If
\[
 \ep\leq \min\{C_2^{-4}, \frac{1}{10} C_2^{-1}\delta_1, 1\},
\]
then for all $t\leq T/\ep$, we have
\begin{equation*}
 \begin{split}
  &\|v\|_{H^1}+\|w\|_{L^2}\leq \mathcal{E}(t)^{\f12}\leq C_2\ep\leq \f12 \delta_1,\\
&|\xi|\leq C_0\ep+C_0^{\frac{3}{2}}T+C_0 T+C_0^2\ep T \leq C_2.
 \end{split}
\end{equation*}
This improves the bootstrap assumptions \eqref{bawv},
\eqref{baxi}. Therefore we can conclude that Proposition \ref{propH1est} follows from Corollary \ref{corcontrga}.

For Theorem \ref{corcontrga}, notice that there is a unique short time solution $\phi(t, x)$ on $[0, t^*)\times\mathbb{R}^3$. Then
Lemma \ref{nondegD} implies that the modulation equations \eqref{modeq} admits a local solution $\la(t)$, $t\in[0, t^{**}]
\subset [0, t^*)$. Proposition \ref{propH1est} then shows that $\phi(t, x)$, $\la(t)$ satisfy \eqref{obsphi}, \eqref{obsla} with
a constant $C$ independent of $\ep$. Since Proposition \ref{propH1est} holds for any $t^{**}\leq T/\ep$, we can conclude that
the solution $\phi(t, x)$, $\la(t)$ can be uniquely extended to $[0, T/\ep]\times\mathbb{R}^3$ and satisfy estimates \eqref{obsphi},
\eqref{obsla}.

\subsection{Estimates of Higher Sobolev Norms}
In the previous section, we have proven the orbital stability of stable solitons of
equation \eqref{FEQUATION} on a slowly varying background in the energy space
$H^1\times L^2$. To solve the full Einstein equations \eqref{EQUATION} and to obtain a $C^1$ spacetime
$([0, t^*]\times\mathbb{R}^3, g)$, we need higher Sobolev estimates for the matter field $\phi$.
In this section, we prove Proposition \ref{prophigherSobnorm} and Corollary \ref{corhighSobnorm}.

Since we already have a solution $\phi(t, x)$ and a curve $\la(t)$ satisfying estimates \eqref{obsphi}, \eqref{obsla}, we use
energy estimates to obtain the higher Sobolev estimates by considering the equation of the remainder $v$.
However, to simply the argument in the sequel and to avoid taking fourth order derivative of $\ga(t)$, we choose a modified curve
$\tilde{\la}(t)\in \La_{\textnormal{stab}}$, defined as the integral
curve of $V(\la)$, that is
\begin{equation*}
\pa_t\tilde{\la}=V(\la),\quad
\tilde{\la}(0)=(\om_0, \th_0, 0, u_0).
\end{equation*}
Using this modified curve $\tilde{\la}(t)$, we decompose the solution $\phi$ as follows
\begin{equation}
\label{decompmod} \phi(t,
x)=\phi_S(\tilde{\la};x)+\e^{i\Th(\tilde{\la})}\tilde{v}.
\end{equation}
Then we claim that Proposition \ref{prophigherSobnorm} is reduced to the following estimates.
\begin{prop}
 \label{prophighsobred}
Assume the initial data $\phi_0\in H^3$, $\phi_1\in H^2$. Let $\ep_1$ be defined in Proposition \ref{prophigherSobnorm}.
Then
\[
 \sum\limits_{|s|\leq 3}\|\pa^s \tilde{v}(t, \cdot)\|_{L^2(\mathbb{R}^3)}\les \max\{\ep, \ep_1\},
\quad \forall t\leq T/\ep.
\]
\end{prop}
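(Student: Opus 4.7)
The plan is to derive the nonlinear Klein-Gordon equation satisfied by $\tilde v$ and run $L^2$ energy estimates on the slowly-varying background $g^\ep$, commuted with spatial derivatives $\pa_x^s$ for $|s|\leq 2$. Time derivatives up to third order are then recovered from the equation itself, giving the claimed $H^3$ control.

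First I would translate the $H^1$ result of Proposition \ref{propH1est} into the new decomposition \eqref{decompmod}. Since $\pa_t\tilde\la=V(\la)$ and $\dot\la=V(\la)+\dot\ga$ with $|\dot\ga|\les\ep^2$, we have $|\la(t)-\tilde\la(t)|\les\int_0^t|\dot\ga|\,d\tau\les\ep$ on $[0,T/\ep]$, so $\|\tilde v\|_{H^1}+\|\pa_t\tilde v\|_{L^2}\les\ep$ follows from Theorem \ref{thmfix}. Substituting \eqref{decompmod} into \eqref{FEQUATION} and using \eqref{idenofphiS} at $\tilde\la$ together with $\pa_t\tilde\la=V(\la)$, the equation for $\tilde v$ takes the form
\begin{equation*}
\Box_{g^\ep}\bigl(e^{i\Th(\tilde\la)}\tilde v\bigr) - m^2 e^{i\Th(\tilde\la)}\tilde v - \mathcal L_{\tilde\la}\tilde v = \mathcal R,
\end{equation*}
where $\mathcal L_{\tilde\la}$ is the linearization of $\phi\mapsto|\phi|^{p-1}\phi$ at $\phi_S(\tilde\la)$, and the source $\mathcal R$ collects (i) the metric mismatch $(\Box_{g^\ep}-\Box_{m_0})\phi_S(\tilde\la)$, (ii) curve-evolution errors such as $D_{\tilde\la}\psi_S\cdot(V(\la)-V(\tilde\la))$ and $D_{\tilde\la}V(\la)\cdot\dot\ga$, and (iii) the genuine nonlinearity $\mathcal N(\tilde\la)$ from \eqref{defnonlin}, bounded by $|\tilde v|^2+|\tilde v|^p$ via Lemma \ref{lemnonlinear}.

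Next I would set up, for each spatial multi-index $s$ with $|s|\leq 2$, the energy
\begin{equation*}
E_s(t)=\tfrac12\int_{\mathbb{R}^3}\bigl(-(g^\ep)^{00}|\pa_t\pa^s\tilde v|^2+(g^\ep)^{kl}\pa_k\pa^s\tilde v\,\pa_l\pa^s\tilde v+m^2|\pa^s\tilde v|^2\bigr)\,dx,
\end{equation*}
coercive on $H^1$ by \eqref{condfix} and Corollary \ref{corgposi}. Commuting $\pa^s$ through the equation, multiplying by $\pa_t\pa^s\tilde v$ and integrating, the commutator $[\pa^s,\Box_{g^\ep}]\tilde v$ produces terms with coefficients $O(\ep)$ acting on $\pa^{\leq|s|+1}\tilde v$; the commuted potentials $\pa^s\bigl(|\phi_S|^{p-1}\tilde v\bigr)$ and $\pa^s\mathcal N(\tilde\la)$ are treated by Moser-type inequalities using the exponential decay and $C^4$ regularity of $f_\om$ from Theorem \ref{propoff} together with inductively established lower-order $H^{|s|}$ bounds on $\tilde v$; and $\|\pa^s\mathcal R\|_{L^2}\les\ep$ is obtained via Lemma \ref{lemb} (soliton-metric mismatch) combined with the bounds $|V(\la)-V(\tilde\la)|\les\ep$, $|\dot\ga|\les\ep^2$. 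Summing yields
\begin{equation*}
\sum_{|s|\leq 2}E_s(t)\les\max\{\ep,\ep_1\}^2+\int_0^t\Bigl(\ep\sum_{|s|\leq 2}E_s(\tau)+\ep^2+\bigl(\textstyle\sum_{|s|\leq 2}E_s(\tau)\bigr)^{3/2}\Bigr)d\tau,
\end{equation*}
and Gronwall on $[0,T/\ep]$ absorbs the factor $e^{C\ep\cdot T/\ep}=e^{CT}$ into an implicit constant, giving $\sum_{|s|\leq 2}E_s(t)\les\max\{\ep,\ep_1\}^2$. Time derivatives in $\pa^s\tilde v$ for $|s|\leq 3$ are then recovered by repeatedly solving $\pa_t^2\tilde v$ from the equation.

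The principal obstacle is the uniform $L^2$ estimate $\|\pa^s\mathcal R\|_{L^2}\les\ep$ for all $t\leq T/\ep$. For the metric part $(g^\ep-m_0)\pa^2\phi_S(\tilde\la)$, Lemma \ref{lemb} requires that the soliton center $\tilde\xi(t)$ remains in the region where $g^\ep-m_0$ has size $O(\ep^2|\,\cdot\,|^2+\ep^2)$; since $\pa_t\tilde\xi=u+u_0$ and $|u(t)|\les\ep$ from Proposition \ref{propH1est}, one has $|\tilde\xi(t)-u_0 t|\les 1$ uniformly for $t\leq T/\ep$, so \eqref{pofhm} and \eqref{psiep} apply. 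A secondary difficulty is that for $p$ close to $2$ the map $\phi\mapsto|\phi|^{p-1}\phi$ has only limited classical regularity; the commuted estimates must exploit $p\geq 2$ together with a fractional Moser inequality, using $H^2\hookrightarrow L^\infty$ in three dimensions to absorb top-order derivatives of $|\phi_S+e^{i\Th}\tilde v|^{p-1}$.
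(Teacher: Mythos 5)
There is a genuine gap. You commute the equation with spatial derivatives $\pa^s$ and use $\pa_t\pa^s\tilde v$ as multiplier, estimating the commuted source by $\|\pa^s\mathcal R\|_{L^2}\les\ep$. That estimate is correct but too weak to close. The dominant contribution to $\tilde F$, after applying \eqref{idenofphiS}, is $V(\tilde\la)D_\la^2\phi_S\,V(\tilde\la)-V(\la)D_\la^2\phi_S\,V(\la)$; this has $L^2$ size $|V(\la)-V(\tilde\la)|\les\ep$, and applying $\pa^s$ only hits $\phi_S$ (so $\pa^s D_\la^2\phi_S=O(1)$) and does not reduce its size below $O(\ep)$. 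Feeding $\|\pa^s\mathcal R\|_{L^2}\les\ep$ into the energy identity over $[0,T/\ep]$ gives an accumulated error $\int_0^{T/\ep}\ep\sqrt{E_s}\,d\tau\gtrsim T\ep$ (or, after Cauchy--Schwarz with the natural $\ep^{-1}$ weight, $\ep^{-1}\int_0^{T/\ep}\ep^2\,d\tau= T$), which is far larger than the target $\max\{\ep,\ep_1\}^2$. Note that in your displayed Gronwall inequality you wrote an $\ep^2$ source, which is inconsistent with your own bound $\|\pa^s\mathcal R\|_{L^2}\les\ep$; and even $\int_0^{T/\ep}\ep^2\,d\tau= T\ep$ is still too large compared to $\ep^2$.

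The paper closes the estimate precisely by \emph{not} commuting with generic spatial derivatives. Instead it commutes with $X=\pa_t+u_0^k\pa_k$ (and $X^2$), the direction along which the soliton worldline travels, and exploits the key inequality $|XF_1(f_{\om}(z))|\les\ep$ in \eqref{Xfom}. Because of this, $X$ hitting the soliton-dependent coefficients gains one extra factor of $\ep$, yielding $\|X\tilde F\|_{L^2}\les\ep^2$ and $\|X^2\tilde F\|_{L^2}\les\ep\max\{\ep,\ep_1\}$, so that $\ep^{-1}\int_0^{T/\ep}\|X\tilde F\|_{L^2}^2\,dt\les T\ep^2$ and the Gronwall argument in Lemma \ref{lemenergyL} actually closes. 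The full $H^2$ and $H^3$ norms of $\tilde v$ are then recovered from the $\pa X\tilde v$, $\pa X^2\tilde v$ bounds by the elliptic estimate Lemma \ref{lemellipt}, using that the spatial principal part of $\Box_{g^\ep}$ is uniformly elliptic once one splits off the $X$--direction (since $X$ is uniformly timelike). You also omit the higher-derivative estimates on the modulation curve (Proposition \ref{hiorderga}, which uses \eqref{Xfom} as well) that are needed to control $\ddot\ga$, $\pa_t^3\ga$ appearing in $X\tilde F$ and $X^2\tilde F$. In short, the missing idea is the pairing of the geodesic-adapted commutator $X$ (giving the extra $\ep$) with the elliptic recovery lemma; a straight spatial energy estimate with multiplier $\pa_t$ cannot work uniformly over $[0,T/\ep]$.
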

In fact, if Proposition \ref{prophighsobred} holds, observing that
\begin{align*}
|\la-\tilde{\la}|&\les\int_{0}^{t}|\dot\ga|ds+|\la(0)-\tilde{\la}(0)|\les \ep^2 t+\ep\les \ep,\quad \forall t\leq \frac{T}{\ep},\\
 \pa_t \Th(\tilde{\la})&=\frac{\om}{\rho}+\rho_0\om_0 u_0(u_0+u),\quad \nabla_x\Th(\tilde{\la})=-\rho_0\om_0 u_0,
\end{align*}
then we have
\begin{align*}
 \|\phi-\phi_S(x;\la)\|_{H^3}&=\|\phi_S(x;\tilde{\la})+\e^{i\Th(\tilde{\la})}\tilde{v}-\phi_S(x;\la)\|_{H^3}\\
&\les|\la(t)-\tilde{\la}(t)|+\| \tilde{v}\|_{H^3}\les \max\{\ep, \ep_1\}.
\end{align*}
This partially explains Proposition \ref{prophigherSobnorm} as $\|\pa_t^s(\phi-\phi_S(x;\la))\|_{L^2}$
requires taking higher order derivatives of the modulation curve $\la(t)$. We will estimate the higher derivatives of $\la(t)$
in next section.

Although we have modified the curve $\la(t)$, the remainder $\tilde{v}$ is still small in $H^1$.
\begin{lem}
\label{lemsmallrad}
We have
\[
\|\tilde{v}\|_{H^1}+\|\pa_t\tilde{v}\|_{L^2}\les \ep,\quad \forall t\in[0, T/\ep].
\]
\end{lem}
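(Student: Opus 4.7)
The strategy is to compare the two decompositions: the modulation-adapted one in \eqref{decomp} using $\la(t)$, for which Proposition \ref{propH1est} (and hence Theorem \ref{thmfix}) provides $H^1\times L^2$ control on the remainder, and the ODE-integrated one in \eqref{decompmod} using $\tilde\la(t)$. Since $\e^{i\Th(\tilde\la)}\tilde v = \phi-\phi_S(x;\tilde\la)$, we write
\[
\phi-\phi_S(x;\tilde\la)=\bigl(\phi-\phi_S(x;\la)\bigr)+\bigl(\phi_S(x;\la)-\phi_S(x;\tilde\la)\bigr),
\]
and bound each piece in $H^1$.

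The first step is to estimate $|\la(t)-\tilde\la(t)|$ on the long time interval $[0,T/\ep]$. From $\dot\la=\dot\ga+V(\la)$ and $\dot{\tilde\la}=V(\la)$, integrating gives
\[
\la(t)-\tilde\la(t)=\bigl(\la(0)-\la_0\bigr)+\int_0^t\dot\ga(s)\,ds.
\]
By Lemma \ref{datapre}, $|\la(0)-\la_0|\les\ep$, while by Proposition \ref{propH1est} (or Corollary \ref{corcontrga}) $|\dot\ga|\les\ep^2$. Hence $|\la(t)-\tilde\la(t)|\les\ep+\ep^2\cdot T/\ep\les\ep$, where the constant absorbs the fixed $T$ per the paper's convention. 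Combined with the exponential decay of $f_\om$ and its derivatives in $\la$ (Theorem \ref{propoff}), a straightforward mean-value computation along a path from $\tilde\la$ to $\la$ yields
\[
\|\phi_S(x;\la)-\phi_S(x;\tilde\la)\|_{H^1}\les|\la-\tilde\la|\cdot\sup_{\mu}\|D_\la\phi_S(\,\cdot\,;\mu)\|_{H^1}\les\ep.
\]
Since Theorem \ref{thmfix} also gives $\|\phi-\phi_S(x;\la)\|_{H^1}\les\ep$, we conclude $\|\phi-\phi_S(x;\tilde\la)\|_{H^1}\les\ep$. Using the explicit formula $\nabla_x\Th(\tilde\la)=-\rho_0\om_0 u_0$, which is a bounded constant vector, the relation $\tilde v=\e^{-i\Th(\tilde\la)}(\phi-\phi_S(x;\tilde\la))$ transfers this bound to $\|\tilde v\|_{H^1}\les\ep$.

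For the time derivative, differentiate \eqref{decompmod}:
\[
\e^{i\Th(\tilde\la)}\pa_t\tilde v=\pa_t\phi-\pa_t\phi_S(x;\tilde\la)-i\pa_t\Th(\tilde\la)\,\e^{i\Th(\tilde\la)}\tilde v.
\]
Now $\pa_t\phi_S(x;\tilde\la)=D_\la\phi_S(x;\tilde\la)\cdot V(\la)$, while $\psi_S(x;\la)=D_\la\phi_S(x;\la)\cdot V(\la)$, so
\[
\psi_S(x;\la)-\pa_t\phi_S(x;\tilde\la)=\bigl(D_\la\phi_S(x;\la)-D_\la\phi_S(x;\tilde\la)\bigr)\cdot V(\la),
\]
whose $L^2$ norm is $\les|\la-\tilde\la|\les\ep$ by the same mean-value argument applied to $D_\la\phi_S$. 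Combining with $\|\pa_t\phi-\psi_S(x;\la)\|_{L^2}\les\ep$ from Theorem \ref{thmfix} and the uniform bound on $\pa_t\Th(\tilde\la)$ (explicit from the excerpt's formula), we obtain $\|\pa_t\tilde v\|_{L^2}\les\ep$.

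The only genuine subtlety is the first step, controlling $|\la-\tilde\la|$ over time $T/\ep$ with $|\dot\ga|\les\ep^2$; everything else is a direct translation between the two decompositions exploiting the smoothness and exponential decay of $\phi_S$ in $\la$ together with the boundedness of $\Th(\tilde\la)$ and its first derivatives. No bootstrap is needed here since the $H^1$ orbital stability of $(v,w)$ around $\phi_S(x;\la)$ has already been established.
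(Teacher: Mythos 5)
Your proof is correct and follows essentially the same route as the paper: both express $\e^{i\Th(\tilde\la)}\tilde v$ (and its time derivative) via the $\la$-decomposition, reducing the claim to $\|v\|_{H^1}+\|w\|_{L^2}\les\ep$ together with $|\la(t)-\tilde\la(t)|\les\ep$, the latter coming from integrating $|\dot\ga|\les\ep^2$ over $[0,T/\ep]$ plus $|\la(0)-\tilde\la(0)|\les\ep$. The only cosmetic difference is that you phrase the first piece via Theorem \ref{thmfix} rather than substituting the decomposition \eqref{decomp} directly, but these are the same bound.
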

\begin{proof}
Since $\|v\|_{H^1}+\|w\|_{L^2}\les \ep$, by using the decomposition \eqref{decomp}, we can show that
\begin{align*}
\|\e^{i\Th(\tilde{\la})}\tilde{v}\|_{H^1}&=\|\phi_S(\la;x)+\e^{i\Th(\la)}(q_\ep
d_\ep)^{-1}v-\phi_S(\tilde{\la};x)\|_{H^1}\les \ep,\\
\|\e^{i\Th(\tilde{\la})}\pa_t
\tilde{v}\|_{L^2}&=\|\pa_t(\phi-\phi_S(\tilde{\la};x))-i\pa_t\Th(\tilde{\la})\e^{i\Th(\tilde{\la})}\tilde{v}\|_{L^2}\\
&=\|D_\la\phi_S(\la;x)\cdot V(\la)+\e^{i\Th}(p_\ep
d_\ep)^{-1}w-D_\la\phi_S(\tilde{\la};x)\cdot V(\la)-i\pa_t\Th(\tilde{\la})\e^{i\Th(\tilde{\la})}\tilde{v}\|_{L^2}\les
\ep.
\end{align*}
\end{proof}
\begin{remark}
The reason that we consider the modified curve $\tilde{\la}$ is that
we must avoid taking the fourth derivative of $\la$. Otherwise, we have to take third derivative of the
 nonlinearity $|\phi|^{p-1}\phi$, which is impossible
since $p$ is assumed to be less than 3. Using the modified
curve guarantees that when we differentiate the equation of the
radiation term $\tilde{v}$ twice, we only need third derivative of $\la$. We
remark here that $\tilde{\la}$ still depends on $\la$.
\end{remark}

\subsubsection{Estimates for Higher Derivatives of $\la(t)$}

Since $\la(t)$ satisfies the modulation equations \eqref{modeq}, by differentiating the equations, we are able to derive
estimates for derivatives of $\la(t)$. It turns out that we first have to estimate the derivatives of the nonlinearity
$\mathcal{N}(\la)$.
\begin{lem}
\label{lemnonlest} Let $\mathcal{N}(\la)$ be defined in line \eqref{defnonlin}. Assume $p\geq 2$. Then for
any vector field $Y$ on $[0, T/\ep]\times\mathbb{R}^3$, we have
\begin{align*}
|Y\mathcal{N}(\la)|\les &(|v|+|v|^{p-1})(|Y
f_\om|+|Y v|+|v|),\\
 |Y^2\mathcal{N}(\la)|\les&(1+\|v\|_{L^\infty}+|Y \ln
f_\om|)(|Y f_\om|^2+|Y
v|^2+|v|^2)+(|v|+|v|^{p-1})(|Y^2 f_\om|+|Y^2v|)\\
 &+|Y^2d_\ep|(|v|^2+|v|^{p}).
\end{align*}
\end{lem}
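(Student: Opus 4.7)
My plan is to exploit the $U(1)$ phase symmetry of $F(u) = |u|^{p-1} u$. Since $F(\e^{i\Th} u) = \e^{i\Th} F(u)$, introducing $\tilde v = \e^{-i\Th}(\phi - \phi_S)$, which up to the harmless bounded factor $(q_\ep d_\ep)^{-1}$ is simply $v$, a direct computation collapses \eqref{defnonlin} to
\[
\mathcal{N}(\la) = F(f_\om + \tilde v) - F(f_\om) - DF(f_\om)[\tilde v],
\]
a pure second-order Taylor remainder of $F$ about the real point $f_\om$, with no explicit $\Th$-dependence.

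For the first inequality, I would apply $Y$ and use the product rule to split
\[
Y\mathcal{N} = \bigl(DF(f_\om+\tilde v) - DF(f_\om) - D^2 F(f_\om)[\tilde v]\bigr)[Yf_\om] + \bigl(DF(f_\om+\tilde v) - DF(f_\om)\bigr)[Y\tilde v].
\]
The second bracket is controlled by the mean value estimate $\les (f_\om^{p-2} + |\tilde v|^{p-2})|\tilde v| \les |\tilde v| + |\tilde v|^{p-1}$, using uniform boundedness of $f_\om$ from Theorem \ref{propoff}. The first bracket is the second-order Taylor remainder of $DF$; for $p\in[2,3]$ I would bound it using $(p-2)$-H\"older continuity of $D^2 F$, and for $p\geq 3$ by Lipschitz continuity, yielding $\les |\tilde v|^2 + |\tilde v|^{p-1}$ in either case. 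Combining and absorbing the $(q_\ep d_\ep)^{-1}$ factors into the $|v|$ term on the right produces the first bound.

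For the $Y^2$ estimate I would differentiate once more. Applying $Y$ to the displayed decomposition produces three groups of terms: (i) second-derivative contributions $|Y^2 f_\om|$ and $|Y^2 \tilde v|$ paired with the bracket factors already controlled, yielding $(|v|+|v|^{p-1})(|Y^2 f_\om|+|Y^2 v|)$; (ii) products of first derivatives $|Yf_\om|^2$, $|Yf_\om||Y\tilde v|$, $|Y\tilde v|^2$ multiplied by coefficients $\les 1+|\tilde v|^{p-2}$, producing the $(1+\|v\|_{L^\infty})(|Y f_\om|^2+|Yv|^2+|v|^2)$ contribution; and (iii) the pieces arising from two derivatives landing on the weight $(q_\ep d_\ep)^{-1}$ rather than on $v$, producing the $|Y^2 d_\ep|(|v|^2+|v|^p)$ term. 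The delicate contributions are those requiring a third derivative of $F$, which for $p\in[2,3)$ is unbounded at the origin. The key trick is to extract the factor $Yf_\om$ and rewrite it as $f_\om\cdot (Y\ln f_\om)$, then absorb the $f_\om$ into the singular $|u|^{p-3}$ factor coming from $D^3 F$; this is well defined since $f_\om>0$ everywhere by Theorem \ref{propoff}, and it is precisely the mechanism that makes the weight $|Y\ln f_\om|$ appear in the stated bound.

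The main obstacle is this regime $2\le p<3$ where $D^3 F$ fails to be pointwise bounded. The $Y\ln f_\om$ rewriting above, combined with an integral representation of the Taylor remainder that uses H\"older rather than Lipschitz regularity of $D^2 F$, overcomes the obstruction. What remains is careful but routine bookkeeping of all product-rule contributions to assemble them into the form of the stated inequalities.
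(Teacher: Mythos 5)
Your proposal is correct and supplies precisely the ``direct calculations'' the paper merely asserts: recognizing $\mathcal{N}(\la)$ as the second-order Taylor remainder of $F(u)=|u|^{p-1}u$ about the real point $f_\om$ (using the $U(1)$-equivariance $F(\e^{i\Th}u)=\e^{i\Th}F(u)$ to strip out $\Th$), differentiating with the product rule, bounding increments of $DF$ and $D^2F$ via $(p-2)$-H\"older continuity for $2\le p<3$, and crucially writing $Yf_\om=f_\om\,Y\ln f_\om$ to absorb the singular $D^3F(f_\om)\sim f_\om^{p-3}$, which is exactly what puts the weight $|Y\ln f_\om|$ into the stated bound. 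This is essentially the intended argument; the paper gives no more detail than you have.
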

\begin{proof}
It follows by direct calculations and the properties of $f_\om$
summarized in Theorem \ref{propoff}.
\end{proof}
Using this lemma together with the modulation equations \eqref{modeq}, we are
able to estimate the higher order derivatives of the modulation curve $\la(t)$.
\begin{prop}
\label{hiorderga}
 Let $\ga(t)$ be defined in line \eqref{lagaV}. Assume $\ga(t)$
satisfies the modulation equations \eqref{modeq}. Then we have
\begin{align*}
&|\ddot{\ga}|\les\ep^2,\\
 &|\pa_t^3\ga|\les\ep^2(1+\|v\|_{L^\infty})+\ep\|D_\la\phi_S X^2v\|_{L^2}.
\end{align*}
\end{prop}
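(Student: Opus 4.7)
The strategy is to differentiate the modulation equation \eqref{modeq} once (resp.\ twice) and invert the matrix $D+D_1+D_2$, using the nondegeneracy provided by Lemma \ref{nondegD} together with the smallness estimates \eqref{D1},\eqref{D2} already established. From the bounds $\|D_1\|+\|D_2\|\lesssim \ep$ and $|\det D|\geq c(\delta_0,\la_0)$, the operator $D+D_1+D_2$ is uniformly invertible on $[0,T/\ep]$, so it suffices to estimate the time derivatives of $F$ and of $D+D_1+D_2$.

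For the first estimate $|\ddot{\ga}|\lesssim\ep^2$, differentiating \eqref{modeq} gives
\[
(D+D_1+D_2)\ddot{\ga}=\pa_t F-\pa_t(D+D_1+D_2)\,\dot{\ga}.
\]
Since $|\dot{\ga}|\lesssim\ep^2$ from Proposition \ref{propH1est} and $\pa_t(D+D_1+D_2)=O(1)$ (the derivatives of soliton-like quantities involve $\dot{\la}=V(\la)+\dot{\ga}$, whose principal part $V(\la)$ is bounded, while $\pa_t a_0,\pa_t a_1,\pa_t b$ produce factors of size $\ep$), the second term on the right is $O(\ep^2)$. For $\pa_t F$, I would differentiate term by term in \eqref{defofF} and apply Lemma \ref{lemb} together with \eqref{Nvpt} and Lemma \ref{lemnonlest} (for $\pa_t\mathcal{N}(\la)$). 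Using $\|v\|_{H^1}+\|w\|_{L^2}\lesssim\ep$ and the fact that each time derivative of $a_0-1$, $a_1-1$, $b-1$, $a^{k\mu}$, $b^\mu$ produces either the required $\ep^2$ smallness or an $\ep$-factor paired with another $\ep$-small quantity, one concludes $\|\pa_t F\|\lesssim\ep^2$.

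For the bound on $\pa_t^3\ga$, differentiating \eqref{modeq} twice yields
\[
(D+D_1+D_2)\pa_t^3\ga=\pa_t^2F-2\pa_t(D+D_1+D_2)\ddot{\ga}-\pa_t^2(D+D_1+D_2)\dot{\ga}.
\]
The last two terms are $O(\ep^2)$ by the first estimate and the boundedness of $\pa_t^2(D+D_1+D_2)$ (via the same chain-rule bookkeeping combined with $|\ddot{\la}|\lesssim|\pa_t V(\la)|+|\ddot{\ga}|\lesssim 1$). The main work is $\pa_t^2F$: expand each inner product using Leibniz, then use Lemma \ref{lemnonlest} to bound $\pa_t^2\mathcal{N}(\la)$, which produces terms like $|v|^{p-1}(|\pa_t f_\om|^2+|\pa_t v|^2)$ (bounded in $L^1$ by $\|v\|_{L^\infty}\ep^2$) and, crucially, a term of the form $|v||\pa_t^2 v|$. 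Paired with $bD_\la\phi_S$, this last contribution is $<bD_\la\phi_S,\e^{i\Th}(|v|+|v|^{p-1})\pa_t^2v>_{dx}$.

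The main obstacle is converting this $\pa_t^2$ on $v$ into the form $X^2v$ that appears on the right-hand side. The point is that $\pa_t\phi_S$ acts essentially like $X\phi_S+V(\la)$-corrections on soliton profiles, so $\pa_t D_\la\phi_S = X D_\la\phi_S+O(|\dot{\ga}|)$, and the analog of the identity \eqref{patnabla} lets us replace $\pa_t^2$ on the soliton factor by $X^2$ modulo lower-order terms of size $|\dot{\ga}|+\ep$. Using integration by parts in the coupling $<bD_\la\phi_S,\e^{i\Th}\pa_t^2v>$ (and symmetrically $<\pa_t^2(\e^{-i\Th}bD_\la\phi_S),v>$) transfers two derivatives onto the soliton factor, turning them into $X^2(\e^{-i\Th}D_\la\phi_S)$ plus errors controlled by $\dot{\ga},\ddot{\ga}$, and leaving behind exactly the residual term $\ep\|D_\la\phi_S\,X^2v\|_{L^2}$. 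Assembling all contributions, $\|\pa_t^2F\|\lesssim \ep^2(1+\|v\|_{L^\infty})+\ep\|D_\la\phi_S\,X^2v\|_{L^2}$, which after inverting $D+D_1+D_2$ yields the claimed bound on $\pa_t^3\ga$.
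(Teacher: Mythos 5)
Your overall strategy matches the paper's: differentiate the modulation equations \eqref{modeq} once and twice, then invert $D+D_1+D_2$, using the nondegeneracy of $D$ and the smallness of $D_1+D_2$. For the bound $|\ddot\ga|\les\ep^2$ your outline is roughly right, but controlling $\pa_t D_2$ and $\pa_t F$ is not mere chain-rule bookkeeping: $D_2$ involves $\e^{i\Th}w$, so $\pa_t D_2$ contains $\phi_{tt}$, as does the term $<\dot a_1 D_\la\phi_S,\pa_t(\phi_t-\psi_S)>_{dx}$ arising from $\pa_t F$. The paper eliminates $\phi_{tt}$ by substituting the wave equation through \eqref{defofH}, then integrating by parts in $x$ and invoking the bootstrap bound \eqref{bapsi} on $\pa^2\psi^\ep$; without that step $\dot a_1\phi_{tt}$ only yields one power of $\ep$, not two.

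More seriously, your mechanism for producing $\ep\|D_\la\phi_S X^2 v\|_{L^2}$ in the second bound does not work. You propose to integrate by parts so as to transfer $\pa_t^2$ onto the soliton factor, ``turning them into $X^2(\e^{-i\Th}D_\la\phi_S)$.'' But $X(\e^{-i\Th}D_\la\phi_S)$ is not $\ep$-small, since $X\Th$ is $O(1)$ (the phase oscillates in time), and integrating by parts in $t$ would create boundary terms you cannot discard. The actual source of smallness is different: one first writes $\pa_{tt}=X^2-2u_0\nabla_xX+(u_0\nabla_x)^2$ acting on $\mathcal{N}(\la)$, estimates $X^2\mathcal{N}(\la)$ pointwise via Lemma \ref{lemnonlest} with $Y=X$---this produces $|Xf_\om|^2\les\ep^2$ by \eqref{Xfom}, $|Xv|^2$, and the crucial $(|v|+|v|^{p-1})|X^2 v|$---and handles the remaining $\nabla_x$ pieces by spatial integration by parts onto the exponentially decaying factor $bD_\la\phi_S$. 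The factor of $\ep$ in front of $\|D_\la\phi_S X^2 v\|_{L^2}$ comes from $\|v\|_{L^2}+\|v\|_{H^1}^{p-1}\les\ep$ via Cauchy--Schwarz, not from moving $X$-derivatives onto the soliton. Your claimed identity $\pa_t D_\la\phi_S=X D_\la\phi_S+O(|\dot\ga|)$ is also false: $\pa_t=X-u_0\nabla_x$ holds exactly, and $u_0\nabla_x D_\la\phi_S$ is $O(1)$, not $O(|\dot\ga|)$.
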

\begin{proof}
 Differentiate the modulation equations \eqref{modeq}, we get the ODE
for $\dot{\ga}$
\begin{equation*}
(D+D_1+D_2)\ddot{\ga}+\pa_t(D+D_1+D_2)\dot{\ga}=\pa_t F(t;\la(t)).
\end{equation*}
Since $|\dot \la(t)|\les 1$, $|\pa g^\ep|\les 1$, we can show that
\[
 |\pa_t D|+|\pa_t D_1|\les |\dot \la|+|\pa g^\ep|\les 1
\]
according to the definition of $D$, $D_1$ given in line \eqref{defofD}. For $\pa_t D_2$, we have to use the equation of $\phi$.
In fact, by \eqref{FEQUATION}, \eqref{defofH}, we can show that
\begin{align*}
|\pa_t D_2|&\les |\dot \la|\cdot \|w\|_{L^2}+|<D_\la^2\phi_S, \pa_t(\e^{i\Th}w)>_{dx}|+
|\dot \la|\cdot \|v\|_{L^2}+|<D_{\la}^2\phi_S, \pa_t\left(a_0(\phi-\phi_S)\right)>_{dx}|\\
& \les 1+|<D_{\la}^2\phi_S,
(\phi_{tt}-\pa_t\psi_S)a_1>_{dx}|+|<D_{\la}^2\psi_S, a_0(\pa_t\phi-D_{\la}\phi_S (V(\la)+\dot \ga))>_{dx}|\\
&\les 1+|<D_\la^2\phi_S, H(t, x)+\Delta \phi-b(m^2\phi-|\phi|^{p-1}\phi)>_{dx}|
+|<D_{\la}^2\psi_S, a_0(\phi_t-\phi_S-D_{\la}\phi_S\dot\ga)>_{dx}\\
&\les 1+|<\pa_k(a^{k\mu}D_\la^2\phi_S), \pa_\mu\phi>_{dx}|
+|<\nabla_x D_\la^2\phi_S, \nabla_x \phi>_{dx}|\\
&\les 1+\|v\|_{H^1}+\|w\|_{L^2}\les 1.
\end{align*}
We hence have shown
\[
 |\pa_t(D+D_1+D_2)|\les 1.
\]
Then Lemma \ref{nondegD} implies that estimates $|\ddot{\ga}|\les \ep^2$ follow if we can show that
\begin{equation}
\label{paF}
|\pa_t F(t;\la(t))|\les \ep^2.
\end{equation}
By the definition \eqref{defofF} of $F(t;\la(t))$, it suffices to estimate
\[
<bD_\la\phi_S, \e^{i\Th}\pa_t\mathcal{N}(\la)>_{dx},\quad
<\dot a_1 D_\la\phi_S, \phi_{tt}>_{dx}, \quad <\ddot{a_0}D_\la\psi_S, \phi-\phi_S>_{dx}.
\]
All the other terms can be estimated similarly to $F(t;\la(t))$ in Lemma \ref{lemDF}.
We first consider the nonlinear term, which is in fact the main term in $\pa_t F(t;\la(t))$ as other terms are errors from
the slowly varying metric $g^\ep$.
The key observation is that for vector field $X=\pa_t+u_0\nabla_x$ and any $C^1$ function $F_1$, we have
\begin{equation}
\label{Xfom}
\begin{split}
 |X F_1(f_{\om}(z))|\les|(\pa_t+u_0 \nabla_x)f_\om(z)|&=|D_\om f_\om \dot \om +\nabla_z f_\om \dot z+u_0\nabla_z f_\om A_u|\\
&\les |\dot \ga|+|\nabla_z f_\om (A_u u_0+A_u(-\dot\xi-u_0))|\\
&\les \ep^2+|u|\les \ep
\end{split}
\end{equation}
by using Theorem \ref{thmfix} as well as the relation $\dot \xi=u+\dot\eta$. Hence
by Lemma \ref{lemnonlest} and Lemma \ref{lemnonlinear}, we can show
\begin{align}
\notag
|<bD_\la\phi_S, \pa_t\mathcal{N}(\la)>_{dx}|&\les
|<bD_\la\phi_S, X
\mathcal{N}(\la)>_{dx}|+|<bD_\la\phi_S, u_0\nabla_x\mathcal{N}(\la)>_{dx}|\\
\notag
&\les\|D_\la\phi_S(|v|+|v|^{p-1})(|X f_\om|+|X
v|+|v|)\|_{L^1}+|<u_0\nabla_x(bD_\la\phi_S), \mathcal{N}(\la)>_{dx}|\\
\notag
&\les(\|D_\la\phi_S \cdot X v\|_{L^2} +\ep)(\|v\|_{L^2}+\|v\|_{H^1}^{p-1})+\|v\|_{H^1}^2+\|v\|_{H^1}^p\\
\label{phispatN}
&\les \ep^2+\ep \|D_\la\phi_S \pa_t v\|_{L^2}\les
\ep^2+\|D_\la\phi_S \pa_t(\e^{i\Th}v)\|_{L^2}+\|D_\la\phi_S\dot\Th v\|_{L^2} \\
\notag
&\les\ep^2+\ep\|w\|_{L^2}\les \ep^2.
\end{align}
We must remark here that using the decomposition \eqref{decomp} we have
\[
 |\pa_t v|\les |w|+|\dot \Th v|.
\]
Although $\dot\Th$ depends on $z$, $|\dot \Th D_\la\phi_S|$ decays
exponentially in $z$ by Theorem \ref{propoff}.

For the second term $<\dot a_1 D_\la\phi_S, \phi_{tt}>_{dx}$, we use equations \eqref{FEQUATION}, \eqref{defofH} and then use integration by parts.
We can bound
\begin{align*}
 |<\dot a_1 D_\la\phi_S, \phi_{tt}>_{dx}|&=|<\pa_t\ln a_1 D_\la\phi_S, a^{k\mu}\pa_{\mu k}\phi+b^\mu\pa_\mu\phi+\Delta\phi
-b(m^2\phi-|\phi|^{p-1}\phi)>_{dx}|\\
&\les\ep^2+|<\pa_k(a^{k\mu}\pa_t\ln a_1 D_\la\phi_S), \pa_\mu\phi>_{dx}|+|<\nabla_x(\pa_t\ln a_1 D_\la\phi_S), \nabla_x\phi>_{dx}|\\
&\les\ep^2+\|\pa^2 g^\ep D_\la\phi_S\|_{L^2}\les \ep^2+\|\pa^2(g^\ep-h^\ep)\|_{L^2}+\|\pa^2 h^\ep D_\la\phi_S\|_{L^2}\les \ep^2
\end{align*}
by using the assumption \eqref{bapsi} and the fact that $h^\ep(t,x)=h(\ep t, \ep x)\in C^2$.

The third term $<\ddot{a_0}D_\la\psi_S, \phi-\phi_S>_{dx}$ can be estimated similarly. We can show that
\[
 |<\ddot{a_0}D_\la\psi_S, \phi-\phi_S>_{dx}|\les \ep^2+\|\pa^2 g^\ep D_\la\phi_S\|_{L^2}\les \ep^2.
\]
Therefore, we have shown \eqref{paF}. Hence $|\ddot\ga|\les \ep^2$.

\bigskip

Having proven $|\ddot\ga|\les \ep^2$, to estimate of the third order derivative of $\ga$, differentiate the modulation
equations \eqref{modeq} twice
\[
(D+D_1+D_2)\pa_{t}^3
\ga+2\pa_t(D+D_1+D_2)\ddot{\ga}+\pa_{tt}(D+D_1+D_2)\dot{\ga}=\pa_{tt}F(t;\la(t)).
\]
Similarly to estimating $\pa_t F(t;\la(t))$ carried out above, we can show that
\[
\left|\pa_{tt}(D+D_1+D_2)\right|\les |\ddot \la|+|\dot\la|+\|\pa^2(g^\ep-h^\ep)\|_{L^2}\les1.
\]
It hence suffices to estimate $\pa_{tt}F(t;\la(t))$. The strategy is similar to that of $\pa_t F(t;\la(t))$. The main
term is that with derivative hitting on the nonlinearity
$<\e^{-i\Th}bD_\la\phi_S, \pa_{tt}\mathcal{N}(\la)>$, which by Lemma
\ref{lemnonlest} and by inequalities \eqref{Xfom}, \eqref{phispatN}, can be estimated as follows
\begin{align*}
&\left|<\e^{-i\Th}bD_\la\phi_S, \pa_{tt}\mathcal{N}(\la)>_{dx}\right|\les
\left|<\e^{-i\Th}bD_\la\phi_S,
\pa_{tt}\mathcal{N}(\la)>_{dx}\right|\\
&\les|<\e^{-i\Th}bD_\la\phi_S, X^2\mathcal{N}(\la)>_{dx}|+|<\e^{-i\Th}bD_\la\phi_S, (-(u_0\nabla_x)^2+2u_0\nabla_x X)\mathcal{N}(\la)>_{dx}|\\
&\les \ep^2(1+\|v\|_{L^\infty})+\ep\|D_\la\phi_S X^2v\|_{L^2}+\|D_\la\phi_S \pa^2 g^\ep (|v|^2+|v|^p)\|_{L^1}\\
&\quad +|<(u_0\nabla_x)^2(\e^{-i\Th}bD_\la\phi_S), \mathcal{N}(\la)>_{dx}|+|<u_0\nabla_x (\e^{-i\Th}bD_\la\phi_S),X
\mathcal{N}(\la)>_{dx}|\\
&\les \ep^2(1+\|v\|_{L^\infty})+\ep\|D_\la\phi_S X^2v\|_{L^2}+\|\pa^{2}(g^\ep-h^\ep)\|_{L^2}\|v\|_{H^1}^2\\
&\les \ep^2(1+\|v\|_{L^\infty})+\ep\|D_\la\phi_S X^2v\|_{L^2}.
\end{align*}
For those terms when the derivative hits on $\pa_t\phi$, we rely on the equation \eqref{FEQUATION} together with the identity
 \eqref{defofH} and then use integration by parts to pass the derivative to the metric $g^\ep$. We hence can show that
\begin{align*}
|\pa_{tt}F(t;\la(t))|&\les \ep^2(1+\|v\|_{L^\infty})+\ep\|D_\la\phi_S X^2v\|_{L^2}+
\||\pa^3(g^\ep)|(|D_\la\phi_S|+|D_\la\psi_S|)\|_{L^1}\\
&\les\ep^2(1+\|v\|_{L^\infty})+\ep\|D_\la\phi_S X^2v\|_{L^2}.
\end{align*}
Then Lemma \ref{nondegD} yields the estimate
\[
|\pa_{t}^3\ga|\les \ep^2(1+\|v\|_{L^\infty})+\ep\|D_\la\phi_S
X^2v\|_{L^2}.
\]
\end{proof}

\subsubsection{Linearized Equation for $\tilde{v}$ and Energy Estimates}
Using the modified curve $\tilde{\la}(t)$ and the corresponding decomposition \eqref{decompmod}, we can find the equation for
$\tilde{v}$
\begin{equation}
\label{eqtildev} L_\ep\tilde{v}+\mathcal{N}(\tilde{\la})+\tilde{F}=0,
\end{equation}
where
\begin{align*}
\tilde{F}=\e^{-i\Th(\tilde{\la})}(\Box_{g^\ep}\phi_S(\tilde{\la};x)-m^2\phi_S+|\phi_S|^{p-1}\phi_S)+i\Box_{g^\ep}\Th\cdot \tilde{v}.
\end{align*}
For any complex function $v(t, x)=v_1(t, x)+iv_2(t, x)$, the linear operator $L_\ep$ is defined as follows
\begin{equation}
\label{defofLep}
L_{\ep}v=\Box_{g^\ep}v+A(\tilde\ga) v+2i\pa_{\mu}\Th\cdot\pa^\mu
v-m^2v+f_{\om_0}^{p-1}(z)v+(p-1)f_{\om_0}^{p-1}(z)v_1
\end{equation}
 with
\[
A(\tilde\ga)=-(g^\ep)^{\mu\nu}\pa_\mu\Th(\tilde\ga)\pa_\nu\Th(\tilde\ga)=\e^{-i\Th}\Box_{g^\ep}\e^{i\Th}-i\Box_{g^\ep}\Th.
\]
We have the following energy estimates for the linear operator $L_\ep$.
\begin{lem}
\label{lemenergyL}
For all $t\leq T/\ep$, we have
\begin{equation*}
\|\pa v\|_{L^2}(t)^2\les \|\pa v(0, x)\|_{L^2}^2+\|v(0,
x)\|_{L^2}^2+\ep^{-1}\int_{0}^{t}\|L_\ep v\|_{L^2}^2(s)ds+\sup\limits_{0\leq
s\leq t}\|v\|_{L^2}^2.
\end{equation*}
\end{lem}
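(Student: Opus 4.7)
The plan is a multiplier energy estimate built around the uniformly timelike vector field $X=\pa_t+u_0^k\pa_{k}$ that is tangent to the geodesic along which the soliton $\phi_S(\tilde\la;x)$ travels. The structural fact making the scheme work is that $X$ acts as an $O(\ep)$-derivation on every bounded ``bad'' coefficient of $L_\ep$: because $z=A_{u_0}(x-\tilde\xi(t))$ with $\pa_t\tilde\xi=u+u_0$ and $|u|\les\ep$, one has $X f_{\om_0}^{p-1}(z)=-A_{u_0}u\cdot\nabb_z f_{\om_0}^{p-1}=O(\ep)$ and similarly $XA(\tilde\ga)=O(\ep)$; since $\Th(\tilde\la)$ is affine in $x$ with $\pa_k\Th$ independent of $t$ and $\pa_t^2\Th=O(\ep^2)$, we also have $X\pa^\mu\Th,\,\pa_\mu\pa^\nu\Th=O(\ep)$; finally the deformation tensor $\pi^X=\f12\mathcal{L}_Xg^\ep$ is $O(\ep)$ because $g^\ep(t,x)=g(\ep t,\ep x)$ is slowly varying.

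Pairing $L_\ep v$ with $X\bar v$ in $L^2$ and taking real parts, the principal piece $\text{Re}(\Box_{g^\ep}v\cdot X\bar v)=\pa^\mu(T_{\mu\nu}[v]X^\nu)-T_{\mu\nu}[v]\pi^{X,\mu\nu}$ integrates to a boundary quadratic form $\mathcal{E}(t)\gtrsim\|\pa v(t)\|_{L^2}^2-C\|v(t)\|_{L^2}^2$ plus a bulk controlled by $\ep\int_0^t\|\pa v\|_{L^2}^2\,ds$, and the source satisfies $\big|\int_0^t\!\!\int L_\ep v\cdot X\bar v\big|\leq\f12\ep^{-1}\!\!\int\|L_\ep v\|_{L^2}^2+\f12\ep\int\|Xv\|_{L^2}^2$. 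For the bounded potential pieces $(A-m^2+f_{\om_0}^{p-1})v\cdot X\bar v$ and $(p-1)f_{\om_0}^{p-1}v_1\cdot X\bar v$, the identities $\text{Re}(v\cdot X\bar v)=\f12 X|v|^2$ and $\text{Re}(v_1\cdot X\bar v)=\f12 Xv_1^2$ rewrite each as $\f12 X(c|v|^2)-\f12(Xc)|v|^2$. After integration over $\mathbb{R}^3$ the $u_0\nab$-part vanishes, leaving $\pa_t\!\int\!c|v|^2\,dx$ (absorbed into a modified energy $\tilde{\mathcal{E}}$ of the same strength) together with a bulk of size $\ep\int\|v\|_{L^2}^2\,ds\les T\sup_{s\leq t}\|v\|_{L^2}^2$.

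The first-order contribution $-2\pa^\mu\Th\,\text{Im}(\pa_\mu v\cdot\overline{Xv})$ is handled by the exact commutator identity
\begin{equation*}
2\,\text{Im}(\pa_\mu v\cdot\overline{Xv})=\pa_\mu\text{Im}(v\overline{Xv})-X\,\text{Im}(v\overline{\pa_\mu v}),
\end{equation*}
valid because $[\pa_\mu,X]=0$. Integrating by parts in $x_\mu$ (for spatial indices) or in $t$ (for $\mu=0$), using $\pa_\mu\pa^\nu\Th,\,X\pa^\mu\Th=O(\ep)$, transfers this into boundary terms $\big[\!\int\!\pa^\mu\Th\,\text{Im}(v\overline{\pa_\mu v})dx\big]_0^t\les\sup\|v\|\sup\|\pa v\|\leq\eta\sup\|\pa v\|^2+\eta^{-1}\sup\|v\|^2$ plus bulk of the form $\ep\int\|v\|\|\pa v\|\,ds\les\ep\int\tilde{\mathcal{E}}+T\sup\|v\|^2$. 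Assembling everything, taking $\sup_{s\leq t}$, and absorbing $\eta\sup\|\pa v\|^2$ into the left-hand side for $\eta$ small, we obtain
\begin{equation*}
\sup_{s\leq t}\tilde{\mathcal{E}}(s)\les\tilde{\mathcal{E}}(0)+\sup_{s\leq t}\|v\|_{L^2}^2+\ep^{-1}\int_0^t\|L_\ep v\|_{L^2}^2\,ds+\ep\int_0^t\tilde{\mathcal{E}}(s)\,ds.
\end{equation*}
Gronwall's inequality gives $\sup_{s\leq t}\tilde{\mathcal{E}}(s)\leq e^{C\ep t}(\cdots)\leq e^{CT}(\cdots)$ for $t\leq T/\ep$, and the lemma follows from $\|\pa v(t)\|_{L^2}^2\les\tilde{\mathcal{E}}(t)+\|v(t)\|_{L^2}^2$.

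The main obstacle, which rules out the naïve multiplier $\pa_t\bar v$, is precisely the $O(1)$ size of the coefficients $f_{\om_0}^{p-1}(z)$ and $\pa_\mu\Th$: with $\pa_t\bar v$ they contribute an $O(1)\mathcal{E}$ term to the differential inequality and Gronwall produces the unacceptable factor $e^{CT/\ep}$. The key insight is that all these coefficients are nearly invariant along the soliton's worldline, so choosing the multiplier tangent to that worldline ($X$ rather than $\pa_t$) and exploiting the commutativity $[\pa_\mu,X]=0$ in the identity above replaces the dangerous $O(1)$ coefficient by $O(\ep)$, which is exactly what permits the Gronwall iteration to survive all the way up to time $T/\ep$.
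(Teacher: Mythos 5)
Your argument is correct and is essentially the proof in the paper: multiply by $X v$ with $X=\pa_t+u_0^k\pa_k$, observe that $X$ acts as an $O(\ep)$-derivation on $f_{\om_0}^{p-1}(z)$, $A(\tilde\ga)$, $\pa^\mu\Th$ and $g^\ep$, use the identity $2\,\mathrm{Im}(\pa_\mu v\,\overline{Xv})=\pa_\mu\mathrm{Im}(v\overline{Xv})-X\,\mathrm{Im}(v\overline{\pa_\mu v})$ together with $\mathrm{Re}(v\,\overline{Xv})=\tfrac12 X|v|^2$ to reduce everything to boundary terms of size $\|v\|_{L^2}\|\pa v\|_{L^2}$ plus $O(\ep)$ bulk, and close with Gronwall. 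Your remark contrasting $X$ with the na\"ive multiplier $\pa_t$ correctly identifies why this choice is essential.
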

\begin{proof}
Recall the energy momentum tensor $\tilde{T}_{\mu\nu}[v]$ for the operator $\Box_{g^\ep}$
\[
 \tilde{T}_{\mu\nu}[v]=<\pa_\mu v,\pa_\nu v>-\f12 g^\ep_{\mu\nu}<\pa^\ga v, \pa_\ga v>.
\]
For any vector field $Y$, we have the identity
\[
 D^\mu(\tilde{T}_{\mu\nu}[v]Y^\nu)=\tilde{T}^{\mu\nu}[v]\pi^Y_{\mu\nu}+<\Box_{g^\ep}v, Y(\phi)>.
\]
Take $Y=X=\pa_t+u_0^k\pa_k$. Then integrate on the region
$[0, t]\times \mathbb{R}^3$. We obtain
\begin{align*}
\int_{\mathbb{R}^3}\tilde{T}_{\mu\nu}[v]X^\mu n^\nu
d\si(t)=\int_{\mathbb{R}^3}\tilde{T}_{\mu\nu}[v]X^\mu n^\nu d\si(0)+
\int_{0}^{t}\int_{\mathbb{R}^3}
\tilde{T}^{\mu\nu}[v]\pi^X_{\mu\nu}+<\Box_{g^\ep}v, X(v)>d\vol.
\end{align*}
By replacing $\Box_{g^\ep}v$ with $L_\ep v$, we must estimate the other terms respectively. First, consider the term
$<2i\pa_\mu\Th\cdot \pa^\mu v, X(v)>_{d\si}$. We use integration by parts. We can write
\begin{align*}
&<2i\pa_\mu\Th\cdot\pa^\mu v, X(v)>d_\ep^2=d_\ep^2\pa^\mu\Th<2i\pa_\mu v, X v>=d_\ep^2\pa^\mu\Th\left(\pa_\mu<iv, Xv>-X<iv, \pa_\mu v>\right)\\
&=\pa_\mu\left(d_\ep^2\pa^\mu\Th<iv, Xv>\right)-X\left(d_\ep^2\pa^\mu\Th<iv, \pa_\mu v>\right)-\pa_\mu(d_\ep^2\pa^\mu\Th)
<iv, Xv>+X(d_\ep^2\pa^\mu\Th)<iv, \pa_\mu v>.
\end{align*}
Recall the definition of $\tilde{\la}$. We can compute
\begin{align*}
 \pa_t\Th&=\frac{\om}{\rho}+\rho_0 \om_0(|u_0|^2+u_0 u)=\rho_0\om_0+\rho_0\om_0 u_0 u+\frac{\om}{\rho}-\frac{\om_0}{\rho_0},\\
\nabla_x \Th&=-\rho_0\om_0u_0.
\end{align*}
Therefore
\begin{align*}
& \left|\int_{0}^{t}\int_{\mathbb{R}^3}<2i\pa_\mu\Th\cdot \pa^\mu v, X v>d\vol\right|= \left|\int_{0}^{t}\int_{\mathbb{R}^3}
<2i\pa_\mu\Th\cdot \pa^\mu v, X v>d_\ep^2dxds\right|\\
& \les\left|\left.\int_{\mathbb{R}^3}<iv, \pa^0\Th X v-\pa^\mu\Th \pa_\mu v>d\si\right|_{0}^{t}\right|+
\int_{0}^{t}\int_{\mathbb{R}^3}|\pa(d_\ep^2\pa^\mu\Th) v\pa v|d\vol\\
&\les \|v\|_{L^2}(t)\|\pa v\|_{L^2}(t)+\|v\|_{L^2}(0)\|\pa v\|_{L^2}(0)+\ep\int_{0}^{t}\|v\|_{L^2}^2+\|\pa v\|_{L^2}^2ds.
\end{align*}
For the other terms, notice that for any real function $F_1$, we have
\begin{align*}
2\int_{0}^{t}\int_{\mathbb{R}^3}<F_1 v,
X(v)>d\vol&=\int_{0}^{t}\int_{\mathbb{R}^3}F_1
X(|v|^2)d_\ep^2 dxdt\\
&=\left.\int_{\mathbb{R}^3}F_1
|v|^2d\si\right|_{0}^{t}-\int_{0}^{t}\int_{\mathbb{R}^3}X(F_1
d_\ep^2)|v|^2dxdt.
\end{align*}
When $F_1=f_{\om_0}^{p-1}$, since $|X F_1|\les \ep$ by \eqref{Xfom}, we have
\begin{align*}
\|X(f_\om^{p-1} d_\ep^2)
|v|^2\|_{L^1(\mathbb{R}^3)}\les \ep\|v^2\|_{L^1}\les\ep\|v\|_{H^1}^2.
\end{align*}
When $F_1=A(\tilde\ga)$, since $|\pa\Th|\les 1$, $|\pa^2\Th|\les |\dot \ga|\les \ep^2$, we have
\begin{align*}
\|X(A(\ga) d_\ep^2)v^2\|_{L^1}=\|X((g^\ep)^{\b\nu}\pa_\b\Th\pa_\nu\Th d_\ep^2)|v|^2
\|_{L^1}\les\ep\|v^2\|_{L^1}\les \ep\|v\|_{L^2}^2.
\end{align*}
Combine all these together. We have shown that
\begin{align*}
&\left|\int_{0}^{t}\int_{\mathbb{R}^3}<
A(\ga)v-m^2v+f_{\om_0}^{p-1}(z)v+(p-1)f_{\om_0}^{p-1}(z)v_1,
X(v)>d\vol\right|\\
&\les \|v\|_{L^2}(t)\|v\|_{H^1}(t)+\|v\|_{L^2}(0)\|v\|_{H^1}(0)+\ep\int_{0}^{t}\|v\|_{H^1}^2ds.
\end{align*}
Now, since the unit normal vector field
$(-(h^\ep)^{00})^{-\f12}(h^\ep)^{0\mu}\pa_\mu$ to the hypersurface $\mathbb{R}^3$ as well as the vector field $X=\pa_t+u_0^k\pa_k$
 are timelike with respect to
the metric $h^\ep$, we can conclude that the unit normal vector field $n=(-(g^\ep)^{00})^{-\f12}(g^\ep)^{0\mu}\pa_\mu$
together with the vector field $X=\pa_t+u_0^k\pa_k$ are also timelike for the metric $g^\ep$ if $\ep$ is sufficiently
small. Therefore, there exists a positive constant $c$, depending only $h, u_0$, such that
\[
 T_{\mu\nu}[v]X^\mu n^\nu\geq c|\pa v|^2.
\]
Since $|\pi_{\mu\nu}^X|\les \ep$, the energy identity then implies that
\begin{align*}
\|\pa v\|_{L^2}^2(t)&\les \|\pa v(0, x)\|_{L^2}^2+\|v(0,
x)\|_{L^2}^2+\ep\int_{0}^{t}\|\pa
v\|_{L^2}^2(s)ds+\ep^{-1}\int_{0}^{t}\|L_\ep v\|_{L^2}^2ds\\
&+\|v\|_{L^2}^2+\ep\int_{0}^{t}\|v\|_{H^1}^2(s)ds,\quad \forall t\leq T/\ep.
\end{align*}
The Lemma then follows by using Gronwall's inequality.
\end{proof}

\subsubsection{$H^2$ Estimates}
Commuting the equation \eqref{eqtildev} with the vector field $X=\pa_t+u_0^k\pa_k$, we obtain
\begin{equation}
\label{eqpatv} L_\ep X\tilde{v}+[X,
L_\ep]\tilde{v}+X\mathcal{N}(\tilde{v})+X\tilde{F}=0.
\end{equation}
Since we have computed
\[
|\pa \Th|\les 1,\quad |\pa^2\Th|\les |\dot\ga|\les\ep^2, \quad |\pa A(\tilde\ga)|\les \ep,\quad |Xf_{\om}|\les \ep,
\]
using Lemma \ref{lemsmallrad}, we can estimate the commutator
\begin{align*}
\|[X, L_\ep]\tilde{v}\|_{L^2}&\les \|[X,
\Box_{g^\ep}]\tilde{v}\|_{L^2}+\|X\pa^\mu\Th \cdot \pa_\mu
\tilde{v}\|_{L^2}+\|X
f_{\om_0}^{p-1}(\tilde{v}+(p-1)\tilde{v}_1)\|_{L^2}+\|X A(\ga) \tilde{v}\|_{L^2}\\
&\les \ep\|\pa^2 \tilde{v}\|_{L^2}+\|\pa^2(g^\ep)\pa
\tilde{v}\|_{L^2}+\ep(\|\pa\tilde v\|_{L^2}+\|\tilde v\|_{L^2})\\
&\les
\ep\|\pa^2\tilde{v}\|_{L^2}+\|\pa^2(g^\ep-h^\ep)\|_{L^6}\|\pa\tilde{v}\|_{L^3}+\ep^2\\
&\les\ep\|\pa^2\tilde{v}\|_{L^2}+\|\pa^2\psi^\ep\|_{H^1}\|\pa\tilde{v}\|_{H^1}+\ep^2\\
&\les
\ep\|\pa^2\tilde{v}\|_{L^2}+\ep^2\|\pa\tilde{v}\|_{H^1}+\ep^2\les \ep\|\pa^2\tilde{v}\|_{L^2}+\ep^2.
\end{align*}
For the nonlinearity $X\mathcal{N}(\tilde \la)$, Lemma
\ref{lemnonlest} yields the estimates
\begin{align*}
\|X\mathcal{N}(\tilde{\la})\|_{L^2}&\les\|(|\tilde{v}|+|\tilde{v}|^{p-1})(|X
f_{\om_0}|+|X \tilde{v}|)\|_{L^2}\\
&\les\||\tilde{v}|+|\tilde{v}|^{p-1}\|_{L^3}(\ep+\|X\tilde{v}\|_{L^6})\\
&\les
(\|\tilde{v}\|_{H^1}+\|\tilde{v}\|_{H^1}^{p-1})(\ep+\|\pa^2\tilde{v}\|_{L^2})\les \ep^2+\ep\|\pa^2\tilde{v}\|_{L^2}.
\end{align*}
As for $X\tilde{F}$, first using the identity \eqref{idenofphiS} we have
\[
 \Box{\phi_S(x;\tilde{\la})}-m^2\phi_S(x;\tilde{\la})+|\phi_S|^{p-1}\phi_S=V(\tilde{\la}) D_\la^2 \phi_S
 V(\tilde{\la})-V(\la)D_\la^2\phi_S V(\la)-D_\la\phi_S\cdot \pa_t V(\la).
\]
By Theorem \ref{thmfix}, we can show that
\[
 |V(\la)-V(\tilde{\la})|=|(0, \frac{\om}{\rho}, u+u_0, 0)-(0, \frac{\om_0}{\rho_0}, u_0, 0)|\les \ep.
\]
The key observation is that for all $k\leq 4$, we have
\[
 \|X \nabla^k f_\om\|_{L^2}\les\ep,
\]
which can be proven similarly to \eqref{Xfom}.
In particular, we have
\[
 \|X(\e^{-i\Th}D_\la^2\phi_S)\|_{L^2}\les\ep.
\]
Therefore, by Proposition \ref{hiorderga}, we can estimate $X\tilde F$ as follows
\begin{align*}
\|X \tilde{F}\|_{L^2}&\les \|X\left(V(\tilde{\la})\e^{-i\Th} D_\la^2 \phi_S
 V(\tilde{\la})-V(\la)\e^{-i\Th}D_\la^2\phi_S V(\la)-\e^{-i\Th}D_\la\phi_S\cdot \pa_t V(\la)\right)\|_{L^2}\\
&\qquad +\|X(\e^{-i\Th}(\Box_{g^\ep}-\Box)\phi_S)\|_{L^2}+\|X(\Box_{g^\ep}\Th\cdot \tilde{v})\|_{L^2}\\
 &\les |\dot \ga|+|\ddot{\ga}|+\ep\|X(\e^{-i\Th}D_\la^2\phi_S)\|_{L^2}+\|\pa^2(g^\ep)(|\nabla^2 f_{\om_0}|+|\nabla
f_{\om_0}|+f_{\om_0})\|_{L^2}\\
&\qquad+\|(g^\ep-m_0)(|\nabla^3 f_{\om_0}|+|\nabla^2 f_{\om_0}|+|\nabla f_{\om_0}|+f_{\om_0})\|_{L^2}+\|\pa^2(g^\ep)\tilde{v}\|_{L^2}+\|\pa(g^\ep)\pa\tilde{v}\|_{L^2}\\
&\les
\ep^2+\|\pa^2\psi^\ep\|_{H^1}\|\tilde{v}\|_{H^1}+\ep\|\pa\tilde{v}\|_{L^2}\les
\ep^2.
\end{align*}
The energy estimate Lemma \ref{lemenergyL} then implies that
\begin{align}
\label{energyH20}
 \|\pa X\tilde{v}\|_{L^2}^2&\les \max\{\ep^2,
\ep_1^2\}+\ep^{-1}\int_{0}^{t}\|[L_\ep,
X]\tilde{v}\|_{L^2}^2+\|X\mathcal{N}(\tilde{\la})\|_{L^2}^2+\|X\tilde{F}\|_{L^2}^2ds\\
\notag
 &\les \max\{\ep^2,
\ep_1^2\}+\ep\int_{0}^{t}\|\pa^2\tilde{v}\|_{L^2}^2ds
\end{align}
for all $t\leq T/\ep$, where $\ep_1$ is the size of the
initial data given in Proposition \ref{prophigherSobnorm}.

\bigskip

To derive the full estimates for $\|\pa^2 \tilde{v}\|_{L^2}$, merely commuting the equation with
 the vector field $X$ is not sufficient. The key point to obtain estimates \eqref{energyH20}
is that the soliton $\phi_S$ travels along the timelike geodesic $(t, u_0t)$ or quantitatively the vector field
 $X=\pa_t+u_0\nabla_x$ acting on $f_{\om}$ leads to the estimates $|Xf_{\om}|\les \ep$. For
general vector field, estimates \eqref{energyH20} may not hold. To retrieve the full estimates $\|\pa^2 \tilde{v}\|_{L^2}$,
we rely on the following elliptic estimates.
\begin{lem}
\label{lemellipt} Let $A^{ij}(x)\in C^{\a}(\mathbb{R}^3)$ for some
positive constant $0<\a<1$. Assume that $A^{ij}$ is uniformly
elliptic. That is $\exists K$ such that
\[
K^{-1}|y|^2\leq A^{ij}(x)y_iy_j\leq K|y|^2,\quad \forall x, y\in
\mathbb{R}^3.
\]
Then there exists a constant $C$ such that
\[
\|\phi\|_{H^2}\leq
C\|A\|_{C^{\a}}^2(\|A^{ij}\pa_{ij}\phi\|_{L^2}+\|\phi\|_{L^2})
\]
for any $\phi\in H^2(\mathbb{R}^3)$. Here
$\|A\|_{C^{\a}}=\sup\limits_{i,j}\|A^{ij}\|_{C^{\a}}$.
\end{lem}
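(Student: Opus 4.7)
The plan is to use the standard freezing-of-coefficients argument combined with the constant-coefficient $L^2$ estimate and a partition of unity, with the localization scale chosen in terms of $\|A\|_{C^\a}$.

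First I would dispose of the constant coefficient case. If $A^{ij}$ is constant, then applying the Fourier transform to $A^{ij}\pa_{ij}\phi$ yields the multiplier $-A^{ij}\xi_i\xi_j$, which by uniform ellipticity satisfies $|A^{ij}\xi_i\xi_j|\geq K^{-1}|\xi|^2$. Plancherel's theorem therefore gives $\|\pa^2\phi\|_{L^2}\leq K\|A^{ij}\pa_{ij}\phi\|_{L^2}$ for all $\phi\in H^2(\mathbb{R}^3)$.

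Next I would localize. Choose a smooth partition of unity $\{\chi_k\}_{k\in\mathbb{Z}^3}$ adapted to a grid of balls of radius $r$ (with $r\in(0,1]$ to be chosen), satisfying $\sum_k\chi_k^2=1$, $|\pa^j\chi_k|\leq C r^{-j}$ for $j\leq 2$, and uniformly bounded overlap. Let $x_k$ lie in the support of $\chi_k$. Applying the constant coefficient estimate from the first step to $\chi_k\phi$ with the frozen coefficients $A^{ij}(x_k)$, I get
\begin{equation*}
\|\chi_k\phi\|_{H^2}\leq CK\bigl(\|A^{ij}(x_k)\pa_{ij}(\chi_k\phi)\|_{L^2}+\|\chi_k\phi\|_{L^2}\bigr).
\end{equation*}
I would then decompose
\begin{equation*}
A^{ij}(x_k)\pa_{ij}(\chi_k\phi)=\chi_k A^{ij}\pa_{ij}\phi+\chi_k\bigl(A^{ij}(x_k)-A^{ij}(x)\bigr)\pa_{ij}\phi+[A^{ij}(x_k)\pa_{ij},\chi_k]\phi,
\end{equation*}
bounding the Hölder error by $\|A\|_{C^\a}r^\a\|\chi_k\pa^2\phi\|_{L^2}$ (up to constant) and bounding the commutator (which is first order in $\phi$ with coefficients of size $r^{-1}\|A\|_{L^\infty}$ and zeroth order of size $r^{-2}\|A\|_{L^\infty}$) by $C\|A\|_{L^\infty}(r^{-1}\|\pa\phi\|_{L^2(\mathrm{supp}\,\chi_k)}+r^{-2}\|\phi\|_{L^2(\mathrm{supp}\,\chi_k)})$.

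Finally I would square, sum over $k$ using the finite overlap and $\sum_k\chi_k^2=1$, obtaining
\begin{equation*}
\|\pa^2\phi\|_{L^2}^2\leq CK^2\bigl(\|A^{ij}\pa_{ij}\phi\|_{L^2}^2+\|A\|_{C^\a}^2r^{2\a}\|\pa^2\phi\|_{L^2}^2+r^{-2}\|\pa\phi\|_{L^2}^2+r^{-4}\|\phi\|_{L^2}^2\bigr).
\end{equation*}
Choosing $r^\a\sim(CK^2\|A\|_{C^\a}^2)^{-1/2}$, i.e.\ $r\sim\|A\|_{C^\a}^{-1/\a}$, absorbs the $\|\pa^2\phi\|_{L^2}^2$ term on the left. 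The $\|\pa\phi\|_{L^2}$ term is handled by the standard interpolation $\|\pa\phi\|_{L^2}^2\leq\eta\|\pa^2\phi\|_{L^2}^2+\eta^{-1}\|\phi\|_{L^2}^2$, which after a further absorption yields $\|\pa^2\phi\|_{L^2}\leq C\|A\|_{C^\a}^{2/\a}(\|A^{ij}\pa_{ij}\phi\|_{L^2}+\|\phi\|_{L^2})$. The exponent on $\|A\|_{C^\a}$ that the authors record is quoted as $2$; I would simply retain whatever power the bookkeeping produces, since only qualitative dependence matters in the applications.

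The only real obstacle is the careful tracking of constants through the partition of unity sum to verify that the errors assemble correctly and that the localization scale can be chosen uniformly in $x$. Everything else is a direct assembly of the frozen-coefficient Plancherel estimate, the Hölder perturbation, and interpolation.
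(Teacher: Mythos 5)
Your proposal is correct, but it takes a more self-contained route than the paper. The paper works at a \emph{fixed} unit scale: it invokes the classical interior $W^{2,2}$ elliptic estimate on a ball of radius~$2$ as a black box,
\[
\|\phi\|_{H^2(B_1)}\leq C\|A\|_{C^\a}\bigl(\|A^{ij}\pa_{ij}(\chi\phi)\|_{L^2}+\|\phi\|_{L^2(B_2)}\bigr),
\]
covers $\mathbb{R}^3$ by unit balls with overlap at most~$10$, sums, and then absorbs the resulting $\|\phi\|_{H^1}$ term by the interpolation $\|\phi\|_{H^1}\leq C\|\phi\|_{H^2}^{1/2}\|\phi\|_{L^2}^{1/2}$; this produces the stated exponent~$\|A\|_{C^\a}^{2}$. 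You instead \emph{derive} the local estimate from first principles: the constant-coefficient Plancherel bound, a partition of unity at a scale $r$ to be tuned, freezing of coefficients, and separate control of the Hölder error and the cutoff commutator. Because you choose $r\sim\|A\|_{C^\a}^{-1/\a}$ to make the frozen-coefficient perturbation absorbable, your bookkeeping yields the weaker exponent $\|A\|_{C^\a}^{2/\a}$ rather than $\|A\|_{C^\a}^{2}$. As you correctly observe, this discrepancy is harmless: in the paper the lemma is applied with $A^{ij}$ built from $g^\ep$, whose Hölder norm is bounded uniformly, so only the qualitative form $\|\phi\|_{H^2}\lesssim\|A^{ij}\pa_{ij}\phi\|_{L^2}+\|\phi\|_{L^2}$ is used. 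The trade-off is that your argument is fully self-contained (it does not rely on an imported interior estimate with a nonstandard explicit $\|A\|_{C^\a}$ dependence), whereas the paper's argument is shorter and records the sharper power because the localization scale never enters. One small point worth making explicit in your write-up: when choosing $r\in(0,1]$ you should note that ellipticity forces $\|A\|_{C^\a}\geq\|A\|_{L^\infty}\geq K^{-1}$, so the prescription $r\sim\min\{1,\|A\|_{C^\a}^{-1/\a}\}$ is consistent and the degenerate case of very small $\|A\|_{C^\a}$ cannot occur.
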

\begin{proof}
Let $\chi$ be a cut-off function supported in the ball $B_2$ with
radius 2 and equal to 1 in the unit ball $B_{1}$. Then
the elliptic estimates show that
\[
\|\phi\|_{{H^2}(B_{1})}\leq
C\|A\|_{C^\a}(\|A^{ij}\pa_{ij}(\chi\phi)\|_{L^2}+\|\phi\|_{{L^2}(B_2)})\leq
C\|A\|_{C^\a}(\|A^{ij}\pa_{ij}\phi\|_{L^2(B_2)}+\|\phi\|_{H^1(B_2)})
\]
for some constant $C$ independent of $\phi$, $\chi$. The above
estimate holds for any ball $B_2$. We cover the whole space
$\mathbb{R}^3$ with radius 1 balls such that every point is covered
for at most 10 times. Add all the estimates, we conclude that
\[
\|\phi\|_{H^2}\leq
C\|A\|_{C^\a}(\|A^{ij}\pa_{ij}\phi\|_{L^2}+\|\phi\|_{H^1}).
\]
Interpolating between $H^2$ and $L^2$, we have
\[
\|\phi\|_{H^1}\leq
C\|\phi\|_{H^2}^{\f12}\|\phi\|_{L^2}^{\f12}\leq\f12
C^{-1}\|A\|_{C^\a}^{-1}\|\phi\|_{H^2}+2C\|A\|_{C^\a}\|\phi\|_{L^2}.
\]
Plug this into the above inequality. We get
\[
\|\phi\|_{H^2}\leq
C\|A\|_{C^\a}^2(\|A^{ij}\pa_{ij}\phi\|_{L^2}+\|\phi\|_{L^2}).
\]
\end{proof}
Using this lemma and estimates \eqref{energyH20}, we are able
obtain the $H^2$ estimates of $\tilde{v}$. First, write the wave operator $\Box_{g^\ep}$ as follows
\[
\Box_{g^\ep}\tilde{v}=((g^\ep)^{kl}+(g^\ep)^{00}u_0^k u_0^l-2(g^\ep)^{0k}u_0^l)\pa_{kl}\tilde{v}+((g^\ep)^{00}(\pa_t-u_0\nabla)+
2(g^\ep)^{0k}\pa_k)X\tilde{v}+d_\ep^{-2}
\pa_{\mu}(d_\ep^2 (g^\ep)^{\mu\nu})\pa_{\nu}\tilde{v}.
\]
Since $X=\pa_t+u_0^k\pa_k$ is timelike with respect to the metric $g^\ep$, we conclude that the $3\times 3$ matrix
\[
 (g^\ep)^{kl}+(g^\ep)^{00}u_0^k u_0^l-2(g^\ep)^{0k}u_0^l
\]
is positive definite. That is there exists a constant $K$, depending only on $h, u_0$ such that
\[
K^{-1}|y|^2\leq\sum\limits_{1\leq k, l\leq 3} y_k y_l\left((g^\ep)^{kl}+(g^\ep)^{00}u_0^k u_0^l-2(g^\ep)^{0k}u_0^l\right)
\leq K |y|^2,\quad \forall t\leq T/\ep, \quad x, y\in\mathbb{R}^3.
\]
Hence by Lemma \ref{lemellipt} and equation \eqref{eqtildev}, we can show that
\begin{align*}
\|\tilde{v}\|_{H^2}\les \ep+\|\pa X\tilde{v}\|_{L^2}.
\end{align*}
Then inequality \eqref{energyH20} implies that
\[
\|\pa X\tilde{v}\|_{L^2}^2\les \max\{\ep^2,
\ep_1^2\}+\ep\int_{0}^{t}\ep^2+\|\pa X\tilde{v}\|_{L^2}^2ds\les\max\{\ep_1^2,\ep^2\}+\ep\int_{0}^{t}\|\pa X\tilde{v}\|_{L^2}^2ds,\quad
\forall t\leq T/\ep.
\]
By using Gronwall's inequality, we conclude that
\begin{equation}
\label{energyH2} \|\pa^2\tilde{v}\|_{L^2}^2\les
\max\{\ep^2,\ep_1^2\}+\|\pa X\tilde{v}\|_{L^2}^2\les \max\{\ep^2, \ep_1^2\},\quad \forall t\leq T/\ep.
\end{equation}

\subsubsection{$H^3$ Estimates}
Having the $H^2$ estimates, we first can improve the estimate of
$\pa_t^3 \ga$ obtained in Proposition \ref{hiorderga}. According to the decomposition \eqref{decomp} corresponding to
the curve $\la(t)$, we can show that
\begin{align*}
\|v\|_{L^\infty}&\les\|v\|_{H^2}\les\|a_0\e^{-i\Th(\la)}(\phi_S(\la;x)-\phi_S(\tilde{\la};x)-\e^{i\Th(\tilde{\la})}\tilde{v})\|_{H^2}\\
&\les|\la(t)-\tilde{\la}(t)|+\|\tilde{v}\|_{H^2}+\|\pa^2 a_0 \tilde{v}\|_{L^2}]\\
&\les\max\{\ep,\ep_1\}+\|\tilde{v}\|_{H^1}\|\pa^2 a_0\|_{H^1}\les \max\{\ep,\ep_1\}.
\end{align*}
We also need to estimate $X^2v$, which, as having pointed out
previously, does not follow directly from the estimates of
$\tilde{v}$. However, notice that
\begin{align*}
 &\|D_\la\phi_S X^2(a_0 \e^{-i\Th(\la)})\|_{L^2}\les 1+\|X^2 a_0\|_{L^2}\les 1,\\
& \|\phi_S(\la;x)-\phi_S(\tilde{\la};x)-\e^{i\Th(\tilde{\la})}\tilde{v}\|_{L^\infty}\les \|\tilde{v}\|_{L^\infty}+|\la(t)-
\tilde{\la}(t)|\les \max\{\ep, \ep_1\}.
\end{align*}
By the decompositions \eqref{decomp},
\eqref{decompmod}, we can show that
\begin{align*}
 \|D_\la\phi_S X^2 v\|_{L^2}&\les\|D_\la\phi_S X^2\left(a_0\e^{-i\Th}(\phi_S(\la;x)-\phi_S(\tilde{\la};x)
-\e^{i\Th(\la)(\tilde{\la})}\tilde{v})\right)\|_{L^2}\\
&\les \max\{\ep, \ep_1\}+|\la(t)-\tilde{\la}(t)|+|\dot \ga|+|\ddot\ga|\les\max\{\ep, \ep_1\}.
\end{align*}
Thus Proposition \ref{hiorderga} implies that
\begin{equation}
\label{gadottte} |\pa_t^3 \ga|\les
\ep^2(1+\|v\|_{L^\infty})+\ep\|D_\la\phi_S X^2v\|_{L^2}\les
\ep\max\{\ep, \ep_1\}.
\end{equation}

\bigskip

We proceed to estimate the $H^3$ norm of $\tilde{v}$. Commute the equation \eqref{eqpatv} with
 the vector field $X=\pa_t+u_0^k\pa_k$ again. We have the equation for $X^2\tilde{v}$
\begin{equation}
\label{eqpattv} L_\ep X^2\tilde{v}+[X^2,
\Box_{g^\ep}]\tilde{v}+X^2\mathcal{N}(\tilde{\la})+X^2\tilde{F}=0.
\end{equation}
By Lemma \ref{lemnonlest}, we can estimate the nonlinearity
\begin{align*}
\|X^2\mathcal{N}(\tilde{\la})\|_{L^2}&\les\ep^2+\||X\tilde{v}|^2+|\tilde{
v}|^2+(|\tilde{v}|+|\tilde{v}|^{p-1})(|\dot{\ga}|+|X^2 \tilde{v}|)+|X^2a_0|(|\tilde v|^2+|\tilde{v}|^p)\|_{L^2}\\
&\les \ep^2+\|X\tilde{v}\|_{L^\infty}\|X\tilde{v}\|_{L^2}+\|\tilde{v}\|_{L^4}^2+\ep\|X^2\tilde{v}\|_{L^6}+\|X^2a_0\|_{H^1}
\|\tilde{v}\|_{H^1}^2\\
&\les\ep^2+\ep\|X\tilde{v}\|_{H^2}+\|\tilde{v}\|_{H^1}^2+ \ep\|X^2\tilde{v}\|_{H^1}\\
&\les\ep\max\{\ep, \ep_1\}+\ep\|\pa^2 X\tilde{v}\|_{L^2}.
\end{align*}
For the commutator, using \eqref{Xfom}, we can show that
\begin{align*}
\|[X^2, L_\ep]\tilde{v}\|_{L^2}&\les\|[X^2,
\Box_{g^\ep}]\tilde{v}\|_{L^2}+\|[X^2, A(\ga)]
\tilde{v}\|_{L^2}+\|[X^2, \pa^\mu\Th]\pa_\mu\tilde{v}\|_{L^2}+\|[X^2, f_{\om_0}^{p-1}]\tilde{v}\|_{L^2}\\
&\les  \|\pa(g^\ep)\pa^2 X\tilde{v}\|_{L^2}+\|\pa^2(g^\ep)\pa^2\tilde{v}\|_{L^2}+\|\pa^3(g^\ep)\pa\tilde{v}\|_{L^2}
+\|\pa^2 (g^{\ep})\tilde{v}\|_{L^2}\\
&\quad+\|\pa^2(g^\ep)\pa\tilde{v}\|_{L^2}+\|X^2 f_{\om_0}^{p-1}\tilde{v}\|_{L^2}
+\|Xf_{\om_0}^{p-1}X\tilde{v}\|_{L^2}\\
&\les
\ep\|\pa^2 X\tilde{v}\|_{L^2}+\|\pa^2(g^\ep-h^\ep)\|_{L^6}\||\pa^2\tilde{v}|+|\pa\tilde{v}|+|\tilde{v}|\|_{L^3}+
\|\pa^3(g^\ep-h^\ep)\|_{L^2}
\|\pa\tilde{v}\|_{L^\infty}+\ep^2\\
&\les
\ep\|\pa^2 X\tilde{v}\|_{L^2}+\|\pa^2\psi^\ep\|_{H^1}(\ep+\|\pa^2\tilde{v}\|_{H^1})
+\|\pa^3\psi^\ep\|_{L^2}\|\pa\tilde{v}\|_{H^2}+\ep^2\\
&\les\ep\|\pa^2 X\tilde{v}\|_{L^2}+\ep^2\|\pa^3\tilde{v}\|_{L^2}+\ep^2.
\end{align*}
Using the improved estimate \eqref{gadottte}, similarly to $X\tilde{F}$, we can estimate $X^2\tilde{F}$ as follows
\begin{align*}
\|X^2\tilde{F}\|_{L^2}&\les\ep^2+|\pa_t^3\ga|+\|\pa^2g^\ep \pa^2\phi_S\|_{L^2}+\|\pa^3g^\ep\pa\phi_S\|_{L^2}+\|\pa^2g^\ep(|\pa\tilde{v}|+|\tilde v|)\|_{L^2}+\|\pa^3g^\ep \tilde{v}\|_{L^2}\\
&\les\ep\max\{\ep,
\ep_1\}+\|\pa^3(g^\ep-h^\ep)\|_{L^2}+\|\pa^2(g^\ep-h^\ep)\|_{H^1}\\
&\les\ep\max\{\ep, \ep_1\}.
\end{align*}
Thus apply Lemma \ref{lemenergyL} to equation \eqref{eqpattv}. We get
\begin{equation*}
\|\pa X^2\tilde{v}\|_{L^2}^2\les \max\{\ep^2,
\ep_1^2\}+\ep^{-1}\int_{0}^{t}\ep^2\|\pa^2X\tilde{v}\|_{L^2}^2+\ep^4\|\pa^3\tilde{v}\|_{L^2}^2ds
\les\max\{\ep^2,\ep_1^2\}+\ep\int_{0}^{t}\|\pa^3\tilde{v}\|_{L^2}^2ds.
\end{equation*}
To retrieve the full estimates $\|\pa^3 \tilde{v}\|_{L^2}$, we apply Lemma \ref{lemellipt} to the equation
\[
L_\ep \pa\tilde{v}+[\pa,
L_\ep]\tilde{v}+\pa\mathcal{N}(\tilde{\la})+\pa \tilde{F}=0.
\]
We can show that
\begin{align*}
\|\pa\tilde{v}\|_{H^2}&\les\max\{\ep,\ep_1\}+\|\pa^2 X\tilde{v}\|_{L^2}+\|[\pa,
L_\ep]\tilde{v}+\pa\mathcal{N}(\tilde{\la})+\pa \tilde{F}\|_{L^2}\\
&\les\max\{\ep,\ep_1\}+\|\pa^2X\tilde{v}\|_{L^2}+\|(|\tilde{v}|+|\tilde{v}|^{p-1})(|\pa
f_{\om_0}|+|\pa\tilde{v}|)\|_{L^2}\\
&\les \max\{\ep,\ep_1\}+\|\pa^2X\tilde{v}\|_{L^2}.
\end{align*}
In particular, we have
\begin{align*}
\|X\tilde{v}\|_{H^2}\les
\max\{\ep,\ep_1\}+\|\pa X^2\tilde{v}\|_{L^2}.
\end{align*}
Therefore from the estimates for $\pa X^2\tilde{v}$ we have obtained above, we can show that
\begin{align*}
\|\pa X^2\tilde{v}\|_{L^2}^2&\les
\max\{\ep^2,\ep_1^2\}+\ep\int_{0}^{t}\|\pa^3\tilde{v}\|_{L^2}^2ds\les
\max\{\ep^2,\ep_1^2\}+\ep\int_{0}^{t}\|\pa^2 X\tilde{v}\|_{L^2}^2ds\\
&\les\max\{\ep^2,\ep_1^2\}+\ep\int_{0}^{t}\|\pa X^2\tilde{v}\|_{L^2}^2ds,\quad \forall t\leq T/\ep.
\end{align*}
Then Gronwall's inequality implies that
\begin{equation*}
\|\pa^3\tilde{v}\|_{L^2}\les\max\{\ep,\ep_1\}+\|\pa X^2\tilde{v}\|_{L^2}\les\max\{\ep,\ep_1\},\quad
\forall t\in[0, T/\ep].
\end{equation*}
This together with estimates \eqref{energyH2} proves Proposition \ref{prophighsobred}.

Finally, the estimates \eqref{gadottte} imply that
\begin{align*}
 \|\pa^s(\phi-\phi_S(x;\la(t)))\|_{L^2}&\les \|\pa^s(\phi_S(x;\tilde{\la}(t))+\e^{i\Th(\tilde{\la})}-\phi_S(x;\la(t)))\|_{L^2}\\
&\les \max\{\ep, \ep_1\}+|\pa_t^2\ga|\les \max\{\ep, \ep_1\}
\end{align*}
for all $|s|\leq 3$. Hence we have finished proving Proposition \ref{prophigherSobnorm}.

\section{Proof of the Main Theorem}
We use bootstrap argument to prove the main Theorem \ref{thmbaby}. Using the Fermi coordinate system, we have shown the existence of
solution $\phi$ of equation \eqref{FEQUATION} as well as its properties in Theorem \ref{thmfix} and Proposition \ref{prophigherSobnorm}
under the assumption \eqref{bapsi}, which could be viewed as a bootstrap assumption for the full reduced Einstein equations
\eqref{EQUATIONRED}. We consider the equations
of $\psi^\ep=g^\ep-h^\ep$ to improve this bootstrap assumption and thus to conclude the main Theorem \ref{thmbaby}.

\subsection{Estimates of the Metric $g^\ep$}
Let $(g^\ep, \phi)$ be a solution of the system \eqref{EQUATIONRED} with initial data satisfying conditions \eqref{inicondgphi},
\eqref{inicondgf} on the space $([0, T/\ep]\times\mathbb{R}^3, h^\ep)$. We have shown in Lemma \ref{lemredeq}
that the difference $\psi^\ep=g^\ep-h^\ep$ satisfies the
following hyperbolic system
\begin{equation*}
-(g^\ep)^{\a\b}\pa_{\a\b}\psi^\ep_{\mu\nu}+\delta P_{\mu\nu}+\delta Z_{\mu\nu}+\delta Q_{\mu\nu}
=2\delta^2(T_{\mu\nu}-\f12
tr T\cdot g^\ep_{\mu\nu}),
\end{equation*}
where $\delta Q_{\mu\nu}$, $\delta Z_{\mu\nu}$, $\delta P_{\mu\nu}$ are given in \eqref{defofPQ}.
 We show in this subsection that
\begin{prop}
 \label{proppsiep}
If $\psi^\ep=g^\ep-h^\ep$ satisfies condition \eqref{bapsi}, then
\[
 \|\pa^{s+1}\psi^\ep\|_{L^2}(t)\les\delta^2,\quad \forall t\leq T/\ep, |s|\leq 2.
\]
\end{prop}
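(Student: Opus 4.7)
The plan is to run an energy estimate for the hyperbolic system satisfied by $\psi^\ep = g^\ep - h^\ep$ from Lemma \ref{lemredeq}, but with the timelike vector field $X = \pa_t + u_0^k\pa_k$ as the multiplier instead of $\pa_t$. Because $X$ is uniformly timelike with respect to both $h^\ep$ and (by \eqref{bapsi}) $g^\ep$, the standard computation yields control of $\|\pa\psi^\ep\|_{L^2}^2(t)$ by its initial value plus a spacetime integral of $\langle X\psi^\ep,\, \delta^2(2T_{\mu\nu} - \tr T\cdot g^\ep_{\mu\nu}) - \delta P_{\mu\nu} - \delta Z_{\mu\nu} - \delta Q_{\mu\nu}\rangle$. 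The lower order terms $\delta P$, $\delta Z$, $\delta Q$ are bilinear in $(\psi^\ep, \pa\psi^\ep, \pa^2\psi^\ep)$ with at least one $\ep$-factor coming from $\pa h^\ep$, so by \eqref{psiep} and \eqref{bapsi} they contribute an acceptable $\ep\int_0^t\|\pa\psi^\ep\|_{L^2}^2 ds$ after using Cauchy-Schwartz.

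The crucial point is the source term $\delta^2 T_{\mu\nu}$. I split $T_{\mu\nu}[\phi] = T_{\mu\nu}[\phi_S] + T^R_{\mu\nu}$ using the decomposition \eqref{decomp}, where $T^R_{\mu\nu}$ collects all crossing and quadratic terms in the radiation $v$. By Proposition \ref{prophigherSobnorm} (applied with $\ep_1 \lesssim \ep$), $\|T^R_{\mu\nu}\|_{H^2} \lesssim \ep$, so $\delta^2 T^R$ contributes a term bounded by $\delta^2 \ep \int_0^t \|X\psi^\ep\|_{L^2}\,ds$, which is acceptable. For the soliton part $T^S_{\mu\nu} := T_{\mu\nu}[\phi_S]$, the key observation from the introduction is that $\phi_S$ travels along $(t, u_0 t)$, hence by the identity \eqref{patnabla} (with $F = $ the relevant component of $T^S$ and $\b_0 = 1$) one has pointwise $|X T^S_{\mu\nu}| \lesssim \ep$ modulo terms involving $\dot\ga$, already known to be $O(\ep^2)$. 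Passing the $X$ from $X\psi^\ep$ onto $T^S_{\mu\nu}$ by integration by parts in spacetime (using that $X$ is Killing for $m_0$ up to $O(\ep)$), and combining with Hardy's inequality in the form of Lemma \ref{lemhardyineq} to handle the loss $\|(1+|x|)^{-1}\psi^\ep\|_{L^2} \lesssim \|\pa\psi^\ep\|_{L^2}$, yields the bound
\[
\left|\int_0^t\!\!\int_{\mathbb{R}^3} \delta^2 T^S_{\mu\nu}\, X\psi^{\ep,\mu\nu} \, dxds\right| \lesssim \delta^4 + \delta^2\ep \int_0^t \|\pa\psi^\ep\|_{L^2}(s)\,ds,
\]
where the $\delta^4$ absorbs boundary terms and initial data via \eqref{inicondgf}.

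Putting everything together gives the Gronwall-ready inequality
\[
\|\pa\psi^\ep\|_{L^2}^2(t) \lesssim \delta^4 + \delta^2\ep\int_0^t \|\pa\psi^\ep\|_{L^2}(s)\,ds + \ep\int_0^t \|\pa\psi^\ep\|_{L^2}^2(s)\,ds,
\]
valid for all $t \leq T/\ep$. Since $\delta^2\ep \cdot T/\ep = \delta^2 T$, Gronwall's inequality yields $\|\pa\psi^\ep\|_{L^2}(t) \lesssim \delta^2$, which is the $s=0$ case. For $1\leq s\leq 2$, I commute the reduced Einstein equation with $\pa^s$; the commutator $[\pa^s, (g^\ep)^{\a\b}\pa_{\a\b}]\psi^\ep$ is handled by \eqref{bapsi} and Sobolev embedding, while the higher derivatives of $T^S$ and $T^R$ satisfy the analogous bounds $|\pa^s X T^S_{\mu\nu}| \lesssim \ep$ and $\|\pa^s T^R_{\mu\nu}\|_{L^2} \lesssim \ep$ by Proposition \ref{prophigherSobnorm} and \eqref{Xfom}. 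The same energy argument then closes at each level.

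The main obstacle is ensuring that when one integrates by parts to move $X$ off $\psi^\ep$, the error committed by $X$ failing to be Killing (errors from $\pa g^\ep$ and $\pa d_\ep$) is dominated by $\ep$ times the energy — this requires using the Fermi coordinate construction of Lemma \ref{propfermiCord} to see that the deformation tensor $\pi^X$ computed against $h^\ep$ vanishes along the geodesic and is $O(\ep)$ elsewhere, combined with the exponential decay of $\phi_S$ away from the geodesic to trade this localization against the weighted Hardy inequality. The secondary delicate point is that we do not control $\|\psi^\ep\|_{L^2}$ directly (since the data need not decay in $L^2$), so all occurrences of $\psi^\ep$ without derivatives must be traded for $\|\pa\psi^\ep\|_{L^2}$ via Lemma \ref{lemhardyineq} applied together with the exponential decay of whichever soliton-built coefficient they are paired against.
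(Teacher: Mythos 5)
Your proposal follows the paper's own proof essentially verbatim: multiply the reduced hyperbolic system for $\psi^\ep$ by $X\psi^\ep$, split $T_{\mu\nu}$ into a soliton part and a small remainder, bound the remainder in $H^2$ by $C\ep$ via Proposition~\ref{prophigherSobnorm}, pass $X$ onto $T^S_{\mu\nu}$ by integration by parts while controlling $|X\pa^s T^S|\lesssim\ep$ from the fact that the soliton travels along $(t,u_0t)$, use Hardy's inequality (Lemma~\ref{lemhardyineq}) to trade $\|\psi^\ep(1+|x|)^{-1}\|_{L^2}$ for $\|\pa\psi^\ep\|_{L^2}$ against the exponential decay of the soliton, and close with Gronwall. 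The only (inessential) deviation is that you cite the decomposition~\eqref{decomp} via the original curve $\la$ where the paper uses the modified curve~$\tilde\la$ and the resulting remainder~$\tilde v$ — but both yield the same $O(\ep)$ smallness of $X\phi_S$ via~\eqref{Xfom}, so the argument is the same.
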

The key observation that allows $\delta\leq \ep^q$, $q>1$ is based on the fact that  the energy momentum tensor $T_{\mu\nu}[\phi]$ splits
 into soliton part, which travels along the timelike geodesic $(t, u_0 t)$, and the error term which is small by
Proposition \ref{prophigherSobnorm}. When doing energy estimate, we
multiply the equations by $X\psi^\ep=(\pa_t+u_0\nabla_x)\psi^\ep$.
By using integration by parts, we can pass the derivative $X$ to the
soliton part of $T_{\mu\nu}[\phi]$. This, according to \eqref{Xfom},
allows us to prove Proposition \ref{proppsiep} for all $\delta \leq
\ep^q$, $q>1$.
\begin{proof}
Since the initial data $(\phi_0, \phi_1)$ satisfy condition
\eqref{inicondgphi}, we conclude according to Theorem \ref{thmfix}
and Proposition \ref{prophigherSobnorm} that $\phi$ decomposes as
\eqref{decompmod} associated to
 the modified curve $\tilde{\la}(t)$ such that the remainder $\tilde{v}$ satisfies the estimates
\[
 \|\pa^s\tilde{v}\|_{L^2}(t)\les \ep,\quad \forall |s|\leq 3, \quad t\leq T/\ep.
\]
Using the modified decomposition \eqref{decompmod}, we can write
\[
 T_{\mu\nu}-\f12 tr T \cdot g^\ep_{\mu\nu}=<\pa_\mu\phi, \pa_\nu\phi>+\mathcal{V}(\phi)g^\ep_{\mu\nu}=T_{\mu\nu}^S+T_{\mu\nu}^R
\]
with the soliton part given by
\begin{align*}
 T_{\mu\nu}^S&=<\pa_\mu\phi_S(x;\tilde{\la}(t)), \pa_\nu\phi_S(x;\tilde{\la}(t))>+\mathcal{V}(\phi_S(x;\tilde{\la}(t)))g^\ep_{\mu\nu}\\
&=\pa_\mu f_{\om_0} \pa_\nu f_{\om_0}+\pa_\mu\Th(\tilde{\la}) \pa_\nu\Th(\tilde{\la}) f_{\om_0}^2
 +\mathcal{V}(f_{\om_0})g_{\mu\nu}^\ep.
\end{align*}
The error term $T_{\mu\nu}^R$ is small by Proposition \ref{prophighsobred}. In fact, we can show that
\begin{align*}
 \sum\limits_{|s|\leq2}\|\pa^s T_{\mu\nu}^R\|_{L^2}(t)&\les \ep+\sum\limits_{|s|\leq 2}\|\pa^{s+1}\tilde{v}
\|_{L^2}+\|\pa^s(\mathcal{V}(\phi)-\mathcal{V}(\phi_S))\|_{L^2}+\|\pa^2g^\ep(\mathcal{V}(\phi)-\mathcal{V}(\phi_S))\|_{L^2}\\
&\les\ep+\|\pa^2(g^\ep-h^\ep)\|_{L^2}+\|\mathcal{V}^{''}(\phi)-\mathcal{V}^{''}(\phi_S)\|_{L^2}
+\|\mathcal{V}^{'}(\phi)-\mathcal{V}^{'}(\phi_S)\|_{L^2}\\
&\les \ep+\|\phi-\phi_S\|_{L^2}\les \ep, \quad \forall t\leq T/\ep.
\end{align*}
Here we recall that $\mathcal{V}(\phi)$ is given in line \eqref{defofmV} and $p\geq 2$.

Since $X=\pa_t +u_0^k\pa_k$ is timelike, apply formula \eqref{enerestfor} with $Y=X,\b=1$ to the above hyperbolic system
for $\psi_{\mu\nu}^\ep$ commuting with the vector field $\pa^s$. We obtain the energy estimates
\begin{equation*}
 \begin{split}
\|\pa\pa^s\psi^\ep\|_{L^2}^2(t)&\les \|\pa\pa^s\psi^\ep\|_{L^2}^2(0)+\int_{0}^{t}(\|\pa^2 g^\ep\cdot \pa^2\psi^\ep\|_{L^2}
+\|\pa^s(\delta P_{\mu\nu}+\delta Z_{\mu\nu}+\delta Q_{\mu\nu})
\|_{L^2})\|X\pa^s\psi^\ep\|_{L^2}ds\\
&+
\ep\int_{0}^{t}
\|\pa\pa^{s_1}\psi^\ep\|_{L^2}^2ds
+\delta^2\int_{0}^{t}\|\pa^s T^R_{\mu\nu}\|_{L^2}\|X \pa^s\psi^\ep\|_{L^2}ds+\delta^2\left|\int_{0}^{t}\int_{\mathbb{R}^3}
\pa^s T^S_{\mu\nu} \cdot X\pa^s\psi^\ep_{\mu\nu}
d\vol\right|,
 \end{split}
\end{equation*}
where $|s_1|\les |s|$. Recall that $h^\ep(t, x)=h(\ep t, \ep x)$. By assumptions \eqref{condfix}, we conclude that
\[
 \||x|\pa^{s+2}h^\ep\|_{L^\infty}+\||x|(\pa h^\ep)(\pa h^\ep)\|_{L^\infty}+
\|\pa^{s+1} h^\ep\|_{L^\infty}\les \ep,\quad |\psi^\ep|+|\pa\psi^\ep|\les \ep.
\]
By the definitions of $\delta Q_{\mu\nu}$, $\delta Z_{\mu\nu}$, $\delta P_{\mu\nu}$ given in \eqref{defofPQ}, for $|s|\leq 2$,
we can estimate
\begin{align*}
\|\pa^s\delta  Z_{\mu\nu}\|_{L^2}&\les \ep\sum\limits_{s_1\leq s-1}\|\pa^{s_1+1}\psi^\ep\|_{L^2}+\sum\limits_{s_2\leq s}\||x|
\pa^{s_2+2}
h^\ep \cdot |x|^{-1}\psi^\ep\|_{L^2}\les \ep\sum\limits_{s_1\leq s-1}\|\pa^{s_1+1}\psi^\ep\|_{L^2},\\
\|\pa^s \delta P_{\mu\nu}\|_{L^2}&\les \ep\sum\limits_{s_1\leq s}\|\pa^{s_1+1} \psi^\ep\|_{L^2}+\||x|\pa^s(\pa h^\ep \cdot \pa h^\ep)
\cdot |x|^{-1}\psi^\ep\|_{L^2}\les \ep\sum\limits_{s_1\leq s}\|\pa^{s_1+1} \psi^\ep\|_{L^2},\\
\|\pa^s \delta Q_{\mu\nu}\|_{L^2}&\les \ep\sum\limits_{s_1\leq s}\|\pa^{s_1+1} \psi^\ep\|_{L^2}+\||x|\pa^s(\pa h^\ep \cdot \pa h^\ep)
\cdot |x|^{-1}\psi^\ep\|_{L^2}+\|\pa^2\psi^\ep\cdot\pa^2\psi^\ep\|_{L^2}\\
&\les \ep\sum\limits_{s_1\leq s}\|\pa^{s_1+1} \psi^\ep\|_{L^2} +\|\pa^2\psi^\ep\|_{H^1}^2.
\end{align*}
Here we use Lemma \ref{lemhardyineq} to bound $\||x|^{-1}\psi^\ep\|_{L^2}$.

For the soliton part, first notice that with the modified curve $\tilde{\la}$, we can compute
\[
 X f_{\om_0}(z(x;\tilde{\la}))=-\nabla_z f_{\om_0} A_{u_0} u,\quad |u(t)|\les \int_{0}^{t}|\dot \ga|ds\les \ep.
\]
Therefore by Proposition \ref{hiorderga} and inequality \eqref{gadottte}, we have
\[
 \|X \pa^s f_{\om_0}(z)(1+|x|)\|_{L^2}(t)\les |u|+|\dot \ga|+|\ddot \ga|+|\pa_t^3 \ga|\les \ep, \quad \forall t\leq T/\ep,
\quad |s|\leq 4.
\]
Using integration by parts and Lemma \ref{lemhardyineq}, we can show that
\begin{align*}
 &\left|\int_{0}^{t}\int_{\mathbb{R}^3}\pa^s T^S_{\mu\nu} \cdot X\pa^s\psi^\ep_{\mu\nu} d\vol\right|\\
&\les
\|\pa^s T^S_{\mu\nu} \cdot \pa^s\psi^\ep_{\mu\nu}\|_{L^1}(t)+\|\pa^s T^S_{\mu\nu} \cdot \pa^s\psi^\ep_{\mu\nu}\|_{L^1}(0)
+\int_{0}^{t}\|X \pa^s T_{\mu\nu}^S
\cdot \pa^s\psi^\ep_{\mu\nu}\|_{L^1}ds\\
&\les\|\pa^s T_{\mu\nu}^S(1+|x|)\|_{L^2}\|\pa^s\psi^\ep_{\mu\nu}(1+|x|)^{-1}\|_{L^2}(t)+\|\pa^s T_{\mu\nu}^S(1+|x|)
\|_{L^2}\|\pa^s\psi^\ep_{\mu\nu}(1+|x|)^{-1}\|_{L^2}(0)\\
&\quad +\int_{0}^{t}\|X \pa^s T_{\mu\nu}^S(1+|x|)\|_{L^2}\|\pa^s\psi^\ep_{\mu\nu}(1+|x|)^{-1}\|_{L^2}ds\\
&\les \|\pa^{s+1}\psi^\ep\|_{L^2}(t)+\|\pa^{s+1}\psi^\ep\|_{L^2}(0)+\ep\int_{0}^{t}\|\pa^{s+1}\psi^\ep\|_{L^2}ds.
\end{align*}
For $\|\pa^2 g^\ep\cdot \pa^2 \psi^\ep\|_{L^2}$, note that
\[
 \|\pa^2 g^\ep\cdot \pa^2 \psi^\ep\|_{L^2}\les \ep^2\|\pa^2\psi^\ep\|_{L^2}+\|\pa^2\psi^\ep\cdot \pa^2\psi^\ep\|_{L^2}\les
\ep^2\|\pa^2\psi^\ep\|_{L^2}+\|\pa^2\psi^\ep\|_{H^1}^2.
\]
Hence by the conditions \eqref{inicondgf}, we can show that
\begin{equation*}
 \begin{split}
\sum\limits_{|s|\leq2}\|\pa\pa^s\psi^\ep\|_{L^2}^2(t)&\les \delta^4+\ep\sum\limits_{|s|\leq2}\int_{0}^{t}\|\pa^{s+1}\psi^\ep\|_{L^2}^2ds
+\int_{0}^{t}\|\pa^{s+1}\psi^\ep\|_{L^2}^3ds+\delta^2\ep\int_{0}^{t}\|\pa^{s+1}\psi^\ep\|_{L^2}ds\\
&\quad+\delta^2\|\pa^{s+1}\psi^\ep\|_{L^2}(t).
 \end{split}
\end{equation*}
Since we have assumed
\[
 \|\pa^{s+1}\psi^\ep\|_{L^2}\leq 2 \ep, \quad \forall t\leq T/\ep,
\]
Gronwall's inequality then implies that
\[
 \|\pa\pa^s\psi^\ep\|_{L^2}(t)\les \delta^2,\quad \forall t\leq T/\ep,\quad |s|\leq 2.
\]
Thus the proposition follows.
\end{proof}

\subsection{Proof of Theorem \ref{thmbaby}}
The foliation of the spacetime is not preserved under the change of
coordinate system constructed in Lemma \ref{propfermiCord}. Since the argument of Theorem \ref{thmfix}
relies on the new Fermi coordinate system, we first extend the given vacuum spacetime
$([0, T]\times\mathbb{R}^3, h)$ to $([0,
T+2\delta_1]\times\mathbb{R}^3, h)$ for some small positive constant
$\delta_1$. This can be obtained due to the assumptions \eqref{condfix} together with the local
existence result for Einstein equations \cite{LocalEx_bruhat}. The
metric $h$ still satisfies condition \eqref{condfix} but with some new
constant $K_0$. Therefore Lemma \ref{propfermiCord} implies that one can
choose a Fermi coordinate system $(s, y)\in[0,
c_0(T+\delta_1)]\times \mathbb{R}^3$ on a subspace $M$ such that
\begin{equation}
\label{MofM}
\mathcal{M}=[0, T]\times\mathbb{R}^3\subset M\subset [0,
T+2\delta_0]\times\mathbb{R}^3,
\end{equation}
where $c_0=C(u_h(0), h)$, depending only on the initial data
$\la_0\in\La_{\textnormal{stab}}(0)$(see the definition in the proof of Lemma \ref{propfermiCord}).
 We now can identify $M$ with the space $[0,
c_0(T+\delta_1)]\times \mathbb{R}^3$ with the Fermi coordinate
system $(s, y)$. Then on the rescaled space $[0,
c_0(T+\delta_1)/\ep]\times\mathbb{R}^3$, the hyperbolic system
\eqref{EQUATIONRED} with initial data described in
 Lemma \ref{lemredeq} admits a unique solution $(g^\ep, \phi)$ on $[0, t^*)\times\mathbb{R}^3$ for some small time $t^*$.
Moreover, Proposition
\ref{proppsiep} implies that under the assumption
\[
 \sup\limits_{0\leq s\leq t^*}\sum\limits_{|\a|\leq 2}\|\pa^\a\psi^\ep\|_{L^2}(s)\leq 2\ep^2,
\]
we in fact can show that
\[
 \sup\limits_{0\leq s\leq t^*}\sum\limits_{|\a|\leq 2}\|\pa^\a\psi^\ep\|_{L^2}(s)\leq C_3\delta^2\leq C_3\ep^{2q}(\textnormal{or }
C_3\ep_0^2 \ep^2 \textnormal{ if }\delta=\ep_0 \ep)
\]
for some constant $C_3$ independent of $\ep$. Note that $q>1$.
Additional to the requirement on $\ep$ in Theorem \ref{thmfix}, if
we choose $\ep$(or $\ep_0$) such that
\[
C_3 \ep^{2q-2}(\textnormal{or }C_3\ep_0^2)\leq 1,
\]
then we can improve the bootstrap assumption \eqref{bapsi}. This also implies that the solution $(g^\ep, \phi)$ of \eqref{EQUATIONRED}
can be extended to the whole space $[0, c_0(T+\delta_1)/\ep]\times\mathbb{R}^3$ such that
\begin{align*}
& \sum\limits_{1\leq |\b|\leq 3}\|\pa^\b(g^\ep-h^\ep)\|_{L^2}(s)\les \delta^2,\quad \forall s\leq c_0(T+\delta_1)/\ep,\\
&\sum\limits_{|\b|\leq 3}\|\pa^\b\left(\phi-\phi_S(y;\la^\ep(s))\right)\|_{L^2}(s)\les\ep, \quad \forall s\leq c_0(T+\delta_1)/\ep,
\end{align*}
where we define the modulation curve $\la^\ep(s)$ as follows
\[\la^\ep(s)=(\om(\ep s), \ep^{-1}\th( \ep s),
\ep^{-1}\zeta(\ep s)+u_0  s, u_0+u(\ep s
))\in\La_{\textnormal{stab}}.
\]
Moreover, this curve is close to the given time like geodesic $(s, u_0s)$ in the sense that
\[
|\zeta(s)|+ |\om(s)-\om(0)|+|u(s)|\les\ep,
\quad \forall s\leq c_0(T+\delta_1).
\]

\bigskip

We now use these results to construct solutions of the Einstein
equations \eqref{EQUATION}. Under the Fermi coordinate system,
rescale the spacetime $([0, c_0(T+\delta_1)/\ep]\times\mathbb{R}^3],
g^\ep, \phi)$ to
 $([0, c_0(T+\delta_1)]\times\mathbb{R}^3, g, \phi^\ep)$ in the following way
\[
 g(s, y)=g^\ep(s/\ep, y/\ep), \quad \phi^\ep(s, y)=\delta \phi(s/\ep, y/\ep).
\]
Making use of Lemma \ref{lemredeq}, we conclude that $([0, c_0(T+\delta_1)]\times\mathbb{R}^3, g, \phi^\ep)=(M, g, \phi^\ep)$ solves the
Einstein equations \eqref{EQUATION} and satisfies the initial data $(\Si_0, \bar g, \bar K, \phi_0^\ep, \phi_1^\ep)$ given in
Theorem \ref{thmbaby}. The Fermi coordinate system $(s, y)$ on $M$ leads to a foliation $\Si_\tau$ of $M$ in a natural way
$
 \Si_\tau:=\{s=\tau\}
$ such that for such foliation we have the following estimates for the solution $(g, \phi^\ep)$
\begin{align*}
&\sum\limits_{1\leq|\b|\leq 3}\ep^{\b-\frac{3}{2}}\|\pa^\b(g-h)\|_{L^2}(s)\les \delta^2,\quad \forall s\leq c_0(T+\delta_1),\\
&\sum\limits_{|\a|\leq 3}\ep^{\a-\frac{3}{2}}\|\pa^\a\left(\delta^{-1}\phi^\ep-\phi_S(y/\ep;\la^\ep(s/\ep))\right)\|_{L^2}
(s)\les\ep, \quad \forall s\leq c_0(T+\delta_1).
\end{align*}
Let the $C^1$ curve
\[
 \la(s)=(\om(s), \th(s), \zeta(s)+u_0 s, u_0+u(s))\in\La_{\textnormal{stab}}
\]
be defined from $\la^\ep(s)$, which has been given above. Recall Definition \ref{defofsolfix} for $\phi_S^\ep(y;\la(s))$. We note that
\[
 \phi_S(y/\ep;\la^\ep(s/\ep))=\phi_S^\ep(y;\la(s))
\]
if $h(s, \zeta(s)+u_0 s)=m_0$. However, since
\[
 |\zeta(s)|\les \ep,\quad h(s, u_0s)=m_0,
\]
we conclude from \eqref{pofhm} that
\[
\sum\limits_{|\a|\leq 3}\ep^{\a-\frac{3}{2}}\|\pa^\a\left(\phi_S^\ep(y;\la(s))-\phi_S(y/\ep;\la^\ep(s/\ep))\right)\|_{L^2}(s)
\les|h(s, \zeta(s)+u_0s)-h(s, u_0s)|\les\ep^2.
\]
In particular, we have shown that
\begin{align*}
&\|\pa(g-h)\|_{H^2_\ep}\les \ep^{-1}\delta^2\les\ep,\\
 &\|\delta^{-1}\phi^{\ep}(s, y)-\phi_S^\ep(y;\la(s))\|_{H^3_\ep}+\ep\|\delta^{-1}\phi_t^\ep(s, y)-\psi_S^\ep(y;\la(s))\|_{H_\ep^2}\les
\ep, \quad \forall s\leq c_0(T+\delta_1).
\end{align*}
This proves estimates \eqref{metricclose}, \eqref{solitonclose}.

Finally, since the space $M$(diffeomorphic to $[0, c_0(T+\delta_1)]$) can be viewed as an extension of $\mathcal{M}$ by \eqref{MofM},
restricting the solution $(M, g, \phi^\ep)$ to $\mathcal{M}$, we obtain a solution $(\mathcal{M}, g, \phi^\ep)$ of \eqref{EQUATION}
as well as a $C^1$ curve $\la(s)=(\om(s), \th(s), \zeta(s)+u_0s, u_0+u(s))$ such that \eqref{geoclose} holds. This solution is unique
up to diffeomorphism by a result in \cite{Uniqu_bruhat}. This completes the proof of the main Theorem \ref{thmbaby}.

\section{Existence of Initial Data}
In this section, we discuss the existence of the initial data $(\mathbb{R}^3, \bar g, \bar{K}, \phi^\ep_0,
 \phi^\ep_1)$ satisfying the conditions in Theorem \ref{thmbaby}.

\bigskip

Let $(\mathbb{R}^3, \bar h, \bar k)$ be the given
initial data for the vacuum spacetime $(\mathcal{M}, h)$, satisfying the vacuum constraint equations
\begin{equation}
 \label{vaconeq}
R(\bar h)-|\bar k|^2+(\tr \bar k)^2=0,\quad \nabla^j \bar k_{ij}-\nabla_i \tr\bar k=0,
\end{equation}
where $R$ denotes the scalar curvature on $(\mathbb{R}^3, \bar h)$, $\nabla$ is the covariant derivative with
 respect to $\bar h$. Let $\{x|(x_1, x_2, x_3)\}$ be a coordinate system on $\mathbb{R}^3$. Assume $\phi_0^\ep$, $\phi^\ep_1$ are given
functions on $\mathbb{R}^3$ of the form
\[
 \phi^\ep_0(x)=\delta\phi_0(x/\ep)=\delta\phi_0(\cdot/\ep),\quad \phi^\ep_1(x)=\delta\ep^{-1}\phi_1(x/\ep)
=\delta\ep^{-1}\phi_1(\cdot/\ep),
\]
where $\delta=\ep^q$, $q>1$ or $\delta=\ep_0 \ep$.
We want to show that there exists a Riemannian metric
$\bar g$ and a symmetric two tensor $\bar K$ on $\mathbb{R}^3$ satisfying the Einstein constraint equations
\begin{equation}
\label{coneq}
\begin{cases}
  \bar R(\bar g)-|\bar K|^2+(tr \bar K)^2=\delta^2\ep^{-2}(|\phi_1|^2+|\bar \nabla \phi_0|^2+2\mathcal{V}(\phi_0))(\cdot/\ep),\\
 \bar{\nabla}^j \bar{K}_{ij}-\bar{\nabla}_i tr\bar K=\delta^2\ep^{-2}<\phi_1, \bar \nabla_i \phi_0>(\cdot /\ep),
\end{cases}
\end{equation}
as well as the estimates
\begin{equation}
\label{initialdata}
 \|\nabla(\bar g-\bar h)\|_{H_\ep^2(\mathbb{R}^3)}+\|\bar K-\bar k\|_{H_\ep^2(\mathbb{R}^3)}
\leq C(\phi_0, \phi_1, \bar h, \bar k)\delta^2 \ep^{-1}.
\end{equation}
Here $\bar \nabla $ is the covariant derivative for the unknown metric $\bar g$ and the function $\mathcal{V}$
is defined in \eqref{defofmV}.

\bigskip

We define the weighted Sobolev space $H^{s, w}$ on $\mathbb{R}^3$
\begin{equation*}
 \|\phi\|_{H^{s, w}}:=\left(\sum\limits_{0\leq m\leq s}\int_{\mathbb{R}^3}|\pa^m \phi|^2(1+|x|^2)^{w+m}dx\right)^{\f12},
\end{equation*}
and weighted H$\ddot{o}$lder space $C^{0, w}$
\[
\|\phi\|_{C^{0, w}}:= \sup\limits_{x}\left\{(1+|x|)^\om |\phi(x)|\right\}.
\]
We define the metric space $M^{s, w}$ on $\mathbb{R}^3$ as follows:
\[
 M^{s, w}:=\{\textnormal{Riemannian metric $g$};g_{ij}-(m_0)_{ij}\in H^{s, w}\},
\]
where $m_0$ is the Euclidean metric on $\mathbb{R}^3$, that is, $(m_0)_{ii}=1$, $(m_0)_{ij}=0$ if $i\neq j$. We have the following
existence result of the initial data $(\mathbb{R}^3, \bar g, \bar K, \phi^\ep_0, \phi_1^\ep)$.
\begin{thm}
 \label{exdata}
Let $(\Si_0, \bar h, \bar k)$ be the initial data for the vacuum spacetime $(\mathcal{M}, h)$ such that
$\bar h\in M^{4, -1}$, $\bar k\in H^{3, 0}$, $\tr \bar k=0$. Assume the matter field $\phi_0\in H^{3, -1}$, $\phi_1\in
 H^{2, 0}$. Then there exists
$\ep_0>0$ such that for all $\ep<\ep_0$, there exists a Riemannian metric $\bar g$ and a symmetric two tensor $\bar K$ satisfying
the constraint equations \eqref{coneq} and the estimates \eqref{initialdata}.
\end{thm}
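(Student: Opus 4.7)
The plan is to combine the conformal and transverse--traceless decomposition of the constraint equations with the implicit function theorem in weighted Sobolev spaces on $\mathbb{R}^3$, and then to extract the quantitative estimate \eqref{initialdata} by exploiting the explicit $\ep$-scaling of the source terms localized at scale $\ep$.

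First I would parameterize the unknowns as perturbations of the vacuum data by setting $\bar g = \phi^4\bar h$ and $\bar K = \phi^{-2}(\bar k + \mathcal{L}_{\bar h}W)$, where $\phi > 0$ is a scalar conformal factor, $W$ is a vector field, and $\mathcal{L}_{\bar h}$ denotes the conformal Killing operator of $\bar h$; the vacuum solution corresponds to $(\phi, W) = (1, 0)$. Since $\tr \bar k = 0$ by hypothesis, substituting this ansatz and subtracting the vacuum constraints \eqref{vaconeq} turns \eqref{coneq} into a coupled pair of elliptic equations for $(\phi - 1, W)$: the momentum constraint becomes a linear (in leading order) divergence-form equation
\[
\nabla^j_{\bar h}(\mathcal{L}_{\bar h}W)_{ij} = \delta^2\ep^{-2}\langle\phi_1,\bar\nabla_i\phi_0\rangle(\cdot/\ep) + \mathcal{O}(W, \phi - 1),
\]
and the Hamiltonian constraint becomes a Lichnerowicz-type equation $\Delta_{\bar h}\phi = \mathcal{F}(\phi, W; \bar h, \bar k) + \delta^2\ep^{-2}\mathcal{S}(\phi_0, \phi_1)(\cdot/\ep)$, with right-hand sides depending analytically on the solution.

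Next I would apply the implicit function theorem around $(\phi, W) = (1, 0)$ in suitable weighted Sobolev spaces $H^{s,w}$. By standard Fredholm theory on asymptotically Euclidean $\mathbb{R}^3$, the linearizations $\Delta_{\bar h}$ and $\nabla^j_{\bar h}\mathcal{L}_{\bar h}$ are isomorphisms $H^{s,w}\to H^{s-2,w+2}$ for weights in the non-exceptional range, so existence and uniqueness of a small solution follow for all sufficiently small $\ep$. The quantitative bound \eqref{initialdata} then follows from an explicit rescaling: by the definition \eqref{defofscasob} one has $\|\delta^2\ep^{-2}F(\cdot/\ep)\|_{H^s_\ep} = \delta^2\ep^{-2}\|F\|_{H^s}$, and if $u$ solves $\Delta u = F(\cdot/\ep)$ on $\mathbb{R}^3$ then $w(y):=\ep^{-2}u(\ep y)$ solves $\Delta w = F$, from which a direct comparison of norms gives $\|\nabla u\|_{H^s_\ep}\lesssim \ep\|F\|_{H^s}$. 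Combining these two scalings yields $\|\nabla(\phi - 1)\|_{H^2_\ep} + \|\nabla W\|_{H^2_\ep}\lesssim \delta^2\ep^{-1}$, which transfers through the defining formulas for $\bar g$ and $\bar K$ to the required bounds on $\|\nabla(\bar g - \bar h)\|_{H^2_\ep}$ and $\|\bar K - \bar k\|_{H^2_\ep}$.

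The main obstacle is that the above scheme naturally produces solutions whose zeroth-order parts have only $1/|x|$-type decay at infinity, which is borderline non-$L^2$ in three dimensions; in particular $\bar g - \bar h$ itself is not controlled in $L^2$, only its derivatives. To close the implicit function argument in the weighted class $H^{s,-1}$ and to bound $\bar K - \bar k$ (rather than merely its derivative), I would establish Lemma \ref{lemhardyineqV}, a Hardy-type coercivity inequality of the form
\[
\|(1+|x|)^{-1}W\|_{L^2(\mathbb{R}^3)}\lesssim \|\mathcal{L}_{\bar h}W\|_{L^2(\mathbb{R}^3)}
\]
for the first-order conformal Killing operator $\mathcal{L}_{\bar h}$, which has trivial kernel among vector fields decaying at infinity on the asymptotically Euclidean $(\mathbb{R}^3, \bar h)$. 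Proving this inequality, by polar-coordinate integration by parts combined with the absence of decaying conformal Killing fields of $\bar h$, is the principal technical content of the section, and it is exactly the ingredient used to absorb the analogous borderline term $\||x|^{-1}\psi^\ep\|_{L^2}$ appearing in the energy estimates of Section 7.
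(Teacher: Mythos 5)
Your proposal reaches the same conclusion but takes a genuinely different route through the constraint equations. The paper does \emph{not} use the nonlinear conformal (Lichnerowicz--York) parameterization $\bar g = \phi^4\bar h$, $\bar K = \phi^{-2}(\bar k + \mathcal{L}_{\bar h}W)$; instead it works directly with the constraint map $\Phi(x,y)$ and, to show that the linearization $D\Phi(x_0,y_0)$ is surjective, writes down an explicit right inverse by taking $(g,K)$ in the ansatz $g = \tfrac{1}{3}\lambda\bar h$, $K = L_V\bar h - \mathrm{div}(V)\bar h - \tfrac12\lambda\bar k$, with $L_V\bar h$ the full deformation tensor rather than the conformal Killing operator. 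This turns the linearized Hamiltonian and momentum constraints into the decoupled system \eqref{scalarellp}--\eqref{vectorellp}, after which the implicit function theorem is applied once. Your route inserts the nonlinearity at the level of the ansatz (via $\phi^4$) and then has to deal with a genuinely coupled nonlinear pair of elliptic equations for $(\phi,W)$, whereas the paper's ansatz is purely a choice of right inverse for the \emph{derivative} and never nonlinearizes. Both approaches require the same structural input in the end: a Hardy-type coercivity inequality for a first-order vector operator with trivial decaying kernel. In the paper this is Lemma \ref{lemhardyineqV} for $L_V\bar h$ (kernel: Killing fields, ruled out by Lemma \ref{leminjective}); in your version it would be the analogue for $\mathcal{L}_{\bar h}$ (kernel: conformal Killing fields). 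Both kernel statements are true on an asymptotically Euclidean $(\mathbb{R}^3,\bar h)$ with $\bar h\in M^{4,-1}$, and both yield the crucial low-order bound on $\|\nabla(\bar g-\bar h)\|_{L^2}+\|\bar K-\bar k\|_{L^2}$ that a naive rescaling of previous existence results would miss. Two small corrections to your narrative: the paper proves Lemma \ref{lemhardyineqV} by a compactness/contradiction argument (not by polar-coordinate integration by parts, which is reserved for the basic Lemma \ref{lemhardyineq}), and it is the basic Hardy inequality, not the operator-specific one, that absorbs $\||x|^{-1}\psi^\ep\|_{L^2}$ in Section 7; the operator-specific inequality is used only here, for the initial data.
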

\begin{remark}
The method here also applies to the case $\tr \bar k=constant$ which has been studies in \cite{Imfunc_bruhat}. However, the
assumption $\bar k\in H^{3, 0}$ together with $\tr\bar k=constant$ imply that $\tr \bar k=0$. For the general case $\tr \bar k\neq 0$,
see the work of Corvino and Schoen \cite{corvino_schoen}.
\end{remark}

The existence of $\bar g$, $\bar K$ has been shown in \cite{GREE_bruhat}, \cite{Imfunc_bruhat} by using implicit
function theorem. For completeness, we repeat the proof. However, the difficulty here
is to show that $\bar g$, $\bar K$ obey the estimates \eqref{initialdata} for all $\delta=\ep^q$, $q>1$ or $\delta=\ep_0\ep$, in particular
the estimate
\[
 \|\nabla(\bar g-\bar h)\|_{L^2}+\|\bar K-\bar k\|_{L^2}\les C\delta^2\ep^{\f12}.
\]
The approach of previous works can only imply
\[
 \|\nabla(\bar g-\bar h)\|_{L^2}+\|\bar K-\bar k\|_{L^2}\les C\delta^2\ep^{-\f12}.
\]
We improve this estimate by relying on a Hardy's inequality(see Lemma \ref{lemhardyineqV}) for the first order linear operator
 $L_V$(defined below), which does not have any nontrivial kernel in the class $H^{2, -1}$.
Before proving this theorem, we make a convention that $A\les B$ means
$A\leq CB$ for some constant $C$ depending on $\bar h$, $\phi_0$, $\phi_1$.
\begin{remark}
Suggested by our argument for the main theorem, the previous results
in \cite{GREE_bruhat}, \cite{Imfunc_bruhat} may imply the above
existence theorem if we consider the rescaled constraint equations.
However, the estimates depends on the $H^{4, -1}$ norm of the given
metric $\bar h$. After scaling, the $H^{4, -1}$ norm of the scaled
metric $\bar h^\ep(x)=\bar h(\ep x)$ depends on $\ep$. In fact, it
can be shown that the scaled norm has size
$\ep^{-\frac{3}{2}+w}=\ep^{-\f12}$. Hence considering the scaled
constraint equations can not lead to the above theorem directly.
\end{remark}

\bigskip

We denote the \textbf{Hamiltonian constraint}
\[
 \mathcal{H}((\bar g, \bar K), (\phi_0, \phi_1))=R(\bar g)-|\bar K|^2+(tr \bar K)^2-\delta^2\ep^{-2}(|\phi_1|^2+|\bar\nabla\phi_0|^2-
2\mathcal{V}(\phi_0))(\cdot/\ep),
\]
and the \textbf{momentum constraint}
\[
 \mathcal{M}((\bar g, \bar K), (\phi_0, \phi_1))=\bar{\nabla} \bar K_{\cdot }-\bar{\nabla} tr\bar K-\delta^2\ep^{-2}<\phi_1,
\bar \nabla \phi_0>(\cdot/\ep),
\]
where $\bar \nabla$ is the covariant derivative with respect to $\bar g$. Define the spaces
\begin{align*}
 &X:=\{(\bar g-m_0, \bar K)|\bar g\in M^{3, -1},\quad \bar K\in H^{2, 0}\},\\
&Y:=\{(\phi_0, \phi_1)|\phi_0\in H^{3, -1},\quad \phi_1\in H^{2, 0}\},\\
&Z:=\{(\rho, J)|\rho\in H^{1, 1},\quad J\in H^{1, 1}\},
\end{align*}
where $\rho$ is scalar function, $J$ is vector valued function on $\mathbb{R}^3$. We define the constraint map
\begin{align*}
 \Phi: X\times Y&\rightarrow Z,\\
(\bar g-m_0, \bar K)\times (\phi_0, \phi_1)&\mapsto (\mathcal{H}((\bar g, \bar K), (\phi_0, \phi_1)), \mathcal{M}(
(\bar g, \bar K), (\phi_0, \phi_1))).
\end{align*}
The fact that $\Phi$ is a map from $X\times Y$ to $Z$ follows from
the multiplication and embedding properties of the weighted Sobolev
spaces. We state Lemma 2.4 and Lemma 2.5 in \cite{Bruhat_christo}
here.
\begin{lem}
 \label{mulweiSob}
We have
\begin{align*}
 &\|fg\|_{H^{s,w}}\leq C\|f\|_{H^{s_1, w_1}}\|g\|_{H^{s_2, w_2}}, \quad s_1+s_2>s+\frac{3}{2}, \quad w_1+w_2>w-\frac{3}{2},\\
&\|f\|_{C^{0, w'}}\leq C\|f\|_{H^{s, w}},\quad s>\frac{3}{2},\quad w'<w+\frac{3}{2},
 \end{align*}
where the constant $C$ depends only on $s$, $w$, $s_1$, $w_1$, $s_2$, $w_2$, $w'$.
\end{lem}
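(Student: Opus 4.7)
The plan is to reduce both estimates to their unweighted counterparts on a fixed bounded region via a dyadic decomposition and a scaling argument. Partition $\mathbb{R}^3$ into a ball $B_0=\{|x|\leq 2\}$ together with dyadic annuli $A_k=\{2^k\leq |x|\leq 2^{k+1}\}$ for $k\geq 1$. On $A_k$ the weight satisfies $(1+|x|^2)^\alpha\sim 2^{2k\alpha}$ uniformly. Introduce the rescaled function $f_k(y)=f(2^ky)$ defined on the fixed annulus $A_*=\{1\leq|y|\leq 2\}$ (and its slight enlargement $A_*'$). A direct change of variables using $\partial_x=2^{-k}\partial_y$ and $dx=2^{3k}dy$ shows
\begin{equation*}
\|f\|_{H^{s,w}(A_k)}\sim 2^{k(w+\frac{3}{2})}\|f_k\|_{H^s(A_*)},
\end{equation*}
and analogously $\sup_{A_k}(1+|x|^2)^{w'/2}|f|\sim 2^{kw'}\|f_k\|_{L^\infty(A_*)}$. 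Thus the weighted estimates become, after scaling, uniform-in-$k$ bounds for the rescaled functions on the fixed annulus.

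For the product inequality, apply the classical Sobolev multiplication on the bounded region $A_*'$ (valid for $s_1+s_2>s+\tfrac{3}{2}$) to get $\|f_kg_k\|_{H^s(A_*)}\leq C\|f_k\|_{H^{s_1}(A_*')}\|g_k\|_{H^{s_2}(A_*')}$ with $C$ independent of $k$. Translating back via the scaling relation gives
\begin{equation*}
\|fg\|_{H^{s,w}(A_k)}\leq C\,2^{k(w-w_1-w_2-\frac{3}{2})}\,\|f\|_{H^{s_1,w_1}(A_k')}\,\|g\|_{H^{s_2,w_2}(A_k')}.
\end{equation*}
Set $\alpha_k=\|f\|_{H^{s_1,w_1}(A_k')}$, $\beta_k=\|g\|_{H^{s_2,w_2}(A_k')}$, and $\gamma_k=2^{k(w-w_1-w_2-\frac{3}{2})}$. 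The hypothesis $w_1+w_2>w-\tfrac{3}{2}$ makes $\gamma_k$ geometrically decaying, so $\sup_k\gamma_k<\infty$; squaring and summing gives $\sum_k\|fg\|^2_{H^{s,w}(A_k)}\leq C\sum_k\alpha_k^2\beta_k^2\leq C(\sum_k\alpha_k^2)(\sum_k\beta_k^2)=C\|f\|^2_{H^{s_1,w_1}}\|g\|^2_{H^{s_2,w_2}}$, where I used that the enlarged annuli $A_k'$ have bounded overlap. The inner region $B_0$ is handled by the standard Sobolev multiplication on a ball, where all weights are bounded above and below.

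For the embedding, apply the classical Sobolev embedding $H^s(A_*')\hookrightarrow L^\infty(A_*)$, which holds for $s>\tfrac{3}{2}$, to $f_k$ and rescale. This yields
\begin{equation*}
\sup_{A_k}(1+|x|^2)^{w'/2}|f|\leq C\,2^{k(w'-w-\frac{3}{2})}\|f\|_{H^{s,w}(A_k')}\leq C\,2^{k(w'-w-\frac{3}{2})}\|f\|_{H^{s,w}}.
\end{equation*}
The assumption $w'<w+\tfrac{3}{2}$ makes the factor bounded (in fact summable) in $k$, so taking $\sup_k$ yields $\|f\|_{C^{0,w'}(\mathbb{R}^3\setminus B_0)}\lesssim\|f\|_{H^{s,w}}$. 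On $B_0$ the bound reduces to the classical Sobolev embedding on a bounded domain.

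The main technical point to take care of is ensuring uniformity of the constants in $k$: every inequality applied on $A_*'$ must come from the standard theory on a fixed bounded Lipschitz domain, so the constants depend only on the geometry of $A_*'$. The strict inequalities in the hypotheses are what provide the geometric decay in the exponent of $2^k$ that is needed to sum (for the multiplication statement) or to take supremum (for the embedding), while the borderline cases $w_1+w_2=w-\tfrac{3}{2}$ or $w'=w+\tfrac{3}{2}$ would lose control of the logarithmically-diverging dyadic sum/supremum.
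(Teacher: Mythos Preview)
The paper does not prove this lemma; it merely quotes the statement from \cite{Bruhat_christo} (Choquet-Bruhat and Christodoulou). Your dyadic localization and scaling argument is precisely the method used in that reference to establish these weighted estimates, and your execution is correct: the scaling identity $\|f\|_{H^{s,w}(A_k)}\sim 2^{k(w+3/2)}\|f_k\|_{H^s(A_*)}$ is the heart of the matter, and the inequality $\sum_k \alpha_k^2\beta_k^2\leq(\sum_k\alpha_k^2)(\sum_k\beta_k^2)$ for nonnegative sequences handles the summation after bounding $\gamma_k$ uniformly. One small remark: the unweighted multiplication $H^{s_1}\times H^{s_2}\to H^s$ on the fixed annulus requires, in addition to $s_1+s_2>s+\tfrac32$, the conditions $s_1,s_2\geq s$ and $s_1,s_2\geq 0$; these are implicit in the cited reference and in the applications made in the present paper, so this does not affect your argument.
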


Let
\[
x_0=(\bar h, \bar k), \quad y_0=(0, 0), \quad x=(\bar g, \bar K), \quad y=\delta^2\ep^{-2}(\phi_0, \phi_1)(\cdot/\ep).
\]
Then the vacuum constraint equations \eqref{vaconeq} become
$\Phi(x_0, y_0)=0$. We define the linear map
\begin{align*}
 D\Phi(x_0, y_0): X&\rightarrow Z,\\
 (g, K)&\mapsto (D\mathcal{H}, D\mathcal{M})
\end{align*}
as the linearization of $\Phi(x, y)$ at the point $(x_0, y_0)$, which can be computed as follows
\begin{align*}
&D\mathcal{H}=-\Delta_{\bar h}(tr_{\bar h}g)+div_{\bar h}(div_{\bar h}g)-g\cdot Ric(\bar h)-2\bar k\cdot K+2(tr_{\bar h}\bar
k)(tr_{\bar h}K), \\
&D\mathcal{M}=\nabla_j K_{i}^{j}-\nabla_i (tr_{\bar h} K)+\f12k_i^l \nabla_l tr_{\bar h}g-\f12 k^{jl}\nabla_i g_{jl}.
\end{align*}
Here the covariant derivative $\nabla$ is for the metric $\bar h$. The above formulae
could be found in \cite{GREE_bruhat}
or can be obtained by straightforward computations, noticing that the linearization of the connection is given by
\[
 \delta \Ga_{ij}^l=\f12 \left(\nabla_i g_j^l+\nabla_j g_i^l -\nabla^l g_{ij}\right), \quad g_{i}^{j}=g_{il}\bar h^{lj}
\]
for symmetric two tensor and $g_{ij}$. Moreover, using Lemma \ref{mulweiSob}, we can show that
\[
 \|\Phi((g+\bar h, K+\bar k), y_0)-\Phi((\bar h, \bar k), y_0)-(D \mathcal{H}, D\mathcal{M})\|_{Z}\les\|(g, K)\|_{X}^2.
\]
To apply the implicit function theorem, we must show that the linear map $D\Phi(x_0, y_0)$ is surjective from $X$ to $Z$, that is,
for any $z=(\rho, J)\in Z$, the equations
\begin{equation}
\label{surjeq}
 D\mathcal{H}=\rho, \quad D\mathcal{M}=J
\end{equation}
have at least one solution $(g, K)\in X$. Notice that the above equations are underdetermined. The linear map $D\Phi(x_0, y_0)$
 has nontrivial kernel. We are instead looking for a solution of the form
\begin{equation}
\label{gkspecialform}
 g=\frac{1}{3}\la \bar h,\quad K=L_{V}\bar h-div(V) \bar h-\f12 \la \bar k
\end{equation}
for some real function $\la$ and vector fields $V$ on $\mathbb{R}^3$. Here $L_V\bar h$ is the deformation tensor of the vector
fields $V$ on $(\mathbb{R}^3, \bar h)$ defined as follows
\[
 (L_V\bar h)_{ij}=\nabla_i V_j+\nabla_j V_i.
\]
Note that $tr \bar k=0$ on $\mathbb{R}^3$. Using \eqref{vaconeq}, the equations \eqref{surjeq} are reduced to
the following elliptic systems
\begin{align}
\label{scalarellp}
-\Delta_{\bar h}\la+|\bar k|^2\la&=3\bar k\cdot L_{V}\bar h+\frac{3}{2}\rho,\\
\label{vectorellp}
div(L_{V}\bar h)&=J.
\end{align}
We must show that the above elliptic systems have a unique solution $(\la, V)\in H^{s, w}$ for any $z=(\rho, J)\in Z$. Since the
systems are splitting. We first
 consider the second equation \eqref{vectorellp} which is independent of $\la$. The following lemma indicates that the
 operator $div(L_{(\cdot) }\bar h)$ is injective
from $H^{2, -1}$ to $H^{0, 1}$.
\begin{lem}
 \label{leminjective}
Let $\bar h\in M^{4, -1}$. If $div(L_{V}\bar h)=0$, $V\in H^{2, -1}$, then $V=0$.
\end{lem}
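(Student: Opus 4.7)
The plan is two-step: first show that $V$ must be a Killing vector field for $\bar h$ (i.e.\ $L_V\bar h\equiv 0$), then deduce $V\equiv 0$ from the weighted decay of $V$ and the asymptotically Euclidean structure of $\bar h$.

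For the first step I would test the equation $\text{div}(L_V\bar h) = 0$ against $\chi_R V$, where $\chi_R$ is a standard cutoff equal to $1$ on $B_R$, supported in $B_{2R}$, with $|\nabla\chi_R|\lesssim R^{-1}$. Integration by parts with respect to the volume form of $\bar h$ (uniformly comparable to Lebesgue measure since $\bar h - m_0\in H^{4,-1}$ implies $\bar h\to m_0$ at infinity) yields
\[
\tfrac12\int_{\mathbb{R}^3}\chi_R\,|L_V\bar h|^2\,d\vol_{\bar h}
= -\int_{\mathbb{R}^3}(\nabla^i\chi_R)\,V^j\,(L_V\bar h)_{ij}\,d\vol_{\bar h}.
\]
The right-hand side is bounded by $R^{-1}\|V\|_{L^2(B_{2R}\setminus B_R)}\|L_V\bar h\|_{L^2(B_{2R}\setminus B_R)}$. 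The hypothesis $V\in H^{2,-1}$ means $|V|^2(1+|x|^2)^{-1}\in L^1$, so its mass on dyadic annuli vanishes and $\|V\|_{L^2(B_{2R}\setminus B_R)} = o(R)$; together with $\nabla V\in L^2$, which gives $\|L_V\bar h\|_{L^2(B_{2R}\setminus B_R)}\to 0$, the right-hand side is $o(1)$. Sending $R\to\infty$ forces $L_V\bar h\equiv 0$.

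For the second step, the Sobolev embedding from Lemma \ref{mulweiSob} applied to $V\in H^{2,-1}$ gives $V\in C^{0,w'}$ for any $w'<1/2$, hence $V(x)\to 0$ as $|x|\to\infty$; the same embedding applied to $\bar h-m_0\in H^{4,-1}$ gives $\bar h\to m_0$ at infinity. Thus $V$ is a Killing field of $\bar h$ that decays at infinity on an asymptotically Euclidean background. The standard Killing identity $\nabla_i\nabla_j V_k = R_{ikjl}V^l$ turns the problem into a closed first-order linear ODE for the pair $(V,\omega)$ with $\omega=\nabla V$ along any geodesic of $\bar h$; in particular $V$ is determined globally by $(V(p),\nabla V(p))$ at any single point $p$. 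Combining the Killing relation $|\nabla\omega|\lesssim|R|\,|V|$ with the Sobolev-controlled curvature of $\bar h$, a Gr\"onwall argument transporting this ODE from faraway points (where $(V,\nabla V)$ can be made arbitrarily small) back to any fixed point then yields $V\equiv 0$.

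The main obstacle is this final rigidity step: the decay $V\to 0$ at infinity is immediate from the weighted Sobolev embedding, but $\nabla V$ is controlled only in $L^2$, not pointwise, so to initiate the geodesic transport one must exploit the Killing structure itself (via $|\nabla\omega|\lesssim|R|\,|V|$) to deduce decay of $\nabla V$ from decay of $V$. The weighted Sobolev framework $H^{2,-1}$, which simultaneously controls $V$, $\nabla V\in L^2$, and (through $\bar h\in M^{4,-1}$) the full curvature tensor of $\bar h$, is precisely what makes this coupling quantitative.
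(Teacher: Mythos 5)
Your first step is essentially the paper's: both integrate $\mathrm{div}(L_V\bar h)\cdot V=0$ against suitable approximations of $V$ to conclude $\|L_V\bar h\|_{L^2}=0$. For the second step you diverge. The paper does not run a Killing-transport/Gronwall argument at all. Instead, after noting the pointwise decay $|R|\lesssim(1+|x|)^{-9/4}$, $|V|\lesssim(1+|x|)^{-1/4}$ (from the weighted Sobolev embedding in Lemma \ref{mulweiSob}), it feeds the Killing identity $\nabla^2 V=R(V,\cdot,\cdot,\cdot)$ into a weighted Hardy estimate to obtain
\[
\|(1+|x|)\nabla^2 V\|_{L^2(\mathbb{R}^3\setminus B_{R_0})}\lesssim (1+R_0)^{-1/4}\|(1+|x|)\nabla^2 V\|_{L^2(\mathbb{R}^3\setminus B_{R_0})},
\]
which forces $V\equiv 0$ outside a large ball $B_{R_0}$, and then propagates $V\equiv 0$ inward by a short open-and-closed unique-continuation argument built on the same Killing identity. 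That route never needs pointwise control of $\nabla V$; it works entirely with $L^2$ quantities.

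Your proposal follows instead the geometric rigidity mechanism of transporting $(V,\nabla V)$ along geodesics from infinity, and you correctly identify the genuine gap: the embedding of $H^{2,-1}$ gives pointwise decay for $V$ but not for $\nabla V$, since $\nabla V$ lies only in $H^{1,0}$ and the embedding requires strictly more than $3/2$ derivatives. Your remark that one should exploit $|\nabla\omega|\lesssim|R||V|$ is the right idea, but as written it names the obstacle without closing it. To close it: the Killing identity together with the established pointwise decay gives $|\nabla^2 V|\lesssim(1+|x|)^{-5/2}$, whose rate beats $r^{-1}$ and is therefore integrable along rays; hence $\nabla V$ is uniformly Cauchy near infinity and converges to a single limiting value. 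That limit must vanish, because otherwise $\tfrac{D}{dt}V$ would tend to a nonzero constant along some ray, making $V$ grow linearly and contradicting $V\to 0$. Only after establishing $\nabla V\to 0$ (with rate $(1+|x|)^{-3/2}$) does the Gronwall transport from infinity close. Since your write-up leaves this step as an announced obstacle rather than resolving it, the proof is not complete as stated, though the route is salvageable with the above bootstrap.
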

This result has been proven in \cite{maximal_bruhat}. From a geometric point of view, a killing vector field $V$ is uniquely determined by
$V|_{p}$, $\nabla V|_{p}$. The condition $div(L_{V}\bar h)=0$, $V\in H^{2, -1}$ implies that $V$ is killing and
$V$, $\nabla V$ vanish at infinity. Hence $V$ vanishes everywhere.
However, we give another proof inspired by the method in \cite{boost_christ}, see Theorem 3.3 there.
\begin{proof}
 Since $V\in H^{2, -1}$, we have
\[
 0=\int_{\mathbb{R}^3}\bar h(div(L_{V}\bar h),  V)d\si=-\f12\int_{\mathbb{R}^3}|L_{V}\bar h|^2d\si
\]
by approximating $V$ with vectorfields $V_n\in C_{0}^{\infty}$.
Hence $L_{V}\bar h=0$, that is, $V$ is killing. In local
coordinates, we have $\nabla_{\pa_i}V_j+\nabla_{\pa_j}V_i=0$.
 In particular, we can compute
\begin{equation}
\label{vveq}
 \nabla_{\pa_i}\nabla_{\pa_j}V_{k}=-R(V, \pa_i, \pa_j, \pa_k),
\end{equation}
where $\pa_i$ is the vector field $\pa_{x_i}$, $R$ is the Riemann
curvature tensor defined as follows
\[
 R(\pa_l, \pa_i, \pa_j, \pa_k)=\bar h(\nabla_{\pa_l}\nabla_{\pa_i}\pa_j-\nabla_{\pa_i}\nabla_{\pa_l}\pa_j, \pa_k).
\]
Using Lemma \ref{mulweiSob} by taking $w'=\frac{9}{4}<-1+2+\frac{3}{2}$, we have
 \[
 |R(\pa_l, \pa_i, \pa_j, \pa_k)|\les |\nabla^2 \bar h|\les
 (1+|x|)^{-\frac{9}{4}}, \quad |V|\les (1+|x|)^{-\frac{1}{4}}.
 \]
Hence by Lemma \ref{lemhardyineq}(or Poincar$\acute{e}$ inequality), we can show that
\begin{align*}
 \|(1+|x|)\nabla^2 V\|_{L^2(\mathbb{R}^3/B_{R_0})}&\les (1+R)^{-\frac{1}{4}}\|(1+|x|)^{-1}V\|_{L^2(\mathbb{R}^3/B_{R_0})}\\
&\les (1+R)^{-\frac{1}{4}}\|(1+|x|)\nabla^2 V\|_{L^2(\mathbb{R}^3/B_{R_0})}
\end{align*}
for any ball $B_{R_0}$ with radius $R_0$. Choose $R_0$ large enough. We can conclude that $V$ is vanishing outside the ball
$B_{R_0}$.

\bigskip

Now consider the set
\[
 S:=\{x\in\mathbb{R}^3,\quad V|_{x}=0,\quad \nabla V|_{x}=0\}.
\]
We show that the set $S$ is open. In fact, let $x\in S$. Notice that $V\in C^{0}$. We have
\begin{align*}
 \int_{0}^{r}|V|^2(x+s\om )ds\les r^2\int_{0}^{r}|\nabla V|^2 ds\les r^4\int_{0}^{r}|\nabla^2 V|(x+s\om)ds\les r^4\int_{0}^{r}
|V|^2(x+s\om) ds,\quad \forall \om \in \mathbb{S}^2,
\end{align*}
where we have used the equation \eqref{vveq}. Choosing $r$ small enough, we can show that the ball $B_r(x)\subset S$. Hence $S$ is open.
Notice that $S$ is closed and nonempty. We conclude that $V\equiv0$ on $\mathbb{R}^3$.
\end{proof}

This lemma also implies that the first order linear operator $L_{(\cdot)}\bar h$ is injective. We prove a Hardy's inequality
for this operator, which will be used to improve estimates for $\|(\nabla\la, \nabla V)\|_{L^2}$.
\begin{lem}
 \label{lemhardyineqV}
Assume $\bar h\in M^{4, -1}$, $V\in H^{2, -1}$. Then
\[
 \||x|^{-1}V\|_{L^2}\les \|L_{V}\bar h\|_{L^{2}}.
\]
\end{lem}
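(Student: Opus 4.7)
The plan is to reduce the claim to a Korn-type inequality $\|\nabla V\|_{L^2}\lesssim\|L_V\bar h\|_{L^2}$ and then invoke the standard Hardy inequality (Lemma \ref{lemhardyineq}) to upgrade $\|\nabla V\|_{L^2}$ to the weighted norm $\||x|^{-1}V\|_{L^2}$. So the heart of the matter is the Korn-type bound on all of $\mathbb{R}^3$ in the weighted class $H^{2,-1}$.

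The first concrete step is an integration-by-parts identity. For $V\in C_0^\infty$, a direct calculation using $[\nabla^j,\nabla_i]V_j=R_i{}^kV_k$ gives
\begin{equation*}
\int_{\mathbb{R}^3}|L_V\bar h|^2\,d\mu_{\bar h}=2\int|\nabla V|^2\,d\mu_{\bar h}+2\int(\nabla\cdot V)^2\,d\mu_{\bar h}-2\int R_{ij}V^iV^j\,d\mu_{\bar h},
\end{equation*}
and I would extend this identity to all $V\in H^{2,-1}$ by approximation with cutoffs. The assumption $\bar h\in M^{4,-1}$ gives $\nabla^2\bar h\in H^{2,1}$, hence by Lemma \ref{mulweiSob} the embedding into $C^{0,9/4}$ yields $|R_{ij}|\lesssim(1+|x|)^{-9/4}$. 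Therefore
\begin{equation*}
\Big|\int R_{ij}V^iV^j\Big|\lesssim\int(1+|x|)^{-9/4}|V|^2\lesssim\int\frac{|V|^2}{(1+|x|)^2},
\end{equation*}
so the identity gives the a priori bound $\|\nabla V\|_{L^2}^2\lesssim\|L_V\bar h\|_{L^2}^2+\||x|^{-1}V\|_{L^2}^2$.

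This bound is not yet enough to conclude absorption-wise, so I would run a compactness-contradiction argument. Suppose the lemma fails: then there exists a sequence $V_n\in H^{2,-1}$ with $\||x|^{-1}V_n\|_{L^2}=1$ and $\|L_{V_n}\bar h\|_{L^2}\to 0$. By the identity and the Ricci bound, $\|\nabla V_n\|_{L^2}$ is uniformly bounded, so $\{V_n\}$ is bounded in $H^{1,-1}$. Extract a subsequence with $V_n\rightharpoonup V_\infty$ weakly in $H^{1,-1}$ and strongly in $L^2_{\mathrm{loc}}$. Weak lower semicontinuity then forces $L_{V_\infty}\bar h=0$. Since the symbol $\xi_iW_j+\xi_jW_i=0$ has only the trivial solution $W=0$ for $\xi\neq 0$, the operator $W\mapsto L_W\bar h$ is overdetermined elliptic, so elliptic regularity in weighted Sobolev spaces promotes $V_\infty$ to $H^{2,-1}$. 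By Lemma \ref{leminjective}, $V_\infty\equiv 0$.

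The last piece, and what I expect to be the main obstacle, is ruling out the loss of mass at infinity so that $\||x|^{-1}V_n\|_{L^2}\to 0$ (contradicting the normalization). Strong $L^2_{\mathrm{loc}}$ convergence handles the bounded region, but on $|x|\geq R$ one must show $\int_{|x|\geq R}|V_n|^2/(1+|x|)^2\to 0$ as $R\to\infty$, uniformly in $n$. For this I would localize the identity via a cutoff $\eta_R$ supported on $|x|\geq R$, observing that on that region $|R_{ij}|\lesssim R^{-9/4}$, which allows the curvature term for $\eta_R V_n$ to be absorbed once $R$ is taken large. The commutator $L_{\eta_R V_n}\bar h-\eta_R L_{V_n}\bar h$ is supported in an annulus and controlled using $|\nabla\eta_R|\lesssim 1/R$ together with the uniform bound on $\||x|^{-1}V_n\|_{L^2}$, producing the required tightness and finishing the contradiction.
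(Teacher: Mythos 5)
Your proof assembles the same ingredients the paper uses — the integration-by-parts identity $\frac12\|L_V\bar h\|_{L^2}^2=\|\nabla V\|_{L^2}^2+\|\mathrm{div}\,V\|_{L^2}^2-\int \mathrm{Ric}(V,V)$, the Ricci decay $|\mathrm{Ric}|\lesssim(1+|x|)^{-9/4}$ coming from $\bar h\in M^{4,-1}$ and Lemma~\ref{mulweiSob}, a compactness contradiction, and Lemma~\ref{leminjective} to annihilate the weak limit — but your choice of normalization in the contradiction is different and this is exactly what creates the obstacle you flagged. You set $\||x|^{-1}V_n\|_{L^2}=1$, a global weighted norm, and therefore must show no mass escapes to infinity. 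The paper sidesteps this entirely by normalizing $\|V_n\|_{L^2(B_{R_0})}=1$ on a fixed ball, where $R_0$ is chosen so the Ricci term on $\{|x|\geq R_0\}$ can be absorbed against $\|\nabla V_n\|_{L^2}^2$ via Hardy. With that local normalization, Rellich compactness on $B_{R_0}$ alone carries the nontriviality of the limit — no tightness is needed. The paper then upgrades to the global weighted estimate in a separate short second step by running the integration-by-parts identity once more and using the just-proven local bound $\|V\|_{L^2(B_{R_0})}\lesssim\|L_V\bar h\|_{L^2}$ to control the interior Ricci term.

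The tightness step you sketch has a genuine gap as stated. For the commutator $L_{\eta_R V_n}\bar h-\eta_R L_{V_n}\bar h\sim\nabla\eta_R\otimes V_n$ on the annulus $R\leq|x|\leq 2R$, you say it is "controlled using $|\nabla\eta_R|\lesssim 1/R$ together with the uniform bound on $\||x|^{-1}V_n\|_{L^2}$." But the uniform bound $\||x|^{-1}V_n\|_{L^2}=1$ only gives $\|\nabla\eta_R\cdot V_n\|_{L^2}\lesssim 1$ uniformly in $n$ and $R$; it does not make the commutator small, and in fact you cannot make it small uniformly in $n$ from the global norm alone — that would already be tightness, which is what you are trying to prove. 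The repair is to use the strong $L^2_{\mathrm{loc}}$ convergence $V_n\to V_\infty=0$ on the \emph{fixed} annulus: for each fixed large $R$, $\|\nabla\eta_R\cdot V_n\|_{L^2}\lesssim R^{-1}\|V_n\|_{L^2(R\leq|x|\leq 2R)}\to 0$ as $n\to\infty$, which together with $\|L_{V_n}\bar h\|_{L^2}\to 0$ and the absorbed Ricci term gives $\||x|^{-1}V_n\|_{L^2(|x|\geq 2R)}\to 0$; adding the strong local convergence on $|x|\leq 2R$ then contradicts $\||x|^{-1}V_n\|_{L^2}=1$. With this fix your argument closes, but the paper's choice of a compactly supported normalization makes the whole tightness issue disappear, which is the cleaner route.
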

\begin{proof}
 Choose $R_0$ such that
\[
 |Ric|\leq \frac{1}{10}(1+|x|)^{-2},\quad |x|\geq R_0.
\]
We claim that
\begin{equation}
\label{hardyineqbaby}
 \|V\|_{L^2(B_{R_0})}\les \|L_{V}\bar h\|_{L^2}.
\end{equation}
In fact, if the above inequality does not hold, assume $V_{n}\in H^{2, -1}$ such that
\[
 1=\|V_n\|_{L^2(B_{R_0})}\geq n\|L_{V_n}\bar h\|_{L^2}.
\]
Integration by parts, we have
\begin{align*}
 \f12\|L_{V}\bar h\|_{L^2}^2&=\int_{\mathbb{R}^3}\nabla^iV^j \nabla_i V_j+\nabla^j V^i \nabla_i V_j d\si\\
&=\int_{\mathbb{R}^3}\nabla^iV^j \nabla_i V_j- V^i \nabla^j\nabla_i V_j +V^i \nabla_i\nabla^j V_j-V^i \nabla_i\nabla^j V_jd\si\\
&=\int_{\mathbb{R}^3}|\nabla V|^2+|div (V)|^2-Ric(V, V)d\si.
\end{align*}
Hence we conclude from the assumptions on $V_n$ that
\[
 \|\nabla V_n\|_{L^2}\leq 1+\|Ric\|_{C^0}\|V_n\|_{L^2(B_{R_0})}+\frac{1}{10}\|(1+|x|)^{-1}V_n\|_{L^2(\mathbb{R}^3/B_{R_0})},
\]
which, by using Lemma \ref{lemhardyineq}, implies that
\[
 \|\nabla V_n\|_{L^2}\leq 2+2\|Ric\|_{C^0}.
\]
Therefore, there exists vectorfields $V$ such that(up to a
subsequence)
\[
 \nabla V_n\rightharpoonup \nabla V \textnormal{ weakly},\quad V_n\rightarrow V \textnormal{ strongly in }L^{2}(B_{0}).
\]
In particular, we have
\[
 \|L_V\bar h\|_{L^2}\leq \varliminf\limits_{n\rightarrow \infty} \|L_{V_n}\bar h\|_{L^2}=0, \quad \|V\|_{L^2(B_{R_0})}=1.
\]
That is $V$ is killing vector field. Moreover, by \eqref{vveq}, we have $V\in H^{2, -1}$. Then Lemma \ref{leminjective} implies that
$V\equiv 0$, which contradicts to $\|V\|_{L^2(B_{R_0})}=1$. Therefore the desired inequality \eqref{hardyineqbaby} holds. Thus
, for $V\in H^{2, -1}$, we can show that
\begin{align*}
 \||x|^{-1}V\|_{L^2}^2&\les\|\nabla V\|_{L^2}^2=2\|L_{V}\bar h\|_{L^2}^2+\int_{\mathbb{R}^3}Ric(V, V)-|div(V)|^2d\si\\
&\leq 2\|L_{V}\bar h\|_{L^2}^2+\|Ric\|_{C^0}\|V\|_{L^2(B_{R_0})}+\frac{1}{10}\|(1+|x|)^{-1}V\|_{L^2(\mathbb{R}^3/B_{R_0})},
\end{align*}
which implies that
\[
 \||x|^{-1}V\|_{L^2}^2\les 2\|L_{V}\bar h\|_{L^2}^2+\|Ric\|_{C^0}\|V\|_{L^2(B_{R_0})}\les \|L_{V}\bar h\|_{L^2}^2.
\]
\end{proof}

\begin{remark}
 An alternative approach for Lemma \ref{leminjective} and Lemma \ref{lemhardyineqV} is to use continuity argument. We sketch the prove
here. Let
\[
 L_t(V)=L_{V}(t \bar h+(1-t)m_0),\quad t\in[0, 1].
\]
If for some $t_0\in[0, 1]$ such that
\[
 \||x|^{-1}V\|_{L^2}\leq C_0 \|L_{t_0}(V)\|_{L^2},\quad \forall V\in H^{1, -1},
\]
then we can show that
\begin{align*}
 \|L_{t_0}(V)\|_{L^2}&\leq C(\bar h)\|L_{t}(V)\|_{L^2}
\end{align*}
for $t$ close to $t_0$. Since Lemma \ref{lemhardyineqV} follows from Lemma \ref{lemhardyineq} if $\bar h=m_0$, we thus conclude
that Lemma \ref{lemhardyineqV} holds for all $\bar h\in M^{2, -1}$, $V\in H^{2, -1}$. In particular, Lemma \ref{leminjective}
follows from Lemma \ref{lemhardyineqV}.

\end{remark}

We now proceed to prove Theorem \ref{exdata}. By Lemma \ref{leminjective}, the operator $L_{(\cdot)}(t\bar h+(1-t)m_0)$
is injective from $H^{s+2, -1}$ to $H^{s, 1}$, for $s=0$ or 1.
Now, for $t=0$, the operator $L_{(\cdot )}m_0=2\Delta$ is a diffeomorphism from $H^{s+2, -1}$ to $H^{s, 1}$, see
Theorem 5.1 in \cite{Bruhat_christo}. Hence the method of continuity \cite{elliptic} implies that the operator $L_{(\cdot)}\bar h$
is a diffeomorphism from $H^{s+2, -1}$ to $H^{s, 1}$. In particular, there exists a unique solution $V\in H^{s+2, -1}$ of
 \eqref{vectorellp} such that
\[
 \|V\|_{H^{s+2, -1}}\les \|J\|_{H^{s, 1}}.
\]
Moreover, by Lemma \ref{lemhardyineqV}, we can show that
\[
 \|L_{V}\bar h\|_{L^2}^2=-2\int_{\mathbb{R}^3}\bar h(V, div(L_{V}\bar h))d\si\les \||x|^{-1}V\|_{L^2}\cdot\||x| J\|_{L^2}
\les \|L_{V}\bar h\|_{L^2}\||x|J\|_{L^2},
\]
which implies that
\begin{equation}
\label{LVineq}
 \|L_{V}\bar h\|_{L^2}\les \||x|J\|_{L^2}.
\end{equation}
Having obtained $V$, for equation \eqref{scalarellp}, by Theorem 6.6 in \cite{Bruhat_christo},
the operator $-\nabla_{\bar h}+|k|^2$ is a diffeomorphism
from $H^{s+2, -1}$ to $H^{s, 1}$, for $s= 0$ or 1. By Lemma \ref{mulweiSob}, $k\cdot L_{V}\bar h\in H^{1, 1}$. Therefore, there
exists a unique solution $\la$ of \eqref{scalarellp} such that
\[
\|\la\|_{H^{s+2, -1}} \les \|k\cdot L_{V}\bar h\|_{H^{s, 1}}+\|\rho\|_{H^{s, 1}}\les \|\rho\|_{H^{s, 1}}+\|J\|_{H^{s, 1}},
\quad s=0, 1.
\]
Moreover, multiply equation
\eqref{scalarellp} by $\la$ and integration by parts, we can show that
\[
 \|\nabla \la\|_{L^2}^2+\|k\la\|_{L^2}^2=-\int_{\mathbb{R}^3}\la\left(3\bar k\cdot L_{V}\bar h+\frac{3}{2}\rho\right)d\si
\les \||x|^{-1}\la\|_{L^2}\left(\||x|\rho\|_{L^2}+\|L_{V}\bar h\|_{L^2}\right),
\]
where $||x| \bar k|\les 1$ by the assumption that $\bar k\in H^{3, 0}$. Using \eqref{LVineq}, we get a similar estimate
\begin{equation}
 \label{laineq}
\|\nabla \la\|_{L^2}\les \||x|\rho\|_{L^2}+\||x|J\|_{L^2}.
\end{equation}
Now we have shown that the linear map $D\Phi(x_0, y_0)$ is surjective from $X$ to $Z$. Hence the Banach space
$X$ can be decomposed as $X=X_1+X_2$ such that
$D\Phi(x_0, y_0)(X_2)=0$, $D\Phi(x_0, y_0)$ is a diffeomorphism from $X_1$ to $Z$. In particular, for
$y=\delta^2\ep^{-2}(\phi_0, \phi_1)(\cdot /\ep)$, the implicit function theorem shows that
there is a solution
\[
 (\bar g, \bar K)=(\bar h+g, \bar k+K)\in X
\]
of \eqref{coneq} if $\ep$(or $\ep_0$) is sufficiently small, depending only on $\bar h$, $\phi_0$, $\phi_1$. Moreover, we can require that $(g, K)$ is of the form
\eqref{gkspecialform} for $(\la, V)\in Z$. Therefore
\[
 \|(\la, V)\|_{H^{2, -1}}\les\delta^2\ep^{-2} \|(|\phi_1|^2+|\bar \nabla\phi_0|^2-2\mathcal{V}(\phi_0))(\cdot/\ep)\|_{H^{0, 1}}+
\|<\phi_1, \bar \nabla\phi_0>(\cdot/\ep)\|_{H^{0, 1}}\les \delta^2\ep^{-\frac{1}{2}},
\]
which implies that
\[
 \ep^{1-\frac{3}{2}}\left(\|\nabla^2(\bar g-\bar h)\|_{L^2}+\|\nabla(\bar K-\bar k)\|_{L^2}\right)\les
\ep^{-\f12}\|(\la, V)\|_{H^{2, -1}}\les \delta^2 \ep^{-1}.
\]
We must remark here that in local coordinate $\{x\}$ we have $\phi_0(\cdot/\ep)=\phi_0(x/\ep)$.
Similarly
\[
  \|(\la, V)\|_{H^{3, -1}}\les\delta^2\ep^{-2} \|(|\phi_1|^2+|\bar \nabla\phi_0|^2-2\mathcal{V}(\phi_0))(\cdot/\ep)\|_{H^{1, 1}}+
\|<\phi_1, \bar \nabla\phi_0>(\cdot/\ep)\|_{H^{0, 1}}\les \delta^2\ep^{-\frac{3}{2}},
\]
which shows that
\[
 \ep^{2-\frac{3}{2}}\left(\|\nabla^3(\bar g-\bar h)\|_{L^2}+\|\nabla^2(\bar K-\bar k)\|_{L^2}\right)\les
\ep^{\f12}\|(\la, V)\|_{H^{3, -1}}\les\delta^2 \ep^{-1}.
\]
It remains to estimate $\|\nabla(\bar g-\bar h)\|_{L^2}+\|\bar K-\bar k\|_{L^2}$, which we rely on \eqref{LVineq},
 \eqref{laineq}. Note that
\[
 |\nabla \bar h|+|\bar k|\les |x|^{-1}.
\]
 Making use of the multiplication properties
 of $H^{s, w}$ in Lemma \ref{mulweiSob}, we can show that
\begin{align*}
 &\|\nabla(\bar g-\bar h)\|_{L^2}+\|\bar K-\bar k\|_{L^2}\les\|\nabla\la \|_{L^2}+\|L_{V}\bar h\|_{L^2}\\
&\les \delta^{2}\ep^{-2}\left(\||x|(|\phi_1|^2+|\bar \nabla \phi_0|^2-2\mathcal{V}(\phi_0))(\cdot/\ep)\|_{L^2}+\||x|<\phi_1, \bar \nabla \phi_0>(\cdot/
\ep)\|_{L^2}\right)+\|\mathcal{N}(g, K)\|_{H^{0, 1}}\\
&\les \delta^{2}\ep^{\f12}+\|g\|_{H^{2, -1}}^2+\|K\|_{H^{1, 0}}^2\les \delta^2\ep^{\f12}+
(\delta^2\ep^{-\f12})^2\les \delta^2\ep^{\f12},
\end{align*}
where $\mathcal{N}(g, K)$ is the nonlinear term in equation \eqref{coneq} for $\bar g=\bar h+g$, $\bar K=\bar k+K$. Hence
\[
 \|\nabla(\bar g-\bar h)\|_{H_\ep^2}+\|\bar K-\bar k\|_{H^2_\ep}=\sum\limits_{|\a|\leq 2}\ep^{|\a|-\frac{3}{2}}\left(
\|\nabla^{\a+1}(\bar g-\bar h)\|_{L^2}+\|\nabla^\a(\bar K-\bar k)\|_{L^2}\right)\les \delta^2\ep^{-1}.
\]
That is the solution $(\bar g, \bar K)$ satisfies the constraint equations \eqref{coneq} as well as
 the estimate \eqref{initialdata}.

\bigskip

\textbf{Acknowledgments} The author is deeply indebted to his
advisor Igor Rodnianski for suggesting this problem and for sharing
numerous valuable thoughts, as well as tremendous indispensable
help.

\bibliography{shiwu}{}
\bibliographystyle{plain}

 \end{document}